\let\nc\newcommand
\let\renc\renewcommand
\theoremstyle{plain}
\newtheorem{thm}{Theorem}
\newtheorem{prop}[thm]{Proposition}
\newtheorem{cor}[thm]{Corollary}
\newtheorem{lem}[thm]{Lemma}
\theoremstyle{definition}
\newtheorem{example}[thm]{Example}
\newtheorem{remark}[thm]{Remark}
\newtheorem{rem}[thm]{Remark}
\numberwithin{thm}{section}
\renewcommand{\subsection}{\@startsection{subsection}{2}{0pt}{-3ex
plus -1ex minus -0.2ex}{-2mm plus -0pt minus
-2pt}{\normalfont\bfseries}} \makeatother
\numberwithin{equation}{section}
\newcommand{\Lmod}[1]{#1\text{-}{\mathsf{mod}}}
\def\gbf#1{\mbox{$\mathbf{#1}$}}
\DeclareMathOperator{\res}{{\mathrm{res}}}
\DeclareMathOperator{\Proj}{\mathrm{Proj}}
\DeclareMathOperator{\Ext}{\mathrm{Ext}}
\DeclareMathOperator{\End}{\mathrm{End}}
\DeclareMathOperator{\rk}{\mathrm{rk}}
\DeclareMathOperator{\gr}{\mathrm{gr}}
\DeclareMathOperator{\Hilb}{{\mathrm{Hilb}}}
\DeclareMathOperator{\Tr}{\mathrm{Tr}}
\DeclareMathOperator{\Rep}{\mathrm{Rep}}
\newcommand{\bv}{\boldsymbol{v}}
\newcommand{\beq}{\begin{equation}\label}
\newcommand{\eeq}{\end{equation}}
\DeclareMathOperator{\Spec}{\mathrm{Spec}}
\newcommand{\iso}{{\;\stackrel{_\sim}{\to}\;}}
\DeclareMathOperator{\Hom}{\mathrm{Hom}}
\DeclareMathOperator{\GL}{\mathrm{GL}}
\nc{\Z}{\mathbb{Z}}
\newcommand{\N}{\mathbb{N}}
\newcommand{\Q}{\mathbb{Q}}
\newcommand{\C}{\mathbb{C}}
\newcommand{\h}{\mathfrak{h}}
\nc{\rank}{\textrm{rank} \,}
\nc{\ds}{\dots}
\let\mc\mathcal
\let\mf\mathfrak
\nc{\mbf}{\mathbf}
\nc{\Res}{\mathsf{Res} \, }
\nc{\Ind}{\mathsf{Ind} \, }
\nc{\cont}{\textrm{cont}}
\nc{\msf}{\mathsf}
\nc{\minusone}{-1}
\nc{\minustwo}{-2}
\nc{\Mod}{\mathrm{Mod} \,}
\nc{\ms}{\mathscr}
\nc{\Frac}{\mathrm{Frac} \,}
\nc{\ra}{\rightarrow}
\nc{\hra}{\hookrightarrow}
\nc{\lab}{\label}
\renc{\O}{\mc{O}}
\nc{\Tan}{\mc{T}}
\nc{\ul}{\underline}
\nc{\s}{\mathfrak{S}}
\nc{\g}{\mf{g}}
\nc{\pa}{\partial}
\nc{\tit}{\textit}
\nc{\Maxspec}{\mathrm{Maxspec} \, }
\nc{\gldim}{\mathrm{gl.dim}}
\nc{\rkm}{\mathrm{rk} \, (\mf{m})}
\nc{\sm}{\mathrm{sm}}
\nc{\PD}{\mathbb{PD}}
\nc{\hilb}{\textrm{Hilb}}
\nc{\T}{\mathbb{T}}
\nc{\X}{\mathbb{X}}
\nc{\F}{\mathbb{F}}
\nc{\id}{\msf{id}}
\nc{\A}{\mathbb{A}}
\nc{\Grat}{\mc{Grat}}
\nc{\Squo}[1]{\A^{(#1)}}
\nc{\twist}{\mathrm{twist}}
\nc{\Cd}{\mc{C}}
\nc{\Span}{\mathrm{Span}}
\nc{\Grass}{\mathrm{Gr}}
\nc{\Supp}{\mathrm{Supp}}
\nc{\Irr}{\mathrm{Irr}}
\renc{\o}{\otimes}
\renc{\gr}{\mathsf{gr}}
\nc{\fin}{\mathrm{fin}}
\nc{\aff}{\mathrm{aff}}
\nc{\algD}{\mf{D}}
\nc{\hr}{\mf{h}_{\textrm{reg}}}
\nc{\D}{\mathscr{D}}
\nc{\PIdeg}{\mathrm{P.I.-degree}}
\nc{\ch}{\mathrm{ch}}
\nc{\ev}{\mathsf{ev}}
\nc{\Stab}{\mathrm{Stab}}
\nc{\Der}{\mathrm{Der}}
\nc{\rightsim}{\stackrel{\sim}{\longrightarrow}}
\nc{\HZ}{H_{\mbf{h},\Z}(\Z_m)}
\nc{\sing}{\mathrm{sing}}
\nc{\dd}{\mathscr{D}}
\nc{\bc}{\mathbf{c}}
\nc{\vc}{\underline{\mathbf{c}}}
\nc{\ba}{\mathbf{a}}
\nc{\reg}{\mathrm{reg}}
\nc{\Amp}{\mathrm{Amp}}
\nc{\Nef}{\mathrm{Nef}}
\nc{\SL}{\mathrm{SL}}
\nc{\Sp}{\mathrm{Sp}}
\nc{\Sym}{\mathrm{Sym}}
\nc{\Mov}{\mathrm{Mov}}
\nc{\Pic}{\mathrm{Pic}}
\nc{\Cs}{\C^{\times}}
\nc{\Nak}[3]{\mf{M}_{{#1}} ({#2},{#3}) }
\nc{\Naka}[2]{\mf{M}({#1},{#2}) }
\nc{\Mtheta}[1]{\mc{M}_{#1}}
\DeclareMathOperator{\Seshadri}{\mathrm{S}} 
\DeclareMathOperator{\head}{hd}
\DeclareMathOperator{\tail}{tl}
\nc{\bw}{\mathbf{w}}
\renewcommand{\bm}{\mathbf{m}}
\nc{\bn}{\mathbf{n}}
\nc{\CB}{\mathrm{CB}}
\nc{\GVect}{\Lambda}
\nc{\pZ}{\overline{Z}}
\nc{\Tang}{\mc{T}}
\newcommand{\one}{\ensuremath{(\mathrm{i})}}
\newcommand{\two}{\ensuremath{(\mathrm{ii})}}
\newcommand{\three}{\ensuremath{(\mathrm{iii})}}
\newcommand{\four}{\ensuremath{(\mathrm{iv})}}
\newcommand{\git}{\ensuremath{/\!\!/\!}}
\begin{document}

\title{Birational geometry of symplectic quotient singularities}

\author{Gwyn Bellamy}
\address{School of Mathematics and Statistics, University of Glasgow, University Place,
Glasgow G12 8QQ,
United Kingdom.}
\email{gwyn.bellamy@glasgow.ac.uk}
\urladdr{http://www.maths.gla.ac.uk/~gbellamy/}

\author{Alastair Craw} 
\address{Department of Mathematical Sciences, 
University of Bath, 
Claverton Down, 
Bath BA2 7AY, 
United Kingdom.}
\email{a.craw@bath.ac.uk}
\urladdr{http://people.bath.ac.uk/ac886/}

\begin{abstract}
For a finite subgroup $\Gamma\subset \SL(2,\C)$ and for $n\geq 1$, we use variation of GIT quotient for Nakajima quiver varieties to study the birational geometry of the Hilbert scheme of $n$ points on the minimal resolution $S$ of the Kleinian singularity $\C^2/\Gamma$. It is well known that $X:=\Hilb^{[n]}(S)$ is a projective, crepant resolution of the symplectic singularity $\C^{2n}/\Gamma_n$, where $\Gamma_n=\Gamma\wr\s_n$ is the wreath product. We prove that every projective, crepant resolution of $\C^{2n}/\Gamma_n$ can be realised as the fine moduli space of $\theta$-stable $\Pi$-modules for a fixed dimension vector, where $\Pi$ is the framed preprojective algebra of $\Gamma$ and $\theta$ is a choice of generic stability condition. Our approach uses the linearisation map from GIT to relate wall crossing in the space of $\theta$-stability conditions to birational transformations of $X$ over $\C^{2n}/\Gamma_n$. As a corollary, we describe completely the ample and movable cones of $X$ over $\C^{2n}/\Gamma_n$, and show that the Mori chamber decomposition of the movable cone is determined by an extended Catalan hyperplane arrangement of the ADE root system associated to $\Gamma$ by the McKay correspondence. 

In the appendix, we show that morphisms of quiver varieties induced by variation of GIT quotient are semismall, generalising a result of Nakajima in the case where the quiver variety is smooth. \end{abstract}

\maketitle
\tableofcontents

\section{Introduction}

 For a finite subgroup $\Gamma\subset \SL(2,\C)$, let $S\to \C^2/\Gamma$ denote the minimal resolution of the corresponding Kleinian singularity. The well-known paper by Kronheimer~\cite{Kronheimer} realises $S$ as a hyperk\"{a}hler quotient, describes the ample cone of $S$ as a Weyl chamber of the root system of type ADE associated to $\Gamma$ by the McKay correspondence, and constructs the simultaneous resolution of the semi-universal deformation of $\C^2/\Gamma$. 
 In the present paper we provide a natural generalisation of these results to higher dimensions by studying symplectic resolutions of the quotient singularity  $\C^{2n} / \Gamma_n$ for any $n\geq 1$, where $\Gamma_n=\Gamma\wr\s_n$ is the wreath product. We prove that every projective crepant resolution of $\C^{2n} / \Gamma_n$ can be realised as a Nakajima quiver variety, generalising the description of the Hilbert scheme $X:=\Hilb^{[n]}(S)$ of $n$-points on $S$ by Kuznetsov~\cite{Kuznetsov07} (established independently by both Haiman and Nakajima). We also obtain a complete understanding of the birational geometry of $X$ over $\C^{2n} / \Gamma_n$ by describing explicitly the movable cone of $X$ over $\C^{2n} / \Gamma_n$ in terms of an extended Catalan hyperplane arrangement determined by the ADE root system associated to $\Gamma$. Finally, we construct, using quiver GIT, the simultaneous resolution of the universal Poisson deformation of $\C^{2n}/\Gamma_n$.
 
\subsection{Quiver varieties and the linearisation map}
 For $n\geq 1$ and for a finite subgroup $\Gamma\subset \SL(2,\C)$, the symmetric group $\s_n$ acts on the direct product $\Gamma^n$ by permuting the factors, and the wreath product is defined to be the semidirect product $\Gamma_n:= \Gamma^n\rtimes \s_n\subset \Sp(2n,\C)$. Throughout the introduction, we assume $n > 1$ and $\Gamma$ is non-trivial (see Section~\ref{sec:n=1}, Remark~\ref{rem:Gammatrivial} and Proposition~\ref{prop:n=1} for these degenerate cases). It is well-known that the Hilbert scheme of $n$ points on $S$ provides a symplectic resolution
 \[
 f\colon X:=\Hilb^{[n]}(S)\longrightarrow Y:=\C^{2n}/\Gamma_n
 \]
  of the corresponding quotient singularity. In particular, $f$ is a projective, crepant resolution of singularities.

 In order to study the birational geometry of $X$ over $Y$, we first recall that $X$ can be constructed by GIT as a quiver variety. Consider the affine Dynkin graph associated to $\Gamma$ by McKay~\cite{McKay80}, where the vertex set is by definition the set of irreducible representations of $\Gamma$. Define the dimension vector $\bv:= n \delta$ and the framing vector $\bw = \rho_0$, where $\delta$ and $\rho_0$ are the regular and trivial representations of $\Gamma$ respectively. If we write $R(\Gamma)$ for the representation ring of $\Gamma$, then our interest lies in studying the quiver varieties $\mathfrak{M}_\theta:= \mathfrak{M}_\theta(\bv,\bw)$, where the GIT stability parameter $\theta$ can be regarded as an element in the rational vector space $\Theta:=\Hom(R(\Gamma),\Q)$; equivalently, $\mathfrak{M}_\theta$ can be regarded as a moduli space of $\theta$-semistable $\Pi$-modules, where $\Pi$ is the framed preprojective algebra of $\Gamma$ (see section~\ref{sec:Nakasec} for details). A result of Kuznetsov~\cite{Kuznetsov07} (also due to Haiman~\cite{Haiman00} and Nakajima~\cite{NakajimaPC}) determines an open GIT chamber $C_-$ in $\Theta$ and a commutative diagram 
  \begin{equation*}
\begin{tikzcd}
 X=\Hilb^{[n]}(S) \ar[r]\ar[d,"{f}"] & \mathfrak{M}_\theta \ar[d,"{f_\theta}"] \\
 Y= \C^{2n}/\Gamma_n\ar[r] &  \mathfrak{M}_0
 \end{tikzcd}
\end{equation*}
 for any $\theta\in C_-$, where the horizontal arrows are isomorphisms and where the right-hand symplectic resolution is obtained by variation of GIT quotient. 
 
 Our first main result calculates explicitly the GIT chamber decomposition of $\Theta$ and describes the geometry of the quiver varieties $\mf{M}_\theta$ whenever $\theta$ lies in a chamber.  To state the result, write $\Phi^+$ for the set of positive roots in the ADE root system of finite type associated to $\Gamma$ by the McKay correspondence, and for $\gamma\in R(\Gamma)$ we write $\gamma^\perp:= \{\theta\in \Theta \mid \theta(\gamma)=0\}$ for the dual hyperplane. 

\begin{thm}
\label{thm:introsmoothness}
For $\theta\in \Theta$, the following are equivalent:
 \begin{enumerate}
     \item[\one] $\theta$ is generic, i.e.\ $\theta$ is contained in a GIT chamber;
     \item[\two] $\theta$ does not lie in any hyperplane from the hyperplane arrangement
\[
\mc{A}:=  \big\{ \delta^\perp, (m\delta\pm \alpha)^\perp \mid \alpha \in \Phi^+, \ 0 \le m < n \big\};
\]
\item[\three] the morphism $f_\theta\colon \mf{M}_\theta\to \mf{M}_0$ obtained by variation of GIT quotient is a projective crepant resolution. 
 \end{enumerate}
  In particular, for any such $\theta\in \Theta$, there is a birational map $\psi_\theta\colon X\dashrightarrow \mf{M}_\theta$ over $Y$ that is an isomorphism in codimension-one.
\end{thm}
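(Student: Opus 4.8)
\textit{Strategy.} The plan is to pass to Nakajima's quiver variety formalism: via Crawley--Boevey's construction, realise the framed preprojective algebra $\Pi$ as the ordinary preprojective algebra of the doubled, framed quiver $Q^{+}$ obtained from the affine ADE graph by adjoining a vertex $\infty$ joined to the affine vertex $0$, so that $\mf{M}_\theta=\mf{M}_\theta(\bv,\bw)$ is the GIT quotient of $\mu^{-1}(0)$ by $G=\prod_i\GL(v_i)$ linearised by $\theta$ --- equivalently, the moduli of $\theta^{+}$-semistable $\Pi$-modules of dimension vector $\bv^{+}:=(\bv,1)=(n\delta,1)$, where $\theta^{+}$ extends $\theta$ by $\theta^{+}_{\infty}:=-\theta(n\delta)$ so that $\theta^{+}(\bv^{+})=0$ --- and $f_\theta$ is the natural projective morphism to the affine quotient $\mf{M}_0$. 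I would prove $\one\Leftrightarrow\two$ by computing the GIT fan on $\Theta$ explicitly, then $\one\Rightarrow\three$ by exhibiting $\mf{M}_\theta$ as a symplectic resolution of $\mf{M}_0$, then $\three\Rightarrow\one$ by analysing singularities on a wall, and finally deduce the concluding statement by comparing crepant resolutions of $Y$.

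\textit{The equivalence $\one\Leftrightarrow\two$, and the main obstacle.} Recall that $\theta$ is generic exactly when $\mu^{-1}(0)$ has no $\theta^{+}$-semistable module of dimension $\bv^{+}$ that is properly semistable. By King's criterion, such modules appear along a codimension-one subset of $\Theta$ precisely on those hyperplanes coming from a splitting $\bv^{+}=\beta'+\beta''$ of the dimension vector into \emph{two} positive roots of $Q^{+}$; splittings into three or more roots, or ones in which a summand must itself decompose into pieces of equal slope, impose only conditions of codimension $\ge 2$ (for instance $\alpha^\perp\cap\delta^\perp$). Writing $\beta'=(\gamma,b)$ with $b\in\{0,1\}$, the relation $\theta^{+}(\beta')=0$ cuts out $\gamma^\perp$ if $b=0$ and $(n\delta-\gamma)^\perp$ if $b=1$. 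Enumerating the admissible pairs then requires three combinatorial inputs: (a)~the positive roots of the affine ADE system lying componentwise below $n\delta$ are exactly $m\delta$ for $1\le m\le n$ and $m\delta\pm\alpha$ for $\alpha\in\Phi^{+}$ in the appropriate range of $m$ --- this rests on the highest root $\theta_{\max}=\delta-\alpha_0$ dominating every positive root, so that $\delta-\alpha=\alpha_0+(\theta_{\max}-\alpha)\ge 0$; (b)~$(\bv',1)$ is a positive root of $Q^{+}$ if and only if $\bv'=0$ or $\bv'$ is a positive affine root whose coefficient at vertex $0$ is at least $1$; (c)~matching complementary pairs and collapsing the resulting hyperplanes --- each imaginary root contributes only $\delta^\perp$, each real root contributes some $(m\delta\pm\alpha)^\perp$ with $0\le m<n$, and the a priori leftover candidates $(n\delta-\alpha)^\perp$ drop out because their complement $(\alpha,1)$ has vanishing coefficient at $0$ and so is not a root of $Q^{+}$ by~(b) (whereas the admissible decomposition $\bv^{+}=(n\delta-\alpha,1)+(\alpha,0)$ contributes the hyperplane $\alpha^\perp\in\mc{A}$). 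I expect step~(c) --- proving that the wall arrangement is exactly $\mc{A}$, with the ``no extra walls'' inclusion the delicate one, since the naive count of root-orthogonality hyperplanes is strictly larger --- to be the main obstacle.

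\textit{$\one\Rightarrow\three$ and its converse.} If $\theta$ is generic then $\theta^{+}$-semistability coincides with $\theta^{+}$-stability on $\mu^{-1}(0)$, the group $G$ acts on the stable locus with all stabilisers reduced to the scalars, and $\mf{M}_\theta$ is a smooth, connected --- hence irreducible --- algebraic symplectic variety of dimension $2n$; and $f_\theta$ is projective, being the structure morphism of a $\Proj$ over the affine variety $\mf{M}_0$. For birationality, let $U\subseteq\mf{M}_0$ be the dense open subset parametrising \emph{simple} $\Pi$-modules of dimension $\bv^{+}$, which under the identification $\mf{M}_0\cong\C^{2n}/\Gamma_n$ is the (dense) locus of free $\Gamma_n$-orbits: a simple module is $\theta^{+}$-stable for every $\theta$ and is the unique closed point of its fibre, so $f_\theta$ restricts to a bijective projective --- hence finite --- morphism onto the normal variety $U$, which in characteristic zero forces an isomorphism over $U$; thus $f_\theta$ is birational. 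It is moreover crepant, since $\mf{M}_\theta$ carries a nowhere-vanishing top form (so $K_{\mf{M}_\theta}=0$) while $\mf{M}_0$ has trivial canonical --- indeed symplectic singularities --- as $\Gamma_n\subset\Sp(2n,\C)$, whence $K_{\mf{M}_\theta}=f_\theta^{*}K_{\mf{M}_0}$. Conversely, if $\theta$ lies on a wall then properly $\theta^{+}$-semistable modules exist, and the Luna slice theorem shows $\mf{M}_\theta$ is étale-locally, near a properly polystable point, isomorphic to an affine quiver variety $\mf{M}_0$ (at the trivial stability parameter) for the associated $\Ext$-quiver; one checks that in every case arising here this local model is positive-dimensional, hence singular at the origin, so $\mf{M}_\theta$ is singular and $f_\theta$ cannot be a resolution. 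Combined with $\one\Leftrightarrow\two$ this gives $\three\Rightarrow\one$, completing the equivalence.

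\textit{The concluding statement.} By the cited theorem of Kuznetsov (also Haiman and Nakajima), $C_-$ is a GIT chamber and $X=\Hilb^{[n]}(S)\iso\mf{M}_{\theta_-}$ over $Y$ for $\theta_-\in C_-$. For generic $\theta$, the previous step shows that $f_{\theta_-}$ and $f_\theta$ both restrict to isomorphisms over the common dense open $U\subseteq Y$; composing $X\iso\mf{M}_{\theta_-}$ with the ensuing birational identification $\mf{M}_{\theta_-}\dashrightarrow\mf{M}_\theta$ over $\mf{M}_0$ yields the desired birational map $\psi_\theta\colon X\dashrightarrow\mf{M}_\theta$ over $Y$. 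Finally, $X$ and $\mf{M}_\theta$ are both projective crepant resolutions of $Y$, and any birational map between two such is an isomorphism in codimension one: resolving $\psi_\theta$ by a common smooth projective model $W$ and comparing the relative canonical divisors of $W$ over $X$ and over $\mf{M}_\theta$, a prime divisor contracted by $\psi_\theta$ (or by its inverse) would be exceptional for one of the two morphisms from $W$ but not for the other, which is impossible since both morphisms are crepant over $Y$ and therefore have the same exceptional divisor on $W$.
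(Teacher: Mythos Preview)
The most serious gap is in your argument for birationality of $f_\theta$. You claim that simple $\Pi$-modules of dimension $\bv^{+}=\rho_\infty+n\delta$ form a dense open subset $U\subseteq\mf{M}_0$, identified with the locus of free $\Gamma_n$-orbits. In fact there are \emph{no} simple $\Pi$-modules of this dimension whatsoever: by Crawley--Boevey's criterion (Theorem~\ref{thm:canondecomp}\one), simple modules of dimension $\alpha$ exist if and only if $\alpha\in\Sigma_0$, and the decomposition $v=\rho_\infty+\delta+\cdots+\delta$ gives $p(v)=n=p(\rho_\infty)+n\,p(\delta)$, violating the strict inequality in the definition of $\Sigma_0$. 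Geometrically, the generic closed point of $\mf{M}_0\cong\Sym^n(\C^2/\Gamma)$ is an unordered $n$-tuple of distinct smooth points, and the corresponding $0$-polystable module is the semisimple module $M_\infty\oplus M_1\oplus\cdots\oplus M_n$ with $\dim M_\infty=\rho_\infty$ and $\dim M_i=\delta$ --- never simple. Your set $U$ is therefore empty. The paper establishes birationality by a different route: it proves in the appendix (Theorem~\ref{thm:birationalimage}) that for \emph{every} $\theta$ the variation-of-GIT morphism $f_\theta$ is projective, Poisson and birational onto the closure of a stratum of $\mf{M}_0$, and then (Proposition~\ref{prop:crepres}) observes that $\dim\mf{M}_\theta=\dim\mf{M}_0=2n$ forces the image to be all of $\mf{M}_0$.

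Two smaller points. First, your claim~(b) is false as an ``if and only if'': by the Frenkel--Kac parametrisation (Lemma~\ref{lem:FrenkelKac}) the positive roots of $Q^{+}$ with $\infty$-component equal to~$1$ are all vectors $\rho_\infty+m\delta+\tfrac{1}{2}(\nu,\nu)\delta-\nu$ with $m\ge 0$ and $\nu$ in the finite root lattice, and for $\nu$ not itself a root (e.g.\ $\nu=2\rho_1$) the affine part $\bv'$ is not an affine root. Your use of~(b) in~(c) to exclude $(n\delta-\alpha)^\perp$ survives because it relies only on the weaker (true) statement that $\bv'_0=0$ forces $\bv'=0$; the full matching in~(c) is rescued by the complementary constraint that $n\delta-\bv'$ be a genuine affine root, which in turn forces $\nu\in\Phi\cup\{0\}$ --- but you do not make this explicit. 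The paper avoids this enumeration altogether by working directly with $\Sigma_\theta$ and the canonical decomposition (Proposition~\ref{prop:canonicaldecomp}, Theorem~\ref{thm:smoothmoduli}). Second, your sketch of $\three\Rightarrow\one$ asserts that the \'etale-local model is ``positive-dimensional, hence singular at the origin'', which is not a valid implication. The paper instead deduces singularity of $\mf{M}_\theta$ for non-generic $\theta$ from the smoothness criterion of \cite[Corollary~1.17]{BelSchQuiver}, phrased in terms of the canonical decomposition (Theorem~\ref{thm:smoothmoduli}).
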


 As a result, taking the proper transform along $\psi_\theta$ enables us to identify canonically the N\'{e}ron--Severi spaces of $X$ and $\mf{M}_\theta$ for any generic $\theta$, so we may regard the ample cone of $\mf{M}_\theta$ as lying in $N^1(X/Y)$.
 
   In order to understand the birational geometry of $X$, we exploit the link between wall-crossing for stability parameters and birational transformations of quiver varieties provided by the linearisation map arising from the GIT construction of $\mf{M}_\theta$. To define this map, let $C$ be any GIT chamber in $\Theta$ and fix $\theta\in C$. The quiver variety $\mf{M}_\theta$ comes equipped with a tautological locally-free sheaf $\mc{R}:= \bigoplus_{i\in I} \mc{R}_i$, where the summand $\mc{R}_i$ has rank $v_i$ for each vertex $i\in I$ in the framed affine Dynkin graph (see section~\ref{sec:tautbundles}). 
  The \emph{linearisation map} for the chamber $C$ is the $\Q$-linear map
 \[
 L_C\colon \Theta\longrightarrow N^1(X/Y)
 \]
 defined by sending $\eta = (\eta_i)_{i\in I}$ to the class of the line bundle $L_C(\eta) = \bigotimes_{i\in I} \det(\mc{R}_i)^{\otimes \eta_i}$.   Theorem~\ref{thm:introsmoothness} is a key ingredient in enabling us to prove that $L_C$ is a linear isomorphism that identifies the chamber $C$ with the ample cone $\Amp(\mf{M}_\theta/Y)$ for $\theta\in C$. In order to understand how the linearisation maps $L_C$ are related as we cross a wall between adjacent chambers in $\Theta$, we focus our attention initially on chambers contained in the simplicial cone
\begin{equation}
    \label{eqn:Fintro}
F:=\langle\delta,\rho_1,\dots,\rho_r\rangle^\vee:=\{\theta\in \Theta \mid \theta(\delta)\geq 0, \theta(\rho_i)\geq 0 \text{ for }1\leq i\leq r\}.
\end{equation}
 Note that $F$ is a union of the closures of GIT chambers by Theorem~\ref{thm:introsmoothness}. 
 
  \begin{thm}
 \label{thm:simplifiedmainintro}
  The linearisation maps $L_C$ for chambers $C$ in the cone $F$ glue to define an isomorphism 
  \[
  L_F\colon \Theta\longrightarrow N^1(X/Y)
  \]
  of rational vector spaces that identifies the GIT wall-and-chamber structure of $F$ with the decomposition of $\Mov(X/Y)$ into Mori chambers. In particular, for any generic $\theta\in F$, the moduli space $\mathfrak{M}_\theta$ is the birational model of $X$ determined by the line bundle $L_F(\theta)$.
  \end{thm}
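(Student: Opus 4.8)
The plan is to prove the statement in three stages: (a) promote the individual linearisation maps $L_C$ for chambers $C\subseteq F$ to a single map $L_F$ by showing they all agree; (b) read off the chamber-to-Mori-chamber correspondence from the fact (recalled before the theorem) that each $L_C$ identifies $C$ with $\Amp(\mathfrak M_\theta/Y)$; and (c) show that the Mori chambers so obtained exhaust $\Mov(X/Y)$. Throughout I use Theorem~\ref{thm:introsmoothness}: for every generic $\theta$ the morphism $\mathfrak M_\theta\to Y$ is a projective crepant resolution and $\psi_\theta\colon X\dashrightarrow\mathfrak M_\theta$ is an isomorphism in codimension one over $Y$, so $\mathfrak M_\theta$ is a (smooth, hence $\mathbb Q$-factorial) small modification of $X$ over $Y$; in particular $\overline{\Amp(\mathfrak M_\theta/Y)}$, viewed inside $N^1(X/Y)$ via $\psi_\theta$, is a Mori chamber of $\Mov(X/Y)$ and $\mathfrak M_\theta$ is the corresponding birational model. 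I also use that $L_C(\theta)$ is, up to positive scaling, the ample line bundle used in the GIT construction of $\mathfrak M_\theta$, so $\mathfrak M_\theta=\Proj_Y\bigoplus_{m\ge 0}H^0(\mathfrak M_\theta,L_C(\theta)^{\otimes m})=\Proj_Y\bigoplus_{m\ge 0}H^0(X,L_C(\theta)^{\otimes m})$, the last equality because $\psi_\theta$ is an isomorphism in codimension one; this identity already yields the final ``in particular'' assertion once $L_F$ is constructed.

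For (a) it suffices to show $L_C=L_{C'}$ whenever $C,C'\subseteq F$ are adjacent chambers sharing a wall $W$, since any two chambers of $F$ are joined by a chain of adjacent chambers inside the convex cone $F$ (move along a generic segment, which meets $\bigcup\mc A$ transversally, crossing one hyperplane at a time). Fix $\theta\in C$ and $\theta'\in C'$ close to a point $\theta_0\in W$; variation of GIT gives projective morphisms $\mathfrak M_\theta\to\mathfrak M_{\theta_0}\leftarrow\mathfrak M_{\theta'}$, and by Theorem~\ref{thm:introsmoothness} the induced birational map $\phi=\psi_{\theta'}\circ\psi_\theta^{-1}\colon\mathfrak M_\theta\dashrightarrow\mathfrak M_{\theta'}$ is an isomorphism in codimension one; hence neither leg contracts a divisor, i.e.\ both are small. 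Consequently the tautological bundles $\mathcal R_i^C$ on $\mathfrak M_\theta$ and $\mathcal R_i^{C'}$ on $\mathfrak M_{\theta'}$ restrict to one and the same bundle on a common open subset that contains the $\theta_0$-stable locus and whose complement has codimension $\ge 2$ in each of $\mathfrak M_\theta,\mathfrak M_{\theta'}$ (by the structure of the wall-crossing across $W$ together with Theorem~\ref{thm:introsmoothness}); since $\det\mathcal R_i$ is a line bundle and $\mathfrak M_\theta,\mathfrak M_{\theta'}$ are smooth, it follows that $\phi_*\det\mathcal R_i^C=\det\mathcal R_i^{C'}$, whence $\phi_*L_C(\eta)=L_{C'}(\eta)$ for all $\eta\in\Theta$. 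As the N\'eron--Severi identifications defining $L_C$ and $L_{C'}$ are proper transform along $\psi_\theta$ and $\psi_{\theta'}=\phi\circ\psi_\theta$, we conclude $L_C=L_{C'}$. Write $L_F$ for this common map; it is a linear isomorphism because each $L_C$ is.

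For (b) and (c): by construction and the recalled properties of $L_C$, the map $L_F$ carries each GIT chamber $C\subseteq F$ isomorphically onto $\Amp(\mathfrak M_\theta/Y)$, a Mori chamber of $\Mov(X/Y)$; distinct chambers give distinct Mori chambers since $L_F$ is injective, and by part~(ii) of Theorem~\ref{thm:introsmoothness} the chambers of $F$ are exactly the connected components of $F\setminus\bigcup\mc A$. Thus $L_F$ restricts to a wall-preserving injection from the GIT chambers of $F$ into the Mori chambers of $\Mov(X/Y)$, and $L_F(F)=\bigcup_{C\subseteq F}\overline{\Amp(\mathfrak M_\theta/Y)}\subseteq\Mov(X/Y)$. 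It remains to see this is onto. Using that $f\colon X\to Y$ is a relative Mori dream space (by general results on symplectic resolutions, the relative movable cone is rational polyhedral and $X$ has only finitely many small modifications over $Y$), $\Mov(X/Y)$ is a finite union of Mori chambers with connected wall-adjacency graph, so it suffices to check that $L_F(F)$ is closed under crossing walls --- equivalently, that crossing a boundary facet of $F$ (which lies in $\delta^\perp$ or in some $\rho_i^\perp$) does not produce a Mori chamber outside $L_F(F)$. Here one invokes the specific geometry along these distinguished hyperplanes: for $\theta'$ just across $\rho_i^\perp$ one uses Nakajima's reflection functors (adjusted for the change of framing vector), and for $\theta'$ just across $\delta^\perp$ a nonemptiness/duality argument at the Hilbert--Chow wall, to show that $\mathfrak M_{\theta'}$ is either empty or of smaller dimension --- in which case that facet lies on $\partial\Mov(X/Y)$ --- or else is isomorphic over $Y$, in codimension one, to some $\mathfrak M_{\theta''}$ with $\theta''\in F$, in which case its Mori chamber coincides with $L_F(C'')$ for the chamber $C''\ni\theta''$ and so already lies in $L_F(F)$. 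Either way no new Mori chamber appears beyond $L_F(F)$, so $L_F(F)=\Mov(X/Y)$ and the GIT wall-and-chamber structure of $F$ matches the Mori chamber decomposition; combined with the displayed identity $\mathfrak M_\theta=\Proj_Y\bigoplus_m H^0(X,L_F(\theta)^{\otimes m})$, this also gives the ``in particular'' statement.

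The main obstacle is stage (c): showing that the quiver varieties $\mathfrak M_\theta$ with $\theta\in F$ already realise \emph{every} small $\mathbb Q$-factorial modification of $X$ over $Y$, i.e.\ that $F$ functions as a fundamental domain for the symmetries permuting the GIT chambers of $\Theta$. Concretely, the work lies in understanding the boundary facets of $F$ --- the Hilbert--Chow facet on $\delta^\perp$ and the facets on the hyperplanes $\rho_i^\perp$ --- together with the relevant reflection functors and nonemptiness statements for the quiver varieties $\mathfrak M_\theta(n\delta,\rho_0)$. By comparison, the gluing in stage (a) is essentially formal once Theorem~\ref{thm:introsmoothness} supplies the isomorphisms in codimension one, and stage (b) is immediate from the already-established description of the chamber $C$ as the ample cone of $\mathfrak M_\theta$.
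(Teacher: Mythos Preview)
Your stages (a) and (b) are essentially the paper's approach: the paper also glues the $L_C$ by showing that, for adjacent chambers $C,C'\subset F$, the tautological bundles agree across the wall on an open set with codimension~$\ge 2$ complement (Proposition~\ref{prop:tautrestrict}), whence $L_C=L_{C'}$; and it uses Proposition~\ref{prop:LCC} to identify $C$ with $\Amp(\mf{M}_\theta/Y)$. One caveat: the codimension~$\ge 2$ claim you invoke in (a) is not a formal consequence of Theorem~\ref{thm:introsmoothness} alone; the paper obtains it from the \'etale-local (Ext-graph) description of interior walls together with semismallness, so ``the structure of the wall-crossing'' here is genuine content, not a formality.

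Your stage (c), however, diverges from the paper and has real gaps. The paper does \emph{not} assume the Mori Dream Space property; it derives it afterwards (Corollary~\ref{cor:MDS}). Instead, the paper shows directly, via the \'etale-local analysis, that the VGIT morphism $\mf{M}_\theta\to\mf{M}_{\theta_0}$ for $\theta_0$ generic in a boundary facet of $F$ is a \emph{divisorial contraction} (Corollary~\ref{cor:imaginarywallconstraction} for $\delta^\perp$, Proposition~\ref{prop:realboundary} for $\rho_i^\perp$). That immediately places $L_F(\theta_0)$ on $\partial\,\Mov(X/Y)$ and forces $L_F(F)=\Mov(X/Y)$. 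By contrast, your argument at the boundary is not correct as stated: for $\theta'$ generic just across a boundary facet of $F$, the variety $\mf{M}_{\theta'}$ is neither empty nor of smaller dimension---it is again a smooth crepant resolution by Theorem~\ref{thm:introsmoothness}---so that dichotomy does not apply. And the reflection-functor isomorphism $\mf{M}_{s_i(\theta)}\cong\mf{M}_\theta$ only tells you the \emph{Mori chamber} of $\mf{M}_{\theta'}$ equals $L_F(C)$; it does not by itself tell you where the facet $L_F(\overline C\cap\rho_i^\perp)$ sits inside $\Mov(X/Y)$. One can turn this into a valid argument (if the boundary contraction were small, uniqueness of flops together with $\mf{M}_{\theta'}\cong\mf{M}_\theta$ would force the two adjacent Mori chambers to coincide, a contradiction), but you have not carried this out, and even then it presupposes that the VGIT map $\mf{M}_{\theta'}\to\mf{M}_{\theta_0}$ is birational onto $\mf{M}_{\theta_0}$ and that flops of quiver varieties are realised by VGIT---facts the paper establishes independently. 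In short: the paper's route through divisorial contractions is both more direct and more informative (it identifies the boundary geometry), and avoids importing the MDS property from outside.
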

 
 This result provides information only about the quiver varieties $\mf{M}_\theta$ for generic parameters $\theta$ in the cone $F$ (see Theorem~\ref{thm:mainintro} for a stronger statement), but this is all that we require to establish the following result:
 
  \begin{cor}
  \label{cor:introCIconjecture}
 For $n\geq 1$ and a finite subgroup $\Gamma\subset \SL(2,\C)$, suppose that $X^\prime\to \C^{2n}/\Gamma_n$ is a projective, crepant resolution. Then there exists a generic stability parameter $\theta$ such that $X^\prime\cong \mf{M}_\theta$.
 \end{cor}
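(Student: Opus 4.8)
The degenerate cases in which $n=1$ or $\Gamma$ is trivial are treated in Section~\ref{sec:n=1}, Remark~\ref{rem:Gammatrivial} and Proposition~\ref{prop:n=1}, so assume $n>1$ and $\Gamma$ non-trivial. The plan is to show that, once Theorems~\ref{thm:introsmoothness} and~\ref{thm:simplifiedmainintro} are available, the corollary follows formally from the uniqueness of marked relative minimal models. First I would record that $Y=\C^{2n}/\Gamma_n$ is a quotient singularity, hence $\Q$-factorial with canonical (indeed symplectic) singularities, and that both $f\colon X\to Y$ and $f'\colon X'\to Y$ are projective crepant resolutions. Consequently the induced birational map $g\colon X'\dashrightarrow X$ over $Y$ is an isomorphism in codimension one: two relative minimal models over a fixed base are linked by a sequence of flops, so neither $g$ nor $g^{-1}$ contracts a divisor. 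Taking proper transforms of divisors along $g$ then identifies $N^1(X'/Y)$ with $N^1(X/Y)$, and under this identification $\Amp(X'/Y)$ is an open, full-dimensional subcone of $\Mov(X/Y)$, since a relatively ample (hence semiample) divisor on $X'$ is movable and movability is preserved under the codimension-one isomorphism $g$.

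Next I would bring in Theorem~\ref{thm:simplifiedmainintro}: the linearisation isomorphism $L_F\colon\Theta\to N^1(X/Y)$ carries the GIT wall-and-chamber structure of $F$ onto the Mori chamber decomposition of $\Mov(X/Y)$, and for $\theta$ in a GIT chamber $C\subseteq F$ the restriction $L_C$ identifies $C$ with $\Amp(\mf{M}_\theta/Y)\subseteq N^1(X/Y)$, where $\mf{M}_\theta$ is isomorphic to $X$ in codimension one by Theorem~\ref{thm:introsmoothness}. Since $\Amp(X'/Y)$ is open and full-dimensional, I may choose a class $D$ in it that avoids the finitely many walls of the Mori chamber decomposition. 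Then $D$ lies in the interior of a unique Mori chamber, which by the above equals $\Amp(\mf{M}_\theta/Y)$ for every $\theta$ in a suitable GIT chamber $C\subseteq F$; in particular the proper transform of $D$ is relatively ample on both $X'$ and $\mf{M}_\theta$.

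To finish I would invoke the standard fact that two marked relative minimal models sharing a relatively ample class are isomorphic. Concretely, the composite $X'\dashrightarrow X\dashrightarrow\mf{M}_\theta$ is an isomorphism in codimension one, so for every $m\geq 0$ the reflexive sheaves $\O_{X'}(mD)$ and $\O_{\mf{M}_\theta}(mD)$ agree on a big open subset and hence have the same pushforward to $Y$; since $D$ is relatively ample on each side,
\[
X'=\Proj_Y\Big(\bigoplus_{m\geq 0}f'_*\O_{X'}(mD)\Big)\cong\Proj_Y\Big(\bigoplus_{m\geq 0}f_{\theta\,*}\O_{\mf{M}_\theta}(mD)\Big)=\mf{M}_\theta,
\]
and $\theta\in C$ is generic by Theorem~\ref{thm:introsmoothness}. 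The essential point beyond the two theorems cited is the first step — knowing \emph{a priori} that an \emph{arbitrary} projective crepant resolution of $Y$ is isomorphic to $X$ in codimension one, so that its ample cone embeds into $N^1(X/Y)$ and is thereby captured by the chamber structure of $\Mov(X/Y)$; I expect this, together with the verification that $\Amp(X'/Y)$ is genuinely contained in the movable cone and not merely its closure, to be the only non-formal ingredient.
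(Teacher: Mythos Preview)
Your proposal is correct and follows essentially the same route as the paper. The paper sets up the identification of $N^1(X'/Y)$ with $N^1(X/Y)$ via the codimension-one isomorphism between relative minimal models (citing \cite[Corollary~3.54]{KollarMori}) in section~\ref{sec:birationalgeometry}, and then simply records that Theorem~\ref{thm:simplifiedmainintro} ``immediately implies'' the corollary; your write-up spells out the Proj argument that the paper leaves implicit in the phrase ``$\mf{M}_\theta$ is the birational model of $X$ determined by the line bundle $L_F(\theta)$.''
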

 
 Analogous statements appear in the literature for certain classes of singularities in dimension three: for Gorenstein affine toric threefolds, see 
 Craw--Ishii~\cite{CrawIshii}, Ishii--Ueda~\cite{IshiiUeda}; and for compound du Val singularities, see Wemyss~\cite{Wemyss18}. In higher dimensions, the analogous result for nilpotent orbit closures is due to Fu~\cite{FuNilpotentResolutions}.
  
 \subsection{Wall-crossing and Ext-graphs}
 \label{sec:proofofsimplifiedmain}
  To prove Theorem~\ref{thm:simplifiedmainintro}, we fix a reference chamber $C$ in $F$ and study how the quiver varieties $\mf{M}_\theta$ and, where necessary, the tautological bundles $\mathcal{R}_i$, change as we cross walls. In fact, the reference chamber need not be the chamber $C_-$ defining $X=\Hilb^{[n]}(S)$, so all of our results are independent of the paper~\cite{Kuznetsov07} cited above. 
  
  For each wall of a chamber $C$ in $F$, we study the morphism $f\colon \mf{M}_\theta\to \mf{M}_{\theta_0}$ obtained by varying a parameter $\theta\in C$ to a parameter $\theta_0$ that is generic in the wall. The idea is to understand $f$ \'{e}tale locally by providing a relative version of the \'{e}tale local description of $\mf{M}_\theta$ by Bellamy--Schedler~\cite{BelSchQuiver}, which in turn builds on work of Crawley--Boevey~\cite{CBnormal}, Nakajima~\cite{Nak1994} and Kronheimer~\cite{Kronheimer}. The key statement is the following result that is valid more generally for the Nakajima quiver variety $\mf{M}_\theta(\bv,\bw)$ associated to any graph, any choice of dimension and framing vectors, and any stability condition; see section~\ref{sec:localnormalform} for the relevant definitions and a more precise statement. 
  
   \begin{thm}
   \label{thm:etalelocal}
 Let $x\in \mf{M}_{\theta_0}$ be a closed point. Then the Ext-graph associated to $x$ admits a dimension vector $\bm$, a framing vector $\bn$ and a stability condition $\varrho$ such that the morphism $f\colon \mf{M}_\theta\to \mf{M}_{\theta_0}$ is equivalent \'{e}tale locally over $x$ to the product of the canonical morphism
 $\mathfrak{M}_\varrho(\bm,\bn)\to \mf{M}_0(\bm,\bn)$ and the identity map on $\C^{2\ell}$ for some $\ell\geq 0$. Moreover, this morphism depends only on the GIT stratum of $\mf{M}_{\theta_0}$ containing $x$.
   \end{thm}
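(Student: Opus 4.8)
The plan is to establish Theorem~\ref{thm:etalelocal} by transporting the variation-of-GIT morphism $f$ to a morphism of Luna slices and then identifying those slices with the representation spaces attached to the Ext-graph; this is the natural relative upgrade of the \'etale local description of a single quiver variety from~\cite{BelSchQuiver}, which in turn rests on~\cite{CBnormal,Nak1994,Kronheimer}. Recall that $\mf{M}_\theta(\bv,\bw)=\mu^{-1}(0)^{\theta\text{-ss}}\git G$, where $G=\prod_i\GL(v_i)$ acts on the representation space $\mathbf{M}(\bv,\bw)$ of the doubled framed quiver and $\mu$ is the moment map; applying Crawley--Boevey's trick one may absorb the framing into an extra vertex of dimension one and work with the preprojective algebra $\Pi$ of the resulting graph. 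A closed point $x\in\mf{M}_{\theta_0}$ is represented by a point $b\in\mu^{-1}(0)$ whose $G$-orbit is closed in the $\theta_0$-semistable locus, equivalently by a $\theta_0$-polystable $\Pi$-module $M=\bigoplus_j V_j^{\oplus m_j}$ with the $V_j$ pairwise non-isomorphic $\theta_0$-stable modules (one of them carrying the framing vertex); its stabiliser is the reductive group $G_b=\prod_j\GL(m_j)$.

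The first step would be to pin down the local model over the target $\mf{M}_{\theta_0}$. Deformation theory for $\Pi$, following Crawley--Boevey's computation as in~\cite{BelSchQuiver}, identifies the symplectic normal space $T_b\mu^{-1}(0)/\mathfrak{g}\cdot b$, with its $G_b$-action, with the null fibre of the moment map for the representation space of the \emph{Ext-graph} of $M$: the graph with vertex set $\{V_j\}$, with $\dim\Ext^1_\Pi(V_i,V_j)$ counting the edges joining $i$ and $j$, dimension vector $\bm=(m_j)$, framing vector $\bn$ read off from the framing summand, and gauge group precisely $G_b$. Splitting off the $G_b$-fixed summand, which is a symplectic vector space $\C^{2\ell}$ with $2\ell$ the dimension of the GIT stratum through $x$, Luna's \'etale slice theorem then yields a slice $\Sigma\ni b$ and an \'etale isomorphism from a neighbourhood of $x$ in $\mf{M}_{\theta_0}$ onto a neighbourhood of the origin in $\mf{M}_{\varrho_0}(\bm,\bn)\times\C^{2\ell}$, where $\varrho_0$ is the parameter induced on the Ext-graph by $\theta_0$. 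The key point is that, since $b$ is $\theta_0$-\emph{polystable}, each $V_j$ has $\theta_0$-slope zero, so $\varrho_0$ is the \emph{trivial} parameter: the target is \'etale-locally the affine quiver variety $\mf{M}_0(\bm,\bn)\times\C^{2\ell}$.

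The second step would run the same argument for $\mf{M}_\theta$ using the \emph{same} slice $\Sigma$. Since $f$ is obtained by variation of GIT quotient, $\mathbf{M}(\bv,\bw)^{\theta\text{-ss}}\subseteq\mathbf{M}(\bv,\bw)^{\theta_0\text{-ss}}$ and $f$ is the morphism induced by this inclusion; by the naturality of Luna slices it descends, over the neighbourhood above, to the morphism $\Sigma^{\varrho\text{-ss}}\git G_b\to\Sigma\git G_b$ induced by $\Sigma\cap\mathbf{M}(\bv,\bw)^{\theta\text{-ss}}\subseteq\Sigma$, where $\varrho$ is the parameter induced on the Ext-graph by $\theta$; since $\C^{2\ell}$ is $G_b$-fixed it passes unchanged through both quotients. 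What must be verified --- and this is the main obstacle --- is the compatibility of GIT stability under the slice: that $\Sigma\cap\mathbf{M}(\bv,\bw)^{\theta\text{-ss}}$ equals the $\varrho$-semistable locus $\Sigma^{\varrho\text{-ss}}$ of the Ext-graph representation space, and likewise with ``semistable'' replaced by ``stable''. This amounts to comparing the Hilbert--Mumford (King) criterion for the one-parameter subgroups of $G$ that degenerate $b$ with the criterion for one-parameter subgroups of $G_b$, using the explicit formula expressing $\varrho$ through $\theta$ and the dimension vectors $\dim V_j$, and ruling out any spurious (semi)stable or unstable points on the slice; the fact that $\Sigma$ is a genuine Luna slice at the closed orbit $G\cdot b$ is what lets one conjugate the relevant one-parameter subgroups into $G_b$. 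Combining the two steps identifies $f$, \'etale-locally over $x$, with the product of the variation-of-GIT morphism $\mf{M}_\varrho(\bm,\bn)\to\mf{M}_0(\bm,\bn)$ and the identity on $\C^{2\ell}$.

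Finally, the last assertion follows because the Ext-graph together with the data $(\bm,\bn,\varrho,\ell)$ is determined by the representation type of $b$ --- the multiset of isomorphism classes of $\theta_0$-stable summands of $M$ with their multiplicities --- and the GIT stratification of $\mf{M}_{\theta_0}$ is precisely the stratification by representation type. Choosing the Luna slices to vary algebraically in a family over a fixed GIT stratum then shows that the restriction of $f$ over an \'etale neighbourhood in that stratum is \'etale-locally the product of the fixed morphism $\mf{M}_\varrho(\bm,\bn)\to\mf{M}_0(\bm,\bn)$ with the identity on a neighbourhood in $\C^{2\ell}$, independently of the chosen point in the stratum.
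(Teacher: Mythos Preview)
Your overall strategy matches the paper's: lift $x$ to a point $y$ with closed $G(\bv)$-orbit in the $\theta_0$-semistable locus, take a Luna slice, identify the slice with the representation space of the Ext-graph times $\C^{2\ell}$ (following \cite{CBnormal,BelSchQuiver}), and then check that the variation-of-GIT morphism descends compatibly. The paper organises this into two Cartesian squares (Lemmas~\ref{lem:elate1} and~\ref{lem:elate2}), one relating the original quiver variety to the slice quotient and one relating the slice quotient to the Ext-graph quiver variety, exactly as you outline; the final clause about GIT strata is handled just as you say, via representation type.

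Where your sketch diverges is at the step you yourself flag as ``the main obstacle'': proving that $\theta$-semistability on the big space restricts to $\varrho$-semistability on the slice. You propose to compare Hilbert--Mumford weights and ``conjugate the relevant one-parameter subgroups into $G_b$''. But a one-parameter subgroup of $G(\bv)$ that destabilises a slice point $y+c$ has no reason to fix $y$, hence no reason to be conjugate into $G_b=G(\bv)_y$; the Luna slice property by itself does not hand you this. Making a Hilbert--Mumford argument rigorous here requires additional input (Kempf optimality, or an analysis of how limits of orbits through slice points meet the saturated neighbourhood), none of which your sketch supplies.

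The paper avoids this altogether by working with semi-invariant \emph{functions} rather than one-parameter subgroups. One direction is immediate: if $\gamma\in\C[V]^{\chi_{k\theta}}$ has $\gamma(y+c)\neq 0$, then $c\mapsto\gamma(y+c)$ is a $\chi_{k\varrho}$-semi-invariant on the slice witnessing $\varrho$-semistability of $c$. For the converse the paper invokes Frobenius reciprocity,
\[
\C[G(\bv)\times_{G(\bm)}U]^{\chi_\theta}\ \cong\ \Hom_{G(\bm)}\big(\res^{G(\bv)}_{G(\bm)}\chi_\theta,\C[U]\big)\ \cong\ \C[U]^{\chi_\varrho},
\]
together with the Luna isomorphism $G(\bv)\times_{G(\bm)}U\cong V\times_{V\git G(\bv)}(U\git G(\bm))$, to write any $\varrho$-semi-invariant on $U$ as $\sum_i h_i\otimes\gamma_i$ with $h_i\in\C[V]^{\chi_{k\theta}}$; evaluating at $(1,c)$ forces some $h_i(y+c)\neq 0$. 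This is the content of Lemma~\ref{lem:elate1}\two, and it is precisely what makes the left-hand square in \eqref{eq:cartdiagram1} Cartesian on the semistable loci. If you want to keep your Hilbert--Mumford route you will need to replace the hand-wave about conjugation with a genuine argument; otherwise the semi-invariant approach is both shorter and cleaner.
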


Compare this with the main result of Arbarello--Sacc\`a~\cite[Theorem~1.1]{ArbarelloSacca18} in their study of moduli spaces of sheaves of pure dimension one on a K3 surface.

 Returning to the proof of Theorem~\ref{thm:simplifiedmainintro}, our description of the hyperplane arrangement in Theorem~\ref{thm:introsmoothness}, provides enough information to compute explicitly the Ext-graph associated to any closed point on the quiver variety $\mf{M}_{\theta_0}$, where $\theta_0$ is generic in any GIT wall that lies in $F$. The graphs that arise are quite simple, including for example, the disjoint union of a collection of graphs each comprising one vertex and one edge loop. As such, it is possible to recognise the morphisms $\mathfrak{M}_\varrho(\bm,\bn)\to \mf{M}_0(\bm,\bn)$ from Theorem~\ref{thm:etalelocal} that appear in the \'{e}tale local description of the contractions induced by each wall. In this way, we show for every wall in the interior of $F$ separating chambers $C$ and $C^\prime$ in $F$, that the birational map
 \[
\begin{tikzcd}
\mf{M}_{\theta} \ar[rr,"\varphi",dashed] \ar[dr,"f_{\theta}"'] & & \mf{M}_{\theta'} \ar[dl,"f_{\theta'}"] \\
& \mf{M}_{\theta_0} & 
\end{tikzcd}
\]
induced by crossing the wall is a flop, and moreover, that the line bundles $\det(\mc{R}_i)$ on $\mf{M}_\theta$ are each the proper transform along $\varphi$ of the corresponding line bundle $\det(\mc{R}_i^\prime)$ on $\mf{M}_{\theta^\prime}$. It then follows from the definition that the linearisations maps $L_C$ and $L_{C^\prime}$ of the chambers on either side of the wall agree. Repeating this argument across all chambers in $F$ determines the linear isomorphism $L_F$ from Theorem~\ref{thm:simplifiedmainintro} whose restriction to any chamber $C$ in $F$ identifies $C$ with the cone $\Amp(\mf{M}_\theta/Y)$ for any $\theta\in C$. In addition, we demonstrate that the morphism  $f\colon \mf{M}_\theta\to \mf{M}_{\theta_0}$ induced by moving a GIT parameter from a chamber $C$ of $F$ into a boundary wall of $F$ is necessarily a divisorial contraction; this includes the wall $\delta^\perp\cap F$ of the chamber $C_-$ which induces the Hilbert--Chow morphism $\Hilb^{[n]}(S)\to \Sym^n(S)$. 
 
  \subsection{The movable cone}
   The hyperplanes in the arrangement $\mc{A}$ from Theorem~\ref{thm:introsmoothness} that pass through the interior of the cone $F$ can be computed explicitly, so the decomposition of the movable cone $\Mov(X/Y)$ into Mori chambers can be obtained easily from Theorem~\ref{thm:simplifiedmainintro}: 

\begin{thm}
 \label{thm:movableintro}
  The division of the movable cone $\Mov(X/Y) = L_F(F)$ into Mori chambers is determined by the images under the isomorphism $L_F$ of the hyperplanes $(m\delta-\alpha)^\perp$ for all $0<m<n$ and $\alpha\in \Phi^+$. We have that $\Amp(X/Y) = L_F(C_-)$; more generally,  the chambers in this decomposition are precisely the ample cones of the projective, crepant resolutions of $Y$.
 \end{thm}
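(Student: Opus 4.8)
The plan is to deduce the theorem from Theorem~\ref{thm:simplifiedmainintro} together with the description of the hyperplane arrangement $\mc{A}$ in Theorem~\ref{thm:introsmoothness}. First I would compute which hyperplanes of $\mc{A}$ actually meet the interior of the cone $F = \langle\delta,\rho_1,\dots,\rho_r\rangle^\vee$. The hyperplane $\delta^\perp$ is a face of $F$, so it does not pass through the interior; it is precisely the wall of $C_-$ inducing the Hilbert--Chow contraction, hence a boundary wall of $\Mov(X/Y)$. Among the hyperplanes $(m\delta\pm\alpha)^\perp$ for $\alpha\in\Phi^+$ and $0\le m<n$, I would check, using the defining inequalities $\theta(\delta)\ge 0$, $\theta(\rho_i)\ge 0$ of $F$ and the expression of each positive root $\alpha$ in the basis $\rho_1,\dots,\rho_r$ (with $\delta$ the highest root plus $\rho_0$), that the hyperplanes $(m\delta+\alpha)^\perp$ for $m\geq 0$, and $\alpha^\perp = (0\cdot\delta - \alpha)^\perp$, either fail to meet the open cone $F^\circ$ or coincide with a boundary face; the only hyperplanes crossing $F^\circ$ are exactly the $(m\delta-\alpha)^\perp$ for $0<m<n$. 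This is the combinatorial heart of the argument and I expect it to be the main obstacle: it amounts to a careful positivity check on roots, using that $\delta = \rho_0 + \sum_{i\ge 1}(\dim\rho_i)\rho_i$ in $R(\Gamma)$ and that a positive root $\alpha$ has all coefficients $\le$ those of the highest root, so that on $F^\circ$ one has $\theta(\alpha)>0$ bounded above by $\theta(\delta-\rho_0)<\theta(\delta)$, forcing $\theta(m\delta+\alpha)>0$ for all $m\ge 0$ and likewise $\theta(m\delta-\alpha)$ can vanish in $F^\circ$ only when $0<m<n$.

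Granting this, Theorem~\ref{thm:simplifiedmainintro} immediately gives $\Mov(X/Y) = L_F(F)$ together with the fact that $L_F$ carries the GIT wall-and-chamber decomposition of $F$ to the Mori chamber decomposition of $\Mov(X/Y)$; combining with the previous paragraph, the walls are exactly the images $L_F\big((m\delta-\alpha)^\perp\big)$ for $0<m<n$, $\alpha\in\Phi^+$, which is the first assertion. For the statement $\Amp(X/Y) = L_F(C_-)$, I would argue that $C_-$ is a chamber of $F$ (it is, since $F$ is a union of closed chambers by Theorem~\ref{thm:introsmoothness} and $C_-$ lies in $F$), that the isomorphism $X \cong \mf{M}_\theta$ for $\theta\in C_-$ is an isomorphism over $Y$ by the Kuznetsov diagram, and that under this identification $L_F$ restricted to $C_-$ is the linearisation map $L_{C_-}$, which Theorem~\ref{thm:simplifiedmainintro} (and the discussion preceding it) identifies with $\Amp(\mf{M}_\theta/Y) = \Amp(X/Y)$.

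Finally, for the claim that the chambers in the decomposition are precisely the ample cones of the projective crepant resolutions of $Y$: by Theorem~\ref{thm:simplifiedmainintro} each chamber $C\subset F$ satisfies $L_F(C) = \Amp(\mf{M}_\theta/Y)$ for $\theta\in C$, and by part~\three\ of Theorem~\ref{thm:introsmoothness} each such $\mf{M}_\theta$ is a projective crepant resolution of $Y$; conversely, by Corollary~\ref{cor:introCIconjecture} any projective crepant resolution $X'\to Y$ is isomorphic over $Y$ to some $\mf{M}_\theta$ with $\theta$ generic, and after applying the appropriate element of the relevant Weyl/Namikawa group action on $\Theta$ (or simply by the standard fact that $\Mov(X'/Y)=\Mov(X/Y)$ under the identification of Néron--Severi spaces) one may take $\theta\in F$, so that $\Amp(X'/Y)=\Amp(\mf{M}_\theta/Y)=L_F(C)$ is one of the listed chambers. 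I would take care here to justify that $F$ meets every Weyl-group orbit of chambers, i.e.\ that $F$ is a fundamental domain for the action identifying the various crepant resolutions — this is where a clean statement of the $W$-action on $\Mov(X/Y)$, presumably established earlier or alongside Theorem~\ref{thm:simplifiedmainintro}, is needed, and it is the one external input I would flag as requiring the global structure of the paper rather than a local computation.
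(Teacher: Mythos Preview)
Your proposal is correct and follows essentially the same route as the paper: the theorem is deduced immediately from Theorem~\ref{thm:simplifiedmainintro} together with the combinatorial computation of which hyperplanes of $\mc{A}$ pass through the interior of $F$ (this is Lemma~\ref{lem:Fwalls} in the paper, and your sketched positivity argument is close to the second of the two proofs given there). One simplification: your final concern about needing the Namikawa Weyl group action to place $\theta$ inside $F$ is unnecessary, since Theorem~\ref{thm:simplifiedmainintro} already asserts that $L_F$ is a bijection between the GIT chambers in $F$ and the Mori chambers of $\Mov(X/Y)$, so every ample cone $\Amp(X'/Y)$ is automatically $L_F(C)$ for a unique chamber $C\subset F$ and $X'\cong\mf{M}_\theta$ for $\theta\in C$ (this is exactly Corollary~\ref{cor:CIconjecture}).
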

  
  This generalises the result of Andreatta--Wi\'{s}niewski~\cite[Theorem~1.1]{AW14} in the case when $n=2$ and $\Phi$ is of type $A_r$ (see Example~\ref{exa:n=2Ar}), and provides an answer to the question of Fu~\cite[Problem~1]{Fu06}. 
  
  It turns out that an affine slice of the movable cone admits a purely combinatorial description. The affine hyperplane $\Lambda:= \{\theta\in \Theta_v \mid \theta(\delta)=1\}$ lies parallel to the supporting hyperplane $\delta^\perp$ of $F$. In particular, the slice $F\cap \Lambda$ determines completely the wall-and-chamber decomposition of $F$, so the image of this slice under $L_F$ determines completely the Mori chamber decomposition of $\Mov(X/Y)$: 
  
 \begin{cor}
 \label{cor:sliceMov}
 The intersection of $\Mov(X/Y)$ with the affine hyperplane $\{L_F(\theta) \mid \theta(\delta)=1\}$ is isomorphic to the decomposition of the fundamental chamber of the $(n-1)$-extended Catalan hyperplane arrangement associated to $\Phi$. 
 \end{cor}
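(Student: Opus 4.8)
The plan is to transport the statement along the linearisation isomorphism $L_F$ and then to match two explicit combinatorial models. First I would invoke Theorem~\ref{thm:simplifiedmainintro}: the map $L_F\colon\Theta\to N^1(X/Y)$ is a linear isomorphism carrying the GIT wall-and-chamber decomposition of $F$ onto the Mori chamber decomposition of $\Mov(X/Y)=L_F(F)$. It follows that the slice $\Mov(X/Y)\cap\{L_F(\theta)\mid\theta(\delta)=1\}$ equals $L_F(F\cap\Lambda)$, and that its Mori chamber decomposition is the $L_F$-image of the subdivision of $F\cap\Lambda$ by those hyperplanes of $\mc A$ that meet the interior of $F$. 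Since $L_F$ is a linear isomorphism, it then suffices to show that $F\cap\Lambda$, equipped with this induced subdivision, is isomorphic as a subdivided polyhedron to the fundamental (closed dominant) chamber of the $(n-1)$-extended Catalan arrangement of $\Phi$ with its induced subdivision.

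Next I would set up coordinates. The irreducible representations $\rho_0,\dots,\rho_r$ form a $\Z$-basis of $R(\Gamma)$, and since $\delta=\rho_0+\sum_{i=1}^r d_i\rho_i$ (with $d_i$ the marks of the affine Dynkin diagram) the set $\{\delta,\rho_1,\dots,\rho_r\}$ is also a basis; let $\{\varpi_0,\varpi_1,\dots,\varpi_r\}\subset\Theta$ be the dual basis. Then $F$ is the positive orthant $\langle\varpi_0,\varpi_1,\dots,\varpi_r\rangle$, and since $\varpi_0(\delta)=1$ while $\varpi_i(\delta)=0$ for $i\ge 1$, the slice $F\cap\Lambda$ is the simplicial cone $\varpi_0+\langle\varpi_1,\dots,\varpi_r\rangle$ with apex $\varpi_0$. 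Via the McKay correspondence I would identify $R(\Gamma)_\Q$ with the affine root lattice, sending $\rho_i$ to the finite simple root $\alpha_i$ for $1\le i\le r$ and $\delta$ to the null root; dually this identifies the linear space $\delta^\perp$ underlying $\Lambda$ with the finite Cartan (equivalently, via the Killing form, with the reflection representation), carries the basepoint $\varpi_0$ to the origin, and turns $F\cap\Lambda$ into the closed dominant Weyl chamber $\{x\mid\langle x,\alpha_i\rangle\ge 0,\ 1\le i\le r\}$. Moreover, for any $\theta\in\Lambda$ and any finite positive root $\alpha=\sum_i c_i\alpha_i$ this identification gives $\theta(\alpha)=\sum_i c_i\,\theta(\rho_i)=\langle x_\theta,\alpha\rangle$.

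The last step is to match the hyperplanes. By Theorem~\ref{thm:movableintro} --- or directly from the description of $\mc A$ in Theorem~\ref{thm:introsmoothness} together with the shape of $F$ --- the hyperplanes of $\mc A$ passing through the interior of $F$ are precisely the $(m\delta-\alpha)^\perp$ for $\alpha\in\Phi^+$ and $0<m<n$; restricting such a hyperplane to $\Lambda$ yields $\{\theta\in\Lambda\mid\theta(\alpha)=m\}$, which under the identification above is the affine hyperplane $\{x\mid\langle x,\alpha\rangle=m\}$. As $\alpha$ runs over $\Phi^+$ and $m$ over $\{1,\dots,n-1\}$, these are exactly the hyperplanes of the $(n-1)$-extended Catalan arrangement that meet the interior of the dominant chamber; the remaining Catalan hyperplanes $\langle x,\alpha\rangle=k$ with $-(n-1)\le k\le 0$ are disjoint from the open dominant chamber or lie among its boundary walls, so they induce no further subdivision. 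Hence the subdivision of $F\cap\Lambda$ induced by $\mc A$ coincides with the subdivision of the fundamental chamber of the $(n-1)$-extended Catalan arrangement, and applying the linear isomorphism $L_F$ gives the corollary.

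The hard part here is not the geometry but the bookkeeping of the middle step: one must check that the standard McKay-correspondence dictionary simultaneously turns the inequalities $\theta(\rho_i)\ge 0$ defining $F\cap\Lambda$ into the dominance inequalities and the pairing $\theta(\alpha)$ into $\langle\,\cdot\,,\alpha\rangle$ under a single linear change of coordinates, and that the normalisation of the ``$(n-1)$-extended Catalan arrangement'' is the one with hyperplanes $\langle x,\alpha\rangle=k$ for $|k|\le n-1$ (in the simply-laced ADE case $\alpha$ and $\alpha^\vee$ define the same hyperplanes, so no ambiguity arises). Granting these routine identifications, the corollary is a direct unwinding of Theorems~\ref{thm:simplifiedmainintro} and~\ref{thm:movableintro}.
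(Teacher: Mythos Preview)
Your proposal is correct and follows essentially the same approach as the paper: transport the statement along $L_F$ using Theorem~\ref{thm:simplifiedmainintro}, then identify the slice $F\cap\Lambda$ with the fundamental chamber and match the hyperplanes $(m\delta-\alpha)^\perp\cap\Lambda$ with those of the $(n-1)$-extended Catalan arrangement. The paper packages the combinatorial identification by referring back to the proof of Theorem~\ref{thm:counting} (and Lemma~\ref{lem:Fwalls} for the interior hyperplanes), whereas you spell out the dual-basis change of coordinates explicitly, but the substance is the same.
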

 
 This result provides a geometric realisation of the extended Catalan hyperplane arrangement that was introduced originally by Postnikov--Stanley~\cite{PS00} and studied further by Athanasiadis~\cite{Athanasiadis04}.
 
 Our description of the movable cone also provides new proofs for several results from the literature:
 \begin{itemize}
     \item Corollary~\ref{cor:sliceMov} implies that the number of non-isomorphic projective crepant resolutions of $\C^{2n}/\Gamma_n$ is  
     \[
 \prod_{i=1}^{r} \frac{(n-1)h+d_i}{d_i},
 \]
 where $r$ is the rank, $h$ is the Coxeter number and $d_1,\dots, d_r$ are the degrees of the basic polynomial invariants of the Weyl group $W_\Gamma$. This agrees with the count of Bellamy~\cite[Equation~(1.B)]{BellamyCounting}.
 \item The cone $\Mov(X/Y)$ is simplicial by Theorem~\ref{thm:simplifiedmainintro}, because $F$ is simplical; this is a special case of the result by Andreatta--Wi\'{s}niewski~\cite[Theorem~4.1]{AW14}.
 \item Define an action of $W$ on $N^1(X/Y)$ by setting $s_\delta$ to be reflection in $L_F(\delta^\perp)$ and $s_i$ to be reflection in $L_F(\rho_i^\perp)$ for $1\leq i\leq r$ (compare section~\ref{sec:NamWeylgroup}). Then $\Mov(X/Y)$ is a fundamental domain for $W$. Together with the  observation by Braden--Proudfoot--Webster~\cite[Proposition~2.17]{BLPWAst}, this gives a new description of the action of Namikawa's Weyl group on $N^1(X/Y)$ in our context.
 \item We provide a purely quiver-theoretic 
 proof of the fact that $X$ is a relative Mori Dream Space over $Y$ (see Corollary~\ref{cor:MDS}). This is a special case of \cite[Theorem~3.2]{AW14} when $n=2$, and follows from work of Namikawa~\cite[Lemma~1, Lemma~6]{Namikawa3} when $n>2$.
 \end{itemize}
 
   It is perhaps worth making a philosophical remark about the final point. Any (relative) Mori Dream Space has a finitely generated Cox ring, and in our situation this ring was described by generators and relations by Donten-Bury--Grab~\cite[Section~5]{DontenBuryGrab} when $n=2$ and $\Phi$ is of type $A_1$. While we do not make use of the Cox ring in this paper, the fact that our Corollary~\ref{cor:introCIconjecture} reconstructs all small birational models of $X$ by GIT as quiver varieties for the affine Dynkin graph of $\Gamma$ suggests that the preprojective algebra $\Pi$ should be thought of as a kind of `noncommutative Cox ring' for each of the varieties $\Hilb^{[n]}(S)$ with $n\geq 1$.

 \subsection{Strong version via the Namikawa Weyl group}
  We now explain how to understand the quiver varieties $\mf{M}_\theta$ for parameters $\theta$ that lie beyond the simplicial cone $F$.  For $1\leq i\leq r$, write $s_i\colon \Theta\to \Theta$ for the reflection in the hyperplane $\rho_i^\perp$, and write $s_\delta\colon \Theta\to \Theta$ for reflection in the hyperplane $\delta^\perp$. The \emph{Namikawa Weyl group} is the group  
  \[
  W:=\langle s_\delta, s_1,\dots,s_r\rangle
  \]
  generated by these reflections. We prove (see Proposition~\ref{prop:W}) that the action of $W$ permutes the set of GIT chambers in $\Theta$, and that the simplicial cone $F$ introduced in \eqref{eqn:Fintro} above is a fundamental domain for the action of $W$ on $\Theta$. The next result provides a stronger version of Theorem~\ref{thm:simplifiedmainintro}:
  
  \begin{thm}
 \label{thm:mainintro}
 \begin{enumerate}
 \item[\one] Under the identification of the N\'{e}ron--Severi spaces induced by the birational maps from Theorem~\ref{thm:introsmoothness}, the linearisation maps $L_C$ glue to a piecewise-linear, continuous map 
 \[
 L\colon \Theta\longrightarrow N^1(X/Y).
 \]
 \item[\two] The map $L$ is invariant with respect to the action of $W$ on $\Theta$, i.e.\ $L(\theta)=L(w\theta)$ for all $w\in W$ and $\theta\in \Theta$. In particular, the image of $L$ is the movable cone $\Mov(X/Y)$.
 \item[\three]  The map $L$ is compatible with the chamber decomposition of $\Theta$ and the Mori chamber decomposition of $\Mov(X/Y)$, in the sense that for any chamber $C\subset \Theta_{v}$ and for any $\theta\in C$, the moduli space $\mathfrak{M}_\theta$ is the birational model of $X$ determined by the line bundle $L(\theta)$.
  \item[\four] For each chamber $C\subset \Theta$, the map $L\vert_C \colon C\to \Amp(\mf{M}_\theta/Y)$ is an isomorphism for $\theta\in C$.
 \end{enumerate}
\end{thm}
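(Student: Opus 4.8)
The plan is to upgrade Theorem~\ref{thm:simplifiedmainintro} from the fundamental domain $F$ to all of $\Theta$, using the isomorphisms of quiver varieties that realise the Namikawa Weyl group action. Since $F$ is a fundamental domain for the finite group $W$ (Proposition~\ref{prop:W}), one defines $L\colon\Theta\to N^1(X/Y)$ by $L(\theta):=L_F(w^{-1}\theta)$ whenever $\theta\in wF$. This is well defined because $\bar F$ meets each $W$-orbit in exactly one point; it is continuous and piecewise linear, having the finitely many linear pieces $L_F\circ w^{-1}$ on the cones $wF$, which agree on a common face $wF\cap w'F$ because that face is fixed pointwise by $w^{-1}w'$; it is $W$-invariant by construction; and its image is $L_F(F)=\Mov(X/Y)$ by Theorem~\ref{thm:simplifiedmainintro}. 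Thus most of \one\ and \two\ is formal. What remains is to show that $L$ restricts to the linearisation map $L_C$ on every GIT chamber $C$; since every chamber has the form $wC_0$ for a unique $w\in W$ and chamber $C_0\subset F$, and since $L|_{wC_0}=L_{C_0}\circ w^{-1}$ by construction, this amounts to the identity $L_{wC_0}=L_{C_0}\circ w^{-1}$ between linearisation maps.

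To prove that identity I would use the reflection isomorphisms. For each simple generator $s\in\{s_\delta,s_1,\dots,s_r\}$ there is an isomorphism $R_s\colon\mathfrak{M}_\theta\xrightarrow{\sim}\mathfrak{M}_{s\theta}$ over $Y=\mathfrak{M}_0$ with $R_s^2=\id$, and these compose (via the braid relations) into isomorphisms $R_w\colon\mathfrak{M}_\theta\xrightarrow{\sim}\mathfrak{M}_{w\theta}$ over $Y$: for $s=s_i$ with $i\geq 1$ this is the reflection functor at the vertex $i$ of the affine Dynkin graph, available because $\bv=n\delta$ is $s_i$-fixed (as $\delta$ spans the kernel of the affine Cartan matrix) and the framing vanishes there, $\bw_i=0$; for $s=s_\delta$ it is the isomorphism constructed in Section~\ref{sec:NamWeylgroup}. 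The key point is the effect on the tautological bundles $\mathcal{R}_i$. Because $\bw_i=0$, the reflection functor at $i\geq 1$ presents $R_{s_i}^*\mathcal{R}_i$ as the kernel of a surjection $\bigoplus_{j\sim i}\mathcal{R}_j\twoheadrightarrow\mathcal{R}_i$ and fixes $\mathcal{R}_j$ for $j\neq i$, so
\[
R_{s_i}^*\left(\bigotimes_k\det(\mathcal{R}_k)^{\eta_k}\right)\ \cong\ \bigotimes_k\det(\mathcal{R}_k)^{(s_i\eta)_k}\qquad\text{for all }\eta\in\Theta;
\]
together with the analogous identity for $s_\delta$ from Section~\ref{sec:NamWeylgroup}, composition yields $R_w^*\bigl(\bigotimes_k\det(\mathcal{R}_k)^{\eta_k}\bigr)\cong\bigotimes_k\det(\mathcal{R}_k)^{(w^{-1}\eta)_k}$. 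Finally I would normalise the birational maps of Theorem~\ref{thm:introsmoothness} so that $\psi_\theta=R_w\circ\psi_{w^{-1}\theta}$ whenever $\theta\in wF$ is generic, where for $\theta_0\in F$ the map $\psi_{\theta_0}\colon X\dashrightarrow\mathfrak{M}_{\theta_0}$ is the canonical birational map onto the model determined by $L_F(\theta_0)$; each $\psi_\theta$ is then an isomorphism in codimension one over $Y$, as Theorem~\ref{thm:introsmoothness} requires.

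Granting all this, the identity is a short computation: for $\theta_0\in C_0\subset F$ generic and $\theta=w\theta_0$, one has in $N^1(X/Y)$
\[
L_{wC_0}(\eta)=(\psi_\theta^{-1})_*\left(\bigotimes_k\det(\mathcal{R}_k)^{\eta_k}\right)=(\psi_{\theta_0}^{-1})_*\left(\bigotimes_k\det(\mathcal{R}_k)^{(w^{-1}\eta)_k}\right)=L_{C_0}(w^{-1}\eta),
\]
using $(\psi_\theta^{-1})_*=(\psi_{\theta_0}^{-1})_*\circ R_w^*$ (as $\psi_\theta=R_w\circ\psi_{\theta_0}$) and the bundle identity above (the bundles on the left live on $\mathfrak{M}_\theta$, those on the right on $\mathfrak{M}_{\theta_0}$). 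This completes \one, and hence \two. For \four, when $C\subset F$ the statement that $L|_C=L_F|_C$ is an isomorphism onto $\Amp(\mathfrak{M}_\theta/Y)$ is part of Theorem~\ref{thm:simplifiedmainintro}; for $C=wC_0$ with $C_0\subset F$, the identity just proved gives $L|_C(C)=L_{C_0}(C_0)=\Amp(\mathfrak{M}_{\theta_0}/Y)$, and the isomorphism $R_w\colon\mathfrak{M}_{\theta_0}\xrightarrow{\sim}\mathfrak{M}_\theta$ over $Y$ carries $\Amp(\mathfrak{M}_{\theta_0}/Y)$ onto $\Amp(\mathfrak{M}_\theta/Y)$ inside $N^1(X/Y)$ compatibly with the identification of N\'eron--Severi spaces, since $\psi_\theta=R_w\circ\psi_{\theta_0}$. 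Part \three\ follows: for generic $\theta\in C$, the quiver variety $\mathfrak{M}_\theta$ is a crepant resolution isomorphic to $X$ in codimension one over $Y$ (Theorem~\ref{thm:introsmoothness}) whose ample cone is the Mori chamber of $N^1(X/Y)$ containing $L(\theta)$, hence $\mathfrak{M}_\theta\cong\Proj\bigoplus_{m\geq 0}H^0\bigl(X,\mathcal{O}_X(\lfloor mL(\theta)\rfloor)\bigr)$ is the birational model of $X$ determined by $L(\theta)$.

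The main obstacle is the second paragraph. The essential geometric input not already contained in Theorem~\ref{thm:simplifiedmainintro} is the construction of the reflection isomorphism realising $s_\delta$ — which, in contrast to $s_1,\dots,s_r$, is not a vertex reflection of the affine quiver — and the determination of its effect on the tautological bundles, together with the verification that the birational maps of Theorem~\ref{thm:introsmoothness} can be (and are) normalised so that $\psi_\theta=R_w\circ\psi_{w^{-1}\theta}$, equivalently that the birational self-map $\psi_\theta^{-1}\circ R_w\circ\psi_{w^{-1}\theta}$ of $X$ acts trivially on $N^1(X/Y)$. Once these are in place, the rest is the formal bookkeeping above.
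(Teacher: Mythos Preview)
Your strategy matches the paper's: establish $L_{w(C)}(w\theta)=L_C(\theta)$ for all $w\in W$ via isomorphisms $\mf{M}_\theta\cong\mf{M}_{w\theta}$ over $Y$ and their effect on the tautological bundles, then glue. Two points deserve correction.

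First, your repeated appeal to Section~\ref{sec:NamWeylgroup} for the isomorphism realising $s_\delta$ and its effect on bundles is misplaced: that section is purely combinatorial and contains no such construction. In fact the paper does \emph{not} construct a reflection isomorphism for $s_\delta$ directly. Instead it builds a duality isomorphism $D\colon\mf{M}_{-\iota(\theta)}\to\mf{M}_\theta$ over $Y$ (via the anti-involution $\Pi\cong\Pi^{\mathrm{op}}$ compatible with the graph involution $\iota$) and computes $D^*(\det\mc{R}_i)\cong(\det\mc{R}'_{\iota(i)})^{-1}$. The key observation is then that $s_\delta w_0$ acts on $\Theta_v$ as $-\iota$, where $w_0$ is the longest element of $W_\Gamma$; since $W$ is generated by $\{s_1,\dots,s_r,\,s_\delta w_0\}$, one verifies the compatibility $L_{w(C)}(w\theta)=L_C(\theta)$ on these generators using the reflection functors for $s_1,\dots,s_r$ and the duality $D$ for $s_\delta w_0$. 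Your proposed direct isomorphism for $s_\delta$ alone would have to be obtained by composing $D$ with the reflection functors for $w_0$, which is fine but not how the paper organises the argument.

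Second, your concern about ``normalising'' the birational maps $\psi_\theta$ so that $\psi_\theta=R_w\circ\psi_{w^{-1}\theta}$ is unnecessary. The paper observes that the birational map $\psi_\theta\colon X\dashrightarrow\mf{M}_\theta$ of schemes over $Y$ is \emph{unique}: it is determined by the linear system $|L_{C_-}(\ell\theta)|$ for $\ell\gg 0$. Since $S_i$ and $D$ are isomorphisms over $Y$, the compositions $S_i\circ\psi_{s_i(\theta)}$ and $D\circ\psi_{-\iota(\theta)}$ are birational maps $X\dashrightarrow\mf{M}_\theta$ over $Y$, hence equal to $\psi_\theta$ automatically. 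No normalisation is required, and the worry that $\psi_\theta^{-1}\circ R_w\circ\psi_{w^{-1}\theta}$ might act nontrivially on $N^1(X/Y)$ evaporates.
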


 The following result was anticipated by Losev~\cite{LosevProcesi}.

\begin{cor}
\label{cor:Worbits}
 Let $C, C^\prime\subset \Theta$ be chambers and let $\theta \in C$,  $\theta^\prime \in C^\prime$. Then $\mathfrak{M}_\theta\cong \mathfrak{M}_{\theta^\prime}$ as schemes over $Y$ if and only if there exists $w \in W$ such that $w(C) = C^\prime$.
 \end{cor}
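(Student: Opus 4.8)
The plan is to show that both conditions in the corollary --- that $\mathfrak{M}_\theta\cong\mathfrak{M}_{\theta'}$ over $Y$, and that $w(C)=C'$ for some $w\in W$ --- are equivalent to the single assertion $L(C)=L(C')$ inside $N^1(X/Y)$, where $L(C)$ denotes the image of the cone $C$ under the piecewise-linear map $L$ of Theorem~\ref{thm:mainintro}. By part~(iv) of that theorem we already have $L(C)=\Amp(\mathfrak{M}_\theta/Y)$ and $L(C')=\Amp(\mathfrak{M}_{\theta'}/Y)$, where throughout we identify the N\'eron--Severi spaces $N^1(\mathfrak{M}_\theta/Y)$ and $N^1(\mathfrak{M}_{\theta'}/Y)$ with $N^1(X/Y)$ by taking proper transforms along the codimension-one isomorphisms $\psi_\theta$ and $\psi_{\theta'}$ of Theorem~\ref{thm:introsmoothness}.

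First I would show that $\mathfrak{M}_\theta\cong\mathfrak{M}_{\theta'}$ over $Y$ if and only if the cones $\Amp(\mathfrak{M}_\theta/Y)$ and $\Amp(\mathfrak{M}_{\theta'}/Y)$ coincide in $N^1(X/Y)$. Given an isomorphism $g\colon\mathfrak{M}_\theta\to\mathfrak{M}_{\theta'}$ over $Y$, the key observation is that $X$, $\mathfrak{M}_\theta$ and $\mathfrak{M}_{\theta'}$ are all projective \emph{and} birational over $Y$, so there is a unique birational map between any two of them commuting with the structure morphisms to $Y$; hence $g$ equals this birational map and $\psi_{\theta'}=g\circ\psi_\theta$ as birational maps. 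Consequently $g^*$ becomes the identity on $N^1(X/Y)$ under the identifications above, so it carries $\Amp(\mathfrak{M}_{\theta'}/Y)$ onto $\Amp(\mathfrak{M}_\theta/Y)$ and the two cones agree. Conversely, if the cones agree, pick a class $D$ in their common interior with proper transform $D_X$ on $X$; then $D$ is $f_\theta$-ample on $\mathfrak{M}_\theta$ and $f_{\theta'}$-ample on $\mathfrak{M}_{\theta'}$, and because both varieties are isomorphic in codimension one to $X$ we have $(f_\theta)_*\mathcal{O}_{\mathfrak{M}_\theta}(mD)\cong f_*\mathcal{O}_X(mD_X)\cong(f_{\theta'})_*\mathcal{O}_{\mathfrak{M}_{\theta'}}(mD)$ for all $m\geq 0$; taking relative $\Proj$ over $Y$ (finite generation being Corollary~\ref{cor:MDS}) realises both $\mathfrak{M}_\theta$ and $\mathfrak{M}_{\theta'}$ as $\Proj_Y\bigoplus_{m\geq0}f_*\mathcal{O}_X(mD_X)$, whence $\mathfrak{M}_\theta\cong\mathfrak{M}_{\theta'}$ over $Y$.

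Next I would show that $L(C)=L(C')$ if and only if $w(C)=C'$ for some $w\in W$. The ``if'' direction is immediate from the $W$-invariance of $L$ (Theorem~\ref{thm:mainintro}(ii)), which gives $L(w(C))=L(C)$. For the converse, recall from Proposition~\ref{prop:W} that $F$ is a fundamental domain for the action of $W$ on $\Theta$ and that $W$ permutes the GIT chambers. Since each facet hyperplane of $F$ belongs to the arrangement $\mc{A}$ of Theorem~\ref{thm:introsmoothness}, no GIT chamber meets the boundary of $F$, so every GIT chamber meeting $F$ lies in its interior; in particular there exist $w_1,w_2\in W$ with $w_1(C)\subset F$ and $w_2(C')\subset F$. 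Using $W$-invariance of $L$ again, $L(w_1(C))=L(C)=L(C')=L(w_2(C'))$, and since $L$ restricts on $F$ to the \emph{linear isomorphism} $L_F$ of Theorem~\ref{thm:simplifiedmainintro}, we conclude $w_1(C)=w_2(C')$; thus $w(C)=C'$ with $w:=w_2^{-1}w_1$.

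Putting the two steps together with the identifications $L(C)=\Amp(\mathfrak{M}_\theta/Y)$ and $L(C')=\Amp(\mathfrak{M}_{\theta'}/Y)$ proves the corollary. The step I expect to require the most care is the forward implication of the first equivalence, namely checking that an abstract isomorphism over $Y$ is compatible with the canonical identifications of N\'eron--Severi spaces: in an arbitrary proper birational setting this would be a genuine subtlety, since one would have to rule out non-trivial pseudo-automorphisms of $X$ over $Y$ acting on $N^1(X/Y)$. Here, however, it dissolves because $X$, $\mathfrak{M}_\theta$ and $\mathfrak{M}_{\theta'}$ are genuinely birational over $Y$, so the birational map over $Y$ between any two of them is unique; everything else is a formal consequence of Theorems~\ref{thm:introsmoothness}, \ref{thm:simplifiedmainintro} and~\ref{thm:mainintro} together with Proposition~\ref{prop:W}.
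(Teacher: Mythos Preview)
Your proof is correct and follows essentially the same route as the paper's. The paper's argument is terser: for the forward direction it simply invokes Theorem~\ref{thm:mainintro}\four\ to obtain $L(C)=\Amp(\mathfrak{M}_\theta/Y)=\Amp(\mathfrak{M}_{\theta'}/Y)=L(C')$, leaving implicit the point you spell out about uniqueness of the birational map over $Y$; for the converse it invokes Theorem~\ref{thm:mainintro}\three\ directly rather than unpacking the relative $\Proj$ construction. Your second step, reducing to $F$ via the fundamental-domain property and the injectivity of $L_F$, is identical to the paper's.
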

 
 Once we prove that $L$ is invariant under the action of $W$ as in Theorem~\ref{thm:mainintro}\one, then parts \two-\four\ follow from Theorem~\ref{thm:simplifiedmainintro}. To achieve this, for each reflection $s_1,\dots, s_r$, we study the corresponding Nakajima reflection functor and its effect on the tautological bundles of $\mf{M}_\theta$. The case of the reflection $s_\delta$ has to be treated separately by studying the isomorphism from $\mf{M}_\theta$ to $\mf{M}_{-\iota(\theta)}$, where $\iota$ is either the identity or is induced by an order two symmetry of the McKay graph of $\Gamma$ (see section~\ref{sec:graphinvolution}).
 
  The work of Bezrukavnikov--Kaledin~\cite{SympMcKay} shows that every symplectic resolution $X^\prime\to \C^{2n}/\Gamma_n$ possesses a collection of tilting bundles (called \textit{Procesi bundles}) that induce derived equivalences between the derived category of coherent sheaves on $X^\prime$ and the derived category of $\Gamma_n$-equivariant coherent sheaves on $\C^{2n}$. Remarkably, these Procesi bundles were classified completely by Losev~\cite{LosevProcesi}, at least when $X^\prime$ is a quiver variety $\mf{M}_\theta(\bv,\bw)$. Moreover, he confirmed the first half of a conjecture of Haiman~\cite[Conjecture 7.2.13]{HaimanSurvey}, that there is a unique Procesi bundle on each $\mf{M}_\theta(\bv,\bw)$ whose $\Gamma_{n-1}$-invariant summand is the tautological bundle $\mc{R}_\theta(\bv,\bw)$. Corollaries~\ref{cor:introCIconjecture} and~\ref{cor:Worbits} now imply the following:

  \begin{cor}
  \label{cor:introLosev}
  Let $X^\prime\to \C^{2n}/\Gamma_n$ be a projective, symplectic resolution, and let  $C\subset F$ be the chamber satisfying $L(C)=\Amp(X^\prime/Y)$. For every normalised Procesi bundle $\mathcal{P}$ on $X^\prime$, there exists a unique $w\in W$ such that the $\Gamma_{n-1}$-invariant part of $\mathcal{P}$ is the tautological bundle $\mathcal{R}_{w(\theta)}$ on $\mf{M}_{w(\theta)}\cong X^\prime$ for $\theta\in C$. Moreover, every normalised Procesi bundle on $X^\prime$ arises in this way. In particular, there is a bijection between elements of $W$ and the normalised Procesi bundles on each projective crepant resolution of $\C^{2n}/\Gamma_n$.
 \end{cor}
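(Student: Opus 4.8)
The plan is to combine the three inputs cited in the paragraph preceding the statement: Losev's classification of normalised Procesi bundles, the fact established here that every projective crepant resolution of $Y$ is a quiver variety (Corollary~\ref{cor:introCIconjecture}), and the $W$-equivariance of the linearisation map together with Corollary~\ref{cor:Worbits}. First I would fix $X^\prime\to Y$ and let $C\subset F$ be the unique chamber with $L(C)=\Amp(X^\prime/Y)$; this exists and is unique because, by Theorem~\ref{thm:mainintro}\four\ and Theorem~\ref{thm:movableintro}, the chambers in $F$ map bijectively under $L_F$ onto the ample cones of the projective crepant resolutions of $Y$, and $X^\prime$ is one of these by Corollary~\ref{cor:introCIconjecture}. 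For $\theta\in C$ we then have a canonical identification $\mf{M}_\theta\cong X^\prime$ over $Y$.

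Next I would invoke Losev's theorem: on every quiver variety $\mf{M}_{\theta'}(\bv,\bw)$ there is a \emph{unique} normalised Procesi bundle whose $\Gamma_{n-1}$-invariant summand is the tautological bundle $\mc{R}_{\theta'}$, and moreover every normalised Procesi bundle on a symplectic resolution of $Y$ arises from \emph{some} quiver-variety presentation $\mf{M}_{\theta'}(\bv,\bw)$ of that resolution. So the normalised Procesi bundles on $X^\prime$ are in bijection with the set of chambers $C^\prime\subset\Theta$ such that $\mf{M}_{\theta'}\cong X^\prime$ over $Y$ for $\theta'\in C^\prime$, the bijection sending $C^\prime$ to the Procesi bundle with $\Gamma_{n-1}$-invariant part $\mc{R}_{\theta'}$. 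The point to be careful about here is that distinct chambers $C'$ could a priori give isomorphic normalised Procesi bundles on $X^\prime$; Losev's uniqueness statement rules this out precisely because $\mc{R}_{\theta'}$ determines the whole bundle, and $\mc{R}_{\theta'}$ is genuinely attached to the quiver presentation, not just to the underlying variety.

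Finally I would apply Corollary~\ref{cor:Worbits}: the chambers $C^\prime$ with $\mf{M}_{\theta'}\cong X^\prime$ over $Y$ are exactly the $W$-translates $w(C)$ of our fixed chamber $C\subset F$, and since $F$ is a fundamental domain for $W$ acting on $\Theta$ (Proposition~\ref{prop:W}), the map $w\mapsto w(C)$ is a bijection from $W$ onto this set of chambers. Composing the two bijections gives $w\mapsto \mc{R}_{w(\theta)}$ on $\mf{M}_{w(\theta)}\cong X^\prime$, and hence a bijection between $W$ and the normalised Procesi bundles on $X^\prime$; this also yields the intermediate assertions in the statement (uniqueness of $w$ for a given $\mathcal{P}$, and that every normalised Procesi bundle is of this form). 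I expect the only genuine subtlety — as opposed to routine bookkeeping — to be checking that ``normalised Procesi bundle on $X^\prime$'' as used in Losev's work matches the normalisation convention fixed in this paper, i.e.\ that the $\Gamma_{n-1}$-invariant part is the tautological bundle $\mc{R}_\theta$ rather than some twist of it; once the conventions are aligned, the rest is a concatenation of already-established bijections.
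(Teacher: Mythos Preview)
Your proposal is correct and follows exactly the route the paper indicates: the paper's own ``proof'' consists of the single sentence that Corollaries~\ref{cor:introCIconjecture} and~\ref{cor:Worbits} together with Losev's classification imply the result, and you have unpacked this correctly. The one point you flag as potentially subtle---injectivity of the map from chambers to Procesi bundles---is in fact handled by Losev's \emph{complete} classification (there are exactly $|W|$ normalised Procesi bundles), so a surjection from the $|W|$ chambers $w(C)$ onto the set of Procesi bundles is automatically a bijection; your appeal to ``$\mc{R}_{\theta'}$ determines the whole bundle'' is not quite the right lever there, but the conclusion stands.
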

 
  In addition, confirmation of the second half of Haiman's $\Gamma_n$-constellation conjecture, when combined with Corollary~\ref{cor:introCIconjecture}, would imply every projective crepant resolution $X^\prime$ of $\C^{2n}/\Gamma_n$ is a fine moduli space of stable modules over the skew group algebra $\C[V^{\times n}]\rtimes \Gamma_n$. It would then follow from Bayer--Craw--Zhang~\cite[Section~7]{BCZ17}, together with the derived equivalence of \cite{SympMcKay}, that every such $X^\prime$ can be realised as a moduli space of Bridgeland-stable objects in the derived category of coherent sheaves on $X$.

\subsection{The universal Poisson deformation}

Kronheimer's realisation of the minimal resolution of the Kleinian singularity as a morphism of quiver varieties led to a new construction of the semiuniversal deformation of $\C^2/\Gamma$ and its simultaneous resolution. In higher dimensions, the semi-universal deformation does not behave well; instead the natural object to consider is the universal graded Poisson deformation, as defined by Ginzburg--Kaledin~\cite{GK} and Namikawa~\cite{Namikawa}. 

It was shown by Kaledin--Verbitsky~\cite{KaledinVerbitskyPeriod} and Losev~\cite{Losev} that each symplectic resolution $\mf{M}_{\theta}$ of the quotient singularity $Y$ admits a universal graded Poisson deformation $\mc{X} \rightarrow \mf{h}$, where $\mf{h} = N^1(X/Y) \o_{\Q} \C \cong \Theta \o_{\Q} \C$. Namikawa~\cite{Namikawa} showed that $Y$ also admits a universal graded Poisson deformations $\mc{Y} \rightarrow \mf{h}/ W$. On the other hand, by taking the preimage under the moment map of a point $\lambda \in \mf{h}$, one gets a graded Poisson deformation $\boldsymbol{\mf{M}}_{\theta} \rightarrow \mf{h}$ for any $\theta \in \Theta$. The morphism $f_{\theta} \colon \mf{M}_{\theta} \rightarrow \mf{M}_0$ obtained by variation of GIT quotient extends to a projective morphism $\boldsymbol{f}_{\theta} \colon \boldsymbol{\mf{M}}_{\theta} \rightarrow \boldsymbol{\mf{M}}_{0}$. 

\begin{thm}
Let $C\subseteq \Theta$ be a chamber, and let $\theta \in C$.
\begin{enumerate}
    \item[\one] 
There exists a unique $w \in W$ and graded Poisson isomorphism $\boldsymbol{\mf{M}}_{\theta} \iso \mc{X}$ such that the diagram
$$
\begin{tikzcd}
\boldsymbol{\mf{M}}_{\theta} \ar[r] \ar[d,"\wr"] & \h \ar[d,"w \cdot"] \\
\mc{X} \ar[r] & \h
\end{tikzcd}
$$
commutes. Thus, the flat family $\boldsymbol{\mf{M}}_{\theta} \rightarrow \h$ is the universal graded Poisson deformation of $\mf{M}_{\theta}$. 
\item[\two] The morphism $\boldsymbol{f}_{\theta} \colon \boldsymbol{\mf{M}}_{\theta} \rightarrow \boldsymbol{\mf{M}}_{0}$ is a crepant resolution, and an isomorphism in codimension one. 
\end{enumerate}
\end{thm}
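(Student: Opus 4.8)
\emph{Part \one.} The plan is first to verify that $\boldsymbol{\mf{M}}_\theta\to\h$ is a graded Poisson deformation of $\mf{M}_\theta$: flatness follows from flatness of the moment map for the quiver data $(\bv,\bw)$, the Poisson bracket and the $\C^\times$-grading are inherited by performing symplectic reduction in families over $\h$, and the central fibre is $\mf{M}_\theta$ by construction. Taking as given (Namikawa, Losev, Kaledin--Verbitsky~\cite{Namikawa,Losev,KaledinVerbitskyPeriod}) that $\mc{X}\to\h$ is the universal graded Poisson deformation of $\mf{M}_\theta$, there is then a classifying morphism $\boldsymbol{\mf{M}}_\theta\to\mc{X}$ whose base component is, by gradedness, a linear map $\varphi_\theta\colon\h\to\h$. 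The key input is that $\varphi_\theta$ is the linearisation map: the period of the one-parameter subfamily $\mu^{-1}(\C\lambda)^{\theta\text{-ss}}/G$ --- the derivative of the relative symplectic form --- equals $\sum_{i\in I}\lambda_i\,c_1(\mc{R}_i)$, so $\varphi_\theta=L_C$ for $\theta\in C$. Since $L_C$ is a linear isomorphism by Theorem~\ref{thm:mainintro}\four, $\boldsymbol{\mf{M}}_\theta\to\h$ is universal and $\boldsymbol{\mf{M}}_\theta\cong\mc{X}$ over $\varphi_\theta$. Finally, using that $F$ is a fundamental domain for $W$ (Proposition~\ref{prop:W}) and that $L$ is $W$-invariant (Theorem~\ref{thm:mainintro}\two), for the unique $w\in W$ with $w(C)\subseteq F$ one gets $L_C=L_F\circ w$ as linear maps; hence, once $\h$ is identified with $N^1(X/Y)\o_\Q\C$ via $L_F$, the map $\varphi_\theta$ is exactly the action of $w$ on $\h$ (because $L_F$ intertwines the $W$-action on $\Theta$ with the $W$-action on $N^1(X/Y)$ defining Namikawa's Weyl group), and $w$ is unique by faithfulness of $W$ on $\h$. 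This produces the commuting square, and the last sentence of \one\ is then immediate.

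\emph{Part \two.} For the second part the plan is as follows. The total space $\boldsymbol{\mf{M}}_\theta$ is smooth with trivial canonical bundle, being the universal Poisson deformation of the smooth symplectic variety $\mf{M}_\theta$ with holomorphic symplectic fibres over an affine base; and $\boldsymbol{\mf{M}}_0$ is normal Gorenstein with trivial canonical bundle, by the corresponding properties of the quiver-variety fibres~\cite{BelSchQuiver} together with flatness over the affine base $\h$. The morphism $\boldsymbol{f}_\theta$ is projective, and it is birational: over a generic $\lambda\in\h$ it restricts to a variation-of-GIT morphism of \emph{smooth} quiver varieties, which is semismall by the appendix, hence finite, hence an isomorphism. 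Consequently $\boldsymbol{f}_\theta$ is crepant. Since a crepant projective birational morphism between normal varieties is an isomorphism over the smooth locus of its target, $\mathrm{Exc}(\boldsymbol{f}_\theta)$ lies over $\mathrm{Sing}(\boldsymbol{\mf{M}}_0)$. The \'etale-local normal form of Theorem~\ref{thm:etalelocal}, applied to $\boldsymbol{f}_\theta$, shows that the universal Poisson deformation nontrivially deforms the codimension-two Kleinian transverse slices of $Y$, so $\mathrm{Sing}(\boldsymbol{\mf{M}}_0)$ has codimension at least three in $\boldsymbol{\mf{M}}_0$; combined with the semismallness of $\boldsymbol{f}_\theta$, this rules out any divisorial component of $\mathrm{Exc}(\boldsymbol{f}_\theta)$, so $\boldsymbol{f}_\theta$ is an isomorphism in codimension one. (Equivalently: by Part~\one, $\boldsymbol{\mf{M}}_0\to\h$ is the base change of Namikawa's universal Poisson deformation $\mc{Y}\to\h/W$ of $Y$ along the quotient $\h\to\h/W$, and $\boldsymbol{f}_\theta$ is the associated simultaneous resolution, so the assertion is Namikawa's~\cite{Namikawa}.)

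The step I expect to be the main obstacle is, for Part~\one, the identification of the period map of the moment-map deformation with the linearisation map $L_C$ --- this is exactly what ties the complex deformation theory of $\mf{M}_\theta$ to the GIT/ample-cone picture established in the earlier sections --- and, for Part~\two, the control of $\mathrm{Sing}(\boldsymbol{\mf{M}}_0)$, i.e.\ showing that the universal Poisson deformation smooths away all codimension-two singularities so that $\boldsymbol{f}_\theta$ becomes small. The remaining steps are bookkeeping with the $W$-action, together with the cited structural results on universal graded Poisson deformations.
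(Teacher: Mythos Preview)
Your approach to Part~\one\ differs from the paper's. You aim to identify the classifying map $\varphi_\theta\colon\h\to\h$ directly with $L_C$ via a period computation, then read off $w\in W$ from Theorem~\ref{thm:mainintro}. The paper instead works downstairs: it first proves Part~\two\ (as Proposition~\ref{prop:deformedresiscrepant}), then identifies $\boldsymbol{\mf{M}}_0\to\h$ with the pullback $\mc{Y}\times_{\h/W}\h$ of Namikawa's universal deformation of $Y$ (via the Calogero--Moser space, using \cite{MarsdenWeinsteinStratification} and \cite[Theorem~1.4]{BellamyCounting}), and finally invokes \cite[Corollary~7]{Namikawa3}, which says that any crepant resolution of such a pullback is the universal deformation of its central fibre up to the $W$-action. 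Your route is more direct and avoids the dependence on Part~\two, but the step you yourself flag as the main obstacle---that the period map of the moment-map deformation is $\sum_i\lambda_i\,c_1(\mc{R}_i)$---is the entire content of your argument, and you do not prove it; it does not appear anywhere in the paper.

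For Part~\two\ there are genuine gaps. You invoke Theorem~\ref{thm:commdiagram22} and the semismallness theorem of the appendix ``applied to $\boldsymbol{f}_\theta$'', but both are stated and proved only for morphisms $\mf{M}_\theta\to\mf{M}_{\theta_0}$ of quiver varieties over a point, not for the family over~$\h$; extending them fibrewise is plausible but is extra work you have not done. The paper's argument for the isomorphism in codimension one is more elementary and does not need these extensions: if a divisor $E\subset\boldsymbol{\mf{M}}_\theta$ were contracted, then $E$ is $\C^\times$-stable and $\sigma_\theta(E)$ is a proper closed subset of $\h$, so some fibre of $\sigma_\theta|_E$ has dimension at least $2n$; the contracting $\C^\times$-action then forces $\dim(E\cap\mf{M}_\theta)\ge 2n$, i.e.\ $E\supset\mf{M}_\theta$, contradicting that $f_\theta=\boldsymbol{f}_\theta|_{\mf{M}_\theta}$ is generically an isomorphism. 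Your parenthetical alternative presupposes that $\boldsymbol{\mf{M}}_0\to\h$ is the base change of $\mc{Y}\to\h/W$, but your Part~\one\ concerns only $\boldsymbol{\mf{M}}_\theta$ and does not establish this; in the paper that identification is proved independently and is what feeds into Part~\one, not the other way around.
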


The situation is summarised in the following commutative diagram
$$
 \begin{tikzcd}
   & \mf{M}_\theta\ar[r,"f_\theta"]\ar[ld,hook']\ar[ddr] & \C^{2n}/\Gamma_n\ar[ld,hook',crossing over]\ar[dd]\ar[dr,hook'] & & & \\
\boldsymbol{\mf{M}}_{\theta} \ar[r,"\boldsymbol{f}_{\theta}"] \ar[ddr] & \boldsymbol{\mf{M}}_0 \ar[dd] \ar[rr,crossing over] & & \mc{Y}\ar[dd] & & \\
 & & 0\ar[ld,hook']\ar[rd,hook'] & & & \\
& \h \ar[rr] & & \h/W & 
\end{tikzcd}
$$
where the lower right rectangle is shown to be Cartesian. 
 
  \subsection{Relation to other work}
  We have chosen to state Theorem~\ref{thm:mainintro} in a manner parallel to the main result of Bayer-Macr\`{i}~\cite[Theorem~1.3]{BayerMacriMMP} on moduli spaces of Bridgeland-stable objects on a projective K3 surface. On the face of it, such a link is perhaps unsurprising because $X$ has trivial canonical class and is projective over an affine. However, it is important to note that $X$ is not derived equivalent to the preprojective algebra $\Pi$; the stability space $\Theta$ studied here is isomorphic to the rational Picard group of $X$ rather than the rational Grothendieck group. Put another way, the framed McKay quiver has too few vertices to be the quiver encoding a tilting bundle on $X$ when $n>1$. In particular, the quiver varieties $\mf{M}_\theta(\bv,\bw)$ that we study here cannot be realised directly as moduli spaces of Bridgeland-stable objects in the derived category of coherent sheaves on $X$ in the manner described in \cite[Section~7]{BCZ17}.

Recently, McGerty--Nevins~\cite{QuiverKirwan} have shown that Kirwan surjectivity holds for quiver varieties. That is, for each chamber $C$, with $\theta \in C$, the natural map $K_C \colon \C[\mf{g}] = H_G^*(\mathrm{pt},\C) \rightarrow H^*(\mf{M}_\theta(\bv,\bw),\C)$ is surjective. Our map $L_C$ fits naturally into a commutative diagram
$$
\begin{tikzcd}
 \Theta_v \o_{\Q} \C \ar[r,"L_C"] \ar[d] & N^1(X/Y) \o_{\Q} \C \ar[d] \\
 \mathbb{X}(\mf{g}) \ar[r,"K_C"] & H^2(\mf{M}_\theta(\bv,\bw),\C),
\end{tikzcd}
$$
where $ \mathbb{X}(\mf{g}) \subset \C[\mf{g}]$ is the space of characters of $\mf{g}$. As noted above, it follows easily from Theorem \ref{thm:introsmoothness} that $L_C$ is an isomorphism. The two vertical maps are also isomorphisms, so the map $K_C \colon  \mathbb{X}(\mf{g}) \rightarrow H^2(\mf{M}_\theta(\bv,\bw),\C)$ is an isomorphism too in our case. Therefore, our main results demonstrate precisely the extent to which the linear map $K_C$ extends across chambers to give a linear map on a union of chambers.
 
Since this paper was written, Theorem~\ref{thm:movableintro} enabled Gammelgaard, Gyenge, Szendr\H{o}i and the second author~\cite{CGGS19} to construct the Hilbert scheme of $n$ points on $\mathbb{C}^2/\Gamma$ as a quiver variety $\mathfrak{M}_\theta$ for a particular non-generic parameter $\theta$. Nakajima~\cite{NakajimaEulerChar20} subsequently used this description in his proof of the conjecture describing the generating series of Euler characteristics of $\Hilb^{[n]}(\C^2/\Gamma)$ by Gyenge, N\'{e}methi and Szendr\H{o}i~\cite{GNS18}.
 
\subsection{Acknowledgements}
The first author thanks Travis Schedler for many interesting discussions regarding quiver varieties. The second author thanks Michael Wemyss for a useful discussion about Procesi bundles. We also thank the anonymous referee for many helpful comments and corrections.
 
\section{A wall-and-chamber structure}\label{sec:combinatorics}
We begin by providing an elementary description of a wall-and-chamber decomposition of a rational vector space $\Theta$, together with an action of the Namikawa Weyl group on $\Theta$. This section is purely combinatorics and uses no machinery from Geometric Invariant Theory (GIT).

\subsection{The chamber decomposition}\label{sec:chamberdecomp}
Let $\Gamma\subset \SL(2,\C)$ be a finite subgroup and let $n\geq 1$ be an integer. Let $V$ denote the given 2-dimensional representation of $\Gamma$ and list the irreducible representations as $\rho_0, \dots, \rho_r$, where $\rho_0$ is the trivial representation. 

The McKay graph is the affine Dynkin diagram associated to $\Gamma$ by the McKay correspondence; explicitly, the vertex set is $\Irr(\Gamma)$, and there are $\dim\Hom_{\Gamma}(\rho_i,\rho_j\otimes V)$ edges between vertices $\rho_i$ and $\rho_j$. Since $V$ is self-dual, this is symmetric in $\rho_i$ and $\rho_j$. Let $A_\Gamma$ denote the adjacency matrix of this graph. McKay~\cite{McKay80} observed that if we define $C_\Gamma:= 2\mathrm{Id}-A_\Gamma$ and equip the integral representation ring 
\[
R(\Gamma):=\bigoplus_{0\leq i\leq r} \Z \rho_i
\]
with the symmetric bilinear form given by $(\alpha,\beta)_\Gamma := \alpha^tC_\Gamma\beta$, then we obtain the root lattice of an affine root system $\Phi_{\aff}$ of type ADE in which the McKay graph is the Dynkin diagram, the irreducible representations $\{\rho_0, \dots, \rho_r\}$ provide a system of simple roots, and the regular representation 
 \[
 \delta:=\sum_{0\leq i\leq r} (\dim_\C\rho_i) \rho_i
 \]
 is the minimal imaginary root. The corresponding root system of finite type $\Phi\subset \Phi_\aff$ is the intersection of $\Phi_\aff$ with the integer span of the nontrivial irreducible representations. Let $\Phi^+$ denote the set of positive roots.
 
 Let $\Theta:=\Hom(R(\Gamma),\Q)$ denote the rational vector space whose underlying lattice is dual to $R(\Gamma)$. We adopt the following notation: write elements of $\Theta$ as $\theta=(\theta_0,\dots,\theta_r)\in \Theta$ where $\theta_i:=\theta(\rho_i)$ for $0\leq i\leq r$. Given $v\in R(\Gamma)$, we let $v^\perp:= \{\theta\in \Theta \mid \theta(v)=0\}$ denote the dual hyperplane; and given $v_1,\dots, v_m\in R(\Gamma)$, we let $\langle v_1,\dots,v_m\rangle^\vee:= \{\theta\in \Theta \mid \theta(v_i)\geq 0 \text{ for }1\leq i\leq m\}$ denote the corresponding polyhedral cone. 
 
 Consider the hyperplane arrangement in $\Theta$ given by
 \begin{eqnarray}
\mc{A} & = & \left\{ \delta^\perp, (m\delta+\alpha)^\perp \mid \alpha \in \Phi, -n < m < n \right\} \nonumber \\
 & = & \{ \delta^\perp, (m\delta\pm \alpha)^\perp \mid \alpha \in \Phi^+, \ 0 \le m < n \}.\label{eqn:hyperplanes}
\end{eqnarray}
  A \emph{chamber} in $\Theta$ is the intersection with $\Theta$ of a connected component of the locus 
$$
\big(\Theta\otimes_{\mathbb{Q}} \mathbb{R}\big) \smallsetminus \bigcup_{\gamma^\perp \in \mc{A}} \gamma^\perp. 
$$
and we let $\Theta^{\reg}$ denote the union of all chambers in $\Theta$. The closure of each chamber defines a top-dimensional cone in a chamber complex in $\Theta$ determined by $\mc{A}$, and the codimension-one faces of these top-dimensional cones are called \emph{walls} in $\Theta$. Each wall is contained in a unique hyperplane from $\mc{A}$. 

\begin{example}
\label{exa:Cpmn}
 \begin{enumerate}
 \item The interior $C_-$ of the closed cone
\begin{equation}
\label{eqn:C-}
 \big\langle \delta, \pm m \delta + \alpha \mid 0 \le m < n, \;\alpha \in \Phi^+\big\rangle^\vee 
\end{equation}
 is a chamber of $\Theta$. Indeed, since $C_-$ is cut out by specifying a strict inequality for each hyperplane in $\mathcal{A}$, the claim follows provided that we show that it's non-empty. Let $h= \sum_{0\leq i\leq r} \delta_i$ be the Coxeter number of $\Phi$. If we set $\theta_i = 1$ for $i \ge 1$ and $\theta_0 = \frac{1}{2n} - h + 1$, then $\theta(m \delta) = \frac{m}{2n}$ and $\theta(\alpha) \ge 1$ for all $\alpha \in \Phi^+$. This shows that $\theta \in C_-$ as required.

 \item The interior $C_+$ of the closed cone
 \begin{equation}
 \label{eqn:C+}
 \big\langle \delta, \alpha, m \delta \pm \alpha \mid 1 \le m < n, \alpha \in \Phi^+ \big\rangle^\vee
 \end{equation}
 is also a chamber in $\Theta$, because if we set $\theta_i = 1$ for $i \ge 0$, then $\theta\in C_+$ and the statement follows by the same logic as in part (1). In fact, $C_+$ can be described more simply as the interior of the cone
 \begin{equation}
 \label{eqn:poschamber}
 \big\langle \rho_0,\rho_1,\dots,\rho_r\big\rangle^\vee.
 \end{equation}
 Indeed, since $\theta_0 = \theta(\delta - \beta)$ where $\beta$ is the highest positive root, we have that the closure of $C_+$ is contained in \eqref{eqn:poschamber}. For the opposite inclusion, $m\delta > \alpha$ for $1\leq m < n$, so each of $\delta, \alpha, m\delta\pm \alpha$ can be expressed as a positive sum of $\rho_0, \dots, \rho_r$. It follows that the inequalities defining $C_+$ in \eqref{eqn:C+} can be deduced from the inequalities $\theta_i\geq 0$ for $0\leq i\leq r$ which characterise the cone \eqref{eqn:poschamber}.
 \end{enumerate}
\end{example}

 \subsection{The Namikawa Weyl group}\label{sec:NamWeylgroup}
 We now introduce an action of a finite group on $\Theta$. For $1\leq i\leq r$, let  $s_i\colon R(\Gamma)\to R(\Gamma)$ denote reflection in the hyperplane $\{v\in R(\Gamma) \mid (v,\rho_i)_\Gamma=0\}$ orthogonal to $\rho_i$. Explicitly, for any $0\leq j\leq r$, if we write $c_{i,j}$ for the $(i,j)$-th entry of the Cartan matrix $C_{\Gamma}$, then
 \begin{equation}
 \label{eqn:sieta}
 s_i(\rho_j) = \rho_j-c_{i,j}\rho_i.
 \end{equation}
 In addition, consider the involution $s_\delta\colon R(\Gamma) \to R(\Gamma)$ defined by sending $\delta$ to $-\delta$ and fixing $\rho_1, \dots, \rho_r$; explicitly, 
 \begin{equation}
 \label{eqn:sdelta}
 s_{\delta}(\rho_j) = \left\{\begin{array}{cr}  \rho_0 - 2 \delta & \text{for }j=0, \\  
 \rho_j & \text{ for }1\leq j\leq r.
 \end{array}\right.
 \end{equation}
We caution the reader that the subspace in $R(\Gamma)$ orthogonal (with respect to $( - , - )_{\Gamma}$) to $\rho$ differs from $\rho^\perp = \{\theta\in \Theta \mid \theta(\rho)=0\}$, which is a hyperplane in the dual space $\Theta$.  
  
  We are primarily interested in the dual action on $\Theta$. For $1\leq i\leq r$, we use the same notation $s_i\colon \Theta\to\Theta$ for the linear map defined for $\theta\in \Theta$ and $0\leq j\leq r$ by setting $s_i(\theta)(\rho_j) = \theta(s_i^{-1}(\rho_j)) = \theta(s_i(\rho_j))$, where we use the fact that $s_i$ is an involution. It follows for $\theta\in \Theta$ and $1\leq i\leq r$ that $s_i(\theta)\in \Theta$ has components
	\[
	s_i(\theta)_j = \theta_j - c_{i,j}\theta_i
	\]
 for all $0\leq j\leq r$. Note that $s_i$ is reflection in the hyperplane $\rho_i^\perp$. Similarly, the dual map $s_\delta\colon \Theta\to \Theta$ sends $\theta$ to the vector $s_\delta(\theta)$ with components
 \[
 s_{\delta}(\theta)_j = \left\{\begin{array}{cr}  \theta_0 - 2 \theta(\delta) & \text{for }j=0, \\  
 \theta_j & \text{ for }1\leq j\leq r.
 \end{array}\right.
 \]
 Now define the \emph{Namikawa Weyl group} to be the group 
 \[
 W:=\langle s_\delta, s_1, \dots, s_r\rangle
 \]
 generated by these reflections of $\Theta$.

 \begin{prop}
 \label{prop:W}
 The Namikawa Weyl group $W$ satisfies the following properties:
 \begin{enumerate}
 \item[\one] it is isomorphic to $\s_2 \times W_{\Gamma}$, where $W_\Gamma$ is the Weyl group of the root system $\Phi$;
  \item[\two] the simplicial cone in $\Theta$ defined by 
 \[
 F:=  \langle \delta,\rho_1,\dots,\rho_r\rangle^\vee
 \]
 is a fundamental domain for the action of $W$ on $\Theta$; and
 \item[\three] the action of $W$ on $\Theta$ permutes the hyperplanes in $\mathcal{A}$ and hence permutes the set of chambers.
\end{enumerate}
In particular, for every chamber $C$ in $\Theta$, there exists a unique $w\in W$ such that $w(C)\subset F$.
 \end{prop}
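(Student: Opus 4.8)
\medskip

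The plan is to treat the three parts in order, with part~\one\ as the structural backbone and part~\two\ as the main technical obstacle. For part~\one, I would first observe that $s_\delta$ commutes with each $s_i$, $1\le i\le r$. This is a direct computation from \eqref{eqn:sdelta} and \eqref{eqn:sieta}: the reflections $s_1,\dots,s_r$ act on the span of $\rho_1,\dots,\rho_r$ exactly as the simple reflections of the finite Weyl group $W_\Gamma$ (since the Cartan matrix $C_\Gamma$ restricted to these indices is the finite-type Cartan matrix), and they fix $\delta$ because $\delta$ is the imaginary root, i.e. $(\delta,\rho_i)_\Gamma=0$ for all $i$; meanwhile $s_\delta$ fixes every $\rho_i$ with $i\ge 1$ and negates $\delta$, so on the decomposition $R(\Gamma)\otimes\Q = \Q\delta \oplus \Span(\rho_1,\dots,\rho_r)$ the element $s_\delta$ acts as $(-1)\oplus\mathrm{id}$ and the $s_i$ act as $\mathrm{id}\oplus(\text{finite Weyl action})$. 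Hence $\langle s_\delta\rangle\cong\s_2$ centralises $\langle s_1,\dots,s_r\rangle\cong W_\Gamma$, and since $s_\delta\notin W_\Gamma$ (it acts nontrivially on $\delta$, which $W_\Gamma$ fixes) the subgroup they generate is the direct product $\s_2\times W_\Gamma$. Dualising, the same decomposition $\Theta = \delta^\perp \oplus (\text{line})$ — more precisely the dual splitting — gives the action on $\Theta$.

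For part~\two, the strategy is to use the product structure from part~\one\ and the fact that the cone $F = \langle\delta,\rho_1,\dots,\rho_r\rangle^\vee$ is the product of the ray $\{\theta(\delta)\ge 0\}$-half-space-type condition with the dominant cone of $W_\Gamma$. Concretely: $W_\Gamma$ acts on $\Theta$ only through the coordinates pairing with $\rho_1,\dots,\rho_r$ and fixes the functional $\theta\mapsto\theta(\delta)$, while $s_\delta$ flips the sign of a coordinate transverse to $\delta^\perp$ and is the identity on the $W_\Gamma$-part. Standard Weyl-group theory gives that $\langle\rho_1,\dots,\rho_r\rangle^\vee$ (the dominant cone) is a fundamental domain for $W_\Gamma$ acting on the relevant subspace, and $\{\theta : \theta(\delta)\ge 0\}$ is a fundamental domain for the order-two sign flip $s_\delta$ on the complementary line. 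I would then argue that a fundamental domain for a direct product acting on a direct sum is the product of the fundamental domains — this requires knowing the two factors act on complementary, jointly-spanning subspaces, which is exactly what part~\one's decomposition provides. Care is needed because $\delta$ and the $\rho_i$ are not an orthogonal basis and the splitting of $\Theta$ dual to $R(\Gamma)\otimes\Q=\Q\delta\oplus\Span(\rho_1,\dots,\rho_r)$ must be identified explicitly; this bookkeeping — verifying that $F$ really is the product of the two elementary fundamental domains under the correct identification — is where I expect the main friction to lie.

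For part~\three, I would show directly that $W$ permutes the hyperplanes in $\mc{A}$. Using the first description of $\mc{A}$ in \eqref{eqn:hyperplanes}, namely $\{\delta^\perp\}\cup\{(m\delta+\alpha)^\perp : \alpha\in\Phi,\ -n<m<n\}$, it suffices to check that each generator sends a normal vector $m\delta+\alpha$ (with $\alpha\in\Phi$, possibly zero) to $\pm(m'\delta+\alpha')$ with $\alpha'\in\Phi$ and $|m'|<n$. For $s_i$ ($1\le i\le r$): $s_i$ fixes $\delta$ and permutes $\Phi$ (it is a simple reflection of the finite root system), so $s_i(m\delta+\alpha) = m\delta + s_i(\alpha)$, and $s_i(\alpha)\in\Phi\cup\{0\}$ with $m$ unchanged — done. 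For $s_\delta$: it negates $\delta$ and fixes $\Span(\rho_1,\dots,\rho_r)\supseteq\Phi$ pointwise, so $s_\delta(m\delta+\alpha) = -m\delta+\alpha = -((m)\delta-\alpha)$ wait — $= -(m\delta - \alpha)$, and since $-\alpha\in\Phi$ and $|{-m}|=|m|<n$ this is again (a scalar times) a normal vector in $\mc{A}$; also $s_\delta(\delta^\perp)=\delta^\perp$. Since $\mc{A}$ is finite, permuting its hyperplanes forces $W$ to permute the connected components of the complement, hence the chambers. Finally, the last sentence — that for every chamber $C$ there is a \emph{unique} $w\in W$ with $w(C)\subset F$ — follows by combining \two\ and \three: pick any $\theta\in C$; by \two\ there is a unique $w$ with $w(\theta)$ in the (interior relative to) $F$, but one must argue $w(C)$ lands in a single chamber inside $F$ and that genericity of $\theta$ (it lies in a chamber, so off every hyperplane of $\mc{A}$, hence off the walls bounding $F$ which lie in $\mc{A}$) upgrades "$w(\theta)\in F$" to "$w(\theta)$ in the interior of $F$" and pins down $w$ uniquely via the fundamental-domain property; here I would note $F$ is a union of chamber-closures (stated in the introduction as a consequence of Theorem~\ref{thm:introsmoothness}, but in this purely combinatorial section it follows because the walls of $F$, namely $\delta^\perp$ and $\rho_i^\perp$, are among the hyperplanes of $\mc{A}$ — indeed $\rho_i = (0\cdot\delta+\rho_i)$ with $\rho_i\in\Phi^+$).
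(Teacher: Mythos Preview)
Your proposal is correct and follows essentially the same route as the paper. The one difference worth noting is packaging: the paper observes directly that, after changing basis from $\{\rho_0,\rho_1,\dots,\rho_r\}$ to $\{\delta,\rho_1,\dots,\rho_r\}$, the vector $\delta$ is orthogonal to each $\rho_i$ (for $i\ge 1$) under $(\cdot,\cdot)_\Gamma$, so this new basis is a system of simple roots for the decomposable root system $A_1\oplus\Phi$; this immediately yields both \one\ (the Weyl group is $\s_2\times W_\Gamma$) and \two\ (the closed dominant Weyl chamber, i.e.\ the dual positive orthant in the new basis, is a fundamental domain), dissolving the ``bookkeeping friction'' you anticipated in part~\two. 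Your treatment of \three\ and of the final uniqueness claim matches the paper's.
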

 \begin{proof}
  For \one, change basis on $R(\Gamma)$ from $\{\rho_0,\rho_1,\dots,\rho_r\}$ to $\{\delta,\rho_1,\dots,\rho_r\}$. The vector $\delta$ is orthogonal to the hyperplane spanned by $\rho_1,\dots,\rho_r$ with respect to the symmetric bilinear form on $R(\Gamma)$, so the basis $\{\delta,\rho_1,\dots,\rho_r\}$ provides a set of simple roots for the decomposable root system of type $A_1\oplus \Phi$. Since $s_1,\dots,s_r$ generate $W_\Gamma$, it follows $s_\delta,s_1,\dots,s_r$ generate the Weyl group $\s_2 \times W_{\Gamma}$ of this root system and part \one\ follows by changing basis back to $\{\rho_0,\rho_1,\dots,\rho_r\}$. For \two, the positive orthant in the basis $\{\delta,\rho_1,\dots,\rho_r\}$ provides a fundamental domain for the action of the Weyl group $\s_2 \times W_{\Gamma}$. After changing basis back to $\{\rho_0,\rho_1,\dots,\rho_r\}$, we see that the cone $F$ is a fundamental domain for the action of $W$. For \three, the action of the generator $s_\delta$ fixes $\delta^\perp$ and exchanges $(m\delta+\alpha)^\perp$ with $(m\delta-\alpha)^\perp$ for all $0\leq m<n$ and $\alpha\in \Phi^+$, while the simple reflections $s_1, \dots, s_r$ permute the hyperplanes in $\mathcal{A}$. It follows that $W$ permutes the chambers. 
 \end{proof}

 \subsection{Counting chambers}
 The supporting hyperplanes of $F$ lie in $\mathcal{A}$, so $F\cap\Theta^\reg$ is a union of chambers. Our next goal is to count the number of chambers in $F$. We begin with a useful lemma.
 
\begin{lem}
\label{lem:Fwalls}
 The hyperplanes of $\mc{A}$ passing through the interior of $F$ are those of the form $(m\delta-\alpha)^\perp$ for $0 < m< n$ and $\alpha\in \Phi^+$.  
 \end{lem}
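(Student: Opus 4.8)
The goal is to decide, for each hyperplane $\gamma^\perp\in\mc{A}$, whether it meets the interior of the simplicial cone $F=\langle\delta,\rho_1,\dots,\rho_r\rangle^\vee$. Recall that a point $\theta$ lies in the interior of $F$ exactly when $\theta(\delta)>0$ and $\theta(\rho_i)>0$ for $1\le i\le r$; equivalently, writing the positive root $\alpha=\sum_{i=1}^r a_i\rho_i$ with all $a_i\ge 0$, we have $\theta(\alpha)>0$ on the interior of $F$ for every $\alpha\in\Phi^+$, and also $\theta(\delta)>0$. So on $\mathrm{int}(F)$ the linear functionals $\delta$, $\alpha$ ($\alpha\in\Phi^+$), and hence $m\delta+\alpha$ for $m\ge 0$, are all strictly positive. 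The strategy is simply to test each candidate hyperplane against these sign constraints.

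First I would dispose of the "obvious" cases. The hyperplane $\delta^\perp$ is a supporting hyperplane of $F$ (it is one of the defining walls, up to the identification $\delta^\perp=\{\theta(\delta)=0\}$), so it does not pass through the interior. Next, for $\alpha\in\Phi^+$ and $0\le m<n$, the functional $m\delta+\alpha$ is a nonnegative (and not identically zero) combination of $\delta,\rho_1,\dots,\rho_r$, hence strictly positive on $\mathrm{int}(F)$; therefore $(m\delta+\alpha)^\perp$ misses the interior of $F$. This already shows the only hyperplanes of $\mc{A}$ that can possibly meet $\mathrm{int}(F)$ are those of the form $(m\delta-\alpha)^\perp$ with $\alpha\in\Phi^+$. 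The case $m=0$ gives $(-\alpha)^\perp=\alpha^\perp$, which again is disjoint from $\mathrm{int}(F)$ since $\alpha$ is strictly positive there; so we may assume $0<m<n$.

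It remains to show that each $(m\delta-\alpha)^\perp$ with $0<m<n$ and $\alpha\in\Phi^+$ genuinely meets the interior of $F$, i.e.\ that $m\delta-\alpha$ takes both strictly positive and strictly negative values on $\mathrm{int}(F)$. For a point where it is positive, take $\theta$ with all coordinates $\theta_i$ ($i\ge 0$) large; more concretely, any $\theta$ with $\theta(\delta)$ huge relative to $\theta(\alpha)$ works, e.g.\ the interior point $\theta_i=1$ for all $i$ scaled appropriately, or one simply notes $m\delta-\alpha$ evaluated at a suitable interior point of $F$: since $\delta=\sum\delta_i\rho_i$ with $\delta_i\ge 1$ and $\alpha=\sum a_i\rho_i$ with $a_i\le\delta_i$ (as $\delta$ is the highest root of $\Phi_{\aff}$, dominating every finite positive root), choosing $\theta_i=1$ for $i\ge 1$ and $\theta_0$ large gives $\theta(m\delta-\alpha)=m\theta(\delta)-\theta(\alpha)>0$. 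For a point where it is negative, I would exploit that $m\ge 1$ but $\delta$ still has a coordinate we can make relatively small: pick $\theta$ in $\mathrm{int}(F)$ with $\theta(\rho_j)$ very small for the vertices $j$ in the support of $\alpha$ while keeping $\theta(\delta)$ bounded—concretely, choose $\theta_i$ tiny for all $i\ge 1$ and $\theta_0=1$; then $\theta(\delta)=\delta_0+\sum_{i\ge1}\delta_i\theta_i$ is close to $1$ while $\theta(\alpha)=\sum a_i\theta_i$ is close to $0$, which is the wrong sign—so instead one wants $\theta(\alpha)$ comparable to or larger than $m\,\theta(\delta)$. Here is the clean choice: since $\alpha\in\Phi^+$ has some coordinate, reflect or rescale so that $\theta(\alpha)$ dominates; e.g.\ take $\theta_0$ tiny and $\theta_i$ moderate for $i\ge 1$, so $\theta(\delta)=\delta_0\theta_0+\sum_{i\ge 1}\delta_i\theta_i$ stays bounded while $\theta(\alpha)=\sum a_i\theta_i$ can be made to exceed $m\,\theta(\delta)$ because $m<n$ is fixed and we are free to scale the $\theta_i$, $i\ge1$, independently of $\theta_0$; one then checks $\theta\in\mathrm{int}(F)$. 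Since $m\delta-\alpha$ is thus seen to be indefinite on the (connected, hence interval-image) interior of $F$, by continuity it vanishes somewhere in $\mathrm{int}(F)$, proving $(m\delta-\alpha)^\perp\cap\mathrm{int}(F)\ne\varnothing$.

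The only mildly delicate point—the "main obstacle"—is the last construction: producing an explicit interior point of $F$ on which $m\delta-\alpha$ is negative, while simultaneously verifying all the strict inequalities $\theta(\delta)>0$, $\theta(\rho_i)>0$ cutting out $\mathrm{int}(F)$. This is a finite linear feasibility check and causes no real difficulty, but it does require being slightly careful about the coupling between the $\rho_0$-coordinate (which enters $\delta$ but not $\alpha$) and the remaining coordinates; one resolves it by first choosing the $\theta_i$ with $i\ge 1$ to force $\theta(\alpha)>m\,\theta(\delta_{\ge 1})$ where $\delta_{\ge 1}=\sum_{i\ge 1}\delta_i\rho_i$, and then choosing $\theta_0>0$ small enough that the contribution $m\delta_0\theta_0$ does not tip the balance back. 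With both a positive and a negative value exhibited on the connected set $\mathrm{int}(F)$, the intermediate value theorem closes the argument.
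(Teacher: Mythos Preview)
Your overall strategy matches the paper's second proof: dispose of $\delta^\perp$, $\alpha^\perp$ and $(m\delta+\alpha)^\perp$ by positivity on $\mathrm{int}(F)$, then show each remaining hyperplane $(m\delta-\alpha)^\perp$ with $0<m<n$ actually meets the interior. The problem is in your construction of an interior point on which $m\delta-\alpha$ is negative. You propose to ``choose the $\theta_i$ with $i\ge 1$ to force $\theta(\alpha)>m\,\theta(\delta_{\ge 1})$'' and then take $\theta_0>0$ small. But this is impossible: writing $\alpha=\sum_{i\ge 1}a_i\rho_i$ and using that $a_i\le\delta_i$ for every $i\ge 1$ (since $\alpha$ lies below the highest root $\beta=\delta-\rho_0$), we get
\[
m\delta-\alpha \;=\; m\rho_0+\sum_{i\ge 1}(m\delta_i-a_i)\rho_i
\]
with all coefficients nonnegative. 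Hence for any $\theta$ with $\theta_0>0$ and $\theta_i>0$ for $i\ge 1$ one has $\theta(m\delta-\alpha)>0$, and your intermediate-value argument never gets off the ground. The bound $m<n$ plays no role here and cannot rescue the inequality.

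The fix is simply to drop the spurious constraint $\theta_0>0$: membership in $\mathrm{int}(F)$ only requires $\theta(\delta)>0$ and $\theta_i>0$ for $i\ge 1$, so $\theta_0$ may be negative. One then checks that the interval of admissible $\theta_0$ values for which $\theta(m\delta-\alpha)<0$ while $\theta(\delta)>0$ is nonempty (its length is $\tfrac{1}{m}\theta(\alpha)>0$). The paper bypasses the sign argument entirely by writing down the explicit point $\theta_i=1$ for $i\ge 1$ and $\theta_0=\tfrac{1}{m}\mathrm{ht}(\alpha)-\mathrm{ht}(\beta)$, which lies in $\mathrm{int}(F)$ and satisfies $\theta(m\delta-\alpha)=0$ on the nose.
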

 \begin{proof}
 We give two proofs. For the first, we claim that $F$ is equal to the cone in $\Theta$ generated by the closures of the cones $C_\pm$ from Example~\ref{exa:Cpmn}. Indeed, the cone $F$ is generated by the vectors $f_0,\dots, f_r\in \Theta$ where $f_0:=(1,0,\dots,0)$ and, for $1\leq i\leq r$, the vector $f_i$ satisfies
\[
f_i(\rho_j):=\left\{\begin{array}{cr} 
-\dim(\rho_i) & \text{for }j=0, \\
1 & \text{ for }j=i, \\
0 & \text{otherwise}.
\end{array}\right.
\]
 We have that $f_0\in \overline{C_+}$ and $f_1,\dots, f_r\in \overline{C_-}$, so $F$ lies in the cone generated by $\overline{C_-}\cup \overline{C_+}$. For the opposite inclusion, we have $C_+\subset F$ by \eqref{eqn:poschamber}, while the inequalities $\theta(\alpha)>0$ for $\alpha\in \Phi^+$ defining $C_-$ include $\theta(\rho_i)>0$ for $1\leq i\leq r$ and hence $C_-\subset F$. This proves the claim. It follows that the walls passing through the interior of $F$ are those for which the corresponding defining inequality changes from $>$ to $<$ or vice-versa when we compare $C_\pm$. The result follows by comparing the lists from \eqref{eqn:C-} and \eqref{eqn:C+}.
 
The second approach is more explicit. The hyperplanes $\delta^\perp$ and $\alpha^\perp$ for $\alpha \in \Phi^+$ support the facets of $F$ and can be discarded. Notice that if $\theta$ is in the interior of $F$ then $\theta(\delta), \theta(\alpha) > 0$ implies that $\theta(m \delta + \alpha) > 0$, so the hyperplanes $(m \delta + \alpha)^\perp$ for $\alpha \in \Phi^+$ and $0<m<n$ do not intersect the interior of $F$. Hence it suffices to show that $(m \delta - \alpha)^\perp$ intersects the interior of $F$ for all $0 < m < n$ and $\alpha \in \Phi^+$. Let $\theta_i= 1$ for $1\leq i\leq r$. Then for any choice of $\theta_0$, the parameter $\theta=(\theta_0,\dots, \theta_r)\in \Theta$ satisfies $\theta(\gamma) = \mathrm{ht}(\gamma)>0$ for all $\gamma \in \Phi^+$. Now 
$$
\theta(m \delta - \alpha) = m \theta_0 + m \mathrm{ht}(\beta) - \mathrm{ht}(\alpha),
$$
where $\beta \in \Phi^+$ is the highest root (notice that $\mathrm{ht}(\beta) = h - 1$). Set $\theta_0 = \frac{1}{m} \mathrm{ht}(\alpha) - \mathrm{ht}(\beta)$. Then the parameter $\theta=(\theta_0,\dots, \theta_r)\in \Theta$ satisfies 
$$
\theta(\delta) = \frac{1}{m} \mathrm{ht}(\alpha) - \mathrm{ht}(\beta) + \mathrm{ht}(\beta) > 0,
$$
so it lies in the interior of $F$. By construction, we have $\theta(m \delta - \alpha) = 0$, so $\theta$ lies on the required hyperplane. 
\end{proof}

\begin{thm}
\label{thm:counting}
The cone $F$ contains precisely 
 \begin{equation}
 \label{eqn:count}
 \prod_{i=1}^{r} \frac{(n-1)h+d_i}{d_i}
 \end{equation}
 chambers, where $r$ is the rank, $h$ is the Coxeter number and $d_1,\dots, d_r$ are the degrees of the basic polynomial invariants of $W_\Gamma$.
\end{thm}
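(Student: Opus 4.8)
The plan is to count the chambers of $F$ by identifying the slice $F\cap\Lambda$, where $\Lambda=\{\theta\mid\theta(\delta)=1\}$, with the fundamental alcove of the extended Catalan arrangement of $\Phi$, and then to invoke the known formula of Postnikov--Stanley and Athanasiadis for the number of regions. By Lemma~\ref{lem:Fwalls}, the only hyperplanes of $\mc{A}$ meeting the interior of $F$ are $(m\delta-\alpha)^\perp$ for $0<m<n$ and $\alpha\in\Phi^+$. Intersecting with $\Lambda$, such a hyperplane becomes $\{\theta\in\Lambda\mid\theta(\alpha)=m\}$. Thus the chambers of $F$ are in bijection with the connected components of $\Lambda\cap F$ minus these affine hyperplanes, i.e.\ the regions inside the simplex $F\cap\Lambda$ cut out by the hyperplanes $\theta(\alpha)=m$ for $\alpha\in\Phi^+$, $1\le m\le n-1$.

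\smallskip

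First I would set up coordinates on $\Lambda$: since $\{\delta,\rho_1,\dots,\rho_r\}$ is a basis of $R(\Gamma)$ (Proposition~\ref{prop:W}\one), the dual coordinates $(\theta(\delta),\theta(\rho_1),\dots,\theta(\rho_r))$ identify $\Theta$ with $\Q\times\Q^r$, and $\Lambda$ with the affine space $\{1\}\times\Q^r$, which we view as (the rational points of) the reflection representation of $W_\Gamma$ via the simple-root coordinates $\theta(\rho_i)$. Under this identification the facets $\rho_i^\perp$ of $F$ become the walls $\theta(\rho_i)=0$ of the dominant chamber, and $\delta^\perp$ is the plane at infinity, so $F\cap\Lambda$ is precisely the fundamental (dominant) chamber of $W_\Gamma$ acting on $\Lambda$. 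On this chamber, the condition $\theta(\alpha)>0$ already holds for every $\alpha\in\Phi^+$, so intersecting with $\theta(\alpha)=m$ for $m\ge 1$ is exactly restricting the $(n-1)$-extended Catalan arrangement $\{\,\theta(\alpha)=k\mid\alpha\in\Phi^+,\ -(n-1)\le k\le n-1\,\}$ to the dominant chamber. Hence the number of chambers of $F$ equals the number of regions of the $(n-1)$-extended Catalan arrangement lying in a fixed Weyl chamber; equivalently, by $W_\Gamma$-equivariance, it is $\tfrac{1}{|W_\Gamma|}$ times the total number of regions of that arrangement.

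\smallskip

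The final step is to quote the Postnikov--Stanley/Athanasiadis count: the $(n-1)$-extended Catalan (Linial-type) arrangement for $\Phi$ has $\prod_{i=1}^r\big((n-1)h+d_i\big)\cdot\tfrac{|W_\Gamma|}{\prod d_i}\cdot\tfrac{1}{|W_\Gamma|}$... more precisely, the number of regions in each Weyl chamber is $\prod_{i=1}^r\frac{(n-1)h+d_i}{d_i}$, which is the desired formula~\eqref{eqn:count}. Concretely I would cite Athanasiadis~\cite{Athanasiadis04} (building on Postnikov--Stanley~\cite{PS00}) for the statement that the number of regions of the $m$-Catalan arrangement inside the fundamental chamber is $\prod_{i=1}^r\frac{mh+d_i}{d_i}$, and set $m=n-1$.

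\smallskip

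The main obstacle is the bookkeeping in the middle paragraph: one must be careful that the slice $F\cap\Lambda$ is genuinely the \emph{closed} dominant Weyl chamber (not merely a simplex combinatorially equivalent to it), that the vanishing loci of the $\alpha$ and $m\delta\pm\alpha$ on $\Lambda$ match the hyperplanes of the extended Catalan arrangement with the correct multiplicities and no spurious coincidences, and that ``regions in the fundamental chamber'' in the combinatorics literature is normalized the same way as ``chambers of $F$'' here (in particular that no region is lost or doubled along the walls $\rho_i^\perp$, which is handled by genericity). Everything else — the change of basis, the identification $\theta(m\delta-\alpha)=m-\theta(\alpha)$ on $\Lambda$, and discarding the boundary hyperplanes via Lemma~\ref{lem:Fwalls} — is routine.
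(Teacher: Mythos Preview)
Your proposal is correct and follows essentially the same approach as the paper: slice $F$ by the affine hyperplane $\Lambda=\{\theta(\delta)=1\}$, identify $F\cap\Lambda$ with the dominant Weyl chamber in simple-root coordinates, observe via Lemma~\ref{lem:Fwalls} that the interior hyperplanes cut out the $(n-1)$-extended Catalan arrangement, and cite Athanasiadis~\cite[Corollary~1.3]{Athanasiadis04} for the region count. The paper's proof is slightly more terse about the coordinate identification (it simply declares $\Lambda\cap F\cong\Q^r_{\geq 0}$ with origin $f_0$) and notes the substitution $d_i=e_i+1$ to match Athanasiadis's formulation in terms of exponents, but otherwise the arguments coincide.
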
  
 \begin{proof}
 Every chamber in $F$ intersects the affine hyperplane $\Lambda:=\{\theta\in \Theta \mid \theta(\delta)=1\}$ in an open region of dimension $r$, so it suffices to count the number of these regions. The hyperplanes $\rho_i^\perp$ for $1\leq i\leq r$ intersect $\Lambda$ to give a system of coordinate hyperplanes in $\Lambda\cong \Q^r$ with origin at $f_0 = (1,0,\dots,0)\in \Theta$, and $\Lambda\cap F$ is the positive orthant $\Q^r_{\geq 0}$. More generally, the intersection of $\Lambda$ with the hyperplanes from Lemma~\ref{lem:Fwalls} and the hyperplanes $\rho_i^\perp$ for $1\leq i\leq r$ defines the following collection of affine hyperplanes in $\Lambda$:
 \[
 \big\{\theta\in \Lambda \mid \theta(\beta) = m\big\} \quad \text{for }0 \leq m< n, \; \beta\in \Phi^+;
 \]
 this is the \emph{$(n-1)$-extended Catalan hyperplane arrangement of $\Phi$} from \cite{Athanasiadis04}, or one of the \emph{truncated $\Phi_\aff$-affine arrangements} from \cite{PS00}. It follows that the connected components of $\Lambda\cap F\cap\Theta^{\reg}$ are precisely the regions in the fundamental chamber of this hyperplane arrangement. To see that the number of such regions is given by formula \eqref{eqn:count}, substitute $d_i=e_i+1$ for $1\leq i\leq r$ where $e_1,\dots,e_r$ are the exponents of the Weyl group $W_\Gamma$ (see, for example, Carter~\cite[Corollary~10.2.4]{SimpleGroupsOfLieType}), and apply Athanasiadis~\cite[Corollary~1.3]{Athanasiadis04}.
 \end{proof}
 
 \begin{remark}
 \begin{enumerate}
 \item The proof of Theorem~\ref{thm:counting} shows that the unbounded regions in the $(n-1)$-extended Catalan hyperplane arrangement of $\Phi$ are precisely the intersection with the affine hyperplane $\Lambda$ of those chambers in $F$ whose closure touches the facet $\delta^\perp$ of $F$.
 \item Proposition~\ref{prop:W} and
Theorem~\ref{thm:counting} together imply that there are 
 \[
 2\vert W_\Gamma\vert \prod_{i=1}^{r} \frac{(n-1)h+d_i}{d_i}
 \]
 chambers in $\Theta$.
 \end{enumerate}
 \end{remark}
 
\begin{example}
 \label{exa:A2n=4}
 For $n=4$ and $\Phi$ of type $A_2$, Figure~\ref{fig:A2n=4} illustrates in two ways the decomposition of the cone $F$ into chambers:  Figure~\ref{fig:A2n=4a} shows all 22 regions in the fundamental chamber of the 3-extended Catalan hyperplane arrangement of $\Phi$ in the affine plane $\Lambda$ parallel to $\delta^\perp$ that was introduced in the proof of Theorem~\ref{thm:counting}; 
  \begin{figure}[!ht]
   \centering
      \subfigure[]{
      \label{fig:A2n=4a}
       \begin{tikzpicture}[xscale=1.25,yscale=1.25]
			\tikzset{>=latex}
			\draw [->] (0,0) -- (4,0);
			\draw [->] (0,0) -- (0,4);
			\draw (1,0) -- (1,3.8);
			\draw (2,0) -- (2,3.8);
			\draw (3,0) -- (3,3.8);
            \draw (0,1) -- (3.8,1);
            \draw (0,2) -- (3.8,2);
            \draw (0,3) -- (3.8,3);
            \draw (1,0) -- (0,1);
            \draw (2,0) -- (0,2);
            \draw (3,0) -- (0,3);
            \node at (0.35,0.3) {$C_+$};
            \node at (3.5,3.4) {$C_-$};
            \node at (-0.3,2) {$\rho_1^\perp$};
			\node at (2,-0.3) {$\rho_2^\perp$};
            \end{tikzpicture} 
       }
      \qquad  \qquad
      \subfigure[]{
       \label{fig:A2n=4b}
            \begin{tikzpicture}[baseline={(0,-0.3)},xscale=1.5,yscale=1.25]
			\tikzset{>=latex}
            \draw (2,0) -- (4,4);
			\draw (0,4) -- (4,4);
			\draw (0,4) -- (2,0);
            \draw (1.25,1.5) -- (4,4);
            \draw (2.75,1.5) -- (0,4);
            \draw (1.25,1.5) -- (2.75,1.5);
            \draw (10/11,24/11) -- (34/11,24/11);
            \draw (10/11,24/11) -- (4,4);
            \draw (0,4) -- (34/11,24/11);
            \draw (5/7,72/28) -- (23/7,72/28);
            \draw (5/7,72/28) -- (4,4);
            \draw (0,4) -- (23/7,72/28);
              \node at (2,1) {$C_+$};
  \node at (2,3.5) {$C_-$};
   \node at (2,4.2) {$\delta^\perp$};
  \node at (0.7,2) {$\rho_1^\perp$};
 \node at (3.3,2) {$\rho_2^\perp$};
			\end{tikzpicture}
            }
           \caption{Chamber decomposition: (a) fundamental chamber; (b) transverse slice  of $F$.}
           \label{fig:A2n=4}
  \end{figure}
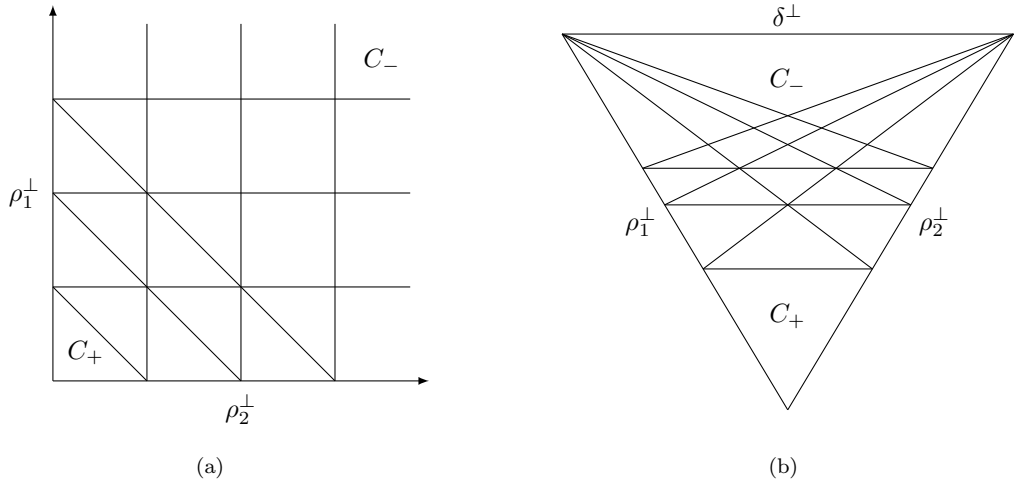
  Figure~\ref{fig:A2n=4b} shows the height-one slice of $F$ and its division into 22 chambers. In each case, we indicate where the chambers $C_\pm$ lie. The seven unbounded regions in Figure~\ref{fig:A2n=4a} correspond to the seven chambers in Figure~\ref{fig:A2n=4b} whose closure touches the facet of $F$ in $\delta^\perp$. Notice that three chambers in $F$ are not the interior of a simplicial cone.
 \end{example}

\section{\'{E}tale local normal form for quiver varieties}
Modelled on Crawley-Boevey's \'etale local description of affine quiver varieties, we give an \'etale local normal form for the morphism between quiver varieties defined by variation of GIT quotient. Pullback allows us to identify the tautological bundles on the quiver variety with the tautological bundles on the normal form. The results of this section hold for arbitrary quiver varieties. 

\subsection{Nakajima quiver varieties}
\label{sec:Nakasec}
Choose an arbitrary finite graph and let $H$ be the set of pairs consisting of an edge, together with an orientation on it. Let $\tail(a)$ and $\head(a)$ denote the tail and head respectively of the oriented edge $a \in H$. Let $a^*$ denote the same edge, but with opposite orientation. We fix an orientation of the graph, that is, a subset $\Omega \subset H$ such that $\Omega \cup \Omega^* = H$ and $\Omega \cap \Omega^* = \emptyset$. Then $\epsilon : H \rightarrow \{ \pm 1 \}$ is defined to take value $1$ on $\Omega$ and $-1$ on $\Omega^*$. Identify the vertex set of the graph with $\{ 0, 1, \ds, r \}$ for some $r\geq 1$.   

Fix collections $V_0, \ds, V_r$ and $W_0, \ds, W_r$ of finite-dimensional complex vector spaces and set 
$$
\bv = (\dim V_0, \ds, \dim V_r), \quad \bw = (\dim W_0, \ds, \dim W_r).
$$
The group $G(\bv) := \prod_{k = 0}^r \GL(V_k)$ acts naturally on the space 
$$
\mathbf{M}(\bv,\bw) := \left(\bigoplus_{a \in H} \Hom_{\C}(V_{\tail(a)}, V_{\head(a)})\right) \oplus \left(\bigoplus_{k = 0}^r \big(\Hom_{\C}(V_k,W_k) \oplus \Hom_{\C}(W_k,V_k)\big)\right).
$$
This action of $G(\bv)$ is Hamiltonian for the natural symplectic structure on $\mathbf{M}(\bv,\bw)$ and, after identifying the dual of $\mf{g}(\bv) := \mathrm{Lie} \ G(\bv)$ with $\mf{g}(\bv)$ via the trace pairing, the corresponding moment map $\gbf{\mu} \colon \mathbf{M}(\bv,\bw) \rightarrow \mf{g}(\bv)$ satisfies
$$
\gbf{\mu}(B,i,j) = \left( \sum_{\head(a) = k} \epsilon(a) B_a B_{a^*} + i_k j_k \right)_{k = 0}^r
$$
where $i_k \in \Hom_{\C}(W_k,V_k), j_k \in \Hom_{\C}(V_k,W_k)$ and $B_a \in \Hom_{\C}(V_{\tail(a)},V_{\head(a)})$. Though one can talk about arbitrary stability conditions in this context, as was done in \cite{NakajimaBranching}, it is easier in our case to apply the trick of Crawley-Boevey~\cite{CBmomap} and reduce to the case where each $W_k = 0$ by introducing a framing vertex. 

The set $H$ associated to the graph can be thought of as the arrow set of a quiver. We frame this quiver by adding an additional vertex $\infty$, as well as $\bw_i$ arrows from vertex $\infty$ to vertex $i$ and another $\bw_i$ arrows from vertex $i$ to vertex $\infty$. This framed (doubled) quiver is denoted $Q = (I,Q_1)$, where $I = \{ \infty, 0, \ds, r \}$. Each dimension vector $\bv = (\dim V_0, \ds, \dim V_r)$ for the original graph determines a dimension vector for $Q$ that we write (without bold font) as $v = (1,\dim V_0, \ds, \dim V_r)$. We may identify $\mathbf{M}(\bv,\bw)$ with the space 
\[
\Rep(Q,v):= \bigoplus_{a\in Q_1} \Hom_{\C}(\C^{\tail(a)},\C^{\head(a)})
\]
of representations of $Q$ of dimension vector $v$ in such a way that the $G(\bv)$-action on $\mathbf{M}(\bv,\bw)$ corresponds to the action of the group $G(v):=\big(\prod_{i\in I} \GL(v_i)\big)/\C^\times$ on $\Rep(Q,v)$ by conjugation and, moreover, that the above map $\gbf{\mu}$ corresponds to the moment map $\mu$ induced by this $G(v)$-action on $\Rep(Q,v)$. If we write 
\[
\Theta_{v}:= \big\{\theta \in \Hom(\Z^I,\Q) \mid  \theta(v)=0\big\},
\]
 then every character of $G(v)$ is of the form $\chi_\theta\colon G(v)\to \C^\times$ for some integer-valued $\theta\in \Theta_{v}$, where $\chi_\theta(g) = \prod_{i \in I} \det(g_i)^{\theta_i}$ for $g\in G(v)$. For $\theta\in \Theta_v$, after replacing $\theta$ by a positive multiple if necessary, the (Nakajima) \emph{quiver variety} associated to $\theta$ is the categorical quotient 
 \[
 \mathfrak{M}_\theta(\bv,\bw) :=  \mu^{-1}(0)\git_{\chi_\theta} G(v)= \mu^{-1}(0)^{\theta}/\!\!/G(v) = \Proj\bigoplus_{k\geq 0} \C[\mu^{-1}(0)]^{\chi_{k\theta}},
 \]
where $\mu^{-1}(0)^{\theta}$ denotes the locus of $\chi_\theta$-semistable points in $\mu^{-1}(0)$ and $\C[\mu^{-1}(0)]^{\chi_{k\theta}}$ is the $\chi_{k\theta}$-semi-invariant slice of the coordinate ring of the affine variety $\mu^{-1}(0)$. Note that $\C^\times$ acts on $\mathbf{M}(\bv,\bw)$ by scaling, and this action descends to an action on $\mathfrak{M}_\theta(\bv,\bw)$.
  
 For a more algebraic description of $\mathfrak{M}_\theta(\bv,\bw)$,  extend $\epsilon$ to $Q$ by setting $\epsilon(a) = 1$ if $a \colon \infty \rightarrow i$ and $\epsilon(a) = -1$ if $a \colon i \rightarrow \infty$. The \textit{preprojective algebra} $\Pi$ is the quotient of the path algebra $\C Q$ by the relation
 \begin{equation}
 \label{eqn:preprojective}
\sum_{a \in Q_1} \epsilon(a) aa^* = 0. 
\end{equation}
Given $\theta\in \Theta_v$, we say that a $\Pi$-module $M$ of dimension vector $v$ is $\theta$-semistable if $\theta(N)\geq 0$ for all submodules $N\subseteq M$, and it is $\theta$-stable if $\theta(N)>0$ for all proper nonzero submodules. A finite dimensional $\Pi$-module is said to be $\theta$-polystable if it is a direct sum of $\theta$-stable $\Pi$-modules. King~\cite{KingStable} proved that a $\Pi$-module $M$ of dimension vector $v$ is $\theta$-semistable (resp.\ $\theta$-stable) if and only if the corresponding point of $\mu^{-1}(0)$ is $\chi_\theta$-semistable (resp.\ $\chi_\theta$-stable) in the sense of GIT. In fact \cite[Propositions~3.2,5.2]{KingStable} establishes that the quiver variety $\mathfrak{M}_\theta(\bv,\bw)$ is the coarse moduli space of $\Seshadri$-equivalence classes of $\theta$-semistable $\Pi$-modules of dimension vector $v$, where the closed points of $\mathfrak{M}_\theta(\bv,\bw)$ are in bijection with the $\theta$-polystable representations of $\Pi$ of dimension $v$. The (possibly empty) open subset of $\mathfrak{M}_\theta(\bv,\bw)$ parameterizing $\theta$-stable representations will be denoted $\mathfrak{M}_\theta(\bv,\bw)^s$. 

The geometry of the quiver varieties $\mathfrak{M}_\theta(\bv,\bw)$ may change as we vary the stability parameter $\theta\in \Theta_v$. We say that $\theta\in \Theta_v$ is \emph{effective} (with respect to $v$) if there exists a $\theta$-semistable $\Pi$-module of dimension vector $v$, and it is \emph{generic} (with respect to $v$) if every $\theta$-semistable $\Pi$-module of dimension vector $v$ is $\theta$-stable. The work of Dolgachev--Hu~\cite{DolgachevHu} and Thaddeus~\cite{Thaddeus} implies that there is a wall-and-chamber structure on the cone of effective stability parameters in $\Theta_v$, where two generic parameters $\theta, \theta^\prime\in \Theta_v$ lie in the same (open) GIT chamber if and only if the notions of $\theta$-stability and $\theta^\prime$-stability coincide, in which case $\mathfrak{M}_\theta(\bv,\bw)\cong \mathfrak{M}_{\theta^\prime}(\bv,\bw)$. The GIT walls of a GIT chamber are the codimension-one faces of the closure of the chamber. 

\begin{remark}
Note that a priori, the locus of generic stability parameters could be empty. 
\end{remark}

 \subsection{A local normal form}\label{sec:localnormalform}
We begin by describing an \'etale local form for $\mf{M}_{\theta}(\bv,\bw)$ based on Luna's slice theorem, generalising \cite[Section~4]{CBnormal}. Let $A$ be the adjacency matrix of the framed (doubled) quiver $Q$, i.e.
$$
A = (a_{i,j}), \quad a_{i,j} := | \{ a \in Q_1 \ | \ \tail(a) = i, \head(a) = j \}|.
$$
Then $A$ is a symmetric matrix and we define a symmetric bilinear form on $\Z^I$ by setting 
\[
(\alpha,\beta) := \alpha^t C \beta
\]
 where $C = 2 \mathrm{Id} - A$ is the Cartan matrix of $Q$. Let $p$ be the quadratic form on $\Z^{I}$ defined by setting 
 \[
 p(\alpha) = 1 - \frac{1}{2}(\alpha,\alpha).
 \]
Let $\theta \in \Theta_v$, and choose $\theta_0 \in \Theta_v$ that lies in the boundary of the closure of the GIT chamber containing $\theta$ (the stability condition $\theta_0$ should not be confused with the component of $\theta$ corresponding to the vertex $0$; in all that follows, we never refer to the latter). Then there is a projective morphism 
\[
f \colon \mf{M}_{\theta}(\bv,\bw) \rightarrow  \mf{M}_{\theta_0}(\bv,\bw)
\]
 induced by variation of GIT quotient. In this generality, $f$ need not be birational. 
 
 Choose a closed point $x \in  \mf{M}_{\theta_0}(\bv,\bw)$ and write $M_{\infty} \oplus M_0^{\bm_0} \oplus \cdots \oplus M_k^{\bm_k}$ for the corresponding $\theta_0$-polystable representation, where $M_{\infty}$ is the unique $\theta_0$-stable summand with $(\dim M_{i})_{\infty} \neq 0$.  For $0\leq i\leq k$, let $\beta^{(i)}\in \Z^I$ denote the dimension vector of $M_i$. Following Nakajima~\cite[Section~6]{Nak1994} and Crawley-Boevey~\cite[Section~4]{CBnormal}, define the \emph{$\Ext$-graph} associated to $x$ to be the graph with vertex set $\{ 0, \ds, k \}$, and edge set comprising $p(\beta^{(i)})$ loops at vertex $i$ and $-(\beta^{(i)}, \beta^{(j)})$ edges between $i$ and $j$; the terminology is motivated by \cite[Lemma 1]{CBKleinian}. Note that the $\Ext$-graph is empty if and only if $x$ is a $\theta_0$-stable point, see Remark~\ref{rem:emptyExtgraph}. We form new dimension vectors 
\begin{equation}
\label{eqn:mandn}
\bm = (\bm_0, \ds, \bm_k) \quad\text{ and }\quad\bn = (\bn_0, \ds, \bn_k),
\end{equation}
where $\bn_i = - (\beta^{(\infty)},\beta^{(i)})$. Finally, define the exponent $\varrho\in \Hom(\Z^{k+1},\Q)$ for a rational character of $G(\bm)$ by 
\begin{equation}
\label{eqn:varrho}
\varrho(\gamma) = \theta\left( \sum_{i = 0}^k \gamma_i \beta^{(i)}\right) 
\end{equation}
for $\gamma = (\gamma_i)\in \Z^{k+1}$. The corresponding character of $G(\bm)$ is obtained from the character $\chi_\theta$ of $G(\bv)$ by restriction, i.e., $\chi_\varrho = \res^{G(\bv)}_{G(\bm)}(\chi_\theta)$. We write $f_{\varrho} \colon \mf{M}_{\varrho}(\bm,\bn) \rightarrow \mf{M}_{0}(\bm,\bn)$ for the projective morphism. 

\begin{thm}\label{thm:commdiagram22}
Let $\ell = p(\beta^{(\infty)}) \ge 0$. There exist affine open neighbourhoods $V \subset \mf{M}_{\theta_0}(\bv,\bw)$ and $V^{\prime} \subset \mf{M}_0(\bm,\bn) \times \C^{2\ell}$ of $x$ and $0$ respectively, together with a projective morphism $\xi\colon \pZ \rightarrow Z$ and a closed point $z\in Z$, forming a diagram 
\begin{equation}\label{eq:cartdiagramquiver0}
\begin{tikzcd}
f^{-1}(V) \ar[d,"f"] & \ar[l] \pZ \ar[r] \ar[d,"{\xi}"] & (f_{\varrho} \times \mathrm{id})^{-1}(V^{\prime})  \ar[d,"{f_{\varrho} \times \mathrm{id}}"] \\
V & \ar[l,"{p}"'] Z \ar[r,"q"] & V^{\prime} 
\end{tikzcd}
\end{equation}
where both squares are Cartesian, all horizontal maps are \'etale, and where $p(z)=x$, $q(z)=0$.
\end{thm}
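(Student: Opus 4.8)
The plan is to deduce this relative statement from the \emph{absolute} \'etale-local description of $\mf{M}_{\theta_0}(\bv,\bw)$ near $x$ --- essentially the content of Bellamy--Schedler~\cite{BelSchQuiver}, building on Crawley-Boevey~\cite[Section~4]{CBnormal} and Nakajima~\cite[Section~6]{Nak1994} --- by carrying the stability character $\chi_\theta$ through Luna's slice and checking compatibility with the (relative) $\Proj$ construction of $\mf{M}_\theta(\bv,\bw)$.

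First I would fix a point $p\in\mu^{-1}(0)$ on the unique closed $G(v)$-orbit in the $\theta_0$-semistable locus lying over $x$, so that $p$ represents the $\theta_0$-polystable module $M_\infty\oplus\bigoplus_{i=0}^k M_i^{\bm_i}$. Since $M_\infty$ occurs with multiplicity one and the $M_i$ are pairwise non-isomorphic $\theta_0$-stable $\Pi$-modules, the stabiliser $\Stab_{G(v)}(p)$ is a copy of $G(\bm)=\prod_{i=0}^{k}\GL(\bm_i)$, the $\infty$-factor contributing only the central $\C^\times$, which is trivial in $G(v)$. Because each $\theta_0$-stable summand is both a submodule and a quotient of the polystable module, $\theta_0(\beta^{(i)})=0$ for $0\le i\le k$ and likewise $\theta_0(\beta^{(\infty)})=0$; hence $\chi_{\theta_0}$ restricts \emph{trivially} to $\Stab_{G(v)}(p)$, whereas $\chi_\theta$ restricts to $\chi_\varrho$ by the computation recorded after~\eqref{eqn:varrho}. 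Applying Luna's \'etale slice theorem at $p$, and computing the normal space to the orbit in $\Rep(Q,v)$ together with its intersection with $\mu^{-1}(0)$ as in \cite[Section~4]{CBnormal} and \cite[Lemma~1]{CBKleinian}, produces a $\Stab_{G(v)}(p)$-stable affine slice $S\ni p$ and a $G(v)$-saturated affine open of $\mu^{-1}(0)$ around the orbit of $p$, with the following features: the $\Ext$-graph, the vectors $\bm$, $\bn$ and the integer $\ell=p(\beta^{(\infty)})\ge 0$ are read off from this normal space (the inequality holds because $M_\infty$ is a brick, so $p(\beta^{(\infty)})=\tfrac12\dim\Ext^1_\Pi(M_\infty,M_\infty)$); and, $G(\bm)$-equivariantly and \'etale-locally around $p$, the slice $S$ is the product of a trivial factor $\C^{2\ell}$ with the zero fibre of the moment map for the $\Ext$-quiver with dimension vector $\bm$ and framing $\bn$. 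Forming GIT quotients with the \emph{trivial} character --- legitimate since $\chi_{\theta_0}|_{\Stab}$ is trivial, which is exactly why $\mf{M}_0(\bm,\bn)$ rather than a twisted quotient appears --- yields affine opens $V\subset\mf{M}_{\theta_0}(\bv,\bw)$ of $x$ and $V'\subset\mf{M}_0(\bm,\bn)\times\C^{2\ell}$ of $0$, a common affine \'etale neighbourhood $Z$ with \'etale maps $p\colon Z\to V$ and $q\colon Z\to V'$, and a point $z\in Z$ with $p(z)=x$, $q(z)=0$: this is the bottom row of~\eqref{eq:cartdiagramquiver0}.

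For the top row I would put $\pZ:=\mf{M}_\theta(\bv,\bw)\times_{\mf{M}_{\theta_0}(\bv,\bw)}Z$, so the left square of~\eqref{eq:cartdiagramquiver0} is Cartesian by construction, $\pZ\to f^{-1}(V)$ is \'etale as a base change of the \'etale map $p$, and $\xi\colon\pZ\to Z$ is projective as a base change of $f$. It remains to exhibit a $Z$-isomorphism $\pZ\cong(f_\varrho\times\mathrm{id})^{-1}(V')\times_{V'}Z$, which makes the right square Cartesian and $\pZ\to(f_\varrho\times\mathrm{id})^{-1}(V')$ \'etale. For this I would use that $\theta_0$ lies in the closure of the chamber of $\theta$, so $\mu^{-1}(0)^{\theta}\subseteq\mu^{-1}(0)^{\theta_0}$, together with the standard fact that GIT semistability is detected on a Luna slice: over the $G(v)$-saturated open around the orbit of $p$, the $\chi_\theta$-semistable locus corresponds, under $G(v)\times_{\Stab_{G(v)}(p)}S\to\mu^{-1}(0)$, to the $\chi_\varrho$-semistable locus of $S$. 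Combined with the product description of $S$, on whose factor $\C^{2\ell}$ the group $G(\bm)$ acts trivially, this identifies $f^{-1}(V)$ \'etale-locally with $\mf{M}_\varrho(\bm,\bn)\times\C^{2\ell}$ in a way that turns $f$ into $f_\varrho\times\mathrm{id}$; pulling back along $Z\to V$ gives the required $Z$-isomorphism. Finally, the $\Ext$-graph, the vectors $\bm$ and $\bn$, the character $\varrho$ and the integer $\ell$ depend only on the isomorphism class of the $\theta_0$-polystable representation at $x$, hence only on the GIT stratum of $\mf{M}_{\theta_0}(\bv,\bw)$ through $x$, and so does the \'etale-local model of $f$.

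The main obstacle is the slice-theoretic input feeding both rows. One half is Crawley-Boevey's moment-map computation showing that the slice to $\mu^{-1}(0)$ at a polystable point has, $\Stab$-equivariantly, the product form above; this is largely contained in \cite{CBnormal,BelSchQuiver}, and the only new bookkeeping is to track the stability character and recognise its restriction to $\Stab$ as $\chi_\varrho$. The other half --- genuinely needed here because $\theta_0$ lies on a wall rather than in a chamber --- is the comparison of $\chi_\theta$-semistability on the saturated neighbourhood with $\chi_\varrho$-semistability on the slice, which is what upgrades the absolute statement to the relative one; concretely one must check that the saturated Luna neighbourhood is also $\chi_\theta$-saturated, so that $f$, built as a relative $\Proj$ of semi-invariants over $\mf{M}_0(\bv,\bw)$, restricts to $f_\varrho$ on the slice. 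Granting these, the two Cartesian squares, the \'etaleness of all horizontal maps and the projectivity of $\xi$ are formal consequences of base change.
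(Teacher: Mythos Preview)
Your proposal is correct and follows essentially the same strategy as the paper: lift $x$ to a closed orbit in $\mu^{-1}(0)^{\theta_0}$, identify the stabiliser with $G(\bm)$, apply Luna's slice together with Crawley-Boevey's moment-map computation to get the \'etale neighbourhood $Z=U\git G(\bm)$, and then match $\chi_\theta$-semistability on the saturated neighbourhood with $\chi_\varrho$-semistability on the slice to build the top row. The paper organises this into two lemmas --- one for each square of~\eqref{eq:cartdiagramquiver0} --- and makes the semistability comparison explicit via Frobenius reciprocity $\C[G(\bv)\times_{G(\bm)}U]^{\chi_\theta}\cong\C[U]^{\chi_\varrho}$, whereas you define $\pZ$ as the fibre product (so the left square is Cartesian by fiat) and then identify it with $U^\varrho\git G(\bm)$; these are equivalent formulations of the same argument.
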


\begin{rem}
 \label{rem:emptyExtgraph}
 If the point $x$ is $\theta_0$-stable, then the $\Ext$-graph is empty, the quiver variety $\mf{M}_0(\bm,\bn)$ is a closed point, and $\ell=p(\bv) = \frac{1}{2}\dim \mf{M}_{\theta_0}(\bv,\bw)$.
\end{rem}

Theorem \ref{thm:commdiagram22} implies:

\begin{cor}
There is an isomorphism $f^{-1}(x) \cong (f_{\varrho}\times \mathrm{id})^{-1}(0)$. In particular, 
\begin{enumerate}
\item[\one] $f^{-1}(x) \neq \emptyset$ if and only if $(f_{\varrho}\times \mathrm{id})^{-1}(0)  \neq \emptyset$; and
\item[\two] $\dim f^{-1}(x) = \dim\; (f_{\varrho}\times \mathrm{id})^{-1}(0)$.
\end{enumerate}
\end{cor}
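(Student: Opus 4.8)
The plan is to obtain the isomorphism as a purely formal consequence of Theorem~\ref{thm:commdiagram22}, by chasing scheme-theoretic fibres through the two Cartesian squares of~\eqref{eq:cartdiagramquiver0}. First I would read off from the left-hand square that $\pZ\cong f^{-1}(V)\times_V Z$, with $\xi$ identified with the projection onto the second factor. Taking the fibre over the closed point $z\in Z$ then gives
\[
\xi^{-1}(z)\;\cong\; f^{-1}(V)\times_V \Spec k(z),
\]
where the structure morphism $\Spec k(z)\to V$ is the composite $p\circ z$. Since we work over $\C$, the point $z$ has residue field $k(z)=\C$, and because $p$ is étale it induces an isomorphism $k(x)\rightsim k(z)$ with $x=p(z)$; hence $p\circ z$ is simply the inclusion of the closed point $x\in V$. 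As $x\in V$ by construction, this yields $\xi^{-1}(z)\cong f^{-1}(x)$, the scheme-theoretic fibre of $f\colon \mf{M}_{\theta}(\bv,\bw)\to \mf{M}_{\theta_0}(\bv,\bw)$ over $x$.

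Next I would repeat this verbatim for the right-hand Cartesian square: it exhibits $\pZ\cong (f_{\varrho}\times \mathrm{id})^{-1}(V^{\prime})\times_{V^{\prime}} Z$ with $\xi$ again the second projection, so
\[
\xi^{-1}(z)\;\cong\; (f_{\varrho}\times \mathrm{id})^{-1}(V^{\prime})\times_{V^{\prime}}\Spec k(z).
\]
Here $q$ is étale and $q(z)=0\in V^{\prime}\subseteq \mf{M}_0(\bm,\bn)\times \C^{2\ell}$, so by the same residue-field argument the structure morphism $\Spec k(z)\to V^{\prime}$ is the inclusion of $0$, and since $0\in V^{\prime}$ we get $\xi^{-1}(z)\cong (f_{\varrho}\times \mathrm{id})^{-1}(0)$. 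Combining the two identifications of $\xi^{-1}(z)$ produces the desired isomorphism $f^{-1}(x)\cong (f_{\varrho}\times \mathrm{id})^{-1}(0)$, and statements~\one and~\two follow at once, since an isomorphism of schemes is in particular a bijection on underlying sets (so detects emptiness) and preserves Krull dimension.

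The argument is short, so I do not expect a genuine obstacle; the only points needing a little care are that $\xi$ must literally be the second projection in \emph{both} fibre-product descriptions of $\pZ$ — which is exactly the content of the two squares in~\eqref{eq:cartdiagramquiver0} being Cartesian with common middle column $\xi\colon\pZ\to Z$, so no extra work is required — and that restricting $f$ and $f_{\varrho}\times\mathrm{id}$ to the affine open neighbourhoods $V$ and $V^{\prime}$ does not alter the fibres over $x$ and $0$, which holds because $x\in V$ and $0\in V^{\prime}$.
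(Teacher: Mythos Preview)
Your proof is correct and is exactly the argument the paper has in mind: the corollary is stated without proof, prefaced only by ``Theorem~\ref{thm:commdiagram22} implies,'' and your fibre-product chase through the two Cartesian squares of~\eqref{eq:cartdiagramquiver0} is the intended (and essentially only) way to extract the isomorphism $\xi^{-1}(z)\cong f^{-1}(x)$ and $\xi^{-1}(z)\cong (f_{\varrho}\times\mathrm{id})^{-1}(0)$ from that diagram.
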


Passing to the formal neighbourhood of $x$ in $V$, and the formal neighbourhood of $f^{-1}(x)$ in $f^{-1}(V)$, we deduce, as was explained in \cite[Section 2.1.6]{BezLosevConj}, that there is a commutative diagram of formal schemes:
\begin{equation}\label{eq:cartdiagram2}
\begin{tikzcd}
\mf{M}_{\theta}(\bv,\bw)_{f^{-1}(x)} \ar[d,"f"] \ar[r,"\sim"] & (\mf{M}_{\varrho}(\bm,\bn) \times \C^{2\ell})_{f_{\varrho}^{-1}(0) \times \{ 0 \} } \ar[d,"{f_{\varrho}\times \mathrm{id}}"] \\
\mf{M}_{\theta_0}(\bv,\bw)_x \ar[r,"q"',"\sim"] & (\mf{M}_{0}(\bm,\bn)\times \C^{2\ell})_{0}. 
\end{tikzcd}
\end{equation}

\subsection{The proof of Theorem \ref{thm:commdiagram22}}

Choose a lift $y \in \mu^{-1}(0)^{\theta_0}$ of $x$ whose $G(\bv)$-orbit is closed in $\mu^{-1}(0)^{\theta_0}$. As shown in \cite[Lemma~3.2]{BelSchQuiver}, the stabiliser subgroup $G(\bv)_y\cong G(\bm)$ is a reductive subgroup of $G(\bv)$. Since $G(\bm)$ is reductive, it is explained in \cite[\S 4]{CBnormal} that one can find a coisotropic $G(\bm)$-module complement $C$ to $\mf{g}(\bv)\cdot y$ in $\mathbf{M}(\bv,\bw)$. As in \textit{loc.\ cit.}, we let $\mu_y$ denote the composition of $\mu$ with the quotient map $\mf{g}(\bv)^* \rightarrow \mf{g}(\bm)^*$ and let $L$ be a $G(\bm)$-stable complement to $\mf{g}(\bm)$ in $\mf{g}(\bv)$. As in \textit{loc.\ cit.}, we define $\nu\colon C \rightarrow L^*$ by 
$$
\nu(c)(l) = \omega(c,ly) + \omega(c,lc) + \omega(y,lc),
$$
 where $\omega$ is the $G$-invariant symplectic form on $\mathbf{M}(\bv,\bw)$. For $c\in C$, $g\in \mf{g}(\bm)$ and $l\in L$ we calculate 
 \begin{equation}
 \label{eqn:moMapFormula}
 \mu(y+c)(g+l) = \omega\big(y+c,(g+l)(y+c)\big) = \nu(c)(l) + \mu_y(c)(g).
 \end{equation}
 The following two results are each a relative version of \cite[Theorem~3.3]{BelSchQuiver}. 

\begin{lem}\label{lem:elate1}
There exists a $G(\bm)$-saturated affine open subset $U_0$ of $0 \in C$ such that:
\begin{enumerate}
\item[\one] the map from $U := U_0 \cap \mu_y^{-1}(0) \cap \nu^{-1}(0)$ to $\mu^{-1}(0)^{\theta_0}$ sending $c$ to $y+c$ induces an \'etale morphism 
\[
p \colon G(\bv) \times_{G(\bm)} U \rightarrow \mu^{-1}(0)^{\theta_0}
\]
 whose image $V$ is a $G(\bv)$-saturated affine open subset of $\mu^{-1}(0)^{\theta_0}$; 
\item[\two] the map $p$ restricts to an \'etale map $G(\bv) \times_{G(\bm)} U^{\varrho} \rightarrow V^{\theta}$ sending the point $(1,0)$ to $y$; and
\item[\three] these maps induce a Cartesian diagram 
\begin{equation}\label{eq:Cartdi1}
\begin{tikzcd}
V^{\theta} \git \ G(\bv) \ar[d,"f"] &  U^{\varrho} \git \ G(\bm) \ar[l] \ar[d,"{\xi}"]\\
V \git \ G(\bv) &  U \git \ G(\bm) \ar[l,"p"']. 
\end{tikzcd}
\end{equation}
with both horizontal maps \'etale.
\end{enumerate}
\end{lem}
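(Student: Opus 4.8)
The plan is to run Luna's étale slice theorem for the linear $G(\bv)$-action on $\mathbf{M}(\bv,\bw)$ at $y$ exactly once and then cut the result down by the moment map. Parts \one\ together with the lower half of \eqref{eq:Cartdi1} are essentially the absolute statement \cite[Theorem~3.3]{BelSchQuiver} (which builds on \cite[\S4]{CBnormal}), so the genuinely new content is that the single slice produced for $\theta_0$ simultaneously controls the parameter $\theta$. Concretely, since $G(\bv)y$ is closed, $G(\bv)_y = G(\bm)$ is reductive, and $C$ is a $G(\bm)$-module complement to $\mf{g}(\bv)\cdot y$, the map $(g,c)\mapsto g\cdot(y+c)$ is étale at $(1,0)$; localising at a $\chi_{N\theta_0}$-semi-invariant $s$ on $\mu^{-1}(0)$ with $s(y)\neq 0$ so that everything takes place inside an affine GIT quotient of $\mf{M}_{\theta_0}(\bv,\bw)$, I would choose $U_0$ to be a sufficiently small $G(\bm)$-saturated affine open neighbourhood of $0\in C$ so that $\tilde p\colon G(\bv)\times_{G(\bm)}U_0\to\mathbf{M}(\bv,\bw)$ is étale onto a $G(\bv)$-saturated affine open subset and induces an étale morphism of affine quotients fitting into a Cartesian square.

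Next I would cut down by the moment map. Writing $\mf{g}(\bv) = \mf{g}(\bm)\oplus L$ as $G(\bm)$-modules, the identity \eqref{eqn:moMapFormula} shows that for $c\in U_0$ one has $\mu(y+c)=0$ if and only if $\mu_y(c)=0$ and $\nu(c)=0$, so by $G(\bv)$-equivariance of $\mu$ the closed subscheme $\tilde p^{-1}(\mu^{-1}(0))$ of $G(\bv)\times_{G(\bm)}U_0$ is identified with $G(\bv)\times_{G(\bm)}U$, where $U = U_0\cap\mu_y^{-1}(0)\cap\nu^{-1}(0)$. As étale morphisms are stable under base change, $p\colon G(\bv)\times_{G(\bm)}U\to\mu^{-1}(0)$ is étale; shrinking $U_0$ so that its image $V$ lies in the open $G(\bv)$-stable set $\mu^{-1}(0)^{\theta_0}\ni y$, and noting that $V$ is the preimage of an open subset of $\mf{M}_{\theta_0}(\bv,\bw)$ under the quotient map, hence $G(\bv)$-saturated, yields \one.

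For \two\ and \three\ I would use that the slice $U$ is independent of the stability parameter. Since $\chi_\varrho=\res^{G(\bv)}_{G(\bm)}\chi_\theta$ by \eqref{eqn:varrho}, the standard identification $\C[G(\bv)\times_{G(\bm)}U]^{G(\bv),\chi_{N\theta}}\cong\C[U]^{G(\bm),\chi_{N\varrho}}$ of semi-invariants on an induced space shows that the $\theta$-semistable locus of $G(\bv)\times_{G(\bm)}U$ equals $G(\bv)\times_{G(\bm)}U^\varrho$, so it remains to check that $p$ carries this locus onto $V^\theta:=V\cap\mu^{-1}(0)^\theta$: pulling back a $\chi_{N\theta}$-semi-invariant from $\mu^{-1}(0)$ gives one inclusion, while the other uses that $p$ induces an étale morphism of good quotients (Luna's fundamental lemma), which allows a $\chi_{N\varrho}$-semi-invariant on $U$ to be extended, after raising to a power, to a $\chi_{N\theta}$-semi-invariant near $V$; base change of $p$ along $\mu^{-1}(0)^\theta\hookrightarrow\mu^{-1}(0)$ then produces the étale morphism of \two. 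Finally, $V$ and $V^\theta$ being $G(\bv)$-saturated open in $\mu^{-1}(0)^{\theta_0}$ and $\mu^{-1}(0)^\theta$ respectively, their good quotients are open in $\mf{M}_{\theta_0}(\bv,\bw)$ and $\mf{M}_\theta(\bv,\bw)$, the left vertical arrow of \eqref{eq:Cartdi1} is the restriction of $f$, and $\xi$ is the morphism induced by variation of GIT quotient for $U$ from $\chi_\varrho$ to the trivial character; applying Luna's fundamental lemma to $p$ and to its restriction over $V^\theta$ produces the two Cartesian squares, which glue to \eqref{eq:Cartdi1}.

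The main obstacle is the $\theta$-semistability comparison in the previous step: unlike the absolute case, $y$ is only $\theta_0$-polystable and in general does not have a closed $G(\bv)$-orbit in $\mu^{-1}(0)^\theta$, so one cannot simply reapply the GIT form of Luna's slice theorem for the parameter $\theta$. The argument must instead transport $\theta$-semistability through the fixed $\theta_0$-slice, and it is precisely the identity $\chi_\varrho=\res^{G(\bv)}_{G(\bm)}\chi_\theta$ built into the definition \eqref{eqn:varrho} of $\varrho$ that forces the two semistability pictures to agree.
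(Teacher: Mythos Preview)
Your proposal is correct and follows essentially the same route as the paper: part \one\ is reduced to \cite[Theorem~3.3]{BelSchQuiver} via the moment-map identity \eqref{eqn:moMapFormula}, and for \two\ the paper proves the equality $p^{-1}(V^{\theta})=G(\bv)\times_{G(\bm)}U^{\varrho}$ by the same two ingredients you name---Frobenius reciprocity $\C[G(\bv)\times_{G(\bm)}U]^{\chi_\theta}\cong\C[U]^{\chi_\varrho}$ for one inclusion, and the Cartesian isomorphism $G(\bv)\times_{G(\bm)}U\cong V\times_{V\git G(\bv)}U\git G(\bm)$ (Luna's fundamental lemma) to write a $\chi_{k\theta}$-semi-invariant on the induced space as $\sum_i h_i\otimes\gamma_i$ with $h_i\in\C[V]^{\chi_{k\theta}}$ for the other. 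Your ``raising to a power'' is unnecessary: the tensor-product decomposition already produces $h_i$ of the same weight $k\theta$, so no further power is needed.
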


\begin{proof}
 Equation~\eqref{eqn:moMapFormula} shows that if $c\in \mu_y^{-1}(0)\cap \nu^{-1}(0)$ then $y+c\in \mu^{-1}(0)$. We now apply \cite[Equation~(4), Lemma~3.9]{BelSchQuiver} to obtain a $G(\bm)$-saturated affine open neighbourhood $U_0$ of $0$ in $C$ such that \one\ holds. 
 
 For \two, it suffices to show that $p^{-1}(V^{\theta}) = G(\bv) \times_{G(\bm)} U^{\varrho}$. To show that the left hand side is contained in the right hand side, we need only show that if $c \in U$ such that $y + c \in V^{\theta}$, then $c \in U^{\varrho}$. If $y + c \in V^{\theta}$ then there exists an $m \theta$-semi-invariant function $\gamma$ on $V$ such that $\gamma(y + c) \neq 0$. Define $h\colon C \rightarrow \C$ by setting $h(c) = \gamma(y + c)$. Then $h$ is $\varrho$-semi-invariant, and hence $c \in U^{\varrho}$. For the opposite inclusion, the tensor product of the counit and the identity map gives a $G(\bm)$-module homomorphism $\C[G(\bv) \times_{G(\bm)} U]\to\C[U]$, and Frobenius reciprocity~\cite[Proposition~3.4]{Jantzen87} gives
 \[
 \C[G(\bv) \times_{G(\bm)} U]^{\chi_\theta}\cong 
 \Hom_{G(\bv)}\big(\chi_\theta,\C[G(\bv) \times_{G(\bm)} U]\big)\cong 
 \Hom_{G(\bm)}\big(\res^{G(\bv)}_{G(\bm)} (\chi_\theta),\C[U]\big)\cong
 \C[U]^{\chi_{\varrho}}.
 \]
 If $c \in U^{\varrho}$ then choose $h \in \C[U]^{k\varrho}$ for some $k>0$ such that $h(c) \neq 0$. We define $h' \in \C[G(\bv) \times_{G(\bm)} U]^{\chi_{k\theta}}$ by $h'(g,c) = \chi_\theta(g) h(c)$. Now, 
$$
G(\bv) \times_{G(\bm)} U \cong V \times_{V/\!\!/ G(\bv)} U/\!\!/ G(\bm) 
$$
 and hence $\C[G(\bv) \times_{G(\bm)} U] \cong \C[V] \o_{\C[V]^{G(\bv)}} \C[U]^{G(\bm)}$. Thus, there exist $h_i \in \C[V]^{\chi_{k\theta}}$ and $\gamma_i \in \C[U]^{G(\bm)}$ such that $h' = \sum_i h_i \o \gamma_i$. In particular, 
$$
h(c) = h'(1,c) = \sum_i h_i(y + c) \gamma_i(\overline{c}) \neq 0
$$
implies that there is some $h_i \in \C[V]^{\chi_{k\theta}}$ such that $h_i(y + c) \neq 0$. In particular, $y + c \in V^{\theta}$, so \two\ holds. 

This also implies that that diagram 
$$
\begin{tikzcd}
V^{\theta} \ar[d] & G(\bv) \times_{G(\bm)} U^{\varrho} \ar[l] \ar[d] \\
V & G(\bv) \times_{G(\bm)} U  \ar[l]. 
\end{tikzcd}
$$
is Cartesian, with the horizontal maps being \'etale and $G(\bv)$-equivariant. Taking the GIT quotient gives the Cartesian diagram \eqref{eq:Cartdi1}, so \three\ holds as required. 
\end{proof}

 Let $\mu_{\bm}$ denote the moment map for the action of $G(\bm)$ on $\mathbf{M}(\bm,\bn)$. It is explained in \cite[\S 4]{CBnormal} that $C\cap (\mf{g}(\bv) \cdot y)^\perp$ can be identified, as a $G(\bm)$-module, with representations of a certain doubled quiver. This doubled quiver is precisely the framed doubled quiver associated to the $\Ext$-graph described in section \ref{sec:localnormalform}, except that we have neglected to include the $p(\beta^{(\infty)}) = \ell$ loops at vertex $\infty$ in our $\Ext$-graph. Since the dimension vector $m$ of the framed doubled quiver satisfies $m_{\infty} = 1$, there is a factor of $\C^{2 \ell}$ in the representation space, corresponding to the value of the endomorphisms at the loops, on which $G(\bm)$ acts trivially. That is, we can identify $C\cap (\mf{g}(\bv) \cdot y)^\perp = \mathbf{M}(\bm,\bn) \times \C^{2 \ell}$ as $G(\bm)$-modules, where $G(\bm)$ acts trivially on $\C^{2 \ell}$, in such a way that $\mu_{\bm}$ is identified with the restriction of $\mu_y$ to $C \cap (\mf{g}(\bv) \cdot y)^\perp$. 
 
\begin{lem}\label{lem:elate2}
There exists a $G(\bm)$-saturated affine open subset $U$ of $0 \in \mu_y^{-1}(0) \cap \nu^{-1}(0)$ such that:
\begin{enumerate}
\item[\one] the projection $q\colon U \rightarrow \mu_{\bm}^{-1}(0) \times \C^{2 \ell}$ is \'etale with image $W$ a $G(\bm)$-saturated affine open subset;
\item[\two] $q$ restricts to an \'etale map $U^{\varrho} \rightarrow (V^{\prime})^{\varrho}$ sending $0$ to $0$; and 
\item[\three] these maps induce a Cartesian diagram 
$$
\begin{tikzcd}
U^{\varrho} /\!\!/ G(\bm) \ar[r] \ar[d,"{\xi}"] & (V^{\prime})^{\varrho} /\!\!/ G(\bm) \ar[d,"{f_{\varrho}}"] \\
U /\!\!/ G(\bm) \ar[r,"{q}"] & V^{\prime} /\!\!/ G(\bm). 
\end{tikzcd}
$$
with both horizontal maps \'etale.
\end{enumerate}
\end{lem}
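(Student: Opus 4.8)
The plan is to mirror the argument just given for Lemma~\ref{lem:elate1}, but now in the ``linearised'' setting where the slice $\mu_y^{-1}(0)\cap\nu^{-1}(0)$ is compared with its tangent model $\mu_{\bm}^{-1}(0)\times\C^{2\ell}$. The key point established above is that, as $G(\bm)$-modules, $C\cap(\mf{g}(\bv)\cdot y)^\perp\cong \mathbf{M}(\bm,\bn)\times\C^{2\ell}$ with $G(\bm)$ acting trivially on the second factor, and under this identification $\mu_{\bm}$ agrees with the restriction of $\mu_y$. First I would observe that $\nu\colon C\to L^*$ vanishes to second order at $0$ (from its defining formula $\nu(c)(l)=\omega(c,ly)+\omega(c,lc)+\omega(y,lc)$, the linear part in $c$ is $\omega(c,ly)+\omega(y,lc)$, which vanishes precisely because $C$ was chosen to be a coisotropic complement to $\mf{g}(\bv)\cdot y$; this is exactly the computation in \cite[\S4]{CBnormal}). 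Hence the differential at $0$ of the inclusion $\mu_y^{-1}(0)\cap\nu^{-1}(0)\hookrightarrow C$ followed by the projection onto $C\cap(\mf{g}(\bv)\cdot y)^\perp$ is an isomorphism onto the tangent space of $\mu_{\bm}^{-1}(0)\times\C^{2\ell}$ at $0$. So the composite $q$ is étale at $0$, and shrinking to a $G(\bm)$-saturated affine open $U$ around $0$ (using \cite[Equation~(4), Lemma~3.9]{BelSchQuiver} exactly as in Lemma~\ref{lem:elate1}\one) gives \one.

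For \two, I would argue exactly as in the proof of Lemma~\ref{lem:elate1}\two: the character $\varrho$ is the same on both sides (it is a restriction of $\chi_\theta$, and $G(\bm)$ acts trivially on the $\C^{2\ell}$ factor so the character is pulled back along $q$), so $q^{-1}\big((V')^\varrho\big)$ and $U^\varrho$ are defined by the vanishing/non-vanishing of matching families of $\varrho$-semi-invariants. One inclusion is immediate by pulling back a semi-invariant along $q$; the reverse inclusion uses that $q$ is étale and $G(\bm)$-equivariant together with the fact that $U\cong W\times_{W/\!\!/G(\bm)}U/\!\!/G(\bm)$, so that $\C[U]^{\chi_{k\varrho}}\cong \C[W]^{\chi_{k\varrho}}\otimes_{\C[W]^{G(\bm)}}\C[U]^{G(\bm)}$, allowing any non-vanishing semi-invariant on $U$ to be written in terms of ones pulled back from $W\subseteq V'$. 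This is the same Frobenius-reciprocity/base-change bookkeeping used above, so I would only sketch it.

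Finally, \three\ follows formally: \one\ and \two\ give a Cartesian square of the semistable loci
\[
\begin{tikzcd}
U^{\varrho} \ar[r] \ar[d] & (V')^{\varrho} \ar[d] \\
U \ar[r,"q"] & V'
\end{tikzcd}
\]
with horizontal arrows étale and $G(\bm)$-equivariant; taking $\chi_\varrho$-GIT quotients (which commutes with the flat, $G(\bm)$-equivariant base change along an étale saturated map, again by the reasoning of \cite[\S4]{CBnormal}) yields the asserted Cartesian diagram with étale horizontal maps. The main obstacle, and the only place real care is needed, is \two: one must check that the semistable locus is genuinely cut out by the \emph{same} character on both sides and that saturatedness of $U$ is compatible with the $\C^{2\ell}$ factor — but since $G(\bm)$ acts trivially there, saturatedness and semistability only see the $\mathbf{M}(\bm,\bn)$ factor, and the argument goes through verbatim as in Lemma~\ref{lem:elate1}.
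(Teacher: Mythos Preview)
Your overall plan---reduce to Crawley--Boevey's Lemma~4.8 and then repeat the semi-invariant bookkeeping from Lemma~\ref{lem:elate1}---is exactly what the paper does (the paper's proof is simply a pointer to \cite[Lemma~4.8]{CBnormal}), and your treatment of parts~\two\ and~\three\ is fine.

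However, there is a genuine error in your argument for~\one. You claim that $\nu$ vanishes to second order at $0$ because $C$ is coisotropic, but this is false. The linear part of $\nu$ at $0$ is $c\mapsto\big(l\mapsto\omega(c,ly)+\omega(y,lc)\big)$; using $G(\bv)$-invariance of $\omega$ one has $\omega(ly,c)+\omega(y,lc)=0$, so this equals $2\omega(c,ly)$, which is \emph{not} zero on $C$. What coisotropy of $C$ actually buys you is the opposite: since $C^\perp\subset C$ and $C\cap(\mf{g}(\bv)\cdot y)=0$, we get $C^\perp\cap(\mf{g}(\bv)\cdot y)=0$, so for nonzero $l\in L$ the vector $ly\notin C^\perp$ and hence $d\nu|_0\colon C\to L^*$ is \emph{surjective}, with kernel $C\cap(\mf{g}(\bv)\cdot y)^\perp$. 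This is the crucial input in \cite[\S4]{CBnormal}: it makes $\nu^{-1}(0)$ smooth at $0$, so that the projection $\nu^{-1}(0)\to C\cap(\mf{g}(\bv)\cdot y)^\perp$ is \'etale there.

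A second, related gap: even after correcting the linearisation, matching Zariski tangent spaces is not enough to conclude \'etaleness, because both $\mu_y^{-1}(0)\cap\nu^{-1}(0)$ and $\mu_{\bm}^{-1}(0)\times\C^{2\ell}$ are typically singular at $0$ (the moment maps $\mu_y$ and $\mu_{\bm}$ have zero differential there). The argument of \cite[Lemma~4.8]{CBnormal} instead uses the \'etale identification $\nu^{-1}(0)\simeq C\cap(\mf{g}(\bv)\cdot y)^\perp$ from the previous paragraph, and then compares the restriction of $\mu_y$ with $\mu_{\bm}$ as functions (recall $\mu_{\bm}$ is by definition the restriction of $\mu_y$ to $C\cap(\mf{g}(\bv)\cdot y)^\perp$), so that the two zero-schemes are pulled back one from the other along an \'etale map. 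Once~\one\ is established in this way, your arguments for~\two\ and~\three\ go through.
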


\begin{proof}
The proof of the lemma is a straight-forward (but easier) adaptation of the proof of \cite[Lemma 4.8]{CBnormal}, as in the proof of Lemma \ref{lem:elate1}. 
\end{proof}

Theorem \ref{thm:commdiagram22} follows directly from the following more precise statement. 

\begin{thm}
\label{thm:precise}
There exists an affine $G(\bm)$-variety $U$, and an affine open $G(\bv)$-stable subset $V \subset \mu^{-1}(0)^{\theta_0}$ containing $y$ and an affine open $G(\bm)$-stable subset $V^{\prime} \subset \mu_{\bm}^{-1}(0) \times \C^{2 \ell}$ containing $0$, forming a diagram 
\begin{equation}\label{eq:cartdiagram1}
\begin{tikzcd}
V^{\theta}/\!\!/G(\bv) \ar[d,"f"] & \ar[l] U^{\varrho} /\!\!/ G(\bm) \ar[r] \ar[d,"{\xi}"] & (V^{\prime})^{\varrho} /\!\!/ G(\bm) \ar[d,"{f_{\varrho}}"] \\
V/\!\!/G(\bv) & \ar[l,"{p}"] U /\!\!/ G(\bm) \ar[r,"q"'] & V^{\prime} /\!\!/ G(\bm). 
\end{tikzcd}
\end{equation}
where both squares are Cartesian, all horizontal maps are \'etale, and where the class of a closed point $u\in U$ has image under $p$ and $q$ equal to $x$ and $0$ respectively.
\end{thm}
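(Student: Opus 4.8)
The plan is to assemble Theorem~\ref{thm:precise} from the two local-slice lemmas that precede it, using a common \'etale model $U$ as the middle object and gluing the two Cartesian squares along it. First I would invoke Luna's slice theorem in the form already prepared: choosing the lift $y\in\mu^{-1}(0)^{\theta_0}$ with closed $G(\bv)$-orbit, one has the reductive stabiliser $G(\bv)_y\cong G(\bm)$, the coisotropic complement $C$ to $\mf{g}(\bv)\cdot y$, the map $\nu\colon C\to L^*$, and the decomposition formula \eqref{eqn:moMapFormula} for $\mu(y+c)$. The key observation that makes everything fit together is that $U_0\cap\mu_y^{-1}(0)\cap\nu^{-1}(0)$ is the \emph{same} slice in both Lemma~\ref{lem:elate1} and Lemma~\ref{lem:elate2}: the former uses it to build the \'etale chart $p\colon G(\bv)\times_{G(\bm)}U\to\mu^{-1}(0)^{\theta_0}$ onto $V$, while the latter uses the identification $C\cap(\mf{g}(\bv)\cdot y)^\perp\cong\mathbf{M}(\bm,\bn)\times\C^{2\ell}$ (with $G(\bm)$ acting trivially on the $\C^{2\ell}$ coming from the $\ell=p(\beta^{(\infty)})$ loops at $\infty$) to build the \'etale chart $q\colon U\to\mu_{\bm}^{-1}(0)\times\C^{2\ell}$ onto $V'$, under which $\mu_y|$ matches $\mu_{\bm}$.

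The second step is bookkeeping to produce a single $U$ that works for both lemmas simultaneously. Each lemma asserts the existence of a $G(\bm)$-saturated affine open neighbourhood of $0$ inside $\mu_y^{-1}(0)\cap\nu^{-1}(0)$; since the intersection of two such neighbourhoods is again $G(\bm)$-saturated affine open, I would simply take $U$ to be the common refinement, then shrink $V$ and $V'$ accordingly (the image of a $G(\bv)$-saturated, resp.\ $G(\bm)$-saturated, affine open under the respective \'etale map remains of that type, by the saturation arguments in \cite{BelSchQuiver}). With this $U$, the left square of \eqref{eq:cartdiagram1} is exactly diagram~\eqref{eq:Cartdi1} of Lemma~\ref{lem:elate1}\three, after noting $V^\theta/\!\!/G(\bv)=\mf{M}_\theta(\bv,\bw)$ restricted to $V$, and the right square is the diagram of Lemma~\ref{lem:elate2}\three, after identifying $(V')^\varrho/\!\!/G(\bm)$ and $V'/\!\!/G(\bm)$ with opens in $\mf{M}_\varrho(\bm,\bn)\times\C^{2\ell}$ and $\mf{M}_0(\bm,\bn)\times\C^{2\ell}$ via the character $\chi_\varrho=\res^{G(\bv)}_{G(\bm)}(\chi_\theta)$ from \eqref{eqn:varrho}. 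The morphisms $f$, $\xi$, $f_\varrho$ between the GIT quotients are the canonical ones from variation of GIT (passing from the $\varrho$-, resp.\ $\theta$-, semistable locus to the $0$-, resp.\ $\theta_0$-, semistable locus), and all four horizontal maps are \'etale because \'etaleness is preserved under taking GIT quotients in this Luna-slice setting, as recorded in both lemmas.

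The last step is to track the marked points. Taking $u\in U$ to be the class of $0\in C$ (equivalently $(1,0)\in G(\bv)\times_{G(\bm)}U$), Lemma~\ref{lem:elate1}\two\ sends it to $y$, hence its image under $p$ in $V/\!\!/G(\bv)$ is the class of $y$, which is $x$; and Lemma~\ref{lem:elate2}\two\ sends $0$ to $0$, so $q(u)=0$. This establishes all the stated conclusions. I expect the only real subtlety to be the compatibility of the two saturation conditions when forming the common $U$ — i.e.\ checking that shrinking the slice to meet both lemmas' requirements does not destroy the $G(\bv)$-saturatedness of the resulting $V$ — but this follows from the standard fact that the restriction of a $G(\bv)$-saturated open along an \'etale Luna chart corresponds to a $G(\bm)$-saturated open of the slice and vice versa, as used in \cite[\S 4]{CBnormal} and \cite[Lemma~3.9]{BelSchQuiver}. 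Everything else is a matter of transporting the already-proven Cartesian squares and gluing them along their shared left-hand vertex.
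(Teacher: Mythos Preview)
Your proposal is correct and follows essentially the same approach as the paper: assemble the two Cartesian squares from Lemma~\ref{lem:elate1} and Lemma~\ref{lem:elate2}, intersect the two $G(\bm)$-saturated affine opens to obtain a common $U$, and track the point $u=0$ to $x$ and $0$ respectively. The paper's own proof is a two-sentence version of exactly this, so your elaboration of the saturation bookkeeping and point-tracking is faithful to (if more detailed than) the original argument.
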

 \begin{proof}
 The required properties of diagram \eqref{eq:cartdiagram1} follow from Lemma~\ref{lem:elate1} and Lemma~\ref{lem:elate2} since, taking their intersection if necessary, we may assume that the two affine sets $U$ of the lemmata are equal. Note that the image of the class of $u:=0\in U$ under $p$ and $q$ is the class of $y$ and the class of $0$ respectively.
\end{proof}

\subsection{Tautological bundles}
\label{sec:tautbundles}
 Tautological bundles play a key role in understanding the birational transformations that occur as one crosses the walls in the space $\Theta_v$ of GIT stability conditions. We now describe what happens to the tautological bundles under the morphisms of Theorem \ref{thm:commdiagram22}.

  Recall that $Q$ is a doubled quiver with framing vertex $\infty$, and $v$ denotes the dimension vector for $Q$ associated to a dimension vector $\bv$ for $Q \smallsetminus \{ \infty \}$. Since $v$ is primitive, King~\cite[Proposition~5.3]{KingStable} proves that for generic $\theta\in \Theta_v$, the quiver variety $\mathfrak{M}_\theta(\bv,\bw)=\mu^{-1}(0)^{\theta} \git \  {G(v)}$ is the fine moduli space of isomorphism classes of $\theta$-stable $\Pi$-modules of dimension vector $v$. In this case, the universal family on $\mf{M}_\theta(\bv,\bw)$ is a tautological locally-free sheaf
 \[
 \mathcal{R}:=\mathcal{R}_\theta(\bv,\bw) \cong \bigoplus_{i\in I} \mathcal{R}_i
 \]
 where $\rank(\mathcal{R}_i)=v_i$ for $i\in I$, together with a $\C$-algebra homomorphism $\Pi\to \End(\mathcal{R})$. Explicitly, 
for $i \in I$ we write $F_i$ for the representation of $G(v)$ obtained by pulling back the vectorial representation from the $i$th factor of $G(v)$. Then $\mathcal{R}_i$ is the locally-free sheaf associated to the vector bundle 
$$
\mu^{-1}(0)^{\theta} \times_{G(v)} F_i \rightarrow \mu^{-1}(0)^{\theta} \git \  {G(v)} = \mf{M}_{\theta}(\bv,\bw).
$$
 When we wish to emphasise the dependence of $\mc{R}_i$ on the dimension vectors, we write $\mc{R}_i(\bv,\bw)$. Since $\mathcal{R}$ is defined only up to tensor product by an invertible sheaf, we normalise by fixing $\mathcal{R}_\infty$ to be the trivial bundle.

As in section~\ref{sec:localnormalform}, choosing a closed point $x\in\mf{M}_{\theta_0}(\bv,\bw)$, where $\theta_0$ lies in the closure of the GIT chamber containing $\theta$, determines $k\geq 0$ and dimension vectors $\beta^{(0)}, \dots, \beta^{(k)}\in \Z^{I}$, dimension vectors $\bm, \bn$ for the Ext-graph of $x$, and a stability condition $\varrho\in \Hom(\Z^{k+1},\Q)$, which determine the quiver variety $\mf{M}_\varrho(\bm,\bn)$. Recall now the statement of Theorem~\ref{thm:commdiagram22}, and specifically, diagram \eqref{eq:cartdiagramquiver0}: 
\begin{equation}
\begin{tikzcd}
f^{-1}(V) \ar[d,"f"] & \ar[l,"{p_{\theta}}"'] Z' \ar[r,"{q_{\varrho}}"] \ar[d,"{\xi}"] & (f_{\varrho} \times \mathrm{id})^{-1}(V^{\prime}) \ar[d,"{f_{\varrho} \times \mathrm{id}}"] \\
V & \ar[l,"{p}"'] Z \ar[r,"q"] & V^{\prime}
\end{tikzcd}
\end{equation}

\begin{prop}
\label{prop:linebundles}
 The quiver variety $\mf{M}_\varrho(\bm,\bn)$ carries a tautological locally-free sheaf $\bigoplus_{j} \mathcal{R}_j(\bm,\bn)$, where $j$ ranges over the set $\{\infty,0,\dots,k\}$. Moreover, for each $i\in I\setminus \{\infty\}$, there is an isomorphism of bundles on $Z'$ given by
$$
p_{\theta}^* \ \mc{R}_i(\bv,\bw) \cong \bigoplus_{j = 0}^k q_{\varrho}^*\ \mc{R}_j(\bm,\bn)^{\oplus \beta_i^{(j)}}.
$$
\end{prop}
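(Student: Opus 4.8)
The plan is to trace the tautological bundles through the étale local construction in the proof of Theorem~\ref{thm:commdiagram22}, keeping track of how the representation spaces and structure groups match up. Recall that $\mc{R}_i(\bv,\bw)$ is, by definition, the sheaf associated to the associated bundle $\mu^{-1}(0)^\theta\times_{G(v)} F_i$, where $F_i$ is the vectorial representation of the $i$th factor of $G(v)$; similarly $\mc{R}_j(\bm,\bn)$ is associated to $\mu_{\bm}^{-1}(0)^\varrho\times_{G(\bm')}F_j'$, where now $\bm'=(1,\bm_0,\dots,\bm_k)$ is the framed dimension vector of the $\Ext$-graph and $F_j'$ is the vectorial representation of its $j$th factor. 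So the statement is really a statement about how the $G(\bv)$-module $F_i$ restricts to the stabiliser subgroup $G(\bv)_y\cong G(\bm')$, pulled back along the étale maps of the diagram.

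First I would record that the quiver variety $\mf{M}_\varrho(\bm,\bn)$ indeed carries a tautological bundle: since $v$ is primitive for $Q$, the framed dimension vector $\bm'$ for the $\Ext$-graph also has $(\bm')_\infty=1$ and hence is primitive, so King's result~\cite[Proposition~5.3]{KingStable} applies verbatim and $\mf{M}_\varrho(\bm,\bn)$ is a fine moduli space carrying $\bigoplus_j\mc{R}_j(\bm,\bn)$. Next I would invoke the key algebraic input from \cite[\S4]{CBnormal}, already used in the proof of Lemma~\ref{lem:elate2}: the isomorphism of $G(\bm')$-modules $C\cap(\mf{g}(\bv)\cdot y)^\perp\cong \mathbf{M}(\bm,\bn)\times\C^{2\ell}$. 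Under the identification $G(\bv)_y\cong G(\bm')$, the embedding of structure groups is the standard block-diagonal one: a point $y\in\mu^{-1}(0)^{\theta_0}$ lying over $x$ corresponds to the polystable module $M_\infty\oplus M_0^{\bm_0}\oplus\cdots\oplus M_k^{\bm_k}$, and the stabiliser $\prod_{j}\GL(\bm_j)$ (modulo the central $\C^\times$, with the $\infty$-block being a single copy of $\C^\times$) sits inside $G(\bv)$ by acting on $V_i=\C^{v_i}$ through the decomposition $\C^{v_i}\cong\bigoplus_{j=0}^k (\text{multiplicity space }\C^{\bm_j})\otimes(M_j)_i$, where $(M_j)_i$ has dimension $\beta_i^{(j)}$. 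In other words, the restriction of $F_i$ from $G(\bv)$ to $G(\bm')$ decomposes as $\bigoplus_{j=0}^k F_j'^{\oplus\beta_i^{(j)}}$ — this is exactly the content we need, and it is a direct linear-algebra consequence of the definition of the dimension vectors $\beta^{(j)}$.

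The middle paragraph is where the real work sits, so let me isolate it as the expected obstacle: one must check that the identification of associated bundles is compatible with all three Cartesian squares and the $\times_{G(\bm')}$ construction, i.e.\ that
\[
p_\theta^*\big(\mu^{-1}(0)^\theta\times_{G(v)}F_i\big)\;\cong\;\big(G(\bv)\times_{G(\bm')}U^\varrho\big)\times_{G(\bv)}F_i\;\cong\;U^\varrho\times_{G(\bm')}\big(F_i|_{G(\bm')}\big),
\]
and likewise that $q_\varrho^*\big(\mu_{\bm}^{-1}(0)^\varrho\times_{G(\bm')}F_j'\big)\cong U^\varrho\times_{G(\bm')}F_j'$ after discarding the $\C^{2\ell}$ factor on which $G(\bm')$ acts trivially. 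Both isomorphisms are instances of the general fact that for $H\subseteq G$ and a $G$-variety with an étale $G$-map from $G\times_H U$, pulling back a $G$-associated bundle to $U$ gives the $H$-associated bundle of the restricted representation; here $U=U_0\cap\mu_y^{-1}(0)\cap\nu^{-1}(0)$ as in Lemma~\ref{lem:elate1} and Lemma~\ref{lem:elate2}. Combining these two identifications over the common space $Z'=U^\varrho/\!\!/G(\bm')$ with the $G(\bm')$-module decomposition $F_i|_{G(\bm')}\cong\bigoplus_{j=0}^k F_j'^{\oplus\beta_i^{(j)}}$ from the previous paragraph yields
\[
p_\theta^*\,\mc{R}_i(\bv,\bw)\;\cong\;U^\varrho\times_{G(\bm')}\Big(\bigoplus_{j=0}^k F_j'^{\oplus\beta_i^{(j)}}\Big)\;\cong\;\bigoplus_{j=0}^k q_\varrho^*\,\mc{R}_j(\bm,\bn)^{\oplus\beta_i^{(j)}},
\]
which is the claimed isomorphism on $Z'$. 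The only care needed is bookkeeping with the central $\C^\times$ quotients defining $G(v)$ and $G(\bm')$ and with the extra $2\ell$ loop directions at $\infty$, but since these contribute trivially to the relevant vectorial representations $F_i$ for $i\neq\infty$, they do not affect the decomposition.
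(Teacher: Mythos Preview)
Your treatment of the second statement---the isomorphism of pulled-back bundles on $Z'$---is essentially the paper's argument: both reduce to the $G(\bm)$-module decomposition $F_i|_{G(\bm)}\cong\bigoplus_{j=0}^k F_j(\bm,\bn)^{\oplus\beta_i^{(j)}}$ coming from the polystable splitting at $x$, combined with the Luna-slice identification $(G(\bv)\times_{G(\bm)}U^\varrho)\times_{G(\bv)}F_i\cong U^\varrho\times_{G(\bm)}(F_i|_{G(\bm)})$.

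There is, however, a genuine gap in your first statement. You write that since the framed dimension vector $m$ for the $\Ext$-graph has $m_\infty=1$ and is therefore primitive, ``King's result applies verbatim and $\mf{M}_\varrho(\bm,\bn)$ is a fine moduli space.'' But King's \cite[Proposition~5.3]{KingStable} requires \emph{both} that the dimension vector be primitive \emph{and} that the stability parameter be generic. You have not shown that $\varrho$ is generic with respect to $m$, and this does not come for free: $\varrho$ is obtained by restricting $\chi_\theta$ to the stabiliser, and a priori the restriction of a generic character need not be generic.

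The paper closes this gap with a short but nontrivial argument. Suppose $\varrho$ were not generic; then the strictly polystable locus in $\mf{M}_\varrho(\bm,\bn)$ is nonempty, closed, and $\C^\times$-stable, hence meets the fibre $f_\varrho^{-1}(0)$. Choosing a lift with closed $G(\bm)$-orbit and pushing it through the Cartesian squares of diagram~\eqref{eq:cartdiagram1}, one obtains a point $y'\in V^\theta$ with closed $G(\bv)$-orbit whose stabiliser $G(\bv)_{y'}$ is isomorphic (by Luna's slice theorem) to the nontrivial stabiliser $G(\bm)_{r'}$. This forces $y'$ to be strictly $\theta$-polystable, contradicting the genericity of $\theta$. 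You should insert this step before invoking King.
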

\begin{proof}
The vector $\bm$ determines a dimension vector $m$ for the framed (doubled) quiver satisfying $m_\infty=1$, so $m$ is primitive. In light of \cite[Proposition~5.3]{KingStable}, it remains to show that $\varrho$ is generic in order to prove the first statement. 
 
 We claim that if $\theta$ is generic with respect to $v$ then $\varrho$ is generic with respect to $m$. Our argument is based on the proof of \cite[Proposition 1.1]{LeBSimple}. Assume that $\varrho$ is not generic for $m$. Then the locus of strictly polystable representations in $\mf{M}_\varrho(\bm,\bn)$ is non-empty. The scaling action of $\Cs$ on $\mf{M}_0(\bm,\bn)$ defined in section \ref{sec:Nakasec} lifts to $\mf{M}_\varrho(\bm,\bn)$, and the locus of strictly polystable representations is both closed and $\Cs$-stable. Hence, there exists a strictly polystable point $r$ in $f_{\varrho}^{-1}(0)$. Choose a lift $(r',0)$ of $(r,0)$ in $W^{\varrho} \subseteq \mu^{-1}_{\mathbf{m}}(0) \times \C^{2 \ell}$ whose $G(\mathbf{m})$-orbit is closed. The fact that the diagram (\ref{eq:cartdiagram1}) is Cartesian means that there is a point $z' \in \xi^{-1}(z)$ mapping to $(r,0)$. Let $x'$ be the image of this point in $V^{\theta}/\!\!/G(\bv)$. If $y'$ is a lift of $x'$ in $V^{\theta}$, whose $G(\bv)$-orbit is closed then the final statement of \cite[Theoreme du slice \'etale]{Luna} implies that $G(\bv)_{y'} \cong G(\mathbf{m})_{(r',0)} = G(\mathbf{m})_{r'}$. The fact that $r$ is strictly polystable means that $G(\mathbf{m})_{r'}$, and hence $G(\bv)_{y'}$, is non-trivial. This in turn implies that $x'$ is strictly polystable, contradicting the assumption that $\theta$ is generic for $v$. 

For the second statement, the locally-free sheaf $p_{\theta}^* \mc{R}_i(\bv,\bw)$ is the sheaf of sections of the map 
$$
(V^{\theta} \times_{G(\bv)} F_i) \times_{V^{\theta} \git \ G(\bv)} U^{\varrho} \git \ G(\bm) \cong (G(\bv) \times_{G(\bm)} U^{\varrho}) \times_{G(\bv)} F_i \cong U^{\varrho} \times_{G(\bm)} F_i \longrightarrow U^{\varrho} \git\ G(\bm);
$$
 here the first isomorphism follows from the proof of Lemma \ref{lem:elate1}, and the second is a consequence of Luna's slice theorem \cite{Luna}. Similarly, $q_{\varrho}^* \mc{R}_j(\bm,\bn)$ corresponds to sections of $U^{\varrho} \times_{G(\bm)} F_j(\bm,\bn) \rightarrow U^{\varrho} \git\ G(\bm,\bn)$. Thus, the result follows from the fact that 
$$
F_i |_{G(\bm)} \cong \bigoplus_{j = 0}^k F_j(\bm,\bn)^{\oplus \beta_i^{(j)}}.
$$
This latter decomposition is simply the fact that $x$ corresponds to the $\theta_0$-polystable representation
$$
M_{\infty} \oplus M_0^{\oplus \bm_0} \oplus \cdots \oplus M_k^{\oplus \bm_{k}} =  M_{\infty} \oplus \big(M_0 \o F_1(\bm,\bn)\big) \oplus \cdots \oplus \big(M_k \o F_k(\bm,\bn)\big). 
$$
This completes the proof.
\end{proof}

\subsection{A stratification}\label{sec:quiverstratification}

There are two natural ways of defining a finite stratification of the quiver variety. The first is Luna's stratification, coming from the fact that it is a GIT quotient by a reductive group. The second is the stratification into symplectic leaves, which is finite since the quiver variety has symplectic singularities \cite[Theorem 1.2]{BelSchQuiver}. It is known that these two stratifications agree \cite[Proposition 3.6]{BelSchQuiver}. In this section, we recall the definition of these strata. In order to do so, we first recall Crawley-Boevey's canonical decomposition of a dimension vector. 

 As explained in \cite{KacBook}, associated to the Cartan matrix $C$ of the framed (doubled) quiver $Q=(I,Q_1)$ is a root system $R \subset \Z^I$, with positive roots $R^+ = R \cap \Z^I_{\ge 0}$. For $\theta\in \Hom(\Z^{I},\Q)$,  set $R_{\theta}^+  = \{ \alpha \in R^+ \mid \theta(\alpha) = 0 \}$ and define
\begin{equation}\label{eq:Sigmathetadefn}
\Sigma_{\theta} = \left\{ \alpha \in R^+_{\theta} \ \Big| \ p(\alpha) > \sum_{i = 0}^k p\left(\beta^{(i)}\right), \ \forall \ \alpha = \beta^{(0)} + \cdots + \beta^{(k)}, k > 0, \beta^{(i)} \in R^+_{\theta} \right\}.
\end{equation}
In general, it is very difficult to compute $\Sigma_{\theta}$, but notice that if $\theta(\beta) \neq 0$ for all roots $\beta < \alpha$, then $\alpha \in \Sigma_{\theta}$.

\begin{thm}
\label{thm:canondecomp}
\begin{enumerate}
\item[\one] For $v\in \N^{I}$, there exists a $\theta$-stable $\Pi$-module of dimension vector $v$ iff $v \in \Sigma_{\theta}$.
\item[\two] Let $\N R_{\theta}^+$ denote the subsemigroup of $\N^{I}$ generated by $R_{\theta}^+$. Each $v \in \N R_{\theta}^+$ admits a decomposition
 \begin{equation}
 \label{eqn:canonical}
v = \gamma^{(0)} + \cdots + \gamma^{(\ell)},
\end{equation}
where $\gamma^{(i)} \in \Sigma_{\theta}$ for $0\leq i\leq \ell$, such that any other decomposition of $v$ as a sum of elements from $\Sigma_\theta$ is a refinement of the sum \eqref{eqn:canonical}; this is the \emph{canonical decomposition of $v$ with respect to $\theta$}.
\item[\three] The canonical decomposition (\ref{eqn:canonical}) is characterised by the fact that
$$
\sum_{i = 0}^{\ell} p\left(\gamma^{(i)}\right) > \sum_{j = 0}^{k} p\left(\beta^{(j)}\right)
$$
for any other decomposition $v = \beta^{(0)} + \cdots + \beta^{(k)}$ of $v$ with each $\beta^{(i)} \in \Sigma_{\theta}$. 
 \item[\four] For $v\in \N R_{\theta}^+$ as in \eqref{eqn:canonical}, let $M$ be a generic $\theta$-polystable $\Pi$-module of dimension $v$ with decomposition $M=M_0^{\oplus m_0}\oplus \dots \oplus M_{\ell}^{\oplus m_{\ell}}$ into pairwise non-isomorphic $\theta$-stable representations. Then, grouping like terms $v = m_1 \xi^{(1)} + \cdots + m_{\ell} \xi^{(\ell)}$ in the canonical decomposition \eqref{eqn:canonical}, we have $\dim M_i = \xi^{(i)}$. 
\end{enumerate}
\end{thm}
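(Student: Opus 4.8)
\textbf{Proof proposal for Theorem~\ref{thm:canondecomp}.}

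The plan is to treat this as a statement about the root system $R$ attached to the Cartan matrix $C$ of the framed doubled quiver, following Crawley-Boevey's theory of the canonical decomposition (as in \cite{CBnormal,CBmomap}) adapted to the presence of a stability parameter $\theta$. The first step is to establish \one, the existence criterion for $\theta$-stable modules of dimension vector $v$, which amounts to showing that $\Sigma_\theta$ is exactly the set of ``$\theta$-stable roots''. For $\theta = 0$ this is Crawley-Boevey's description of the set of Schur roots $\Sigma_0$; for general $\theta$ one restricts attention to the $\theta$-orthogonal sub-root-system $R_\theta^+$ and runs the same argument, because $\theta$-semistability forces every subrepresentation $N$ of a $\theta$-semistable $M$ to have $\theta(N)\ge 0$, and strict inequality for $\theta$-stable $M$. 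The key technical input is that a generic representation of dimension $v \in \Sigma_\theta$ in $\mu^{-1}(0)$ is $\theta$-stable: this uses a dimension count on $\mu^{-1}(0)^\theta$ versus the $G(v)$-orbit dimensions, exactly as in \cite[\S1]{CBnormal} or \cite{CBmomap}, together with the inequality $p(\alpha) > \sum_i p(\beta^{(i)})$ defining $\Sigma_\theta$ to rule out proper subrepresentations in general position.

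For \two\ and \three, I would proceed by an extremal argument. Given $v \in \N R_\theta^+$, among all decompositions $v = \sum_i \beta^{(i)}$ with $\beta^{(i)} \in \Sigma_\theta$, choose one maximizing $\sum_i p(\beta^{(i)})$; such a maximum exists because $p$ is bounded on the finite set of sub-vectors of $v$ lying in $\Sigma_\theta$, and at least one decomposition into positive roots exists by hypothesis. Call this decomposition \eqref{eqn:canonical}. The content is then: (a) this maximizing decomposition is unique, and (b) every decomposition of $v$ into elements of $\Sigma_\theta$ refines it. Both follow from the subadditivity/superadditivity properties of $p$ under splitting roots together with the defining property of $\Sigma_\theta$: if $\alpha\in\Sigma_\theta$ and $\alpha = \beta + \gamma$ with $\beta,\gamma\in R_\theta^+$, then $p(\alpha) > p(\beta)+p(\gamma)$, so coarsening strictly increases $\sum p$ while refining strictly decreases it; hence any two decompositions into $\Sigma_\theta$-elements have a common coarsening lying in $\Sigma_\theta$, and iterating produces the unique maximal one. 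This simultaneously gives the characterization \three. (One must check the common coarsening stays in $\N R_\theta^+$ and that grouping terms preserves membership in $\Sigma_\theta$ — this is where the precise definition \eqref{eq:Sigmathetadefn} is used.)

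For \four, the point is to identify the canonical decomposition of $v$ with the actual decomposition type of a \emph{generic} $\theta$-polystable module of dimension $v$. Let $M = M_0^{\oplus m_0}\oplus\cdots\oplus M_\ell^{\oplus m_\ell}$ be generic $\theta$-polystable with the $M_i$ pairwise non-isomorphic $\theta$-stable. Each $\dim M_i$ lies in $\Sigma_\theta$ by \one, so $v = \sum_i m_i \dim M_i$ is a decomposition into $\Sigma_\theta$-elements, hence refines the canonical one by \two. Conversely, using the canonical decomposition \eqref{eqn:canonical} one constructs (for generic choices) a $\theta$-polystable module with summands of dimensions exactly the $\gamma^{(i)}$, grouped; by the dimension-count characterization of the generic closed $G(v)$-orbit in $\mu^{-1}(0)^\theta$ (the orbit of minimal dimension among closed orbits over a generic point downstairs, cf.\ \cite[\S4]{CBnormal}, \cite{BelSchQuiver}), this generic polystable module is the one with the \emph{most} stable summands counted with the correct multiplicities in the sense that maximizes $\sum p$, forcing the two decompositions to coincide after grouping like terms. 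So $\dim M_i = \xi^{(i)}$.

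The main obstacle I anticipate is \four, specifically pinning down precisely what ``generic $\theta$-polystable of dimension $v$'' means and matching it to the extremal property of the canonical decomposition: one needs that over a generic point of $\mf{M}_0(\bv,\bw)$ (resp.\ the relevant stratum) the closed orbit corresponds to the polystable module whose summand multiplicities realize the canonical decomposition, which requires a careful semicontinuity/dimension argument on $\mu^{-1}(0)^\theta$ comparing $\dim G(v)\cdot\text{(point)}$ with $\dim\mu^{-1}(0)^\theta$, and the identity relating these dimensions to $\sum_i p(\gamma^{(i)})$. Parts \one--\three\ are essentially a transcription of Crawley-Boevey's arguments with $\theta$-stability in place of simplicity and $R_\theta^+$ in place of $R^+$, so the novelty and the subtlety both concentrate in \four.
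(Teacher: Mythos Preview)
Your proposal is essentially aligned with the paper's treatment: the paper does not give a self-contained proof but simply attributes the result to Crawley-Boevey~\cite{CBmomap,CBdecomp} (with the generalisation to arbitrary $\theta$ in Bellamy--Schedler~\cite{BelSchQuiver}), noting in particular that \three\ follows from \cite[Corollary~1.4]{CBdecomp}. Your sketch correctly identifies the source material and the shape of the arguments, though the key reference for the canonical decomposition \two--\three\ is \cite{CBdecomp} rather than \cite{CBnormal}.

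One small caution on your outline for \two: the step ``hence any two decompositions into $\Sigma_\theta$-elements have a common coarsening lying in $\Sigma_\theta$'' does not follow from the monotonicity of $\sum p$ under refinement alone; the existence of a common coarsening with terms in $\Sigma_\theta$ is precisely the nontrivial content, and Crawley-Boevey's actual argument in \cite{CBdecomp} proceeds differently (via the structure of the generic polystable representation rather than a purely combinatorial lattice argument on decompositions). You flag this yourself as something to check, but be aware that this is where the real work lies, not a routine verification.
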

\begin{proof}
This is due to Crawley-Boevey~\cite{CBmomap,CBdecomp}, but see Bellamy--Schedler~\cite{BelSchQuiver} in this generality. In particular, property \three\ can be deduced from \cite[Corollary 1.4]{CBdecomp}.  
\end{proof}

The strata of $\mf{M}_{\theta}(\bv,\bw)$ are labelled by the ``representation types'' of $v$, which we now recall. A \textit{representation type} $\tau$ of $v$ is a tuple
$$
\tau = (n_0,\beta^{(0)}; \ds ; n_{k}, \beta^{(k)})
$$
where $\beta^{(i)} \in \Sigma_{\theta}$, $n_i \in \Z_{> 0}$ and $\sum_{i = 0}^k n_i \beta^{(i)} = v$. The stratum labelled by the representation type $\tau$ is:
$$
\mf{M}_{\theta}(\bv,\bw)_{\tau} := \left\{ M_0^{\oplus n_0} \oplus \cdots \oplus M_{k}^{\oplus n_{k}} \in \mf{M}_{\theta_0}(\bv,\bw) \ | \ M_i \textrm{ is $\theta$-stable and $\dim M_i = \beta^{(i)}$} \right\}.
$$

\begin{remark}\label{rem:stratapropertydecomp}
By \cite[Corollary 3.25]{BelSchQuiver}, each stratum is a connected locally-closed, smooth subvariety of $\mf{M}_{\theta}(\bv,\bw)$. Since the dimension of the locus of $\theta$-stable points in $\mf{M}_{\theta}(\bv,\bw)$ equals $2 p(v)$, Theorem~\ref{thm:canondecomp}\one\ implies that the stratum $\mf{M}_{\theta}(\bv,\bw)_{\tau}$ is non-empty if and only if $\beta^{(i)} \neq \beta^{(j)}$ when $\beta^{(i)}$ and $\beta^{(j)}$ are real roots. The stratification is finite, and 
$$
\dim \mf{M}_{\theta}(\bv,\bw)_{\tau} = 2 \sum_{i = 0}^{k} p\left(\beta^{(i)}\right).
$$
See \cite[\S 3.5]{BelSchQuiver} and references therein for more information.
\end{remark}

\section{Quiver varieties for the framed McKay quiver}
We now specialise to the case where the quiver varieties $\mf{M}_\theta(\bv,\bw)$ are constructed from the affine Dynkin diagram associated to $\Gamma$ by the McKay correspondence.

\subsection{Canonical decomposition of roots}
 Once and for all, we fix the graph to be the McKay graph of $\Gamma\subset \SL(2,\C)$ and fix dimension vectors $\bv:= n \delta$ and $\bw = \rho_0$, so that the corresponding framed, doubled quiver $Q$ is the framed McKay quiver, with vertex set $I = \{\rho_\infty,\rho_0,\rho_1,\dots,\rho_r\}$. The dimension vector for representations of $Q$ is
 \[
 v:= \rho_{\infty} + n \delta\in \Z^{I}.
 \]
 Define
 \[
\Theta_v:= \{\theta\in \Hom(\Z^{I},\Q) \mid \theta(v)=0\},
\]
 and write elements of $\Theta_{v}$ as $\theta=(\theta_\infty,\theta_0,\dots,\theta_r)$ where $\theta_i:=\theta(\rho_i)$ for $i\in \{\infty,0,1,\dots,r\}$. For $\theta \in \Theta_v$, set
 \begin{equation}
 \label{eqn:v}
 \mathfrak{M}_\theta:= \mathfrak{M}_\theta(\bv,\bw) = \mf{M}_{\theta}(Q,v).
 \end{equation}
As explained in section \ref{sec:quiverstratification}, associated to the quiver $Q$ is the root system $R \subset \Z^I$, and $R^+=\N^{I}\cap R$ the set of positive roots. We can recover the affine root system $\Phi_\aff$ associated to $\Gamma$ in section \ref{sec:chamberdecomp} as follows.

 \begin{lem}
 \label{lem:recoverPhi}
We have $\Phi_\aff = \{ \alpha = (\alpha_i)_{i\in I} \in R \mid \alpha_{\infty} = 0 \}$
\end{lem}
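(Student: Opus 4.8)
The plan is to compare two realisations of the finite-type root system sitting inside the Kac root system $R$ of the framed McKay quiver $Q$. Recall that $Q$ is obtained from the affine Dynkin diagram of $\Gamma$ (vertices $\rho_0,\dots,\rho_r$) by adjoining one extra framing vertex $\rho_\infty$ joined to $\rho_0$ by a single edge, since $\bw=\rho_0$ means there is exactly one arrow $\rho_\infty\to\rho_0$ and one arrow $\rho_0\to\rho_\infty$. Thus the underlying unoriented graph of $Q$, forgetting $\rho_\infty$, is precisely the affine Dynkin diagram of $\Gamma$, and its Cartan matrix restricted to $\{\rho_0,\dots,\rho_r\}$ is $C_\Gamma$. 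The root system of that sub-Cartan-matrix is, by definition in Section~\ref{sec:chamberdecomp}, the affine root system $\Phi_\aff$.

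First I would set up the combinatorial comparison. Write $R$ for the root system attached to the Cartan matrix $C$ of $Q$ and $\Phi_\aff$ for the root system attached to $C_\Gamma$. Since $C_\Gamma$ is the principal submatrix of $C$ obtained by deleting the row and column indexed by $\rho_\infty$, there is an obvious inclusion $\Z^{\{0,\dots,r\}}\hookrightarrow\Z^I$ as the sublattice $\{\alpha\mid\alpha_\infty=0\}$, and under this inclusion the bilinear form $(-,-)_\Gamma$ is the restriction of $(-,-)$. The real roots of $R$ lying in this sublattice are exactly the $W(C_\Gamma)$-translates of the simple roots $\rho_0,\dots,\rho_r$ (note the simple reflections $s_0,\dots,s_r$ of $C$ preserve the hyperplane $\alpha_\infty=0$ because $\rho_\infty$ is adjacent only to $\rho_0$, so $s_i(\rho_j)_\infty=0$ for $0\le i,j\le r$), hence coincide with the real roots of $\Phi_\aff$; and the imaginary roots of $R$ in the sublattice are those $\alpha$ with $\alpha_\infty=0$ supported on a connected subgraph with $(\alpha,\alpha)\le 0$, which by the classification of affine diagrams are exactly the imaginary roots of $\Phi_\aff$ (namely the multiples of $\delta$). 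Conversely every root of $\Phi_\aff$, real or imaginary, is a root of $R$ supported away from $\rho_\infty$, because a positive integer combination of $\rho_0,\dots,\rho_r$ has zero $\rho_\infty$-component and the defining inequalities for roots only involve the sub-Cartan-matrix when the support avoids $\rho_\infty$.

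Concretely the cleanest argument is: for $\alpha\in\Z^I$ with $\alpha_\infty=0$, the support of $\alpha$ is contained in the affine Dynkin subdiagram, so whether $\alpha$ is a root of $R$ depends only on $\alpha$ as an element of $\Z^{\{0,\dots,r\}}$ together with the Cartan matrix restricted to that support, which is $C_\Gamma$; hence $\alpha\in R$ if and only if $\alpha\in\Phi_\aff$. This uses the standard fact (Kac) that the roots of a Kac--Moody algebra supported on a sub-diagram are precisely the roots of the Kac--Moody algebra of that sub-diagram. The reverse containment is immediate. Combining the two containments gives $\Phi_\aff=\{\alpha\in R\mid\alpha_\infty=0\}$.

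I expect the only genuine subtlety — the ``main obstacle'' — to be making precise the claim that roots of $R$ supported on the affine sub-diagram are exactly roots of $\Phi_\aff$, i.e.\ that passing to a full sub-diagram does not create or destroy roots among vectors supported on it. This is a well-known structural fact about root systems of symmetric Kac--Moody algebras (it follows, for instance, from Kac's characterisation of real roots as $W$-translates of simple roots together with the description of the imaginary cone, both of which localise to the support), but one should cite it carefully from \cite{KacBook} rather than reprove it; everything else is bookkeeping about which vertex is adjacent to which.
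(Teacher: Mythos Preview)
Your proposal is correct. You and the paper both handle the inclusion $\Phi_\aff\subseteq\{\alpha\in R\mid\alpha_\infty=0\}$ in the same way, by noting that $C_\Gamma$ is the principal submatrix of $C$ obtained by deleting the $\rho_\infty$ row and column. For the reverse inclusion, however, you take a different route. You invoke the general structural fact (to be cited from \cite{KacBook}) that the roots of a Kac--Moody root system supported on a full sub-diagram are exactly the roots of the sub-diagram's root system. The paper instead gives a short direct argument specific to the affine situation: if $\alpha\in R^+$ then $(\alpha,\alpha)\le 2$; since $\alpha_\infty=0$ forces $(\alpha',\alpha')_\Gamma=(\alpha,\alpha)\le 2$ for $\alpha'=(\alpha_0,\dots,\alpha_r)$, and since $C_\Gamma$ is positive semi-definite (affine type), \cite[Proposition~5.10]{KacBook} then gives $\alpha'\in\Phi_\aff^+$. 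Your approach is cleaner conceptually and would work for any sub-diagram, not just affine ones; the paper's approach is more elementary in that it avoids the general sub-diagram lemma and instead exploits the simple numerical characterisation of roots available in affine type. Either is fine here, though the paper's version pins down exactly which statement in Kac is doing the work, whereas you would still need to locate a precise reference for the sub-diagram fact.
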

\begin{proof}
 The fact that $\Phi_\aff$ is contained in the right-hand side follows from the fact that the adjacency matrix $A_\Gamma$ of the McKay graph is obtained by deleting the row and column indexed by $\rho_\infty$ from the adjacency matrix $A$ for $Q$. For the opposite inclusion, suppose $\alpha=(\alpha_\infty,\alpha_0,\dots,\alpha_r)\in R^+$ satisfies $\alpha_\infty=0$. Since $\alpha$ is a positive root, we have $(\alpha,\alpha)\leq 2$. If we set $\alpha^\prime:=(\alpha_0,\dots,\alpha_r)\in R(\Gamma)$, then $\alpha_\infty=0$ gives $(\alpha^\prime,\alpha^\prime)_\Gamma=(\alpha,\alpha)\leq 2$ and hence the fact that $A_{\Gamma}$ is positive semi-definite implies, by \cite[Proposition 5.10]{KacBook}, that $\alpha\in \Phi_\aff^+$. It follows that the set $\{\alpha\in R \mid \alpha_\infty=0\}$ is contained in $\Phi_\aff$.
\end{proof}

 Lemma~\ref{lem:recoverPhi} enables us to identify the root lattice $\Z^{I}$ with the lattice $\Z\oplus R(\Gamma)$, with the standard basis corresponding to $\{\rho_\infty, \rho_0,\rho_1,\dots,\rho_r\}$.

 Our next goal is to prove the following result about the canonical decomposition of $v$ (see Theorem~\ref{thm:canondecomp} for the definition).
 
\begin{prop}
\label{prop:canonicaldecomp}
Let $\theta\in \Theta_{v}$. Then $v\in R_\theta^+$. Moreover:
\begin{enumerate}
\item[\one] if $\theta_\infty \neq 0$ then the canonical decomposition of $v$ with respect to $\theta$ is $v$; and
\item[\two] if $\theta_\infty = 0$ then the canonical decomposition is $v = \rho_{\infty} + \delta + \cdots + \delta$, where $\delta$ appears $n$ times here. 
\end{enumerate}
\end{prop}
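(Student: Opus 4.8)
The plan is to extract the numerics of the quadratic form on the framed McKay quiver $Q$ and feed them into the characterisation of the canonical decomposition in Theorem~\ref{thm:canondecomp}. In the Cartan form of $Q$ one has $(\rho_\infty,\rho_\infty)=2$, $(\rho_\infty,\delta)=-1$ and $(\delta,\delta)=0$ (the last because $\delta$ spans the radical of the affine Cartan form $(\,\cdot\,,\,\cdot\,)_\Gamma$), so $(v,v)=2-2n$ and $p(v)=n$. To see that $v\in R^+$ — which, together with $\theta(v)=0$ for $\theta\in\Theta_v$, gives $v\in R_\theta^+$ — note that for $n\ge 2$ the vector $v$ has full (hence connected) support and satisfies $(v,\rho_i)\le 0$ for all $i\in I$, so it lies in the fundamental region and is an imaginary root, while for $n=1$ we have $(v,\rho_\infty)=1$, so $s_{\rho_\infty}(v)=v-\rho_\infty=\delta$, a root by Lemma~\ref{lem:recoverPhi}. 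Note also that $\theta(v)=0$ forces $\theta(\delta)=-\theta_\infty/n$, which is the source of the dichotomy in the statement.

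For part \one, assume $\theta_\infty\ne 0$; I would show $v\in\Sigma_\theta$, whence the canonical decomposition of $v$ is $v$ itself (the trivial decomposition $(v)$ refines the canonical one, and a one-part decomposition can refine only itself). Let $v=\beta^{(0)}+\cdots+\beta^{(k)}$ with $k\ge 1$ and each $\beta^{(i)}\in R_\theta^+$. Since $v_\infty=1$, exactly one summand, say $\beta^{(0)}$, has $\infty$-coordinate $1$ and the rest have $\infty$-coordinate $0$, hence lie in $\Phi_\aff^+$. As $\theta(\delta)=-\theta_\infty/n\ne 0$, no positive multiple of $\delta$ is annihilated by $\theta$, so $\beta^{(1)},\dots,\beta^{(k)}$ are real affine roots and $p(\beta^{(i)})=0$ for $i\ge1$. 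Setting $\eta:=\beta^{(1)}+\cdots+\beta^{(k)}$, a nonzero vector with $\eta_\infty=0$, and $\gamma^{(0)}:=\beta^{(0)}-\rho_\infty=n\delta-\eta$, a short computation using $(\delta,\delta)_\Gamma=(\delta,\eta)_\Gamma=0$ and $\delta_0=1$ yields
$$
p(\beta^{(0)})=(n-\eta_0)-\tfrac{1}{2}(\eta,\eta)_\Gamma .
$$
The quantity $\eta_0+\tfrac{1}{2}(\eta,\eta)_\Gamma$ is a non-negative integer (non-negative since $(\,\cdot\,,\,\cdot\,)_\Gamma$ is positive semidefinite, integral since it is an even form), and it cannot vanish — otherwise $\eta$ would lie in the radical $\Q\delta$ with $\eta_0=0$, forcing $\eta=0$. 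Hence $p(\beta^{(0)})\le n-1$ and $\sum_i p(\beta^{(i)})=p(\beta^{(0)})<n=p(v)$, so $v\in\Sigma_\theta$.

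For part \two, where $\theta_\infty=0$ and hence $\theta(\delta)=0$, I would first check $\rho_\infty,\delta\in\Sigma_\theta$: $\rho_\infty$ is a simple root with $\theta(\rho_\infty)=0$ and has no proper decomposition, while $\delta$ is the minimal positive imaginary root of $\Phi_\aff$, so every proper decomposition of $\delta$ into positive roots has only real summands and thus $\sum p=0<1=p(\delta)$. Therefore $v=\rho_\infty+\delta+\cdots+\delta$ ($n$ copies of $\delta$) is a decomposition into $\Sigma_\theta$-summands, so $v\in\N R_\theta^+$ and Theorem~\ref{thm:canondecomp}\two\ applies: the canonical decomposition is the unique coarsest decomposition of $v$ into $\Sigma_\theta$-summands, hence a coarsening of $(\rho_\infty;\delta;\dots;\delta)$, so each of its parts is of the form $c\delta$ or $\rho_\infty+c\delta$ for some $c\ge 0$. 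But $c\delta\in\Sigma_\theta$ forces $c\le 1$ (for $c\ge2$ one has $c\delta=\delta+\cdots+\delta$ with $\sum p=c>1=p(c\delta)$), and $\rho_\infty+c\delta\in\Sigma_\theta$ forces $c=0$, since $p(\rho_\infty+c\delta)=c$ by the same computation as for $v$, so for $c\ge1$ the decomposition $\rho_\infty+\delta+\cdots+\delta$ has $\sum p=c=p(\rho_\infty+c\delta)$, violating the \emph{strict} inequality required for membership in $\Sigma_\theta$. Therefore the canonical decomposition can only be $(\rho_\infty;\delta;\dots;\delta)$ itself.

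The part I expect to be the main obstacle is less any single computation than keeping the two quadratic forms straight — moving cleanly between the Cartan form of the framed quiver $Q$ and the positive semidefinite affine form on $R(\Gamma)$ — together with two points that have to be handled with care: in part \one, that the genericity hypothesis $\theta_\infty\ne 0$ is exactly what forbids imaginary affine roots among the $\beta^{(i)}$ and thereby forces $p(\beta^{(0)})\le n-1$; and in part \two, the subtlety that $\rho_\infty+c\delta\notin\Sigma_\theta$ for $c\ge1$ fails by the strict (not the non-strict) inequality in the definition of $\Sigma_\theta$, which is precisely what rules out any coarser decomposition.
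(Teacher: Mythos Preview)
Your proof is correct and takes a genuinely different, more elementary route than the paper's. The paper relies on the Frenkel--Kac parametrisation (Lemma~\ref{lem:FrenkelKac}) of roots with $\infty$-coordinate $1$: for part~\one\ it writes the $\Sigma_\theta$-summand $\gamma^{(0)}$ in Frenkel--Kac form $\rho_\infty + m\delta + \tfrac12(\nu,\nu)\delta - \nu$ with $m=p(\gamma^{(0)})$, then combines the upper bound $m\le n$ with the maximality characterisation of Theorem~\ref{thm:canondecomp}\three\ to force $m=n$ and $\nu=0$; for part~\two\ it argues similarly after separating out the copies of $\delta$. You bypass Frenkel--Kac entirely. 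In part~\one\ you verify $v\in\Sigma_\theta$ by a direct quadratic-form computation of $p(\beta^{(0)})=n-\eta_0-\tfrac12(\eta,\eta)_\Gamma$, using only positive semidefiniteness of $(\,\cdot\,,\,\cdot\,)_\Gamma$ and the radical description $\Q\delta$ to get the strict drop. In part~\two\ you use the coarsening characterisation of Theorem~\ref{thm:canondecomp}\two\ rather than the $\sum p$-maximality of~\three, reducing to the pleasant observation that $\rho_\infty+c\delta\notin\Sigma_\theta$ for $c\ge1$ fails only by the \emph{strict} inequality. Your approach is self-contained and avoids invoking representation theory of the affine Kac--Moody algebra; the paper's route, while heavier, packages the key inequality $p(\beta^{(0)})\le n$ into a structural statement about weights of the vacuum module that may be useful elsewhere.
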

 
 The proof requires two preliminary results, the first of which is an application of the Frenkel--Kac theorem.

\begin{lem}
\label{lem:FrenkelKac}
If $\gamma=(\gamma_i)_{i\in I} \in R^+$ satisfies $\gamma_{\infty} = 1$, then there exists $\nu \in \bigoplus_{i\geq 1} \Z\rho_i$ such that
\begin{equation}\label{eq:wFrenkelKac}
\gamma = \rho_{\infty} + m \delta + \frac{1}{2} (\nu,\nu) \delta - \nu,
\end{equation}
 where $m=p(\gamma)$. Conversely, any vector of this form lies in $R^+$. 
\end{lem}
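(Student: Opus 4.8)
The plan is to exploit the standard realisation of the affine root system $\Phi_\aff$ of type $\widehat{\mathrm{ADE}}$ inside the lattice $\Z\delta \oplus Q_{\mathrm{fin}} \oplus \Z\Lambda_0$, where $Q_{\mathrm{fin}} = \bigoplus_{i\geq 1}\Z\rho_i$ is the finite root lattice and $\delta$ is the minimal imaginary root, together with the description of the affine Weyl group $W_\aff = W_\Gamma \ltimes Q_{\mathrm{fin}}$ acting by translations. Concretely, I would first reduce to a purely lattice-theoretic statement: by Lemma~\ref{lem:recoverPhi} and the identification $\Z^I \cong \Z\rho_\infty \oplus R(\Gamma)$, the condition $\gamma_\infty = 1$ says precisely that $\gamma$ lies in the affine hyperplane at level $1$ for the coordinate dual to $\rho_\infty$, and the root system $R$ restricted to this slice is governed by the affine Kac--Moody data attached to the McKay graph. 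The key input is that, for a root $\gamma$ with $\gamma_\infty = 1$, the pairing $p(\gamma) = 1 - \tfrac12(\gamma,\gamma)$ records how far $\gamma$ sits from the "lowest" such root $\rho_\infty$, which is itself a real root of the enlarged quiver.

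The heart of the argument is the Frenkel--Kac (or Frenkel--Kac--Segal) description of the real roots of an affine Kac--Moody algebra of ADE type: every real root is of the form $\alpha + k\delta$ with $\alpha \in \Phi$ a finite root and $k\in \Z$, and the imaginary roots are the nonzero multiples of $\delta$. I would combine this with the translation action: writing $t_\nu$ for the translation by $\nu \in Q_{\mathrm{fin}}$ inside $W_\aff$, one has the classical formula $t_\nu(\beta) = \beta + (\beta,\nu^\vee)\delta - \ldots$; applied to an element on the level-$1$ slice, translation by $\nu$ sends $\rho_\infty + m\delta$ to $\rho_\infty + m\delta + \tfrac12(\nu,\nu)\delta - \nu$ (this is exactly the shape of \eqref{eq:wFrenkelKac}), because the level-$1$ vector $\rho_\infty$ transforms under $t_\nu$ with the characteristic quadratic shift $\tfrac12(\nu,\nu)\delta$. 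So every vector of the form in \eqref{eq:wFrenkelKac} is in the $W_\aff$-orbit of $\rho_\infty + m\delta$, and one checks directly that $\rho_\infty$ itself is a (real) root while $\rho_\infty + m\delta$ is a root precisely when $m = p(\rho_\infty) = \ldots$; since $W$ preserves $R^+$ up to controlled sign and preserves $p$, this gives the "conversely" direction and pins down $m = p(\gamma)$.

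For the forward direction — the substantive half — I would argue that any $\gamma \in R^+$ with $\gamma_\infty = 1$ must lie in a single $W_\aff$-orbit, namely that of $\rho_\infty$ shifted by an imaginary root. The mechanism: $W_\aff$ acts transitively on the level-$1$ slice modulo $\Z\delta$ because $W_\aff$ contains all translations $t_\nu$, $\nu \in Q_{\mathrm{fin}}$, and these act simply transitively on the coset representatives of $Q_{\mathrm{fin}}$ in the slice; hence $\gamma = w(\rho_\infty + m'\delta)$ for some $w \in W_\aff$ and some $m' \in \Z$, and then unwinding $w$ as a translation times a finite Weyl group element produces the explicit $\nu$. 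That $\gamma$ is a \emph{positive} root, together with $p(\gamma) \geq 0$, forces $m = p(\gamma) \geq 0$ and fixes the sign conventions so that $\gamma$, not $-\gamma$, has the stated form. The main obstacle I anticipate is bookkeeping: getting the precise translation formula with the correct coefficient $\tfrac12(\nu,\nu)$ and the correct sign on $-\nu$, and verifying that the level-$1$ vector $\rho_\infty$ plays the role that $\Lambda_0$ (the basic fundamental weight) plays in the standard affine Lie theory references — i.e.\ that the framed McKay quiver's extra vertex genuinely supplies the level-grading. Once that dictionary is fixed, everything else is a direct computation using the known structure of $R^+$ for affine ADE type (e.g.\ \cite[Chapter 5, 6]{KacBook}).
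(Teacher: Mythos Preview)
Your approach is sound and genuinely different from the paper's. The paper's proof is a two-line citation: it invokes the Frenkel--Kac theorem \cite[Lemma~12.6]{KacBook}, which says that the weights of the level-one vacuum module $V(\omega_0)$ for the Kac--Moody algebra with root system $R$ are exactly the vectors $\omega_0 - m\delta - \tfrac12(\nu,\nu)\delta + \nu$ with $m\geq 0$ and $\nu\in Q_{\mathrm{fin}}$, and then cites \cite[Lemma~2.14]{NakajimaBranching} for the dictionary identifying those weights with positive roots $\gamma$ of $R$ having $\gamma_\infty=1$. You instead stay inside the root system and act by $W_\aff = \langle s_0,\dots,s_r\rangle \subset W_R$ on the slice $\{\gamma_\infty=1\}$, using the translation formula $t_\nu(\rho_\infty + m\delta) = \rho_\infty + m\delta + \tfrac12(\nu,\nu)\delta - \nu$ to reduce everything to the vectors $\rho_\infty + m\delta$. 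Both arguments encode the same Frenkel--Kac mechanism; yours trades the representation-theoretic black box for a direct Weyl-group computation and is arguably more self-contained, while the paper's is shorter.

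There is one genuine gap in your write-up. The sentence ``$\rho_\infty + m\delta$ is a root precisely when $m = p(\rho_\infty) = \ldots$'' is incomplete and, as written, incorrect: the equation $m = p(\rho_\infty + m\delta)$ is a tautology, not a criterion. What you actually need for the converse is the verification that $\rho_\infty + m\delta \in R^+$ for every $m\geq 0$, and that it is \emph{not} a root for $m<0$. The second half is easy since $p(\rho_\infty + m\delta)=m$. For the first half: $m=0$ is a simple root; for $m\geq 2$ one checks $(\rho_\infty + m\delta,\rho_i)\leq 0$ for all $i\in I$ and the support is connected, so it lies in the fundamental domain and is a positive imaginary root; but the case $m=1$ does \emph{not} lie in the fundamental domain (since $(\rho_\infty+\delta,\rho_\infty)=1$), and to handle it you must step outside $W_\aff$ and apply $s_\infty$, giving $s_\infty(\rho_\infty+\delta)=\delta$, which is a known imaginary root. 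Once you fill in this case analysis your argument is complete. Your anticipated sign worry, incidentally, resolves correctly: the level of $\rho_\infty$ is $-(\rho_\infty,\delta)=1$, and the translation formula does produce the $+\tfrac12(\nu,\nu)\delta - \nu$ shape exactly as in \eqref{eq:wFrenkelKac}.
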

\begin{proof}
Let $V(\omega_0)$ be the vacuum module (of level one) for the Kac--Moody algebra with root system $R$. The Frenkel-Kac Theorem \cite[Lemma 12.6]{KacBook} says that $w$ is a weight of $V(\omega_0)$ if and only if 
$$
w = \omega_0 - m \delta - \frac{1}{2} (\nu,\nu) \delta + \nu,
$$
 for some $\nu \in \bigoplus_{i\geq 1} \Z\rho_i$ and $m \ge 0$. Then, the statement follows from \cite[Lemma 2.14]{NakajimaBranching}.
\end{proof}

\begin{lem}
\label{lem:vimroot}
 For $\theta\in \Theta_{v}$, we have $v\in R_\theta^+$ and $p(v) = n$. In particular, if $n > 1$ then $v$ is an anisotropic root. 
\end{lem}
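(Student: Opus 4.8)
The plan is to verify the three assertions in turn: that $v \in R^+$, that $\theta(v) = 0$ so $v \in R^+_\theta$, and that $p(v) = n$, from which anisotropy follows when $n > 1$. The membership $v \in R^+_\theta$ is immediate: by definition $\Theta_v = \{\theta \mid \theta(v) = 0\}$, so $\theta(v) = 0$ automatically, and it only remains to know $v \in R^+$. For this, I would apply Lemma~\ref{lem:FrenkelKac} with $\gamma = v = \rho_\infty + n\delta$: since $v_\infty = 1$, the lemma asserts $v$ is a positive root precisely when $v$ has the form $\rho_\infty + m\delta + \tfrac12(\nu,\nu)\delta - \nu$ for some $\nu \in \bigoplus_{i \ge 1}\Z\rho_i$ and $m = p(v) \ge 0$; taking $\nu = 0$ and $m = n$ exhibits $v$ in exactly this form, so $v \in R^+$ and simultaneously $p(v) = n$.

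The one genuine computation is the value $p(v) = n$, which I would obtain directly from the definition $p(\alpha) = 1 - \tfrac12(\alpha,\alpha)$ rather than relying on the Frenkel--Kac normal form, both as a cross-check and because it is self-contained. Writing $v = \rho_\infty + n\delta$ under the identification $\Z^I = \Z\rho_\infty \oplus R(\Gamma)$ from Lemma~\ref{lem:recoverPhi}, I expand
\[
(v,v) = (\rho_\infty,\rho_\infty) + 2n(\rho_\infty,\delta) + n^2(\delta,\delta).
\]
Now $(\rho_\infty,\rho_\infty) = 2$ (it is a vertex of the Cartan matrix $C$), $(\delta,\delta)_\Gamma = 0$ since $\delta$ is the minimal imaginary root of $\Phi_\aff$ and hence $(\delta,\delta) = 0$ in $\Z^I$ as well (the $\rho_\infty$-component of $\delta$ vanishes), and $(\rho_\infty,\delta) = -a$ where $a$ is the number of arrows between $\rho_\infty$ and the rest of the quiver. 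Since the framing vector is $\bw = \rho_0$, there is exactly one arrow $\rho_\infty \to \rho_0$ and one arrow $\rho_0 \to \rho_\infty$ in $Q$; the off-diagonal entry of $C = 2\,\mathrm{Id} - A$ in position $(\rho_\infty,\rho_0)$ is therefore $-1$, and all other entries in the $\rho_\infty$-row vanish, so $(\rho_\infty,\delta) = -\delta_0 = -\dim\rho_0 = -1$. Hence $(v,v) = 2 - 2n$ and $p(v) = 1 - \tfrac12(2 - 2n) = n$.

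Finally, for the ``in particular'' clause: a positive root $\alpha$ is anisotropic exactly when $(\alpha,\alpha) \le 0$, equivalently $p(\alpha) \ge 1$. Since $p(v) = n$, this holds whenever $n > 1$ (indeed $(v,v) = 2 - 2n < 0$), and in fact $v$ is then an imaginary root. I do not anticipate any real obstacle here; the only mild subtlety is bookkeeping the $\rho_\infty$-row of the Cartan matrix of the framed quiver correctly, i.e.\ remembering that framing by $\bw = \rho_0$ contributes a single edge between $\rho_\infty$ and $\rho_0$, which is what makes $(\rho_\infty,\delta) = -1$.
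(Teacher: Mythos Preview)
Your proposal is correct and follows essentially the same approach as the paper: invoke Lemma~\ref{lem:FrenkelKac} with $\nu=0$ to get $v\in R^+$, note $\theta(v)=0$ by definition of $\Theta_v$, and compute $(v,v)=2-2n$ directly to obtain $p(v)=n$. One small correction: a root is \emph{anisotropic} when $(\alpha,\alpha)<0$, i.e.\ $p(\alpha)>1$ (the weak inequality $p(\alpha)\ge 1$ characterises imaginary roots, including the isotropic case $p=1$); your parenthetical $(v,v)=2-2n<0$ for $n>1$ already verifies the correct strict inequality, so the conclusion stands.
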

\begin{proof}
Lemma~\ref{lem:FrenkelKac} implies that $v\in R_\theta^+$. For $i > 0$, we have $(\rho_i,\rho_{\infty} + n \delta) = 0$. We compute $(\rho_0, \rho_{\infty} + n \delta) = (\rho_0, \rho_{\infty}) = -1$ and $(\rho_{\infty}, \rho_{\infty} + n \delta) = 2 - n$, giving $p(v) = n$. Therefore, if $n > 1$, then $v$ belongs to the fundamental domain of $Q$ and $p(v) > 1$. This means that $v$ is an anisotropic root. 
\end{proof}

\begin{proof}[Proof of Proposition~\ref{prop:canonicaldecomp}]
 Lemma~\ref{lem:vimroot} gives $v \in R_{\theta}^+$, so we obtain a canonical decomposition 
 \[
v= \gamma^{(0)} + \cdots + \gamma^{(\ell)}
 \]
 by Theorem~\ref{thm:canondecomp}. We may assume that $\gamma^{(0)}_{\infty} = 1$.   
 
  Suppose first that $\theta_\infty=0$, so that $\theta(\delta) = 0$. Then the fact that $p(s \delta) = 1$ for all $s \ge 1$ implies that the only multiple of $\delta$ in $\Sigma_{\theta}$ is $\delta$ itself. Similarly, the decomposition 
  $$
  (\rho_{\infty}  + m \delta) = \rho_{\infty} + \delta + \cdots + \delta
  $$
  implies that $\rho_{\infty}  + m \delta \notin \Sigma_{\theta}$ for any $m > 0$ because $m = p(\rho_{\infty}  + m \delta) = p(\rho_{\infty}) + p(\delta) + \cdots + p(\delta)$ would contradict the inequality in (\ref{eq:Sigmathetadefn}). In particular, $v \notin \Sigma_{\theta}$. Therefore the canonical decomposition of $v$ has $\ell \ge 1$. Let $\gamma^{(1)}, \ds, \gamma^{(k)}$ be all roots in the expression that equal $\delta$ (for some $1 \le k \le n$). By Lemma \ref{lem:FrenkelKac}, the root $\gamma^{(0)}$ is of the form \eqref{eq:wFrenkelKac} for some $m \le n - \ell$ and $\nu \in \oplus_{i \ge 1} \Z \rho_i$. Each $\gamma^{(i)}$ for $i > k$ is a real root in $(\Phi_\aff)^+_{\theta} := \{ v \in \Phi_\aff^+ \mid \theta(v) = 0 \}$. In particular, $\sum_i p\left(\gamma^{(i)}\right) = m + k$. By definition of $\Sigma_{\theta}$ and the characterisation of canonical decomposition given in Theorem~\ref{thm:canondecomp}\three, we must have 
$$
n = p(v) \le \sum_i p\left(\gamma^{(i)}\right) = m + k;
$$
otherwise $v \in \Sigma_{\theta}$, which we have shown is not true. Therefore, we deduce that $n = m + k$. This implies that $(\nu,\nu) = 0$. Since $( - , - )$ is positive definite on $\bigoplus_{i \ge 1} \Z \rho_i$, this means that $\nu = 0$, forcing $\gamma^{(0)} = \rho_{\infty} + m \delta$. But we have shown above that this in turn forces $m = 0$ because $\gamma^{(0)} \in \Sigma_{\theta}$. Hence $k = n$, implying that the canonical decomposition of $v$ is $\rho_{\infty} + \delta + \cdots + \delta$. 
  
If $\theta_{\infty} \neq 0$ then none of the roots $\gamma^{(i)}$ is a multiple of $\delta$. Hence every $\gamma^{(i)}$, for $i > 0$, is a real root in $(\Phi_\aff)^+_{\theta}$. Again assuming that $\gamma^{(0)}$ has the form given in \eqref{eq:wFrenkelKac} for some $m \le n$, this implies by Theorem~\ref{thm:canondecomp}\three \ that $\sum_i p\left(\gamma^{(i)}\right) = p\left(\gamma^{(0)}\right) = m \ge n$. Therefore, we must have $m = n$, and $(\nu,\nu) = 0$. Once again, since $( - , - )$ is positive definite on $\bigoplus_{i \ge 1} \Z \rho_i$, this means that $\nu = 0$, and hence $v \in \Sigma_{\theta}$ has trivial canonical decomposition. 
\end{proof}

\subsection{Characterising smooth quiver varieties}
The following description of the affine quotient corresponding to the stability parameter $\theta=0$ is well-known.

\begin{lem}
\label{lem:singularity}
There is an isomorphism of algebraic varieties $\mathfrak{M}_0\cong \C^{2n} / \Gamma_n$ that is also an isomorphism of Poisson varieties (up to rescaling the bracket by some $t \in \C^{\times}$). 
\end{lem}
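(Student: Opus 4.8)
The plan is to identify $\mf{M}_0 = \mu^{-1}(0)\git G(v)$ with $\C^{2n}/\Gamma_n$ by constructing the isomorphism concretely from the classical McKay picture, rather than by invoking a black box. First I would recall that the affine quiver variety $\mf{M}_0(\bv,\bw)$ with $\bv = n\delta$ and $\bw = \rho_0$ for the McKay graph of $\Gamma$ is, by the trick of Crawley--Boevey~\cite{CBmomap} used in section~\ref{sec:Nakasec}, nothing but the moment-map quotient $\mu^{-1}(0)\git G(\bv)$ for the \emph{unframed} McKay quiver after the framing vertex has been folded in; concretely, a point of $\mf{M}_0(\rho_0,\ldots)$ with $\bv=\delta$ is a representation of the preprojective algebra $\Pi_\Gamma$ of the affine McKay quiver on the regular representation $\C[\Gamma]$ together with a cyclic vector, which by Kronheimer's description~\cite{Kronheimer} (see also the treatment in \cite{CBnormal}, \cite{Nak1994}) recovers the affine ADE surface $\C^2/\Gamma$ as $\mf{M}_0(\delta,\rho_0)\cong \C^2/\Gamma$. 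This is the base case $n=1$.

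Next I would handle general $n$ by exhibiting the symmetric-product structure. Since $\theta=0$ imposes no stability, a closed point of $\mf{M}_0(n\delta,\rho_0)$ is a $0$-polystable $\Pi$-module of dimension $v=\rho_\infty+n\delta$; using the canonical decomposition $v=\rho_\infty+\delta+\cdots+\delta$ from Proposition~\ref{prop:canonicaldecomp}\two\ (applied with $\theta=0$, so $\theta_\infty=0$), such a module decomposes as $M_\infty\oplus M_1^{\oplus m_1}\oplus\cdots$ where each $M_j$ has dimension vector $\delta$ and $M_\infty$ has dimension vector $\rho_\infty$ — but the only submodule supported on the framing vertex forces $M_\infty$ to be absorbed, so the underlying data is an unordered $n$-tuple of closed points of $\mf{M}_0(\delta)\cong\C^2/\Gamma$. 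This gives a bijection on closed points $\mf{M}_0(n\delta,\rho_0)\to\Sym^n(\C^2/\Gamma)=\C^{2n}/\Gamma_n$. To promote this to an isomorphism of varieties I would identify the coordinate ring $\C[\mu^{-1}(0)]^{G(v)}$ with the ring of $\Gamma_n$-invariant functions: the $G(v)$-invariants of the representation space are generated by traces of cycles in the quiver, and the wreath-product symmetry together with the ADE invariant theory identifies these with $(\C[\C^2]^{\otimes n})^{\Gamma_n}=\C[\C^{2n}]^{\Gamma_n}$; alternatively, appeal directly to the known statement (it is standard, cf.\ the references cited for $X=\Hilb^{[n]}(S)\to Y$ in the introduction, or Kuznetsov~\cite{Kuznetsov07}, Nakajima's lectures) that the Hilbert--Chow/affinization of the quiver variety is $\C^{2n}/\Gamma_n$.

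Finally, for the Poisson statement, I would note that $\mathbf{M}(\bv,\bw)$ carries the natural $G(v)$-invariant symplectic form $\omega$ of section~\ref{sec:Nakasec}, so $\mu^{-1}(0)\git G(v)$ inherits a Poisson bracket by symplectic reduction; likewise $\C^{2n}$ has the standard symplectic form, which is $\Gamma_n$-invariant since $\Gamma_n\subset\Sp(2n,\C)$, and hence descends to a Poisson bracket on $\C^{2n}/\Gamma_n$. Both brackets are homogeneous of the same degree with respect to the $\C^\times$-scaling actions recorded in section~\ref{sec:Nakasec} (scaling $\mathbf{M}(\bv,\bw)$, resp.\ scaling $\C^{2n}$), so the isomorphism of varieties built above intertwines the two Poisson structures up to an overall constant $t\in\C^\times$ coming from the possibly different weights — one checks this already in the case $n=1$ on $\C^2/\Gamma$ and it then propagates to $\Sym^n$. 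I expect the main obstacle to be the passage from a bijection on closed points to an isomorphism of schemes, i.e.\ the identification of invariant rings $\C[\mu^{-1}(0)]^{G(v)}\cong(\C[\C^{2n}])^{\Gamma_n}$; this is where I would lean most heavily on the cited literature (Kronheimer~\cite{Kronheimer}, Crawley-Boevey~\cite{CBnormal}, Nakajima~\cite{Nak1994}, and the $\Hilb^{[n]}$ references), since carrying it out from scratch amounts to reproving the affine McKay correspondence in families over $\Sym^n$.
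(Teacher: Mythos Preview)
Your strategy matches the paper's: use the canonical decomposition $v=\rho_\infty+\delta+\cdots+\delta$ from Proposition~\ref{prop:canonicaldecomp}\two, identify the $\delta$-piece with $\C^2/\Gamma$ via Kronheimer~\cite{Kronheimer}, and assemble into $\Sym^n(\C^2/\Gamma)\cong\C^{2n}/\Gamma_n$. The gap you correctly flag --- promoting the bijection on closed points to an isomorphism of varieties --- is exactly what the paper closes by citing Crawley-Boevey's decomposition theorem \cite{CBdecomp} (see also \cite[Lemma~9.2]{CBmomap}), which gives directly
\[
\mf{M}_0 \;\cong\; \mf{M}_0^{\mathrm{pt}}(1,0)\times \Sym^n\!\big(\mf{M}_0^{\mathrm{Mc}}(\delta,0)\big)
\]
as varieties from the canonical decomposition, with the first factor a point since $\rho_\infty$ is real. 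For the Poisson statement, the paper replaces your homogeneity-plus-propagation argument by the sharper fact that any generically non-degenerate Poisson bracket on $\C^{2n}/\Gamma_n$ that is homogeneous of degree $-2$ is unique up to scalar, citing \cite[Lemma~2.23]{EG}; this makes the ``up to $t\in\C^\times$'' conclusion immediate once you know both sides carry such a structure.
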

\begin{proof}
  This follows from results of Crawley-Boevey~\cite{CBdecomp}; see also Kuznetsov~\cite[Proposition~33]{Kuznetsov07}. Indeed, Proposition~\ref{prop:canonicaldecomp} shows that the canonical decomposition of $v$ in $\Sigma_0$ is $\rho_{\infty} + \delta + \cdots + \delta$, where $\delta$ appears $n$ times in the sum. If $\mf{M}^{\mathrm{pt}}_0(\bv,\bw)$ denotes a quiver variety associated to the graph with a single vertex and no edges, and $\mf{M}^{\mathrm{Mc}}_0(\bv,\bw)$ a quiver variety associated to the unframed McKay quiver, then Crawley-Boevey's canonical decomposition (see also \cite[Lemma 9.2]{CBmomap}) implies that 
$$
\mathfrak{M}_0 \cong \mf{M}^{\mathrm{pt}}_0(1,0) \times  \Sym^n \left( \mf{M}^{\mathrm{Mc}}_0(\delta,0)\right) 
$$
because $\rho_{\infty}$ is real and hence $p(\rho_{\infty}) = 0$. The quiver variety $\mf{M}^{\mathrm{pt}}_0(1,0)$ is of course just a point, and the isomorphism $\mf{M}^{\mathrm{Mc}}_0(\delta,0) \cong \C^2 / \Gamma$ is due to Kronheimer~\cite{Kronheimer}. The description of $\mathfrak{M}_0$ as an algebraic variety follows from the isomorphism $\Sym^n(\C^2 / \Gamma) \cong \C^{2n} / \Gamma_n$. For the final statement, both varieties have a natural Poisson structure making them symplectic varieties, i.e.\ they have a Poisson bracket that is generically non-degenerate. Moreover, in both cases this bracket is homogeneous of degree $-2$. Any such structure on $\C^{2n} / \Gamma_n$ is unique up to rescaling \cite[Lemma 2.23]{EG}.  
\end{proof}

To state the main result of this section, identify the space $\Theta_{v}$ of GIT stability parameters with $\Theta$ via the projection away from the $\theta_\infty$ coordinate. We obtain from \eqref{eqn:hyperplanes} a hyperplane arrangement in $\Theta_{v}$ given by
\begin{equation}
    \label{eqn:Av}
\mc{A}_{v} =  \{ \delta^\perp, (m\delta\pm \alpha)^\perp \mid \alpha \in \Phi^+, \ 0 \le m < n \},
\end{equation}
where now each $\gamma\in \Z^I = \Z\oplus R(\Gamma)$ defines the dual hyperplane $\gamma^\perp:= \{\theta\in \Theta_{v} \mid \theta(\gamma)=0\}$. As before, a \emph{chamber} in $\Theta_{v}$ is the intersection with $\Theta_v$ of a connected component of the locus 
 \begin{equation}
 \label{eqn:Thetav}
\big(\Theta_{v}\otimes_{\mathbb{Q}} \mathbb{R}\big) \smallsetminus \bigcup_{\gamma^\perp \in \mc{A}_{v}} \gamma^{\perp}, 
 \end{equation}
and we let $\Theta_v^{\reg}$ denote the union of all chambers in $\Theta_v$. A \emph{wall} is a codimension-one face of the closure of a chamber. A vector $w \in \Sigma_{\theta}$ is said to be \textit{minimal} (with respect to $\theta$) if it does not admit a proper decomposition $w = \beta^{(0)} + \cdots + \beta^{(k)}$ with $k > 0$ and $\beta^{(i)} \in \Sigma_{\theta}$.
 
\begin{thm}
\label{thm:smoothmoduli}
 For $n>1$, the following conditions on a stability parameter $\theta\in \Theta_{v}$ are equivalent:
\begin{enumerate}
\item[\one] the variety $\mathfrak{M}_\theta$ is smooth;
\item[\two] the canonical decomposition of $v$ with respect to $\theta$ is of the form $\sigma^{(0)} + \cdots + \sigma^{(\ell)}$ where each $\sigma^{(i)}\in \Sigma_\theta$ is minimal and a given imaginary root can appear at most once as a summand;
 \item[\three] the parameter $\theta$ lies in $\Theta_{v}^{\reg}$;
\item[\four] the parameter $\theta$ is generic, i.e.\ every $\theta$-semistable $\Pi$-module of dimension vector $v$ is $\theta$-stable.
\end{enumerate}
\end{thm}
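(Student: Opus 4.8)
The plan is to prove the cycle of implications \one $\Rightarrow$ \three $\Rightarrow$ \four $\Rightarrow$ \two $\Rightarrow$ \one, using the étale local normal form of Theorem~\ref{thm:commdiagram22} and the explicit canonical decomposition of $v$ from Proposition~\ref{prop:canonicaldecomp} as the two main tools. The key point throughout is that, for the dimension vector $v=\rho_\infty+n\delta$, the only way $v$ can fail to be generic-behaved is if $\theta$ annihilates some root $m\delta\pm\alpha$ with $\alpha\in\Phi^+$ and $0\le m<n$, or $\theta$ annihilates $\delta$ — in other words, precisely when $\theta\in\bigcup_{\gamma^\perp\in\mc{A}_v}\gamma^\perp$.

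First, \four $\Rightarrow$ \three and \three $\Rightarrow$ \four (equivalently \two). If $\theta$ is generic then no positive root $\gamma<v$ lies in $R_\theta^+$ except where forced; concretely, genericity of $\theta$ with respect to $v$ means every $\theta$-semistable module of dimension $v$ is stable, so by Theorem~\ref{thm:canondecomp}\one\ the vector $v$ itself must lie in $\Sigma_\theta$, and moreover $\theta(\gamma)\ne 0$ for all $0<\gamma<v$ in $R^+$; I would check, using Lemma~\ref{lem:recoverPhi} to identify $\{\alpha\in R\mid\alpha_\infty=0\}=\Phi_\aff$, that the roots $\gamma<v$ with $\gamma_\infty\in\{0\}$ are exactly the $m\delta$ for $0<m\le n$ (contributing the hyperplane $\delta^\perp$) while those with $\gamma_\infty=1$ are of the Frenkel--Kac form in Lemma~\ref{lem:FrenkelKac}, and the ones that can actually lie below $v$ give the hyperplanes $(m\delta\pm\alpha)^\perp$. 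This establishes that genericity forces $\theta\in\Theta_v^{\reg}$. Conversely, if $\theta\in\Theta_v^{\reg}$ then $\theta(\gamma)\ne 0$ for every root $\gamma<v$, so by the remark after \eqref{eq:Sigmathetadefn} we get $v\in\Sigma_\theta$; combined with $R_\theta^+$ containing no proper sub-root-sum of $v$, King's theorem gives that every $\theta$-semistable module of dimension $v$ is $\theta$-stable, i.e.\ \four\ holds, and the canonical decomposition is just $v$ itself, which is \two.

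For \four $\Rightarrow$ \one: when $\theta$ is generic with respect to the primitive-enough vector $v$ (note $v_\infty=1$), the quiver variety $\mf{M}_\theta$ is a fine moduli space of $\theta$-stable modules; smoothness then follows from the standard deformation-theoretic argument (the obstruction space vanishes, as in Nakajima), or more cleanly from Theorem~\ref{thm:commdiagram22}: at any closed point $x$ the Ext-graph is empty by Remark~\ref{rem:emptyExtgraph}, so étale-locally $\mf{M}_\theta$ looks like $\C^{2\ell}$. For the remaining implication \one $\Rightarrow$ \three (equivalently, $\theta\notin\Theta_v^{\reg}\Rightarrow\mf{M}_\theta$ is singular), I would argue by contradiction: if $\theta$ lies on some wall $\gamma^\perp\in\mc{A}_v$, then $\theta$ is non-generic, so there is a strictly $\theta$-polystable module of dimension $v$, giving a closed point $x\in\mf{M}_\theta$ with nonempty Ext-graph; applying Theorem~\ref{thm:commdiagram22} (with $\theta_0=\theta$) identifies an étale neighbourhood of $x$ with a product $\mf{M}_0(\bm,\bn)\times\C^{2\ell}$, and one computes from Proposition~\ref{prop:canonicaldecomp} and the explicit form of the roots $m\delta\pm\alpha$ that the Ext-graph is nontrivial enough (e.g.\ it contains a loop, coming from the imaginary-root summand $\delta$, so $\mf{M}_0(\bm,\bn)$ contains a Kleinian-type singularity, or it produces an $A_1$-type contraction) that $\mf{M}_0(\bm,\bn)$ is genuinely singular, hence so is $\mf{M}_\theta$.

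The main obstacle I anticipate is the combinatorial bookkeeping in \four$\Leftrightarrow$\three: one must verify precisely which sub-roots of $v=\rho_\infty+n\delta$ occur, i.e.\ that the hyperplanes cut out by $\theta(\gamma)=0$ for $0<\gamma<v$ are exactly those in $\mc{A}_v$ and no others. This is where Lemma~\ref{lem:FrenkelKac} does the real work — it parametrises the height-one positive roots $\rho_\infty+m\delta+\tfrac12(\nu,\nu)\delta-\nu$, and one has to check that such a root is $<v$ (componentwise $\le$, strictly less somewhere) exactly when the associated hyperplane is one of the $(m'\delta\pm\alpha)^\perp$ with $0\le m'<n$, $\alpha\in\Phi^+$. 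The other potentially delicate point is showing genuine singularity in \one$\Rightarrow$\three, which requires knowing that the Ext-graph $\mf{M}_0(\bm,\bn)$ attached to a wall point is not just a point; this follows once one checks that crossing a wall in $\mc{A}_v$ always changes the module structure in a way detected by a nonzero $-(\beta^{(i)},\beta^{(j)})$ or a positive $p(\beta^{(i)})$, which in turn reduces to the explicit $(\,,\,)$-computations already carried out in Lemma~\ref{lem:vimroot}.
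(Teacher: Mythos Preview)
Your approach differs from the paper's in one structural respect: the paper does not argue \one\ $\Leftrightarrow$ (anything) directly, but instead cites \cite[Corollary~1.17]{BelSchQuiver} for the equivalence \one\ $\Leftrightarrow$ \two, and then establishes \two\ $\Leftrightarrow$ \three\ $\Leftrightarrow$ \four\ by pure combinatorics of the canonical decomposition of $v$. Your plan to replace that citation by the \'etale local normal form works cleanly in one direction --- \four\ $\Rightarrow$ \one\ via Remark~\ref{rem:emptyExtgraph} is fine --- but the reverse direction has a real gap. For \one\ $\Rightarrow$ \three\ you need that whenever $\theta$ lies on a hyperplane of $\mc{A}_v$, the local model $\mf{M}_0(\bm,\bn)$ at some strictly polystable point is genuinely singular. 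You assert the Ext-graph is ``nontrivial enough'', but nonemptiness of the Ext-graph does not by itself force singularity of $\mf{M}_0(\bm,\bn)$; one must either compute that variety explicitly for each wall type (the paper does this later, in Section~5, for different purposes) or invoke precisely the smoothness criterion from \cite{BelSchQuiver} you are trying to avoid. Moreover, this has to work for every $\theta$ on a wall, including intersections of walls, where the representation types are more complicated than in your sketch.

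There are also two combinatorial slips in your \three\ $\Leftrightarrow$ \four\ outline. First, the positive roots $\gamma<v$ with $\gamma_\infty=0$ are all of $\Phi_\aff^+$ with $\gamma\le n\delta$, not just the multiples $m\delta$; the real roots $m\delta\pm\alpha$ also have $\gamma_\infty=0$. Second, the root $n\delta-\alpha$ gives a hyperplane not listed in $\mc{A}_v$; the paper disposes of this by a support argument (any decomposition of $v$ using $n\delta-\alpha$ forces $\rho_\infty\in\Sigma_\theta$, hence $\theta\in\delta^\perp$). Finally, your treatment of \two\ is incomplete: knowing that the canonical decomposition of $v$ is $v$ itself does not yet give \two, because one must still check that $v$ is \emph{minimal} in $\Sigma_\theta$ --- and ruling out proper decompositions $v=\beta^{(0)}+\cdots+\beta^{(k)}$ with each $\beta^{(i)}\in\Sigma_\theta$ is exactly the content of the paper's \two\ $\Leftrightarrow$ \three\ argument.
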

\begin{proof}
 Statements \one\ and \two\ are equivalent by \cite[Corollary~1.17]{BelSchQuiver}. 
  To show that \two\ and \three\ are equivalent, assume first that $\theta \in \Theta^{\reg}_{v}$. Since $\delta^\perp = \rho_{\infty}^\perp = \{\theta \mid \theta_{\infty}=0\}$ is a wall, the canonical decomposition of $v$ is $v$ by Proposition~\ref{prop:canonicaldecomp}. Assume that $v$ is not minimal, i.e.\ it admits a proper decomposition $v = \beta^{(0)} + \cdots + \beta^{(k)}$ with $k > 0$ and $\beta^{(i)} \in \Sigma_{\theta}$. Then, without loss of generality $\beta^{(0)}_{\infty} = 1$.  Hence $\beta^{(i)} \in \Phi_\aff^+$ for $i > 0$. In particular, there exists some $\beta \in E := \{ \beta \in \Phi_\aff^+ \mid 0 < \beta \le n \delta \}$ such that $\theta(\beta) = 0$; explicitly,  
$$
E = \{ m \delta + \alpha \ | \ 0 \le m < n, \ \alpha \in \Phi^{+} \} \cup \{ m \delta - \alpha \ | \ 0 < m \le n , \ \alpha \in \Phi^+ \} \cup \{ m \delta \ | \ 1 \le m \le n \}.
$$
The equations $\theta(\beta) = 0$ define the hyperplanes in $\mc{A}_{v}$, contradicting the assumption $\theta \in \Theta^{\reg}_{v}$, except in the case $n \delta - \alpha\in E$ for $\alpha \in \Phi^+$. Suppose $v$ admits a decomposition with some $\beta^{(i)} = n \delta - \alpha$, for $\alpha \in \Phi^+$. Then $\rho_{\infty} + \alpha \in \N \Sigma_{\theta}$, and hence $\rho_{\infty} \in \Sigma_{\theta}$ because the support of any root is connected; see for instance \cite[\S 5.3]{KacBook}. This implies that $\theta_{\infty} = 0$, so $\theta\in \delta^\perp$ which contradicts $\theta \in \Theta^{\reg}_{v}$. Therefore $v$ is minimal, so statement \two\ holds. Conversely, suppose that $\theta \not\in \Theta^{\reg}_{v}$, i.e.\ $\theta$ lies in a hyperplane from $\mc{A}_{v}$. First, if $\theta\in \delta^\perp = \rho_\infty^\perp$, then $\theta_{\infty} = 0$, so the canonical decomposition of $v$ is $\rho_{\infty}+\delta + \cdots + \delta$ by Proposition~\ref{prop:canonicaldecomp}. Since $\delta$ is imaginary and $n > 1$, condition \two\ fails to hold. Next, if $\theta \in (m\delta+\alpha)^\perp$ satisfies $\theta_{\infty} \neq 0$ for some $-n< m < n$ and $\alpha \in \Phi$, then the canonical decomposition of $v$ is $v$ by Proposition~\ref{prop:canonicaldecomp}. We claim that $v \in \Sigma_{\theta}$ is not minimal. If $m > 0$ then set $\gamma := m \delta + \alpha \in \Phi_\aff^+$, and if $m < 0$ then set $\gamma := - (m \delta + \alpha) \in \Phi_\aff^+$. Either way, $v - \gamma$ can be expressed as a vector of the form \eqref{eq:wFrenkelKac} and hence $v-\gamma \in R^+$. In fact $v-\gamma \in R_\theta^+$ because $\theta \in (m\delta+\alpha)^\perp=\gamma^\perp$ gives $\theta(v-\gamma)= \pm\theta(m\delta+\alpha) = 0$. Then both $\gamma$ and $v-\gamma$ admit a canonical decomposition by Theorem~\ref{thm:canondecomp}\two, so $v$ admits a proper decomposition. This shows that \two\ fails to hold, so \two\ and \three\ are equivalent.

 For the equivalence of \three\ and \four, the previous paragraph shows that if $\theta$ lies on any of the hyperplanes in $\mc{A}_{v}$, then either $v$ admits a canonical decomposition with more than one term (i.e., the generic polystable $\Pi$-module of dimension vector $v$ is not $\theta$-stable), or $v$ admits a proper decomposition. Grouping like terms, such a decomposition defines a representation type $\tau$ consisting of more than one term. This implies that there are $\theta$-semistable representations of dimension $v$ that are not $\theta$-stable. Therefore \four\ implies \three. Conversely, if $\theta$ does not lie on a hyperplane in $\mc{A}_{v}$, then the previous paragraph shows that the only decomposition of $v$ as a sum of elements from $\Sigma_{\theta}$ is $v$ itself. Therefore every $\theta$-semistable representation is $\theta$-stable.
\end{proof}

\begin{rem}
\label{rem:transfer}
\begin{enumerate}
\item Theorem~\ref{thm:smoothmoduli} implies that the wall-and-chamber structure on $\Theta_{v}$ determined by $\mc{A}_{v}$ coincides with the wall-and-chamber structure arising from the GIT construction of the quiver varieties $\mathfrak{M}_\theta$ for $\theta\in \Theta_{v}$. In particular, all results of Section~\ref{sec:combinatorics} hold for the GIT decomposition of $\Theta_{v}$, so the walls and chambers introduced there are GIT walls and GIT chambers.
\item It follows from Proposition~\ref{prop:canonicaldecomp} that whenever the equivalent conditions of Theorem~\ref{thm:smoothmoduli} hold, the canonical decomposition of $v$ is $v$, and moreover $v$ is minimal.
\end{enumerate}
\end{rem}

 \subsection{Crepant resolutions}
 We now investigate how the quiver varieties $\mathfrak{M}_\theta$ are related to each other. 
 
 \begin{prop}
\label{prop:crepres}
For any $\theta \in \Theta_{v}$, the morphism to the affine quotient $f_\theta\colon \mathfrak{M}_\theta \rightarrow \mathfrak{M}_0$ obtained by variation of GIT quotient is projective and birational. If $\theta \in \Theta^{\reg}_{v}$ then $f_\theta$ is a crepant resolution. 
\end{prop}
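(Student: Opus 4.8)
The plan is as follows. Projectivity of $f_\theta$ is immediate from the GIT construction: $\mf{M}_\theta = \Proj\bigoplus_{k\ge 0}\C[\mu^{-1}(0)]^{\chi_{k\theta}}$ is projective over the affine scheme $\mf{M}_0 = \Spec\C[\mu^{-1}(0)]^{G(v)}$. So the substance is birationality for every $\theta\in\Theta_v$, and then crepancy when $\theta\in\Theta_v^{\reg}$. Throughout I would use that $\mf{M}_0\cong\C^{2n}/\Gamma_n\cong\Sym^n(\C^2/\Gamma)$ by Lemma~\ref{lem:singularity}, and that $p(v)=n$ by Lemma~\ref{lem:vimroot}, so that the (non-empty) $\theta$-stable locus of $\mf{M}_\theta$ has dimension $2n=\dim\mf{M}_0$.

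For birationality I would first treat generic $\theta$ and exhibit a dense open $U\subseteq\mf{M}_0$ over which $f_\theta$ is an isomorphism. Take $U\subseteq\Sym^n(\C^2/\Gamma)$ to be the locus of $n$-tuples of pairwise distinct points lying in the smooth locus of $\C^2/\Gamma$; its complement has codimension two, so $U$ is dense. Reading off the product description of $\mf{M}_0$ from the proof of Lemma~\ref{lem:singularity} pointwise, together with Proposition~\ref{prop:canonicaldecomp}, a closed point $x\in U$ corresponds to the $0$-polystable $\Pi$-module $M_\infty\oplus S_1\oplus\cdots\oplus S_n$, where $M_\infty$ is the vertex simple at $\infty$ and $S_1,\dots,S_n$ are pairwise non-isomorphic simple $\Pi$-modules of dimension vector $\delta$. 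The next step is to compute the $\Ext$-graph of $x$: since $p(\delta)=1$ and $(\delta,\delta)=0$, it is the disjoint union of $n$ copies of the quiver with one vertex and one loop, and one finds $\bm=(1,\dots,1)$, $\bn=(1,\dots,1)$ (using $-(\rho_\infty,\delta)=1$) and $\ell=p(\rho_\infty)=0$. A short computation then identifies the morphism $\mf{M}_\varrho(\bm,\bn)\to\mf{M}_0(\bm,\bn)$ with the $n$-fold product of the morphism $\C^2\to\C^2$ attached to the framed one-loop quiver with $\bv=\bw=1$, which is an isomorphism. Theorem~\ref{thm:commdiagram22} (with $\ell=0$) then shows that $f_\theta$ is, étale-locally over $x$, an isomorphism; by étale descent it is an isomorphism over an open neighbourhood of $x$, and as this holds for all $x\in U$, the morphism $f_\theta$ is an isomorphism over $U$. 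Since $\mf{M}_\theta$ is irreducible (general theory of quiver varieties, cf.\ \cite{CBmomap,BelSchQuiver}), $f_\theta$ is birational. For arbitrary $\theta\in\Theta_v$ I would pick a generic $\theta'$ in a chamber whose closure contains $\theta$; both $\mf{M}_{\theta'}\to\mf{M}_\theta$ and $\mf{M}_{\theta'}\to\mf{M}_0$ arise by variation of GIT quotient, and the latter is birational by the previous case, so $f_\theta$ is birational too.

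For crepancy, assume $\theta\in\Theta_v^{\reg}$. Then $\mf{M}_\theta$ is smooth by Theorem~\ref{thm:smoothmoduli}, so $f_\theta$ is a projective resolution of $\mf{M}_0\cong\C^{2n}/\Gamma_n$. The Hamiltonian reduction of the symplectic form on $\mathbf{M}(\bv,\bw)$ descends to a holomorphic $2$-form on $\mf{M}_\theta$ that is non-degenerate on the smooth locus, hence everywhere, so $K_{\mf{M}_\theta}\cong\O_{\mf{M}_\theta}$. Since $\Gamma_n\subset\Sp(2n,\C)\subset\SL(2n,\C)$, the variety $\C^{2n}/\Gamma_n$ is Gorenstein with trivial canonical class and has canonical singularities, so $K_{\mf{M}_\theta}=f_\theta^*K_{\mf{M}_0}+E$ for an effective $f_\theta$-exceptional divisor $E$; triviality of both canonical classes gives $\O_{\mf{M}_\theta}(E)\cong\O_{\mf{M}_\theta}$, and since $f_{\theta*}\O_{\mf{M}_\theta}=\O_{\mf{M}_0}$ with $\mf{M}_0$ normal, $E$ is pulled back from $\mf{M}_0$, whence $E=0$ and $f_\theta$ is crepant. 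Equivalently, $f_\theta$ is an isomorphism in codimension one by the birationality argument and is a Poisson morphism, so the $2$-form on $\mf{M}_\theta$ restricts the symplectic form on the smooth locus of $\mf{M}_0$, exhibiting $f_\theta$ as a symplectic resolution.

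The main obstacle is the birationality step: one must pin down the generic $0$-polystable $\Pi$-module and carry out the $\Ext$-graph computation so as to recognise the local model of Theorem~\ref{thm:commdiagram22} as an isomorphism. Both of these rest on inputs already in place (Crawley-Boevey's canonical decomposition, packaged in Proposition~\ref{prop:canonicaldecomp}, and Theorem~\ref{thm:commdiagram22}), and one additionally needs the irreducibility of $\mf{M}_\theta$, i.e.\ of $\mu^{-1}(0)$. Crepancy is then essentially formal given smoothness and birationality, the only geometric input being the triviality of $K_{\C^{2n}/\Gamma_n}$, immediate from $\Gamma_n\subset\Sp(2n,\C)$.
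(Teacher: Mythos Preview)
Your argument is correct, and for projectivity and crepancy it matches the paper essentially verbatim. The birationality step, however, proceeds differently. The paper invokes Theorem~\ref{thm:birationalimage} from the appendix, which shows in one stroke that any VGIT morphism between quiver varieties is Poisson and birational onto its image; birationality onto all of $\mf{M}_0$ then follows from the dimension count $\dim\mf{M}_\theta=\dim\mf{M}_0=2p(v)=2n$ supplied by Proposition~\ref{prop:canonicaldecomp}. Your route instead exhibits an explicit dense open $U\subset\mf{M}_0$ over which $f_\theta$ is an isomorphism, by computing the Ext-graph at a generic $0$-polystable point and recognising the local model of Theorem~\ref{thm:commdiagram22} as the identity on $(\C^2)^n$. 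This is more hands-on but also more self-contained: it bypasses the appendix entirely (in particular Bozec's seminilpotent dimension estimate underlying semi-smallness) and uses only the \'etale-local normal form already established in Section~3. The cost is that you must separately treat non-generic $\theta$ via the factorisation $\mf{M}_{\theta'}\to\mf{M}_\theta\to\mf{M}_0$ and invoke irreducibility of $\mf{M}_\theta$, whereas the paper's approach handles all $\theta$ uniformly. Incidentally, your Ext-graph computation here is precisely the one the paper carries out later at the imaginary boundary wall (Proposition~\ref{prop:etalelocalHilChowdelta}), where the same disjoint-union-of-loops picture reappears.
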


\begin{proof}
The morphism $f_{\theta}$ is projective by construction. Moreover, as noted in Theorem \ref{thm:birationalimage}, $f_{\theta}$ is a Poisson morphism, birational onto its image. By Proposition \ref{prop:canonicaldecomp}, $\dim \mathfrak{M}_\theta = \dim \mathfrak{M}_0 = 2n$ and hence the image of $f_{\theta}$ is in fact $\mathfrak{M}_0$, Thus, $f_{\theta}$ is birational. Finally, if we assume that $\theta \in \Theta^{\reg}_{v}$, then Theorem \ref{thm:smoothmoduli} says that $\mathfrak{M}_{\theta}$ is a smooth symplectic variety. This implies that $f_{\theta}$ is a symplectic, and hence crepant, resolution.  
\end{proof}

 Two of the resolutions from Proposition~\ref{prop:crepres} are well-known. For the first, let $f_S\colon S\to \C^2/\Gamma$ denote the minimal resolution. Since $S$ is smooth, Fogarty~\cite{Fogarty68} shows the Hilbert scheme of $n$-points on $S$, denoted
 \[
 X:= \Hilb^{[n]}(S),
 \]
 is smooth, and the Hilbert--Chow morphism $\Hilb^{[n]}(S)\to \Sym^n(S)$ is a crepant resolution. By composing this with the functorial morphism $\Sym^n(S)\to \Sym^n(\C^2/\Gamma)$, we obtain a crepant resolution of singularities
 \begin{equation}
 \label{eqn:tau}
 f\colon X\longrightarrow Y:= \Sym^n(\C^2/\Gamma)\cong \C^{2n}/\Gamma_n,
 \end{equation}
 where the wreath product $\Gamma_n:=\s_n \wr \Gamma = \s_n \ltimes \Gamma^n$ acts on $(\C^2)^n\cong \C^{2n}$ in the natural way. That $f$ is an example of one of the morphisms from Proposition~\ref{prop:crepres} is due to Kuznetsov~\cite{Kuznetsov07}; the same observation was made independently by Haiman~\cite{Haiman00} and Nakajima~\cite{NakajimaPC}. To state the result, Remark~\ref{rem:transfer} enables us to regard the chamber $C_-$ from Example~\ref{exa:Cpmn}(1) as a GIT chamber in $\Theta_{\bv}$.
  
 \begin{thm}[Kuznetsov]
 \label{thm:Kuznetsov}
 For any $\theta\in C_-$, there is a commutative diagram
  \begin{equation*}
\begin{tikzcd}
 X=\Hilb^{[n]}(S) \ar[r]\ar[d,"{f}"] & \mathfrak{M}_\theta \ar[d,"{f_\theta}"] \\
 Y= \C^{2n}/\Gamma_n\ar[r] &  \mathfrak{M}_0
 \end{tikzcd}
\end{equation*}
 in which the horizontal arrows are isomorphisms and the vertical arrows are symplectic resolutions. 
  \end{thm}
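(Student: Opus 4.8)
The bottom isomorphism $Y=\C^{2n}/\Gamma_n\iso\mathfrak{M}_0$ is exactly Lemma~\ref{lem:singularity}, and once the top horizontal arrow is known to be an isomorphism of schemes the commutativity of the square is automatic. Indeed, since $\mathfrak{M}_0$ is normal and $f_\theta$ is proper and birational, $f_\theta\colon\mathfrak{M}_\theta\to\mathfrak{M}_0$ is the affinisation morphism $\mathfrak{M}_\theta\to\Spec\,H^0(\mathfrak{M}_\theta,\O)$; likewise $f\colon X\to Y$ is the affinisation morphism of $X=\Hilb^{[n]}(S)$, because the Hilbert--Chow morphism $\Hilb^{[n]}(S)\to\Sym^n(S)$ has connected fibres, so $H^0(\Hilb^{[n]}(S),\O)=H^0(\Sym^n(S),\O)=\C[\Sym^n(\C^2/\Gamma)]=\C[Y]$. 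Hence any isomorphism of schemes $X\iso\mathfrak{M}_\theta$ intertwines $f$ with $f_\theta$ and induces on affinisations the required isomorphism $Y\iso\mathfrak{M}_0$. Thus the whole statement reduces to producing, for a \emph{single} $\theta\in C_-$, an isomorphism of schemes $\Hilb^{[n]}(S)\cong\mathfrak{M}_\theta$ (all parameters in a fixed chamber give isomorphic quiver varieties, by the discussion in Section~\ref{sec:Nakasec}).

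The existence of such an isomorphism for \emph{some} generic parameter is the main result of Kuznetsov~\cite{Kuznetsov07} (obtained independently by Haiman~\cite{Haiman00} and Nakajima~\cite{NakajimaPC}), and I would cite it directly. For orientation, the mechanism is the derived McKay correspondence: $D^b(S)$ is equivalent to the derived category of finite-dimensional modules over the (undeformed, unframed) preprojective algebra of the affine ADE quiver, equivalently to $\Gamma$-equivariant coherent sheaves on $\C^2$; under this equivalence a colength-$n$ ideal sheaf on $S$ becomes a preprojective module of a fixed dimension vector together with a cyclic generator, and recording that generator as a framing at the vertex $\infty$ exhibits $\Hilb^{[n]}(S)$ as a fine moduli space of $\theta$-stable $\Pi$-modules of dimension vector $v=\rho_\infty+n\delta$. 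King's theorem then identifies this moduli space with $\mathfrak{M}_\theta$ for the stability parameter $\theta$ whose notion of stability matches ``ideal sheaf of colength $n$''. (The input case $n=1$ is Kronheimer's realisation~\cite{Kronheimer} of the minimal resolution as a quiver variety, recalled in Section~\ref{sec:n=1}.) Because $\Hilb^{[n]}(S)$ is smooth, this $\theta$ is automatically generic, hence by Theorem~\ref{thm:smoothmoduli} it lies in a chamber of $\Theta_v$.

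It remains to verify that this chamber is $C_-$, which I would do by computing the degrees of the tautological line bundles $\det\mathcal{R}_i$, normalised so that $\mathcal{R}_\infty=\O$, on the curves contracted by $f$. The relative N\'eron--Severi space $N^1(X/Y)$ has rank $r+1$: it is spanned by the $r$ classes pulled back from the exceptional $(-2)$-curves $C_1,\dots,C_r$ of $S\to\C^2/\Gamma$ (indexed by $\rho_1,\dots,\rho_r$ through the McKay correspondence) together with one further class along the Hilbert--Chow exceptional divisor $B$, which enters with a factor of $\tfrac12$ relative to the natural polarisation (Fogarty~\cite{Fogarty68} and the standard theory of punctual Hilbert schemes of surfaces). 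Under these identifications $\det\mathcal{R}_i$ corresponds to the $C_i$-class for $1\le i\le r$, and the defining inequalities of $C_-$ from Example~\ref{exa:Cpmn}(1) --- namely $\theta_i>0$ for $1\le i\le r$ and $0<\theta(\delta)<\tfrac1n\min_{\alpha\in\Phi^+}\theta(\alpha)$ --- translate precisely into the statement that $\bigotimes\det(\mathcal{R}_i)^{\otimes\theta_i}$ is relatively ample in each direction coming from $S$ and involves a positive but sufficiently small multiple of $B$ so as to be relatively ample on $X$ over $Y$ (rather than merely over $\Sym^n(S)$). This is exactly the classical description of the relative ample cone of $\Hilb^{[n]}(S)$ over $Y$, so matching the two descriptions places $\theta\in C_-$ and completes the proof.

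The main obstacle is that the core isomorphism $\Hilb^{[n]}(S)\cong\mathfrak{M}_\theta$ is itself a substantial theorem resting on the derived McKay correspondence and a careful Fourier--Mukai analysis of how ideal sheaves on $S$ correspond to framed preprojective modules; I would import it from \cite{Kuznetsov07} rather than reprove it. The only genuinely self-contained bookkeeping is the chamber identification, where the delicate points are the normalisation $\mathcal{R}_\infty=\O$ of the tautological bundle and the factor $\tfrac12$ relating $B$ to the natural generator of $N^1(X/Y)$ coming from the Hilbert--Chow contraction --- and it is precisely to absorb these that the chamber $C_-$ was defined with the shift $\theta_0=\tfrac{1}{2n}-h+1$ in Example~\ref{exa:Cpmn}(1).
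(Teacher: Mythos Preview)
Your core approach matches the paper's: both import the top isomorphism from Kuznetsov~\cite{Kuznetsov07} and then use that $C_-$ is a single GIT chamber to pass from one parameter to all of $C_-$. Your affinisation argument for commutativity is a valid and arguably cleaner alternative to the paper's direct citation of \cite[Proof of Proposition~44]{Kuznetsov07}: since both $f$ and $f_\theta$ are proper birational morphisms onto normal affine varieties, each coincides with the affinisation map of its source, so any scheme isomorphism upstairs automatically induces a commuting square. (Minor point: the bottom isomorphism produced this way is the one \emph{induced} on global sections, which need not a priori coincide with the specific isomorphism of Lemma~\ref{lem:singularity}; but the theorem only asserts the existence of some commuting square of isomorphisms, so this is harmless.)

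Where you diverge is in the chamber identification, and here the sketch is both superfluous and imprecise. The paper simply notes that \cite[Theorem~43]{Kuznetsov07} already establishes the isomorphism for stability parameters in an explicitly described open subset of $C_-$; once $C_-$ is known to be a single GIT chamber (Theorem~\ref{thm:smoothmoduli}), nothing further is required. Your independent verification has several loose ends: the simplified inequalities for $C_-$ should read $0 < \theta(\delta) < \tfrac{1}{n-1}\min_{\alpha\in\Phi^+}\theta(\alpha)$, not $\tfrac{1}{n}$; the ``classical description of the relative ample cone of $\Hilb^{[n]}(S)$ over $Y$'' you appeal to is not actually classical --- carrying it out honestly amounts to redoing a substantial part of Kuznetsov's analysis or of this paper's Theorem~\ref{thm:movableintro}, so invoking it here risks circularity; and the value $\theta_0 = \tfrac{1}{2n} - h + 1$ in Example~\ref{exa:Cpmn}(1) is merely a witness point demonstrating that $C_-\neq\emptyset$, not a normalisation engineered to absorb any Hilbert--Chow factor of $\tfrac{1}{2}$.
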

  \begin{proof}
 Kuznetsov~\cite[Theorem~43]{Kuznetsov07} gives the horizontal isomorphism $X\to \mathfrak{M}_\theta$ for stability parameters $\theta$ in an open subset of $C_-$. Since $C_-$ is a GIT chamber, the isomorphism exists for all $\theta\in C_-$. The lower horizontal isomorphism from the diagram is from Lemma~\ref{lem:singularity}, and the vertical arrows are the symplectic resolutions from \eqref{eqn:tau} and Proposition~\ref{prop:crepres}. The diagram commutes by \cite[Proof of Proposition~44]{Kuznetsov07}.
  \end{proof}

 For the second well-known example of the morphism from Proposition~\ref{prop:crepres}, set $N:= \vert \Gamma\vert$. The action of $\Gamma$ on $\C^2$ induces an action of $\Gamma$ on the Hilbert scheme $\Hilb^{[nN]}(\C^2)$. Let $n\Gamma\text{-}\Hilb(\C^2)$ denote the subscheme parametrising $\Gamma$-invariant ideals $I$ in $\Hilb^{[nN]}(\C^2)$ such that the quotient $\C[x,y]/I$ is isomorphic to the direct sum of $n$ copies of the regular representation of $\Gamma$. Recall the chamber $C_+$ from Example~\ref{exa:Cpmn}(2). 

 \begin{thm}[Varagnolo--Vasserot, Wang]
 \label{thm:VVW}
 For any $\theta\in C_+$, there is a commutative diagram
  \begin{equation*}
\begin{tikzcd}
 n\Gamma\text{-}\Hilb(\C^2) \ar[r]\ar[d,"{}"] & \mathfrak{M}_\theta \ar[d,"{f_\theta}"] \\
 Y= \C^{2n}/\Gamma_n\ar[r] &  \mathfrak{M}_0
 \end{tikzcd}
\end{equation*}
 in which the horizontal arrows are isomorphisms and the vertical arrows are symplectic resolutions. 
 \end{thm}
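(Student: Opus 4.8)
The plan is to follow the template of the proof of Theorem~\ref{thm:Kuznetsov}, substituting the input of Kuznetsov by the analogous results of Varagnolo--Vasserot and Wang. First I would recall Nakajima's description of $\Hilb^{[nN]}(\C^2)$ as a quiver variety for the Jordan quiver (one vertex, one loop), with a one-dimensional framing and a positive stability parameter $\zeta$. The $\Gamma$-action on $\C^2$ induces a $\Gamma$-action on this quiver variety compatible with its GIT presentation, and by Varagnolo--Vasserot and Wang the $\Gamma$-fixed locus decomposes as a disjoint union of Nakajima quiver varieties for the framed McKay quiver $Q$, indexed by the possible $\Gamma$-representation types of $\C[x,y]/I$. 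The component on which $\C[x,y]/I$ is isomorphic to $n$ copies of the regular representation is exactly $\mf{M}_\theta(\bv,\bw)$ with $\bv = n\delta$ and $\bw = \rho_0$, where $\theta$ is the image of $\zeta$ under the natural map of parameter spaces. This yields an isomorphism $n\Gamma\text{-}\Hilb(\C^2)\cong \mf{M}_\theta$ for $\theta$ ranging over an open subset of a single GIT chamber.

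Second I would identify that chamber with $C_+$. The stability parameter inherited from the Hilbert scheme lies in the positive cone $\langle\rho_0,\rho_1,\dots,\rho_r\rangle^\vee$, which by Example~\ref{exa:Cpmn}(2) is the closure of $C_+$; since $C_+$ is a single GIT chamber (Remark~\ref{rem:transfer}) the quiver variety $\mf{M}_\theta$ does not change as $\theta$ varies inside $C_+$, so the isomorphism $n\Gamma\text{-}\Hilb(\C^2)\cong\mf{M}_\theta$ holds for every $\theta\in C_+$. The lower horizontal isomorphism $Y=\C^{2n}/\Gamma_n\cong\mf{M}_0$ is Lemma~\ref{lem:singularity}, and the right-hand vertical map $f_\theta$ is a symplectic resolution by Proposition~\ref{prop:crepres} (using $\theta\in C_+\subset\Theta^{\reg}_v$ and Theorem~\ref{thm:smoothmoduli}). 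For the left-hand vertical map, note that $n\Gamma\text{-}\Hilb(\C^2)$ is a smooth symplectic variety, being the union of connected components of the $\Gamma$-fixed locus of the smooth symplectic variety $\Hilb^{[nN]}(\C^2)$ cut out by the regular-representation type; composing the Hilbert--Chow morphism $\Hilb^{[nN]}(\C^2)\to\Sym^{nN}(\C^2)$ with the identification of its image with $\Sym^n(\C^2/\Gamma)\cong\C^{2n}/\Gamma_n$ gives a proper birational morphism onto a normal affine variety, hence a crepant resolution.

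It remains to check that the square commutes. After composing with the isomorphism $\mf{M}_0\cong\C^{2n}/\Gamma_n$, this reduces to showing that the two induced morphisms $n\Gamma\text{-}\Hilb(\C^2)\to\C^{2n}/\Gamma_n$ agree: one is $f_\theta$, the canonical affinization morphism of the GIT quotient, and the other sends an ideal to its support cycle (with multiplicity), which is $\Gamma$-invariant of regular type and so descends to $\Sym^n(\C^2/\Gamma)$. Both maps are morphisms to a separated scheme that agree on the dense open locus of ideals supported on $n$ distinct free $\Gamma$-orbits away from the origin, where each is visibly "read off the $n$ underlying points of $\C^2/\Gamma$"; hence they coincide. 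This mirrors the argument in \cite[Proof of Proposition~44]{Kuznetsov07}.

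The step I expect to be the main obstacle is the second one: pinning down, with the orientation and sign conventions in force here, that the stability parameter arising from the Hilbert scheme construction of Varagnolo--Vasserot and Wang really lands in the chamber $C_+$ of Example~\ref{exa:Cpmn}(2), and making the $\Gamma$-fixed-locus decomposition precise enough to see that the component in question carries dimension vector $\bv=n\delta$ and framing $\bw=\rho_0$. Once the chamber is correctly identified, chamber-constancy (Remark~\ref{rem:transfer}) and the commutativity check are routine and entirely parallel to the proof of Theorem~\ref{thm:Kuznetsov}.
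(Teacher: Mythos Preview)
Your proposal is correct and follows essentially the same approach as the paper, which simply cites Varagnolo--Vasserot and Wang for the top horizontal isomorphism and \cite[Remark~41]{Kuznetsov07} for commutativity of the diagram. You have filled in considerably more detail than the paper does (the $\Gamma$-fixed-locus decomposition, the identification of the chamber via Example~\ref{exa:Cpmn}(2) and Remark~\ref{rem:transfer}, and the explicit check of commutativity on the dense open locus), but the skeleton is the same and your expanded sketch is sound.
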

 \begin{proof}
 The isomorphism given by the top horizontal arrow is due to Varagnolo-Vasserot~\cite{VVcyclic} and Wang~\cite[Theorem 2]{wang}, while commutativity of the diagram is noted in \cite[Remark~41]{Kuznetsov07}.
 \end{proof}
 
 \begin{remark}
 The moduli space $\mathfrak{M}_\theta$ defined by any parameter $\theta\in C_+$ is characterised among the moduli $\mathfrak{M}_\theta$ for arbitrary $\theta\in \Theta_{\bv}$ by the property that the tautological vector bundles $\mc{R}_i$ for $i\in I$ on $\mathfrak{M}_\theta$ are all globally generated, see \cite[Corollary~2.4]{CIK18}. 
 \end{remark}
 
\subsection{The case $n = 1$}
\label{sec:n=1}
This case is somewhat different since the quiver variety $\mathfrak{M}_\theta$ can be smooth even for non-generic parameters.

\begin{lem}\label{lem:nequal1chamberdecomp}
If $n = 1$, then $\mathfrak{M}_\theta$ is smooth if and only if $\theta$ does not lie on any $\alpha^{\perp}$, for $\alpha \in \Phi$. 
\end{lem}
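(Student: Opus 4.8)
Since $n=1$, the dimension vector for the framed McKay quiver is $v=\rho_\infty+\delta$, and $\theta(v)=0$ for all $\theta\in\Theta_v$. The plan is to read off smoothness from the canonical decomposition of $v$. The equivalence of conditions~\one\ and~\two\ in Theorem~\ref{thm:smoothmoduli} rests only on \cite[Corollary~1.17]{BelSchQuiver} and is therefore valid for every $n\ge 1$: thus $\mathfrak{M}_\theta$ is smooth precisely when the canonical decomposition of $v$ with respect to $\theta$ is $\sigma^{(0)}+\cdots+\sigma^{(\ell)}$ with each $\sigma^{(i)}\in\Sigma_\theta$ minimal and no imaginary root repeated. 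By Proposition~\ref{prop:canonicaldecomp} this decomposition is $v$ itself if $\theta_\infty\ne 0$, and is $\rho_\infty+\delta$ if $\theta_\infty=0$ (the $n=1$ case of part~\two, where $\delta$ appears exactly once). In either case $\rho_\infty$ is a simple root, hence minimal, and the only imaginary summand ($v$, resp.\ $\delta$) appears once, so the ``no repeated imaginary root'' clause is automatic; the whole question is whether the imaginary summand is minimal. This is exactly where $n=1$ parts company with $n>1$: for $n>1$, Proposition~\ref{prop:canonicaldecomp} would give $\delta$ repeated $n$ times when $\theta_\infty=0$, so $\delta^\perp$ would force non-smoothness, whereas for $n=1$ it does not.

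For the ``only if'' direction I will prove the contrapositive. Suppose $\theta\in\alpha^\perp$ for some $\alpha\in\Phi^+$ (it is enough to consider positive roots, as $\alpha^\perp$ depends only on $\mathbb{Q}\alpha$). If $\theta_\infty\ne 0$, then $\alpha\in R_\theta^+$, and by Lemma~\ref{lem:FrenkelKac} (with $m=0$, $\nu=\alpha$) the vector $v-\alpha=\rho_\infty+\delta-\alpha$ is a positive root; moreover $\theta(v-\alpha)=\theta(v)-\theta(\alpha)=0$, so $v-\alpha\in R_\theta^+$ too. Refining $\alpha$ and $v-\alpha$ into sums of elements of $\Sigma_\theta$ via Theorem~\ref{thm:canondecomp}\two\ exhibits $v$ as a sum of at least two elements of $\Sigma_\theta$; hence $v$ is not minimal and $\mathfrak{M}_\theta$ is singular. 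If $\theta_\infty=0$, then $\theta(\delta)=0$, so $\delta-\alpha\in\Phi_\aff^+$ (it has the form $-\alpha+\delta$, which is positive since $\delta$ dominates every positive root of $\Phi$) satisfies $\theta(\delta-\alpha)=0$, so $\alpha,\delta-\alpha\in R_\theta^+$; the same refinement shows $\delta=\alpha+(\delta-\alpha)$ is a sum of at least two elements of $\Sigma_\theta$, so $\delta$ is not minimal and again $\mathfrak{M}_\theta$ is singular.

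For the ``if'' direction, assume $\theta(\alpha)\ne 0$ for every $\alpha\in\Phi$. Suppose first $\theta_\infty\ne 0$ and, for a contradiction, that $v=\beta^{(0)}+\cdots+\beta^{(k)}$ with $k\ge 1$ and all $\beta^{(i)}\in\Sigma_\theta\subseteq R_\theta^+$. Exactly one summand, say $\beta^{(0)}$, has $\infty$-coordinate $1$; by Lemma~\ref{lem:FrenkelKac} write $\beta^{(0)}=\rho_\infty+m\delta+\tfrac12(\nu,\nu)\delta-\nu$ with $m\ge 0$ and $\nu\in\bigoplus_{j\ge 1}\mathbb{Z}\rho_j$. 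Since the $\rho_0$-coordinate of $\delta$ equals $\dim\rho_0=1$ while that of $\nu$ vanishes, comparing $\rho_0$-coordinates in $v-\beta^{(0)}=\sum_{i\ge 1}\beta^{(i)}\ge 0$ forces $m+\tfrac12(\nu,\nu)\le 1$, hence $(m,\tfrac12(\nu,\nu))\in\{(0,0),(1,0),(0,1)\}$. The first case gives $\beta^{(0)}=\rho_\infty$, so $\theta_\infty=0$, a contradiction; the second gives $\beta^{(0)}=v$, so $k=0$, a contradiction; the third gives $\sum_{i\ge 1}\beta^{(i)}=\nu\in\bigoplus_{j\ge 1}\mathbb{Z}\rho_j$, so each $\beta^{(i)}$ with $i\ge 1$, being a positive root of $\Phi_\aff$ with vanishing $\rho_0$-coordinate, is supported on the nodes $1,\dots,r$ and hence lies in $\Phi^+$ --- contradicting $\theta(\beta^{(i)})=0$. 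Thus $v$ is minimal. If instead $\theta_\infty=0$ and $\delta=\beta^{(0)}+\cdots+\beta^{(k)}$ with $k\ge 1$ and $\beta^{(i)}\in R_\theta^+$, then each $\beta^{(i)}\in\Phi_\aff^+$ has $\theta(\beta^{(i)})=0$; writing $\beta^{(i)}=\gamma+m_i\delta$ with $\gamma\in\Phi\cup\{0\}$ and using $\theta(\delta)=0$ gives $\theta(\gamma)=0$, hence $\gamma=0$ by hypothesis, hence $\beta^{(i)}=m_i\delta$ with $m_i\ge 1$; but then $\sum m_i=1$, forcing $k=0$, a contradiction. So $\delta$ is minimal, and in both cases $\mathfrak{M}_\theta$ is smooth, completing the proof.

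The step I expect to give the most trouble is the $\rho_0$-coordinate bookkeeping in the ``if'' direction with $\theta_\infty\ne 0$: it requires combining the Frenkel--Kac normal form of Lemma~\ref{lem:FrenkelKac} with the facts that $\delta$ has $\rho_0$-coordinate $1$ and that a positive root of $\Phi_\aff$ with no $\rho_0$-component already lies in the finite root system $\Phi$. These are precisely the inputs responsible for the discrepancy between $n=1$ and $n>1$, and ruling out the three sub-cases of $(m,\tfrac12(\nu,\nu))$ cleanly is the crux.
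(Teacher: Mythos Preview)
Your proof is correct and follows essentially the same strategy as the paper, which simply points back to the proof of Theorem~\ref{thm:smoothmoduli} and observes that for $n=1$ the summand $\delta$ appears only once when $\theta_\infty=0$, so $\delta^\perp$ no longer obstructs smoothness. The only tactical difference is in the ``if'' direction with $\theta_\infty\ne 0$: the paper (via Theorem~\ref{thm:smoothmoduli}) analyses the summands $\beta^{(i)}$ for $i>0$ directly as elements of $E=\{\beta\in\Phi_\aff^+\mid 0<\beta\le n\delta\}$ and uses a disconnected-support argument to exclude $\beta^{(i)}=n\delta-\alpha$, whereas you instead pin down $\beta^{(0)}$ via the Frenkel--Kac form and a $\rho_0$-coordinate count; both routes yield the same trichotomy and the same contradictions.
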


\begin{proof}
The proof is identical to the proof of Theorem \ref{thm:smoothmoduli}, except in this case when $\theta_{\infty} = 0$, the minimal imaginary root $\delta$ only appears with multiplicity one in the canonical decomposition. Therefore, a parameter $\theta$ on the hyperplane $\delta^{\perp}$, which is not contained in any $\alpha^{\perp}$ for $\alpha\in \Phi$, is such that $\mf{M}_\theta$ is smooth. Every point of $\mf{M}_{\theta}$ corresponds to a strictly polystable representation even though the moduli space is smooth.
\end{proof}

In particular, when $n=1$ we recover the chamber structure for the unframed McKay quiver moduli construction due to Kronheimer~\cite{Kronheimer} (see also Cassens--Slodowy~\cite{CassensSlodowy}) in the hyperplane $\delta^\perp$. Indeed, it is well-known that the chamber decomposition in this case is the Weyl chamber decomposition for $\Phi$. 

\subsection{Universal Poisson deformations}

The description of the Namikawa Weyl group in Section \ref{sec:NamWeylgroup} allows us to identify the space $\Theta_v \o_{\Q} \C$ with the (complex) reflection representation $\h$ of $W$. For $\theta \in \Theta_v$, consider the family
\[
\sigma_{\theta}\colon \boldsymbol{\mf{M}}_{\theta} := \mu^{-1}(\h)^{\theta} \git \, G(v)\longrightarrow \h
\]
deforming $\mf{M}_{\theta}$. This becomes a graded Poisson morphism if we regard $\h$ as a Poisson variety with trivial bracket. We now explain how to identify this family, when $\theta$ is generic, with the universal graded Poisson deformation of $\mf{M}_{\theta}$ whose existence was shown by Kaledin-Verbitsky~\cite{KaledinVerbitskyPeriod} in the ungraded case, and adapted by Losev~\cite{Losev} to the graded case. 

There is a commutative diagram
\begin{equation}
\label{eq:universalfamilywreath}
 \begin{tikzcd}
   & \mf{M}_\theta\ar[r,"f_\theta"]\ar[ld,hook']\ar[rdd] & \C^{2n}/\Gamma_n\ar[ld,hook',crossing over]\ar[dd] \\
\boldsymbol{\mf{M}}_{\theta} \ar[r,"\boldsymbol{f}_{\theta}"] \ar[ddr,"\sigma_{\theta}"] & \boldsymbol{\mf{M}}_0 \ar[dd,"\sigma_{0}"] & & \\
 & & 0 \ar[ld,hook']  \\
& \h & 
\end{tikzcd}
\end{equation}
where $\boldsymbol{f}_{\theta} \colon \boldsymbol{\mf{M}}_{\theta} \rightarrow \boldsymbol{\mf{M}}_0$ is the morphism given by variation of GIT. 

\begin{lem}\label{lem:boldftheta}
The morphism $\boldsymbol{f}_{\theta}$ is projective, birational and Poisson. 
\end{lem}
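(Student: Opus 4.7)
The strategy is to dispatch projectivity and the Poisson property from the construction of the family, and then reduce birationality to what happens over a generic level $\lambda\in \h$.

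Projectivity is immediate from the GIT description. Both families are relative GIT quotients of the affine $\h$-scheme $\mu^{-1}(\h)$ by $G(v)$: we have $\boldsymbol{\mf{M}}_0 = \Spec\, \C[\mu^{-1}(\h)]^{G(v)}$ as an affine $\h$-scheme, while
\[
\boldsymbol{\mf{M}}_\theta \;=\; \Proj \bigoplus_{k\geq 0} \C[\mu^{-1}(\h)]^{\chi_{k\theta}}
\]
over $\boldsymbol{\mf{M}}_0$. Hence $\boldsymbol{f}_\theta$ is a Proj-to-Spec morphism and, in particular, projective. The Poisson claim is built into the construction: the linear subspace $\h\subset\mf{g}(v)^*$ is central (so Poisson with trivial bracket), and the moment map $\mu$ is Poisson, so $\mu^{-1}(\h)$ is a Poisson subvariety of $\mathbf{M}(\bv,\bw)$. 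Taking $G(v)$-invariants and, respectively, $\chi_{k\theta}$-semi-invariants produces Poisson subalgebras with compatible brackets, and since $\boldsymbol{f}_\theta$ corresponds to the inclusion of invariants into the direct sum of all semi-invariants, it is Poisson.

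For birationality, the plan is to exhibit an open dense subset $U\subseteq \h$ over which $\boldsymbol{f}_\theta$ restricts to an isomorphism. Choose $U$ to be the complement of the (finitely many) hyperplanes in $\h$ that correspond to the reflection hyperplanes of the Namikawa Weyl group $W$; equivalently, by Crawley-Boevey's characterisation of dimension vectors of simple modules of the deformed preprojective algebra $\Pi_\lambda$, the locus of $\lambda\in\h$ such that every point of $\mu^{-1}(\lambda)$ represents a simple $\Pi_\lambda$-module of dimension vector $v$. For such $\lambda$, every point of $\mu^{-1}(\lambda)$ is automatically $\chi_\theta$-stable (as every nonzero submodule equals the whole module), all $G(v)$-orbits are closed with only central stabilisers, and
\[
\sigma_\theta^{-1}(\lambda) \;=\; \mu^{-1}(\lambda)/G(v) \;=\; \sigma_0^{-1}(\lambda),
\]
so $\boldsymbol{f}_\theta$ is an isomorphism over $U$. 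Combined with the projectivity and the flatness of $\sigma_\theta, \sigma_0$ (which forces $\boldsymbol{\mf{M}}_\theta$ and $\boldsymbol{\mf{M}}_0$ to be irreducible of the same dimension, using irreducibility of the central fibre $\mf{M}_\theta$ established in Proposition~\ref{prop:crepres}), this yields birationality of $\boldsymbol{f}_\theta$.

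The main obstacle is verifying the description of the generic fibre: one must know that the set $U$ is nonempty, that it is precisely the locus where all $\Pi_\lambda$-modules of dimension $v$ are simple, and that $\mu^{-1}(\lambda)$ is reduced and irreducible there. Each of these is standard for the McKay quiver with $v=\rho_\infty+n\delta$ by Crawley-Boevey's work on moment maps of quiver varieties, so this step is a careful invocation of that machinery rather than a genuinely new argument.
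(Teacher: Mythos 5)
Your proof follows the same strategy as the paper's: projectivity and the Poisson property come for free from the construction, and birationality is obtained by restricting to a dense open locus in $\h$ over which every representation is simple (hence automatically $\theta$-stable), so that $\boldsymbol{f}_\theta$ is the identity there. That part is fine.

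However, your characterisation of the generic locus $U$ contains an error. You claim that the locus where every $\Pi^\lambda$-module of dimension $v$ is simple is the complement of the reflection hyperplanes of the Namikawa Weyl group $W$, i.e.\ of $\delta^\perp$ and $\alpha^\perp$ for $\alpha\in\Phi^+$. This ``equivalence'' is false: the paper's set $S$ is the complement to $\alpha^\perp$ for \emph{all} positive roots $\alpha<v$, which is a strictly larger arrangement --- it also contains hyperplanes such as $(m\delta\pm\beta)^\perp$ for $0<m<n$, $\beta\in\Phi^+$, which are not reflection hyperplanes of $W$. A parameter $\lambda$ can avoid the $W$-arrangement yet lie on one of these, in which case $v$ admits a proper decomposition into positive roots killed by $\lambda$ and Crawley-Boevey's theorem produces non-simple $\Pi^\lambda$-modules of dimension $v$. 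The overall argument survives because you only need \emph{some} nonempty (hence dense) open subset of $\h$, and the correct $S$ still works; but as written the ``equivalently'' clause is an incorrect assertion. You should define $U$ directly as the complement of $\alpha^\perp$ over all positive roots $\alpha<v$, noting these are proper hyperplanes because $v$ is indivisible, and drop the identification with the $W$-reflection arrangement. You also cite flatness of $\sigma_\theta,\sigma_0$ for irreducibility; in the paper this comes afterwards (Lemma~\ref{lem:defspaceflatCMquiver}), so to keep the order you would either defer this or give a direct irreducibility argument.
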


\begin{proof}
The fact that the morphism is projective and Poisson follows directly from the definition of variation of GIT. To see that it is birational it suffices to note that there is a finite collection of hyperplanes in $\h$ such that $\boldsymbol{f}_{\theta}$ is an isomorphism away from those hyperplanes. Indeed, let $S$ denote the complement to all hyperplanes $\alpha^{\perp}$ for $\alpha < v$ a positive root; these define proper hyperplanes in $\h$ because $v$ is indivisible. If $\lambda \in S$ then the only positive root $\alpha \le v$ in the set $\Sigma_{\lambda}$ is $v$ itself. By \cite[Theorem 1.2]{CBmomap}, this implies that every representation of the deformed preprojective algebra $\Pi^{\lambda}$ of dimension $v$ is simple, and hence $\theta$-stable. Thus, the restriction of $\boldsymbol{f}_{\theta}$ to $\sigma_{\theta}^{-1}(S)$ is the identity map $\sigma_{\theta}^{-1}(S) \rightarrow \sigma_0^{-1}(S)$ because $\mu^{-1}(S)^{\theta} = \mu^{-1}(S)$. In particular, $\boldsymbol{f}_{\theta}$ is birational.
\end{proof}

\begin{lem}
\label{lem:defspaceflatCMquiver}
\begin{enumerate}
    \item[\one] The morphism $\sigma_0$ is flat, and $\boldsymbol{\mf{M}}_0$ is irreducible, normal, and Cohen-Macaulay.
    \item[\two] If $\theta \in \Theta_v^{\reg}$, then $\sigma_\theta$ is flat and $\boldsymbol{\mf{M}}_{\theta}$ is smooth and connected.
\end{enumerate}
\end{lem}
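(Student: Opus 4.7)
The strategy is to first analyse the total space $\mu^{-1}(\h)$ and the morphism $\mu^{-1}(\h)\to\h$ upstairs in $\mathbf{M}(\bv,\bw)$ using Crawley-Boevey's results on deformed preprojective algebras, and then descend through the GIT quotient via the Reynolds operator and Hochster-Roberts.

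For part \one, the key claim upstairs is that $\mu|_{\mu^{-1}(\h)}\colon\mu^{-1}(\h)\to\h$ is flat with all fibres reduced, irreducible, normal complete intersections of the same dimension in $\mathbf{M}(\bv,\bw)$. By Proposition \ref{prop:canonicaldecomp} and Lemma \ref{lem:vimroot}, the dimension vector $v$ lies in $\N R^+_\lambda$ for every $\lambda\in\h$, so the results of Crawley-Boevey \cite{CBmomap} apply to each fibre $\mu^{-1}(\lambda)$ to give the constant dimension $\dim\mathbf{M}(\bv,\bw)-\dim\mf{g}(v)+2p(v)+\dim\h$ of the total space; constant fibre dimension yields flatness, and the complete-intersection property together with the normality result of \cite{CBnormal} upgrades $\mu^{-1}(\h)$ to an irreducible, normal, Cohen-Macaulay scheme. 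These properties then descend: the Reynolds operator expresses $\C[\boldsymbol{\mf{M}}_0]=\C[\mu^{-1}(\h)]^{G(v)}$ as a $\C[\h]$-module direct summand of the flat $\C[\h]$-module $\C[\mu^{-1}(\h)]$, so $\sigma_0$ is flat; Cohen-Macaulayness descends by Hochster-Roberts, irreducibility descends because $G(v)$ is connected, and normality descends because $G(v)$ is reductive acting on a normal affine variety.

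For part \two, when $\theta\in\Theta_v^{\reg}$ the combinatorial description in Theorem \ref{thm:smoothmoduli} shows that every $\theta$-semistable module of dimension $v$ over any deformed preprojective algebra $\Pi^\lambda$ is in fact $\theta$-stable and has trivial stabiliser in $G(v)$. Hence $G(v)$ acts freely on the open subvariety $\mu^{-1}(\h)^\theta\subset\mu^{-1}(\h)$. A standard moment-map computation identifies the cokernel of $d\mu$ at any point $M$ with the dual of the Lie algebra of the stabiliser of $M$; thus $\mu\colon\mathbf{M}(\bv,\bw)\to\mf{g}(v)^*$ is submersive at each $\theta$-stable point, which makes $\mu^{-1}(\h)^\theta\to\h$ a smooth morphism with smooth total space. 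Passing to the free quotient by $G(v)$ yields the smoothness of $\boldsymbol{\mf{M}}_\theta$ together with the smoothness (hence flatness) of $\sigma_\theta$. Finally, $\boldsymbol{\mf{M}}_\theta$ is connected, in fact irreducible, because $\mu^{-1}(\h)^\theta$ is a non-empty open subset of the irreducible variety $\mu^{-1}(\h)$ from part \one, and irreducibility survives the quotient by the connected group $G(v)$.

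The main delicate point is the submersion of $\mu$ on the $\theta$-stable locus: while this is a standard consequence of moment-map theory, in our setting it depends crucially on the identification of the image of $d\mu$ with the annihilator of the Lie algebra of the stabiliser, which in turn requires that genericity of $\theta$ forces every $\theta$-semistable module to have trivial stabiliser. The remainder of the proof then reduces to invoking Crawley-Boevey's moment-map theory together with standard descent of flatness, normality, irreducibility, and Cohen-Macaulayness through reductive GIT quotients.
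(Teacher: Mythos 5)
Your argument for part (ii) is a genuinely different and, as far as I can see, correct route: you verify the freeness of the $G(v)$-action on $\mu^{-1}(\h)^\theta$ and the submersivity of the moment map on the $\theta$-stable locus, then descend smoothness through the principal bundle; the paper instead deduces smoothness of $\boldsymbol{\mf{M}}_\theta$ from flatness of $\sigma_\theta$, smoothness of the central fibre $\mf{M}_\theta$, and the $\Cs$-contraction that forces any singular locus to meet the central fibre. Both are legitimate; yours requires no appeal to the $\Cs$-action, the paper's requires no discussion of stabilisers for general $\lambda$.

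Part (i), however, has a genuine gap in the descent step. You claim ``Cohen--Macaulayness descends by Hochster--Roberts,'' but the Hochster--Roberts theorem (and Boutot's generalisation) requires the source to be regular (respectively, to have rational singularities), not merely Cohen--Macaulay: direct summands of Cohen--Macaulay rings are not Cohen--Macaulay in general, and $\mu^{-1}(\h)$ is only a complete intersection, not regular. Related to this, the normality of $\mu^{-1}(\h)$ is not established by your cited reference: Crawley-Boevey's theorem in \cite{CBnormal} proves that each $\mu^{-1}(\lambda)$ is a reduced irreducible complete intersection and that the Marsden--Weinstein reduction $N(\lambda,\alpha)=\mu^{-1}(\lambda)\git G$ is normal, but it does \emph{not} assert normality of $\mu^{-1}(\lambda)$ itself, so you cannot invoke EGA-style descent of normality through the flat family to conclude $\mu^{-1}(\h)$ is normal. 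The paper avoids both pitfalls by working with the quotient family $\sigma_0\colon\boldsymbol{\mf{M}}_0\to\h$ directly: flatness of $\sigma_0$ still follows from the Reynolds-operator splitting as you say, Cohen--Macaulayness of $\boldsymbol{\mf{M}}_0$ then follows from \cite[Cor.~23.3]{MatCom} because $\h$ is regular and each fibre $\sigma_0^{-1}(\lambda)$ has symplectic (hence rational, hence Cohen--Macaulay) singularities by \cite[Thm.~1.2]{BelSchQuiver}, and normality is checked by verifying $(R_1)$ via a codimension bound on the locus of non-regular points of the fibres, using the dense open $S\subset\h$ over which $\sigma_0$ is an isomorphism onto $\boldsymbol{\mf{M}}_\theta$. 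To repair your version you would need to prove that $\mu^{-1}(\h)$ has rational singularities (so that Boutot applies), or abandon the ``descend everything from upstairs'' strategy for the Cohen--Macaulay and normality claims and argue on the quotient family as the paper does.
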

\begin{proof}
 For \one, the locus $\mu^{-1}(\h) $ is cut out by $n^2 |\Gamma| - r$ equations in the affine space $\mathbf{M}(\bv,\bw)$ of dimension $2n (n |\Gamma| + 1)$, so it is a complete intersection of dimension $n^2 |\Gamma| + 2n + r$ by \cite[Theorem 3.7]{AlmostCommutingVariety}. In particular, $\mu^{-1}(\h) $ is Cohen-Macaulay, and hence the map $\mu^{-1}(\h) \rightarrow \h$ is flat \cite[Corollary 23.1]{MatCom}.  Since $\C[\mu^{-1}(\h)]^{G(v)}$ is a direct summand of $\C[\mu^{-1}(\h)]$ as a $\C[\h]$-module, it follows that $\sigma_0 \colon \boldsymbol{\mf{M}}_0 \rightarrow \h$ is also flat. As for $\boldsymbol{\mf{M}}_0$,  each fiber of $\sigma_0$ has symplectic singularities by \cite[Theorem 1.2]{BelSchQuiver}, so the fibres have rational singularities and hence are Cohen-Macaulay. It follows from \cite[Corollary 23.3]{MatCom} that $\boldsymbol{\mf{M}}_0$ is Cohen-Macaulay. To show normality, it suffices to show that $\boldsymbol{\mf{M}}_0$ satisfies $(R_1)$. Let $U$ be the set of points $x$ in $\boldsymbol{\mf{M}}_0$ such that $x$ is a regular point in the fiber $\sigma^{-1}_0(\sigma_0(x))$. Then \cite[Theorem 23.7]{MatCom} says that $U$ is contained in the smooth locus of $\boldsymbol{\mf{M}}_0$. Thus, we need to show that the complement $C$ to $U$ in $\boldsymbol{\mf{M}}_0$ has codimension at least two. The image of $C$ in $\h$ is contained in the finitely many hyperplanes described in the proof of Lemma \ref{lem:boldftheta} since the set $S$ described there satisfies $\sigma_0^{-1}(S) \subset U$. Since $C \cap \sigma_0^{-1}(\lambda)$ has codimension at least one in $\sigma_0^{-1}(\lambda)$ (in fact it has codimension at least two since each fiber $\sigma_0^{-1}(\lambda)$ is normal), $\mathrm{codim}_{\boldsymbol{\mf{M}}_0} \, C \ge \mathrm{codim}_{\h} \, \sigma_0(C) + 1 \ge 2$. 
 
 For \two, an argument similar to that for $\sigma_0$ above shows that $\sigma_{\theta}$ is also flat.  As for the variety $\boldsymbol{\mf{M}}_\theta$, all maps appearing in diagram \eqref{eq:universalfamilywreath} are equivariant for the natural scaling action of $\Cs$ described in section \ref{sec:Nakasec}, if we make $\Cs$ act on $\h$ with weight one. This implies that the singular locus of $\boldsymbol{\mf{M}}_{\theta}$ is a $\Cs$-stable closed subvariety. In particular, if it is non-empty then it would intersect the zero fiber $\sigma_{\theta}^{-1}(0) = \mf{M}_{\theta}$ non-trivially. However, since $\sigma_{\theta}$ is flat and $\mf{M}_{\theta}$, $\h$ are smooth, \cite[Theorem 23.7]{MatCom} says that this intersection is trivial. Thus, $\boldsymbol{\mf{M}}_{\theta}$ is smooth. Similarly, each connected component of $\boldsymbol{\mf{M}}_{\theta}$ is a closed $\Cs$-stable subvariety, and hence must intersect $\mf{M}_{\theta}$ non-trivially. But the latter is irreducible, so $\boldsymbol{\mf{M}}_{\theta}$ is connected.
\end{proof}

\begin{prop}\label{prop:deformedresiscrepant}
The morphism $\boldsymbol{f}_{\theta}$ is crepant, and an isomorphism in codimension one. 
\end{prop}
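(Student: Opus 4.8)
The plan is to establish the two assertions separately: crepancy will follow from triviality of the canonical bundles of $\boldsymbol{\mf{M}}_{\theta}$ and $\boldsymbol{\mf{M}}_0$, while the codimension-one statement will come from a dimension count over the base $\h$.

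First I would show that $\omega_{\boldsymbol{\mf{M}}_{\theta}}$ and $\omega_{\boldsymbol{\mf{M}}_0}$ are trivial. For $\theta\in\Theta_v^{\reg}$, genericity of $\theta$ together with primitivity of $v$ (as in King's criterion) forces the $G(v)$-action on each $\mu^{-1}(\lambda)^{\theta}$ to be free; hence $\lambda$ is a regular value of the restricted moment map, every fibre $\sigma_{\theta}^{-1}(\lambda)$ is a smooth symplectic variety, and $\sigma_{\theta}$ is a smooth morphism of relative dimension $2n$. The symplectic form on $\mathbf{M}(\bv,\bw)$ restricts to $\mu^{-1}(\h)$ and descends to a relative symplectic form $\omega_{\mathrm{rel}}\in\Gamma(\boldsymbol{\mf{M}}_{\theta},\Omega^2_{\boldsymbol{\mf{M}}_{\theta}/\h})$, whose $n$-th exterior power is a nowhere-vanishing section of $\omega_{\boldsymbol{\mf{M}}_{\theta}/\h}$; thus $\omega_{\boldsymbol{\mf{M}}_{\theta}/\h}\cong\mathcal{O}$ and, since $\h$ is a vector space, $\omega_{\boldsymbol{\mf{M}}_{\theta}}\cong\sigma_{\theta}^{*}\omega_{\h}\cong\mathcal{O}$. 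For $\boldsymbol{\mf{M}}_0$, which is normal and Cohen--Macaulay by Lemma~\ref{lem:defspaceflatCMquiver}\one, the same relative form trivialises the relative dualising sheaf $\omega_{\boldsymbol{\mf{M}}_0/\h}$ over the locus where $\sigma_0$ is smooth; this locus is the complement of $\bigcup_{\lambda\in\h}\mathrm{Sing}(\sigma_0^{-1}(\lambda))$, which has codimension $\geq 2$ because the fibres of $\sigma_0$ over $S$ are smooth, the remaining fibres are normal (so have singular locus of codimension $\geq 2$, as recorded in the proof of Lemma~\ref{lem:defspaceflatCMquiver}), and $\h\setminus S$ has codimension one. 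As $\omega_{\boldsymbol{\mf{M}}_0/\h}$ is reflexive of rank one it is therefore trivial, so $\boldsymbol{\mf{M}}_0$ is Gorenstein with $\omega_{\boldsymbol{\mf{M}}_0}\cong\sigma_0^{*}\omega_{\h}\cong\mathcal{O}$; crepancy of $\boldsymbol{f}_{\theta}$ is then immediate, since $\boldsymbol{f}_{\theta}^{*}\omega_{\boldsymbol{\mf{M}}_0}\cong\mathcal{O}\cong\omega_{\boldsymbol{\mf{M}}_{\theta}}$.

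For the codimension-one statement, I would argue that the locus $E\subset\boldsymbol{\mf{M}}_{\theta}$ where $\boldsymbol{f}_{\theta}$ fails to be a local isomorphism has codimension at least $2$; as $\boldsymbol{\mf{M}}_{\theta}$ is smooth and $\boldsymbol{f}_{\theta}$ is projective and birational (Lemma~\ref{lem:boldftheta}), this yields that $\boldsymbol{f}_{\theta}$ is an isomorphism in codimension one. By the proof of Lemma~\ref{lem:boldftheta} the morphism $\boldsymbol{f}_{\theta}$ is an isomorphism over $\sigma_{\theta}^{-1}(S)$, where $S$ is the complement of the finitely many proper hyperplanes $\alpha^{\perp}\cap\h$ with $\alpha<v$ a positive root of $R$; hence $E\subseteq\sigma_{\theta}^{-1}(\h\setminus S)$. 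I would then restrict to fibres: for a hyperplane $H\subset\h\setminus S$ and general $\lambda\in H$, the restriction of $\boldsymbol{f}_{\theta}$ to $\sigma_{\theta}^{-1}(\lambda)$ is the morphism $\mf{M}_{\theta}^{\lambda}\to\mf{M}_0^{\lambda}$ given by variation of GIT quotient for the deformed preprojective algebra $\Pi^{\lambda}$ (writing $\mf{M}_{\theta}^{\lambda}:=\sigma_{\theta}^{-1}(\lambda)$, $\mf{M}_0^{\lambda}:=\sigma_0^{-1}(\lambda)$), and $E\cap\mf{M}_{\theta}^{\lambda}$ lies in its non-isomorphism locus. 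By flatness of $\sigma_{\theta}$ and $\sigma_0$ the varieties $\mf{M}_{\theta}^{\lambda}$ and $\mf{M}_0^{\lambda}$ have dimension $2n$, and once one knows that $\mf{M}_{\theta}^{\lambda}\to\mf{M}_0^{\lambda}$ is a birational morphism of irreducible varieties, its non-isomorphism locus is a proper closed subset of dimension $\leq 2n-1$. A dimension count then gives $\dim E\leq(\dim\h-1)+(2n-1)=\dim\boldsymbol{\mf{M}}_{\theta}-2$, the contributions over deeper strata of $\h\setminus S$ being even smaller.

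The step that will require the most work is the last one: birationality of the fibre maps $\mf{M}_{\theta}^{\lambda}\to\mf{M}_0^{\lambda}$ for $\lambda$ in a hyperplane of $\h$. This morphism is surjective, being the restriction to a fibre of the surjective $\boldsymbol{f}_{\theta}$, and between irreducible varieties of dimension $2n$; irreducibility of the fibres follows from irreducibility of $\mu^{-1}(\lambda)$ (Crawley--Boevey), which holds because $v\in R_{\lambda}^{+}$ for all $\lambda$ by Lemma~\ref{lem:vimroot}. For birationality I would use Crawley--Boevey's canonical decomposition of $v$ relative to $\lambda$ (Theorem~\ref{thm:canondecomp}, Proposition~\ref{prop:canonicaldecomp}) together with the explicit form of $\theta$-stability to identify the generic representation parametrised by $\mf{M}_0^{\lambda}$ with a direct sum of pairwise non-isomorphic simple (hence $\theta$-stable) modules, and to check that it lifts uniquely to $\mf{M}_{\theta}^{\lambda}$; alternatively, one can invoke Crawley--Boevey's \'etale-local description of $\mf{M}_0^{\lambda}$, noting that for $\lambda$ generic in a single hyperplane the relevant Ext-graph has rank at most one, so the local models of $\mf{M}_{\theta}^{\lambda}\to\mf{M}_0^{\lambda}$ are Hilbert--Chow morphisms of symmetric products of smooth surfaces and minimal resolutions of Kleinian singularities, all of which are birational. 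It is worth stressing that, in the total space $\boldsymbol{\mf{M}}_{\theta}$, such fibrewise divisorial contractions contribute only in codimension two, which is precisely why $\boldsymbol{f}_{\theta}$ is small over $\h$ even though the fibre maps and the central map $f_{\theta}$ are genuine resolutions.
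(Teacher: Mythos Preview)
Your approach diverges from the paper's: the paper exploits the contracting $\Cs$-action on $\boldsymbol{\mf{M}}_\theta$ and $\boldsymbol{\mf{M}}_0$ throughout, whereas you work fibrewise over $\h$ and avoid the torus action entirely.

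For crepancy, both routes succeed. The paper observes that the Poisson bracket gives a map $\phi_\theta\colon\Omega^1_{\boldsymbol{\mf{M}}_\theta/\h}\to\Tang_{\boldsymbol{\mf{M}}_\theta/\h}\langle 2\rangle$ whose degeneracy locus is closed and $\Cs$-stable, hence would meet the central fibre $\mf{M}_\theta$ if non-empty; non-degeneracy there forces $\phi_\theta$ to be an isomorphism globally, and a parallel argument bounds the degeneracy locus in $\boldsymbol{\mf{M}}_0$ to codimension two. Your Marsden--Weinstein construction is a reasonable alternative, though your assertion that $G(v)$ acts freely on each $\mu^{-1}(\lambda)^\theta$ needs care: Theorem~\ref{thm:smoothmoduli} concerns $\Pi$-modules (i.e.\ $\lambda=0$) only, and its conclusion does not automatically transfer to $\Pi^\lambda$-modules. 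You could instead cite Lemma~\ref{lem:defspaceflatCMquiver}\two\ for smoothness of $\boldsymbol{\mf{M}}_\theta$ and recover the relative symplectic form from the Poisson bracket as the paper does.

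For the codimension-one statement, your argument has a genuine gap. The entire dimension count rests on showing that the fibre map $\mf{M}_\theta^\lambda\to\mf{M}_0^\lambda$ is birational for $\lambda$ generic in each hyperplane of $\h\smallsetminus S$, and neither of your sketches establishes this: Proposition~\ref{prop:canonicaldecomp} and Theorem~\ref{thm:commdiagram22} are both formulated for $\lambda=0$, and extending them to deformed preprojective algebras is real work you have not carried out. The paper's argument is far shorter and needs no such analysis: if $\boldsymbol{f}_\theta$ contracted a divisor $E$, then $E$ is $\Cs$-stable with $\sigma_\theta(E)\subset\h\smallsetminus S$ a proper subset, so some fibre of $\sigma_\theta|_E$ has dimension at least $2n$; the contracting $\Cs$-action then forces $\dim(E\cap\mf{M}_\theta)\geq 2n$, whence $\mf{M}_\theta\subseteq E$, contradicting the fact that $f_\theta$ is generically an isomorphism (Proposition~\ref{prop:crepres}). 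The insight you are missing is that the $\Cs$-action reduces the whole question to the central fibre, where birationality is already in hand.
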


\begin{proof}
Let $2n = \dim \mf{M}_{\theta}$. As in the proof of Lemma \ref{lem:defspaceflatCMquiver}\two, we note that all maps appearing in diagram \eqref{eq:universalfamilywreath} are equivariant for the natural scaling action of $\Cs$. The Poisson bracket on both $\boldsymbol{\mf{M}}_{\theta}$ and $\boldsymbol{\mf{M}}_{0}$ is $\mc{O}_{\h}$-linear. Since the bracket is homogeneous of degree $-2$, it defines a $\Cs$-equivariant map $\phi_{\theta} \colon \Omega^1_{\boldsymbol{\mf{M}}_{\theta}/ \h} \rightarrow \Tang_{\boldsymbol{\mf{M}}_{\theta} / \h} \langle 2\rangle$, where $\Tang_{\boldsymbol{\mf{M}}_{\theta} / \h}$ is the sheaf of $\mc{O}_{\h}$-linear derivations on $\boldsymbol{\mf{M}}_{\theta}$, and $\langle 2\rangle$ denotes a shift in the grading by two. The degeneracy locus of $\phi_{\theta}$ is closed and $\Cs$-stable. In particular, if it were non-empty it would intersect the fiber $\sigma_{\theta}^{-1}(0) = \mf{M}_{\theta}$ non-trivially. However, if $i : \mf{M}_{\theta} \hookrightarrow \boldsymbol{\mf{M}}_{\theta}$ is the embedding of the zero fibre, then $i^* \phi_{\theta} \colon \Omega^1_{\mf{M}_{\theta}} \rightarrow \Tang_{\mf{M}_{\theta}}$ is an isomorphism since the Poisson bracket restricted to $\mf{M}_{\theta}$ is non-degenerate. Therefore, we deduce that $\phi_{\theta}$ is an isomorphism. Thus, there is a relative symplectic form $\omega_{\boldsymbol{\mf{M}}_{\theta}/ \h}$ on $\boldsymbol{\mf{M}}_{\theta}$, dual to the Poisson bracket. Since $\h$ is obviously smooth, this implies that $\sigma_{\theta}$ is smooth and, if $\omega_{\h}$ is a nowhere vanishing top form on $\h$, then $\omega_{\boldsymbol{\mf{M}}_{\theta}/ \h}^n \wedge \sigma_{\theta}^* \omega_{\h}$ is a nowhere vanishing top form on $\boldsymbol{\mf{M}}_{\theta}$. In particular, the canonical divisor $K_{\boldsymbol{\mf{M}}_{\theta}}$ is trivial. 

Similarly, on $\boldsymbol{\mf{M}}_{0}$, we have $\phi_0 \colon \Omega^1_{\boldsymbol{\mf{M}}_{0}/ \h} \rightarrow \Tang_{\boldsymbol{\mf{M}}_{0}/ \h}\langle 2\rangle$. Let $D \subset \boldsymbol{\mf{M}}_{0}$ be its degeneracy locus. This is a closed, $\Cs$-stable subset of $\boldsymbol{\mf{M}}_{0}$. Let $D_{\le i} = \{ x \in D \ | \ \dim (D \cap \sigma_0^{-1}(\sigma_0(x))) \le i \}$. Then, by Chevalley's Theorem, each $D_{\le i}$ is closed and $\Cs$-stable. We know that $\dim D \cap \sigma^{-1}_0(0) \le 2n -2$ since the map $\Omega^1_{\mf{M}_0} \rightarrow \Tang_{\mf{M}_0}$ is an isomorphism on the smooth locus of $\mf{M}_0$. Therefore, we deduce $D = D_{\le 2n - 2}$ since the $\Cs$-action contracts every $\lambda \in \h$ to $0$. This implies that $\dim D \le (2n - 2) + \dim \h = \dim \boldsymbol{\mf{M}}_{0} - 2$, i.e. there exists an open subset $U \subset \boldsymbol{\mf{M}}_{0}$ whose complement $D$ has codimension at least two, such that $\phi_0$ is an isomorphism over $U$. We deduce that there exists a relative symplectic form $\omega_{\boldsymbol{\mf{M}}_{0} /\h}$ on the open set $U$. Moreover, the fact that $\boldsymbol{f}_{\theta}$ is Poisson implies that $\boldsymbol{f}_{\theta}^*\omega_{\boldsymbol{\mf{M}}_{0} /\h} = \omega_{\boldsymbol{\mf{M}}_{\theta} /\h}$. Again, the top form $\omega_{\boldsymbol{\mf{M}}_{0} /\h}^n \wedge \sigma_0^* \omega_{\h}$ trivialises the canonical divisor $K_{\boldsymbol{\mf{M}}_{0}}$ over $U$, and we deduce by normality that $K_{\boldsymbol{\mf{M}}_{0}} = 0$, i.e. $\boldsymbol{\mf{M}}_{0}$ is Gorenstein. Thus, $\boldsymbol{f}_{\theta}^* K_{\boldsymbol{\mf{M}}_{0}} = K_{\boldsymbol{\mf{M}}_{\theta}}$, i.e. $\boldsymbol{f}_{\theta}$ is crepant. 

Finally, we check that $\boldsymbol{f}_{\theta}$ is an isomorphism in codimension one. Suppose otherwise, so $\boldsymbol{f}_{\theta}$ contracts a divisor $E$ in $\boldsymbol{\mf{M}}_{\theta}$. Note that $E$ is necessarily stable under the action of $\Cs$. The map $\sigma_{\theta}$ is proper, therefore $\sigma_{\theta}(E)$ is closed in $\h$. Since it lies in the complement to the set $S$ defined in the proof of Lemma \ref{lem:boldftheta}, it is a proper subset of $\h$. Therefore, there must exist a fiber of $\sigma_{\theta} |_E \colon E \rightarrow \h$ of dimension at least $2n$. Arguing as above, we must in fact have $\dim (\sigma_{\theta} |_E)^{-1}(0) \ge 2n$. But $(\sigma_{\theta} |_E)^{-1}(0)  \subset \sigma_{\theta}^{-1}(0) = \mf{M}_{\theta}$, which is irreducible of dimension $2n$. Thus, $(\sigma_{\theta} |_E)^{-1}(0) = \mf{M}_{\theta}$. But $\boldsymbol{f}_{\theta} |_{\mf{M}_{\theta}} = f_{\theta}$ is generically an isomorphism. In particular, a generic fiber is zero dimensional. This is a contradiction. 
\end{proof}

In the proof of Proposition \ref{prop:deformedresiscrepant}, we also established the following result:

\begin{cor}
The  variety $\boldsymbol{\mf{M}}_{0}$ is Gorenstein.
\end{cor}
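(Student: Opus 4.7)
The plan is to extract the Gorenstein conclusion directly from ingredients already assembled in the proof of Proposition~\ref{prop:deformedresiscrepant}, together with the normality and Cohen--Macaulayness of $\boldsymbol{\mf{M}}_0$ established in Lemma~\ref{lem:defspaceflatCMquiver}\one. Recall that a Cohen--Macaulay scheme is Gorenstein precisely when its dualising sheaf is a line bundle, so the goal reduces to exhibiting a nowhere vanishing section of the dualising sheaf $\omega_{\boldsymbol{\mf{M}}_0}$.

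First I would revisit the open subset $U \subset \boldsymbol{\mf{M}}_0$ produced in the proof of Proposition~\ref{prop:deformedresiscrepant}: the complement $D$ of $U$ has codimension at least two, and on $U$ the Poisson map $\phi_0 \colon \Omega^1_{\boldsymbol{\mf{M}}_0/\h} \to \Tang_{\boldsymbol{\mf{M}}_0/\h}\langle 2\rangle$ is an isomorphism. Dualising this yields a relative symplectic form $\omega_{\boldsymbol{\mf{M}}_0/\h}$ on $U$, and wedging with a nowhere vanishing top form $\omega_{\h}$ on the smooth base produces a nowhere vanishing section $\omega_{\boldsymbol{\mf{M}}_0/\h}^n \wedge \sigma_0^*\omega_{\h}$ of the canonical sheaf on $U$.

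Next I would argue that this section extends across $D$ to give a trivialisation of $\omega_{\boldsymbol{\mf{M}}_0}$ on all of $\boldsymbol{\mf{M}}_0$. The dualising sheaf of a Cohen--Macaulay scheme is reflexive, and on a normal variety a reflexive sheaf is determined by its restriction to the complement of any closed subset of codimension at least two (by the $S_2$ property). Hence the isomorphism $\mc{O}_U \iso \omega_{\boldsymbol{\mf{M}}_0}|_U$ given by the trivialising section extends uniquely to an isomorphism $\mc{O}_{\boldsymbol{\mf{M}}_0} \iso \omega_{\boldsymbol{\mf{M}}_0}$. In particular $\omega_{\boldsymbol{\mf{M}}_0}$ is a line bundle, and $\boldsymbol{\mf{M}}_0$ is Gorenstein.

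The step that requires the most care is the extension across the codimension two locus $D$: one must ensure both that the dualising sheaf is genuinely reflexive (which uses the Cohen--Macaulay property from Lemma~\ref{lem:defspaceflatCMquiver}\one) and that the ambient variety is normal (so that the extension of reflexive sheaves from codimension one open sets is well posed). Both ingredients are already in place, so this is essentially a matter of invoking the correct general principle rather than an additional technical obstacle.
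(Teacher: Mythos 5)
Your argument is correct and is essentially the paper's own: the Gorenstein conclusion is established inside the proof of Proposition~\ref{prop:deformedresiscrepant}, where the top form $\omega_{\boldsymbol{\mf{M}}_{0}/\h}^n \wedge \sigma_0^*\omega_{\h}$ trivialises $K_{\boldsymbol{\mf{M}}_0}$ on the open set $U$ whose complement has codimension at least two, and normality is then used to conclude $K_{\boldsymbol{\mf{M}}_0}=0$. You simply spell out the reflexivity and $S_2$ considerations (using the Cohen--Macaulayness from Lemma~\ref{lem:defspaceflatCMquiver}\one) that the paper compresses into ``we deduce by normality.''
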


Given any conic symplectic singularity $Y$ that admits a symplectic resolution $X \rightarrow Y$, Namikawa~\cite{Namikawa} showed that there exist universal graded Poisson deformations $\mc{Y} \rightarrow \h/ W$ and $\mc{X} \rightarrow \h$ of $Y$ and $X$ respectively. Write $q\colon \mf{h}\to \mf{h}/W$ for the quotient map.  

\begin{thm}
There exists a unique $w \in W$ and graded Poisson isomorphism $\boldsymbol{\mf{M}}_{\theta} \iso \mc{X}$ such that the diagram
$$
\begin{tikzcd}
\boldsymbol{\mf{M}}_{\theta} \ar[r] \ar[d,"\wr"] & \h \ar[d,"w \cdot "] \\
\mc{X} \ar[r] & \h
\end{tikzcd}
$$
 commutes. In particular, the flat family $\boldsymbol{\mf{M}}_{\theta} \rightarrow \h$ is the universal graded Poisson deformation of the quiver variety $\mf{M}_{\theta}$. 
\end{thm}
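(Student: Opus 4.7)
The plan is to apply Namikawa's universal property to the graded Poisson family $\boldsymbol{\mf{M}}_\theta \to \h$, and then use the compatibility with the singular family $\boldsymbol{\mf{M}}_0 \to \h$ together with Namikawa's universal deformation $\mc{Y} \to \h/W$ of the base $Y$ to force the classifying map to land in $W \subset \mathrm{GL}(\h)$.

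First I would verify that the hypotheses of the universal property apply. Since $\theta$ lies in a chamber, Theorem~\ref{thm:smoothmoduli} and Proposition~\ref{prop:crepres} guarantee that $\mf{M}_\theta$ is a $\Cs$-conic symplectic resolution of $\mf{M}_0$, while Lemma~\ref{lem:defspaceflatCMquiver}\two\ ensures that $\boldsymbol{\mf{M}}_\theta \to \h$ is a flat graded Poisson deformation of $\mf{M}_\theta$. The universal property of $\mc{X} \to \h$ then yields a unique morphism of graded $\Cs$-varieties $\phi \colon \h \to \h$ together with a graded Poisson isomorphism $\boldsymbol{\mf{M}}_\theta \iso \phi^*\mc{X}$. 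Because $\mc{O}(\h)$ is generated in a single positive weight, $\phi$ is automatically linear; and since the Kodaira-Spencer map of $\boldsymbol{\mf{M}}_\theta \to \h$ at the zero fibre agrees, under the identifications $\h \cong \Theta_v \otimes_\Q \C \cong H^2(\mf{M}_\theta, \C)$, with the natural period map used to define $\mc{X}$, the differential $d\phi_0$ is an isomorphism, so $\phi \in \mathrm{GL}(\h)$.

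The main step is to show $\phi$ lies in $W$. Since $\boldsymbol{f}_\theta$ is crepant by Proposition~\ref{prop:deformedresiscrepant}, the family $\boldsymbol{\mf{M}}_0 \to \h$ is a graded Poisson deformation of $Y$, and Namikawa's universal property of $\mc{Y} \to \h/W$ produces a unique classifying map $\bar\phi \colon \h \to \h/W$ with $\boldsymbol{\mf{M}}_0 \cong \bar\phi^*\mc{Y}$. The compatibility $\mc{X} \to q^*\mc{Y}$ between the two universal families, combined with the isomorphisms already established for $\boldsymbol{\mf{M}}_\theta$ and $\boldsymbol{\mf{M}}_0$ and the uniqueness of the classifying map for $\boldsymbol{\mf{M}}_0$, forces $\bar\phi = q \circ \phi$. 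What I expect to be the main obstacle is then to identify $\bar\phi$ with the quotient map $q$ itself. This amounts to showing that the quiver-theoretic family $\boldsymbol{\mf{M}}_0 \to \h$ built from the moment map is canonically the pullback of $\mc{Y} \to \h/W$ along the Galois cover $q$; I would establish this by comparing $\boldsymbol{\mf{M}}_0$ with the spectrum of the spherical subalgebra of the rational Cherednik algebra attached to $\Gamma_n$, following Etingof--Ginzburg, Gordon and Losev, the base of whose universal deformation is known to be $\h/W$ with Galois cover $\h$. Once $\bar\phi = q$, we conclude $q \circ \phi = q$ and hence $\phi \in W$; uniqueness of $w$ is then immediate from the uniqueness of $\phi$ in the universal property, and the assertion that $\boldsymbol{\mf{M}}_\theta \to \h$ is itself universal follows since pullback along an element of $W \subset \mathrm{GL}(\h)$ preserves universality.
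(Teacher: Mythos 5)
Your proof is correct and hinges on the same essential input as the paper's: the identification of the moment-map family $\boldsymbol{\mf{M}}_0 \to \h$ with the Calogero--Moser (spherical rational Cherednik) family, for which Bellamy's earlier work establishes $\boldsymbol{\mf{M}}_0 \cong \mc{Y} \times_{\h/W} \h$. The difference is organisational rather than substantive: you unpack the universal-property bookkeeping by hand, tracking the classifying maps $\phi$ for $\boldsymbol{\mf{M}}_\theta$ and $\bar\phi$ for $\boldsymbol{\mf{M}}_0$ and arguing $\bar\phi = q \circ \phi = q$ forces $\phi \in W$; the paper instead establishes the fibre product description of $\boldsymbol{\mf{M}}_0$ directly via an explicit $W$-equivariant isomorphism with $\mathsf{CM}(\Gamma_n) \to \mf{c}$ from \cite{MarsdenWeinsteinStratification} and then applies \cite[Corollary 7]{Namikawa3}, which packages precisely the classifying-map argument you carry out. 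Two points worth tightening: your claim that $d\phi_0$ is an isomorphism by matching the Kodaira--Spencer map with the period map is really the statement that the Kirwan map $K_C$ is an isomorphism (noted but not elaborated in the paper's introduction); alternatively, once $\phi$ is known to be linear and $q\phi = q$, membership in $W$ follows without first establishing that $d\phi_0$ is invertible. Also, the relevant references in the paper are Martino \cite{MarsdenWeinsteinStratification} and \cite[Theorem 1.4]{BellamyCounting} rather than Etingof--Ginzburg or Gordon--Losev directly, though the content is the same.
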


\begin{proof}
This follows from the main results of \cite{BellamyCounting} and \cite{Namikawa3}. To see this, first identify $\mf{h}$ with the subset $\{ (\lambda_i \mathrm{Id}_{V_i}) \mid \lambda_i \in \C, \ \lambda(v) = 0 \} \subset \mf{g}(v)$ 
of diagonal matrices pairing to zero with $v$. It is shown in the proof of Theorem 1.4 of \cite{MarsdenWeinsteinStratification} that there is an explicit isomorphism between $\boldsymbol{\mf{M}}_0 \rightarrow \h$ and the Calogero-Moser space $\mathsf{CM}(\Gamma_n) \rightarrow \mf{c}$ considered in \cite{BellamyCounting}. In particular, this defines an isomorphism $\mf{h} \iso \mf{c}$, which one can check is $W$-equivariant if the action of $W$ on $\mf{c}$ is defined via the isomorphism (3.B) of \cite{BellamyCounting}. Therefore \cite[Theorem 1.4]{BellamyCounting} implies that $\boldsymbol{\mf{M}}_0 \rightarrow \h$ is isomorphic to $\mc{Y} \times_{\h/W} \h$, where $\h \rightarrow \h/W$ is the quotient map $q$. Since $\boldsymbol{f}_{\theta} \colon \boldsymbol{\mf{M}}_{\theta} \rightarrow \boldsymbol{\mf{M}}_0$ is a crepant resolution by Proposition \ref{prop:deformedresiscrepant}, the theorem follows from \cite[Corollary 7]{Namikawa3}. 
\end{proof} 

\begin{remark}
The proof of \cite[Theorem 1.4]{BellamyCounting} can be applied directly, word for word, to $\boldsymbol{\mf{M}}_0 \rightarrow \h$ by using the factorisation from \cite[Corollary 3.4]{BelSchQuiver} to show that $\boldsymbol{\mf{M}}_0 \rightarrow \h$ is isomorphic to $\mc{Y} \times_{\h/W} \h$. This avoids using the identification of the former with the Calogero-Moser space, and extends \eqref{eq:universalfamilywreath} to a commutative diagram
\begin{equation}
 \begin{tikzcd}
   & \mf{M}_\theta\ar[r,"f_\theta"]\ar[ld,hook']\ar[ddr] & \C^{2n}/\Gamma_n\ar[ld,hook',crossing over]\ar[dd]\ar[dr,hook'] & & & \\
\boldsymbol{\mf{M}}_{\theta} \ar[r,"\boldsymbol{f}_{\theta}"] \ar[ddr,"\sigma_{\theta}"] & \boldsymbol{\mf{M}}_0 \ar[dd,"\sigma_{0}"] \ar[rr,crossing over] & & \mc{Y}\ar[dd] & & \\
 & & 0\ar[ld,hook']\ar[rd,hook'] & & & \\
& \h \ar[rr,"q"] & & \h/W & 
\end{tikzcd}
\end{equation}
 where the large rectangle at the front of the diagram is Cartesian. 
\end{remark}

\section{Variation of GIT quotient in the cone \protect$F\protect$}
 We now describe the geometry of the quiver varieties $\mf{M}_\theta$ under variation of GIT quotient as the stability parameter $\theta$ passes from a chamber of $F$ into a wall of the chamber. We also study the geometry of $\mf{M}_\theta$ and track what happens to the tautological bundles as $\theta$ crosses any wall that passes through the interior of $F$.

\subsection{Classification of walls}
 Consider the  wall-and-chamber structure on the space $\Theta_v$ of GIT stability conditions, as in equation \eqref{eqn:Thetav}. We now classify the walls into three families. We say that a wall is: 
\begin{enumerate}
\item[\one] an \emph{imaginary boundary wall} if it is contained in $\delta^{\perp}$;
\item[\two] a \emph{real boundary wall} if it is contained in $\alpha^{\perp}$ for some $\alpha \in \Phi^+$; and
\item[\three] a \emph{real internal wall} otherwise. 
\end{enumerate}
This terminology is motivated by the walls that lie in the cone $F$ introduced in Proposition~\ref{prop:W}\two: each wall in the boundary of $F$ is contained in either $\delta^{\perp}$ or $\alpha^{\perp}$ for some $\alpha \in \Phi^+$; while every other wall in $F$ passes through the interior of $F$.

\subsection{The imaginary boundary wall}

The hyperplane $\delta^{\perp}$ contains a single boundary wall of $F$. Let $\theta_0$ be a generic point of this wall. There exists a unique chamber $C$ in $F$, whose closure contains $\theta_0$; this is the chamber $C_-$ introduced in Example~\ref{exa:Cpmn}(1). Theorem~\ref{thm:Kuznetsov} shows that $\mf{M}_\theta \cong \Hilb^{[n]}(S)$ for $\theta\in C_-$, and Kuznetsov~\cite[Remark~48]{Kuznetsov07} proves further that the morphism $f\colon \mf{M}_\theta\to\mf{M}_{\theta_0}$ is isomorphic to the Hilbert--Chow morphism
\[
X=\Hilb^{[n]}(S)\longrightarrow \Sym^n(S).
\]
In particular, $f$ is a divisorial contraction. The statement of Corollary~\ref{cor:imaginarywallconstraction} below is therefore not new. Nevertheless, we present the next result to provide an independent proof of this fact and, furthermore, to illustrate our approach via the Ext-graph as described in section~\ref{sec:localnormalform}.

\begin{prop}\label{prop:etalelocalHilChowdelta}
The strata of $\mf{M}_{\theta_0}$ are $(\mf{M}_{\theta_0})_{\lambda}$, where the partition $\lambda = (n_0,\dots,n_{k}) \vdash n$ corresponds to the representation type
$\lambda = (1,\rho_{\infty};n_0, \delta; \ds ; n_{k}, \delta)$. In an \'etale neighbourhood of $x \in (\mf{M}_{\theta_0})_{\lambda}$, the morphism $f \colon X=\mf{M}_\theta \rightarrow \mf{M}_{\theta_0}$ is isomorphic to the product of Hilbert-Chow morphisms
$$
\Hilb^{[n_0]}(\C^2) \times \cdots \times \Hilb^{[n_{k}]}(\C^2) \longrightarrow \Sym^{n_0} (\C^2) \times \cdots \times \Sym^{n_{k}}(\C^2).
$$
\end{prop}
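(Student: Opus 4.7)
The plan is to apply the étale local normal form of Theorem~\ref{thm:commdiagram22} after computing the Ext-graph at a point $x\in(\mf{M}_{\theta_0})_\lambda$, and then to recognise the resulting local model as a product of framed Jordan quiver varieties. Since $\theta_0$ is generic in the wall $\delta^\perp\cap F$, the only hyperplane of $\mc{A}_v$ that contains $\theta_0$ is $\delta^\perp=\rho_\infty^\perp$; in particular $\theta_0(\delta)=0$, so Proposition~\ref{prop:canonicaldecomp}\two\ gives the canonical decomposition $v=\rho_\infty+\delta+\cdots+\delta$ with $n$ copies of $\delta$. Combined with Theorem~\ref{thm:canondecomp}\four, every $\theta_0$-polystable $\Pi$-module of dimension $v$ therefore decomposes as $M_\infty\oplus M_0^{\oplus n_0}\oplus\cdots\oplus M_k^{\oplus n_k}$, where $\dim M_\infty=\rho_\infty$, each $M_j$ for $0\le j\le k$ is $\theta_0$-stable of dimension $\delta$, and the $M_j$ are pairwise non-isomorphic. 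This yields the partition-indexed description of the strata $(\mf{M}_{\theta_0})_\lambda$ with $\lambda=(n_0,\ldots,n_k)\vdash n$.

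Next I would compute the Ext-graph of $x$ using $\beta^{(\infty)}=\rho_\infty$ and $\beta^{(i)}=\delta$ for $0\le i\le k$. At each Ext-vertex there are $p(\delta)=1-\tfrac12(\delta,\delta)=1$ loops, and between distinct vertices there are $-(\delta,\delta)=0$ edges, so the Ext-graph is the disjoint union of $k+1$ Jordan quivers. Using the entry $C_{\infty,\rho_0}=-1$ of the Cartan matrix of the framed McKay quiver together with $\delta_0=1$, we find $(\rho_\infty,\delta)=-1$, so the framing vector is $\bn_i=-(\rho_\infty,\delta)=1$ at each vertex. Finally $\ell=p(\rho_\infty)=1-\tfrac12(\rho_\infty,\rho_\infty)=0$, so no $\C^{2\ell}$ factor appears in the normal form \eqref{eq:cartdiagramquiver0}.

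The induced stability condition \eqref{eqn:varrho} is $\varrho(\gamma)=\theta(\delta)\sum_i\gamma_i$, which is positive on the strictly positive orthant because $\theta\in C_-$ satisfies $\theta(\delta)>0$. I would then identify $f_\varrho\colon\mf{M}_\varrho(\bm,\bn)\to\mf{M}_0(\bm,\bn)$ with a product of Hilbert--Chow morphisms. The framed Ext-graph is a star with central framing vertex $\infty$ of dimension $m_\infty=1$, one pair of arrows joining $\infty$ to each Jordan vertex $i$, and $\bm=(n_0,\ldots,n_k)$. The representation space factors coordinate-wise over the $k+1$ legs, the preprojective relation at each Jordan vertex decouples as $[B_i,B_i^*]+s_it_i=0$, and the sole coupling relation at $\infty$, a scalar equation, is rendered redundant by the trace-zero condition for the moment map of $G(\bm)=\bigl(\GL(1)\times\prod_i\GL(n_i)\bigr)/\C^\times\cong\prod_i\GL(n_i)$. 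With $\varrho$ positive on each leg, the resulting moduli space is the Nakajima quiver description of $\prod_i\Hilb^{[n_i]}(\C^2)$, with affinisation $\prod_i\Sym^{n_i}(\C^2)$ and $f_\varrho$ the product of Hilbert--Chow morphisms. Substituting this into \eqref{eq:cartdiagramquiver0} with $\ell=0$ completes the argument.

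The main obstacle I foresee is the factorisation step: although the unframed Ext-graph is disconnected, its framed version is connected through $\infty$, so the product structure is not automatic and relies on the combination of $m_\infty=1$, the quotient by the diagonal $\C^\times$, and the resulting redundancy of the $\infty$-component of the moment map. Verifying leg-by-leg that $\varrho$ restricts to the standard GIT stability producing $\Hilb^{[n_i]}(\C^2)$ is an additional book-keeping task, but is routine from the definitions.
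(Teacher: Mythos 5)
Your proof is correct and follows essentially the same route as the paper's: enumerate the representation types using $\rho_\infty,\delta\in\Sigma_{\theta_0}$, compute the Ext-graph as a disjoint union of $k+1$ Jordan quivers with $\bm=(n_0,\dots,n_k)$, $\bn=(1,\dots,1)$ and $\ell=p(\rho_\infty)=0$, then invoke Theorem~\ref{thm:commdiagram22} and recognise the local model as a product of Hilbert--Chow morphisms. Two small remarks: your sign $\varrho_i=\theta(\delta)>0$ is actually the one consistent with $\theta\in C_-$ (the paper writes $\theta(\delta)=-1$, apparently a slip, though immaterial since both GIT chambers of the framed Jordan quiver yield $\Hilb^{[n_i]}(\C^2)$), and your explicit justification of why the framed quiver variety factorises over the $k+1$ components of the unframed Ext-graph --- via $m_\infty=1$, the central $\C^\times$ quotient, and the redundancy of the moment-map component at $\infty$ --- is a worthwhile clarification of a step the paper dispatches in one line by noting the graph is disconnected.
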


\begin{proof}
First, we describe the strata of $\mf{M}_{\theta_0}$ by listing the representation types of $v$. Since $\theta_0$ is a generic point on $\delta^{\perp}$, the only positive roots $\gamma< v$ satisfying $\theta_0(\gamma)=0$ are $\rho_{\infty} + m \delta$ and $m \delta$, for $0 \le m < n$. We have $\rho_{\infty}, \delta \in \Sigma_{\theta_0}$; we have seen in the proof of Proposition \ref{prop:canonicaldecomp} that $\rho_{\infty} + m \delta \notin \Sigma_{\theta_0}$ when $m \ge 1$, even though it is a root. Thus, the representation types of $v$ are
$$
\lambda = (1,\rho_{\infty};n_0, \delta; \ds ; n_{k}, \delta) \ \left( =:(1,\beta^{(\infty)};n_0, \beta^{(0)}; \ds ; n_{k}, \beta^{(k)} \right);
$$
here the open stratum corresponds to $\lambda = (1^n)$, and the unique closed stratum is $\lambda = (n)$. 

Fix a partition $\lambda = (n_0,\dots,n_{k})$ of $n$ and choose $x \in (\mf{M}_{\theta_0})_{\lambda}$. We apply Theorem \ref{thm:commdiagram22} to describe the \'etale-local picture of $f \colon \mf{M}_{\theta} \rightarrow \mf{M}_{\theta_0}$ at $x$. Recall from section \ref{sec:localnormalform} that the associated Ext-graph has vertices $\{ 0, \ds, k \}$, with $p\left(\beta^{(i)}\right) = p(\delta) = 1$ loops at vertex $i$ and $- \left(\beta^{(i)}, \beta^{(j)}\right) = - (\delta,\delta) = 0$ edges between vertex $i$ and $j$. That is, the graph is simply the disjoint union of $k+1$ loops. Since $p\left(\beta^{(\infty)}\right) = p(\rho_{\infty}) = 0$, there are no loops at the framing vertex. Because $-\left(\beta^{(\infty)},\beta^{(i)}\right) = - (\rho_{\infty},\delta) = 1$, the corresponding dimension vectors are 
$$
\bm = (n_0,\dots,n_{k}), \quad \bn = (1, 1, \ds, 1). 
$$
Choose $\theta\in C_-$ such that $\theta(\rho_\infty)=n$ and $\theta(\delta) = -1$. The stability condition $\varrho$ for the Ext-graph satisfies $\varrho_i = \theta(\delta) = -1$ for $0\leq i\leq k$ by \eqref{eqn:varrho}. Since the graph has $k+1$ connected components, it is a product 
$$
\mf{M}_{\varrho}(\bm,\bn) \cong \prod_{0\leq i\leq k} \mf{M}_{(-1,n_i)}((n_i),(1)).
$$ 
The result follows since $\mf{M}_{(-1,m)}((m),(1)) \cong \Hilb^{[m]}(\C^2)$ and $\mf{M}_{0}((m),(1)) \cong \Sym^m(\C^2)$.
\end{proof}

\begin{cor}[Kuznetsov]\label{cor:imaginarywallconstraction}
 For $\theta\in C_-$, the morphism $f \colon X=\mf{M}_\theta \rightarrow \mf{M}_{\theta_0}$ is a divisorial contraction and the exceptional locus is an irreducible effective divisor. 
\end{cor}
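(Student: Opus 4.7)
My plan is to combine the \'etale local description of $f$ from Proposition~\ref{prop:etalelocalHilChowdelta} with a dimension count over the strata of $\mf{M}_{\theta_0}$. The classical input I will rely on is that the Hilbert--Chow morphism $\Hilb^{[m]}(\C^2)\to \Sym^m(\C^2)$ is an isomorphism for $m=1$ and, for $m\geq 2$, has as exceptional locus the irreducible boundary divisor $B_m$ parameterising non-reduced subschemes; in particular $\dim\Hilb^{[m]}_0(\C^2)=m-1$ and $\Hilb^{[2]}_0(\C^2)\cong \mathbb{P}^1$.

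First I would show that the exceptional locus $E\subseteq X$ of $f$ is pure of codimension one. By Proposition~\ref{prop:etalelocalHilChowdelta}, in an \'etale neighbourhood of a point of the stratum $(\mf{M}_{\theta_0})_\lambda$ indexed by $\lambda=(n_0,\dots,n_k)\vdash n$, the morphism $f$ is isomorphic to $\prod_{i=0}^k\bigl(\Hilb^{[n_i]}(\C^2)\to \Sym^{n_i}(\C^2)\bigr)$, and the local exceptional set is
\[
\bigcup_{i:\, n_i\geq 2}\Bigl(\prod_{j\neq i}\Hilb^{[n_j]}(\C^2)\Bigr)\times B_{n_i},
\]
each summand being a divisor in the ambient product. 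Since the two squares of diagram \eqref{eq:cartdiagramquiver0} are Cartesian with \'etale horizontal maps whose images cover an open neighbourhood of $x$, it follows that $E$ is pure of codimension one near $x$, and hence everywhere in $X$. In particular $f$ is a divisorial contraction.

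Next, to control the number of components of $E$, I would perform the following dimension count. By Remark~\ref{rem:stratapropertydecomp}, the stratum $(\mf{M}_{\theta_0})_\lambda$ is irreducible, smooth, and of dimension $2(k+1)$, since $p(\rho_\infty)=0$ and $p(\delta)=1$; and the \'etale local form identifies the fiber of $f$ over any point of this stratum with $\prod_{i=0}^k\Hilb^{[n_i]}_0(\C^2)$, of dimension $n-(k+1)$. Hence
\[
\dim f^{-1}\bigl((\mf{M}_{\theta_0})_\lambda\bigr)=2(k+1)+\bigl(n-(k+1)\bigr)=n+k+1,
\]
which equals $\dim X-1=2n-1$ precisely when $\lambda=(2,1^{n-2})$. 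For that $\lambda$ the fiber is $\Hilb^{[2]}_0(\C^2)\cong \mathbb{P}^1$, so $f^{-1}\bigl((\mf{M}_{\theta_0})_{(2,1^{n-2})}\bigr)$ is a $\mathbb{P}^1$-bundle over an irreducible base and hence irreducible; let $D$ denote its closure in $X$. Any irreducible divisorial component $D'$ of $E$ has image $f(D')$ irreducible and contained in $\mf{M}_{\theta_0}\smallsetminus(\mf{M}_{\theta_0})_{(1^n)}=\overline{(\mf{M}_{\theta_0})_{(2,1^{n-2})}}$, and the count above (applied to any deeper stratum) forces $\dim f(D')=2n-2$, so $f(D')=\overline{(\mf{M}_{\theta_0})_{(2,1^{n-2})}}$, and hence $D'=D$. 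Combined with purity of codimension one, this shows $E=D$ is an irreducible effective divisor.

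The main obstacle, as I see it, is in the first step: rigorously promoting the \'etale local assertion about divisoriality of the local exceptional set to the global statement that $E$ has pure codimension one. This requires the Cartesian nature of the two squares in diagram \eqref{eq:cartdiagramquiver0}, which transports the exceptional locus faithfully between $f$ and its local model. A secondary but routine point is the closure relation $\mf{M}_{\theta_0}\smallsetminus(\mf{M}_{\theta_0})_{(1^n)}=\overline{(\mf{M}_{\theta_0})_{(2,1^{n-2})}}$, which follows from the standard degeneration of $\theta_0$-polystable representation types: any polystable module of type $\lambda\neq (1^n)$ is obtained from one of type $(2,1^{n-2})$ by further coincidence of simple summands.
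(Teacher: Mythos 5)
Your proof is correct and rests on the same fundamental tool as the paper's --- the \'etale local description of $f$ from Proposition~\ref{prop:etalelocalHilChowdelta} --- but you fill in two steps that the paper treats tersely. The paper's proof runs: the singular locus $Z$ of $\mf{M}_{\theta_0}$ is irreducible, being the closure of the smooth connected leaf $(\mf{M}_{\theta_0})_{(2,1^{n-2})}$; it then asserts that the verification that $E = f^{-1}(Z)$ is an irreducible divisor can be carried out \'etale locally on $Z$, reducing to Fogarty's theorem that the exceptional locus of $\Hilb^{[m]}(\C^2)\to\Sym^m(\C^2)$ is an irreducible divisor for $m>1$. Your dimension count across strata --- showing $\dim f^{-1}\bigl((\mf{M}_{\theta_0})_\lambda\bigr)=n+k+1<2n-1$ unless $\lambda=(2,1^{n-2})$ --- is exactly what makes the ``\'etale locally suffices'' reduction rigorous: since the local model over a deeper stratum has several exceptional components, one cannot see irreducibility of $E$ purely locally there; instead one needs to know that no component of $E$ is supported over a deeper stratum, which your count delivers, so every divisorial component dominates $(\mf{M}_{\theta_0})_{(2,1^{n-2})}$ and then Fogarty pins it down. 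You also isolate the purity-of-codimension-one claim (non-automatic here since $\mf{M}_{\theta_0}$ is singular, so van der Waerden purity is not directly available) and justify it by transporting the divisorial exceptional set of the local model through the Cartesian squares of Theorem~\ref{thm:commdiagram22}, which is the right mechanism. In short, this is a careful and correct expansion of the paper's argument along the same route rather than a genuinely different one.
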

\begin{proof}
Let $Z$ denote the singular locus of $\mf{M}_{\theta_0}$. Since each of the leaves $(\mf{M}_{\theta_0})_{\lambda}$ is contained in the closure of $(\mf{M}_{\theta_0})_{(2,1^{n-2})}$, and the latter is smooth and connected, we deduce that $Z$ is irreducible. Therefore, to check that the exceptional locus $E = f^{-1}(Z)$ is an irreducible divisor, it suffices to do so \'etale locally on $Z$. If $x \in  (\mf{M}_{\theta_0})_{\lambda} \subset Z$, then by Proposition~\ref{prop:etalelocalHilChowdelta}, it suffices to show that the exceptional locus of the Hilbert-Chow morphism $\Hilb^{[m]}(\C^2) \rightarrow \Sym^m(\C^2)$ is an irreducible divisor for $m>1$. This is shown in \cite{Fogarty68}. 
\end{proof}

\subsection{The stratification induced by real walls}\label{sec:realwallsstrata}

By Lemma \ref{lem:Fwalls}, the real interior walls are precisely those contained in $\gamma^\perp:= (m \delta - \alpha)^{\perp}$ for some $\alpha \in \Phi^+$ and $0 < m < n$. The real boundary walls are contained in  $\alpha^{\perp}$ for some $\alpha \in \Phi^+$.

 Fix a real wall and take $\theta_0$ a generic point of this wall. The hyperplane containing the wall is of the form $\gamma^\perp = (m\delta-\alpha)^\perp$ for some $\alpha\in \Phi^+$ and $0\leq m < n$, and we define $N$ to be the largest positive integer such that $N(N+m) \le n$. 

\begin{lem}
\label{lem:leavesinternalwall}
 The variety $\mf{M}_{\theta_0}$ has strata $\{ \mc{L}_k \ |  \ 0 \le k \le N \}$, where $\mc{L}_k$ is isomorphic to the fine moduli space of $\theta_0$-stable $\Pi$-modules of dimension vector $v - k \gamma$ with 
$$
\gamma := \left\{ \begin{array}{ll} 
m\delta-\alpha & \textrm{if $m > 0$,} \\
\alpha & \textrm{if $m = 0$}.
\end{array} \right.
$$
Moreover, $\mc{L}_k$ is (non-empty) of codimension $2k(k+m)$.
\end{lem}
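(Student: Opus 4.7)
The plan is to apply the stratification by representation types from Section~\ref{sec:quiverstratification} by first classifying $\Sigma_{\theta_0}$ explicitly and then enumerating the representation types of $v$.

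I would begin by computing $R^+_{\theta_0} := \{\beta \in R^+ \mid \theta_0(\beta) = 0\}$. Since $\theta_0$ is generic on the wall $\gamma^\perp \subset \Theta_v$, any $\beta \in R^+_{\theta_0}$ must lie in $\Q v + \Q\gamma$. Splitting by $\beta_\infty \in \{0,1\}$: when $\beta_\infty = 0$, Lemma~\ref{lem:recoverPhi} gives $\beta \in \Phi_\aff^+$, and since $\gamma$ is a real root the only option is $\beta = \gamma$; when $\beta_\infty = 1$, we have $\beta = v - k\gamma$ with $k\in\Z_{\geq 0}$, and Lemma~\ref{lem:FrenkelKac} combined with the bilinear computations $(v,v) = 2 - 2n$ (from Lemma~\ref{lem:vimroot}), $(v,\gamma) = -m$, and $(\gamma,\gamma) = 2$ gives $p(v - k\gamma) = n - k(k+m)$, forcing $0 \leq k \leq N$. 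I would then verify $\Sigma_{\theta_0} = \{\gamma\} \cup \{v - k\gamma \mid 0 \leq k \leq N\}$ via Theorem~\ref{thm:canondecomp}\three: the only decompositions of $v - k\gamma$ into elements of $R^+_{\theta_0}$ are $(v - j\gamma) + (j-k)\gamma$ for $k < j \leq N$, and the required strict inequality on $p$ follows from $j(j+m) > k(k+m)$; the root $\gamma$ admits no proper decomposition.

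Next I would enumerate the representation types of $v$: since exactly one summand has $\rho_\infty$-coefficient equal to $1$, grouping like summands gives precisely $\tau_k = (1, v - k\gamma;\ k, \gamma)$ for $0 \leq k \leq N$. The non-emptiness condition in Remark~\ref{rem:stratapropertydecomp} is satisfied since $(v - k\gamma)_\infty = 1 \neq 0 = \gamma_\infty$, and the dimension formula yields $\dim \mc{L}_k = 2 p(v - k\gamma) + 2k \cdot p(\gamma) = 2\bigl(n - k(k+m)\bigr)$, so $\mathrm{codim}_{\mf{M}_{\theta_0}} \mc{L}_k = 2k(k+m)$.

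Finally, to identify $\mc{L}_k$ with the fine moduli space of $\theta_0$-stable $\Pi$-modules of dimension $v - k\gamma$: this dimension vector is primitive, so King's theorem realises $\mf{M}_{\theta_0}(v - k\gamma)^s$ as a fine moduli space; and since $\gamma$ is real, there is a unique (up to isomorphism) $\theta_0$-stable $\Pi$-module $M_1$ of dimension $\gamma$, giving a morphism $M_\infty \mapsto M_\infty \oplus M_1^{\oplus k}$ from $\mf{M}_{\theta_0}(v - k\gamma)^s$ to $\mc{L}_k$ that is a bijection on closed points. The main technical obstacle is to upgrade this to an isomorphism of schemes; I would do so using the \'etale local normal form of Theorem~\ref{thm:commdiagram22}, which at any point of $\mc{L}_k$ identifies $\mc{L}_k$ \'etale-locally with the corresponding stratum inside $\mf{M}_0(\bm,\bn) \times \C^{2\ell}$ associated to the $\Ext$-graph, matching the fine moduli structures on both sides.
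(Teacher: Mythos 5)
Your proposal follows essentially the same strategy as the paper's proof: apply the stratification by representation types, pin down $\Sigma_{\theta_0}$ via the Frenkel--Kac lemma and explicit computations of the Cartan form, and read off the strata. The paper organizes the argument slightly differently — it verifies $v - k\gamma \in \Sigma_{\theta_0}$ for each $k$ in one pass by bounding the representation types of $v - k\gamma$, rather than first classifying $R^+_{\theta_0}$ and then checking the defining inequality of $\Sigma_{\theta_0}$ — but the underlying computations, namely $(v,v) = 2-2n$, $(v,\gamma) = -m$, $(\gamma,\gamma)=2$ and $p(v-k\gamma) = n - k(k+m)$, are identical.

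A few points to tidy. First, when you assert that any $\beta \in R^+_{\theta_0}$ lies in $\Q v + \Q\gamma$ and split into $\beta_\infty \in \{0,1\}$, you should restrict from the outset to roots $\beta \le v$: genericity of $\theta_0$ in the wall is a statement about the finitely many hyperplanes $\beta^\perp \cap \Theta_v$ arising from roots $\beta \le v$, and this same restriction is what forces $\beta_\infty \in \{0,1\}$ and, when $\beta_\infty = 1$, that $\beta = v - k\gamma$ with $k \ge 0$ (the condition $\beta \in \Q v + \Q\gamma$ alone also allows $v + k\gamma$ with $k > 0$, which are roots but exceed $v$). This is harmless here since only roots $\le v$ appear as summands in a decomposition of $v$, but should be stated. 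Second, the dimension formula in Remark~\ref{rem:stratapropertydecomp} is $\dim \mf{M}_{\theta_0}(\bv,\bw)_\tau = 2\sum_{i}p(\beta^{(i)})$, summed over the \emph{distinct} roots $\beta^{(i)}$ of the representation type without weighting by multiplicities $n_i$, so the correct expression is $2p(v-k\gamma) + 2p(\gamma)$ rather than $2p(v-k\gamma) + 2k\cdot p(\gamma)$; your numerical answer is right only because $p(\gamma) = 0$. Third, the final scheme-theoretic identification of $\mc{L}_k$ is simpler than you suggest: both $\mc{L}_k$ and the fine moduli space are smooth (hence normal) varieties over $\C$ and the tautological morphism $M_\infty \mapsto M_\infty \oplus M_1^{\oplus k}$ is bijective on closed points, so it is an isomorphism by Zariski's main theorem; the detour through Theorem~\ref{thm:commdiagram22} is not needed.
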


\begin{proof}
We claim that $v - k \gamma \in \Sigma_{\theta_0}$ for all $0 \le k \le N$. First, we note from Lemma \ref{lem:FrenkelKac} that 
$$
v - k \gamma = \rho_{\infty} + (n - k(k+m)) \delta + \frac{1}{2} (k \alpha,k\alpha) \delta + k\alpha
$$
is a root in $R_{\theta_0}^+$. Next, let $\tau = (1,\beta^{(\infty)};n_0,\beta^{(0)}; \ds ;n_{\ell}, \beta^{(\ell)})$ be a representation type of $v - k \gamma$ with $\beta^{(i)} \in \Sigma_{\theta_0}$ and $\beta^{(\infty)}_{\infty} = 1$. The assumption that $\theta_0$ is a generic element in $\gamma^{\perp}$ implies that the only positive root $\xi < n \delta$ that pairs to zero with $\theta_0$ is $\gamma$. This implies that $\beta^{(i)} = \gamma$ for all $i \ge 0$. That is, $v - k \gamma = (v - (k + l) \gamma) + n_0 \gamma+ \cdots + n_{\ell} \gamma$. In fact, as noted in Remark \ref{rem:stratapropertydecomp}, the fact that $\gamma$ is real forces $\ell = 0$ and hence $n_0 = l$ i.e. $\tau = (1,v - (k + l) \gamma; l, \gamma)$. Then, the fact that this is a representation type forces $v - (k + l) \gamma \in \Sigma_{\theta_0}$. In particular, $v - (k + l) \gamma$ must be a positive root. Lemma~\ref{lem:FrenkelKac} implies that 
$$
p(v - (k+ l) \gamma) = n - (k+l) (m + k + l) \ge 0,
$$
i.e. $0 \le k + l \le N$. Moreover, $\gamma$ is a real root, so $p(\gamma) = 0$ and 
$$
\sum_{i = 1}^{\ell} p(\beta^{(i)}) = p(v - (k+l)\gamma) = n - (k+l)(k+l +m) < n - k(k+m)
$$
if $l \ge 1$. This implies that $v - k \gamma \in \Sigma_{\theta_0}$ for all $0 \le k \le N$.

In particular, this applies to $v$ (in the case $k = 0$) and shows that every representation type of $v$ is of the form $(1,v - k \gamma;k,\gamma)$. The symplectic leaves of $\mf{M}_{\theta_0}$ are in bijection with these representation types, with the type $(1,v - k \gamma;k,\gamma)$ corresponding to 
$$
\mc{L}_k = \big\{ M = M_{\infty} \oplus M_0^{\oplus k} \ | \ \dim M_{\infty} = v - k \gamma, \ \dim M_0 = \gamma, \textrm{ and $M_{\infty},M_0$ are $\theta_0$-stable} \big\}.
$$
But $\gamma$ is real, so up to isomorphism there is only one $\theta_0$-stable representation of dimension $\gamma$. Therefore $\mc{L}_k$ is isomorphic to the fine moduli space of $\theta_0$-stable $\Pi$-modules of dimension vector $v - k \gamma$. The fact that $v - k \gamma \in \Sigma_{\theta_0}$ implies that $\mc{L}_k$ is non-empty of dimension $2 p(v - k \gamma) = 2(n - k(k+m))$.
\end{proof}

\subsection{The real boundary walls}

Fix a positive real root $\alpha \in \Phi^+$. The walls of $F$ contained in $\alpha^{\perp}$ are the real boundary walls. Assume that $\theta$ is chosen in a chamber $C \subset F$ with $\theta_0 \in \overline{C} \cap \alpha^{\perp}$ a generic point on the wall $\alpha^{\perp}$.

\begin{prop}
\label{prop:realboundary}
The morphism $f \colon \mf{M}_{\theta} \rightarrow \mf{M}_{\theta_0}$ is a divisorial contraction and the exceptional locus is an irreducible effective divisor.
\end{prop}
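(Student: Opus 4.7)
The plan is to follow the template of Corollary \ref{cor:imaginarywallconstraction}, reducing to an étale local calculation via Theorem \ref{thm:commdiagram22}. By Lemma \ref{lem:leavesinternalwall} applied with $m = 0$ and $\gamma = \alpha$, the strata of $\mf{M}_{\theta_0}$ are $\mc{L}_0, \ldots, \mc{L}_N$ with $N = \lfloor \sqrt{n} \rfloor$ and $\codim \mc{L}_k = 2k^2$.

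For $x \in \mc{L}_k$, I would compute the Ext-graph: since $\alpha$ is real, $p(\alpha) = 0$, so the graph has a single vertex with no loops and dimension $\bm = (k)$. Using $(v,\alpha) = 0$ and $(\alpha,\alpha) = 2$, the framing is $\bn_0 = -(v - k\alpha,\alpha) = 2k$ and the shift is $\ell = p(v - k\alpha) = n - k^2$; the induced stability $\varrho$ satisfies $\varrho_0 = \theta(\alpha) \neq 0$ and is therefore generic. Theorem \ref{thm:commdiagram22} then identifies $f$ étale locally near $x$ with the product of the identity on $\C^{2(n-k^2)}$ and the morphism $f_\varrho \colon \mf{M}_\varrho(\bm,\bn) \to \mf{M}_0(\bm,\bn)$. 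A direct calculation using the map $(i,j) \mapsto ji$ identifies this morphism with the Springer resolution $T^*\Grass(k,2k) \to \overline{\mathcal{O}_{(2^k)}}$, where the target is the closure in $\mathfrak{gl}(2k)$ of the nilpotent orbit of matrices with Jordan type $(2,\ldots,2)$. Its exceptional locus is an irreducible divisor, because the generic fiber over the codimension-two singular orbit $\mathcal{O}_{(2^{k-1},1^2)}$ is $\mathbb{P}^1$ while the fibers over deeper strata have larger dimension and lie in the closure.

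To globalise, I would first show that the singular locus of $\mf{M}_{\theta_0}$ equals $\overline{\mc{L}_1}$, arguing that each deeper stratum $\mc{L}_k$ for $k \ge 2$ is contained in the closure of $\mc{L}_1$ via a deformation of the representative $M_\infty \oplus M_0^{\oplus k}$ to one of type $(1,v - \alpha;1,\alpha)$; this is possible because $v - \alpha \in \Sigma_{\theta_0}$ by Theorem \ref{thm:canondecomp}. Since $\mc{L}_1$ is smooth and connected (being a fine moduli space of $\theta_0$-stable modules), the set $Z := \overline{\mc{L}_1}$ is irreducible. From the étale local picture the exceptional locus $E := f^{-1}(Z)$ is pure of codimension one, and the restriction $f^{-1}(\mc{L}_1) \to \mc{L}_1$ is a $\mathbb{P}^1$-bundle, hence irreducible. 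Since preimages of the deeper strata $\mc{L}_k$ have codimension $k^2 \ge 4$ in $\mf{M}_\theta$, they cannot support a divisorial component of $E$, so $E = \overline{f^{-1}(\mc{L}_1)}$ is an irreducible divisor.

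The main obstacle is identifying the étale local model explicitly with the Springer resolution of $\overline{\mathcal{O}_{(2^k)}}$ and confirming that its exceptional locus is an irreducible divisor for every $k$; once this is in hand, the global statement follows from properness of $f$ together with the dimension inequalities above.
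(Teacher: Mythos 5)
Your overall strategy is the same as the paper's: use Lemma~\ref{lem:leavesinternalwall} with $m=0$ to describe the strata $\mc L_k$ of codimension $2k^2$, compute the Ext-graph at a point of $\mc L_k$ to obtain $\bm=(k)$, $\bn=(2k)$, $\ell=n-k^2$, identify the étale local model via Theorem~\ref{thm:commdiagram22} with $\C^{2(n-k^2)} \times T^*G(k,\GVect) \to \C^{2(n-k^2)} \times \overline{\mc O}$, and then globalise using the fact that $Z = \overline{\mc L_1}$ is the irreducible singular locus. All the numerics check out.

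The gap — which you yourself flag as ``the main obstacle'' — is in establishing that the exceptional locus of $T^*G(k,\GVect)\to\overline{\mc O}$ is an irreducible divisor. Your sketch computes the generic fibre over the codimension-two orbit $\mc O_{(2^{k-1},1^2)}$ to be $\mathbb P^1$ and invokes semismallness to bound the dimension of preimages of deeper strata, but this only shows that the part of the exceptional locus dominating $\mc O_{(2^{k-1},1^2)}$ is a divisor, not that the whole exceptional locus is pure of codimension one. Your later assertion ``From the étale local picture the exceptional locus is pure of codimension one'' is therefore circular: purity is exactly what is at stake, and it fails in general for small resolutions with smooth source when the target is not $\Q$-factorial. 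The claim that the fibres over deeper strata ``lie in the closure'' of the $\mathbb P^1$-bundle over $\mc O_{(2^{k-1},1^2)}$ is the needed specialization statement, and it is not proved.

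The paper closes this gap with a direct determinantal calculation. Writing $\pi\colon T^*G(k,\GVect) \to G(k,\GVect)$ for the zero-section projection, the exceptional locus $\mathrm{Exp}$ is described as a Zariski locally trivial fibre bundle over $G(k,\GVect)$: after choosing a complement to $V \in G(k,\GVect)$ one identifies $\pi^{-1}(V)\cap\mathrm{Exp}$ with the set of $k\times k$ matrices of rank $<k$, i.e.\ the vanishing locus of the determinant. This is an irreducible hypersurface, and since $G(k,\GVect)$ is irreducible the whole of $\mathrm{Exp}$ is an irreducible divisor — with no need to discuss closure relations among the orbit preimages. If you want to make your argument work instead, you would need to prove the specialization statement directly, or appeal to $\Q$-factoriality of $\overline{\mc O_{(2^k)}}$, neither of which is shorter than the determinantal description.

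One further small point: the irreducibility of $f^{-1}(\mc L_1)$ does follow as you say (the total space of an étale-locally trivial $\mathbb P^1$-bundle over a smooth irreducible base is smooth and connected, hence irreducible), and your explanation of why $Z = \overline{\mc L_1}$ via degeneration of representation types is a correct justification of a step the paper leaves implicit.
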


\begin{proof}
By Lemma \ref{lem:leavesinternalwall} with $m=0$, the leaves of $\mf{M}_{\theta_0}$ are $\mc{L}_k$ where $k^2 \le n$. The leaf $\mc{L}_0$ is open and its complement $Z$ is the singular locus of $\mf{M}_{\theta_0}$, which equals $\overline{\mc{L}}_1$. As in the proof of Corollary \ref{cor:imaginarywallconstraction}, the fact that $\mc{L}_1$ is irreducible implies that it suffices to check \'etale locally on $Z$ that the exceptional locus is an irreducible divisor. 

Assume that $x \in \mc{L}_k$ for some $k \ge 1$. The representation type of $v$ labelling this leaf is $(1,v - k \alpha; k, \alpha)$. Note first that $\ell:=p(v-k\alpha) = n-k^2$. Also, the $\Ext$-graph has one vertex $0$, with $p(\alpha) = 0$ loops at this vertex. 
We have $\bm = (k)$ and the fact that $-(v-k\alpha,\alpha) = 2k$ implies that $\bn = (2k)$. Finally, we have $\varrho(\infty) = - \theta(k\alpha) < 0$ and $\varrho(0) = \theta(\alpha) > 0$. If we fix a vector space $\GVect$ of dimension $2k$, then an explicit calculation of quiver varieties shows that there is a commutative diagram
  \begin{equation*}
\begin{tikzcd}
 T^* G(k,\GVect) \ar[r]\ar[d,"{}"] & \mathfrak{M}_\varrho(\bm,\bn) \ar[d,"{f_\varrho}"] \\
 \overline{\mc{O}}\ar[r] & \mf{M}_0(\bm,\bn)
 \end{tikzcd}
\end{equation*}
 in which the horizontal arrows are isomorphisms and the vertical arrows are symplectic resolutions; here, $\mc{O} \subset \mf{gl}(\GVect)$ is the nilpotent orbit of rank $k$ matrices whose square is zero, and $G(k,\GVect)$ is the Grassmanian of $k$-planes in $\GVect$. It now follows from Theorem~\ref{thm:commdiagram22} that, \'etale locally at $x$, the morphism $f \colon \mf{M}_{\theta} \rightarrow \mf{M}_{\theta_0}$ can be identified with 
 \begin{equation}\label{eq:fetalebetawall}
\C^{2 (n-k^2)}\times T^* G(k,\GVect)  \longrightarrow \C^{2(n-k^2)} \times \overline{\mc{O}},
\end{equation}
 with $x$ mapped to $0 \in \C^{2(n-k^2)} \times \overline{\mc{O}}$. We claim that the exceptional locus of this morphism is an irreducible divisor. It suffices to assume that $n = k^2$. Let $\mathrm{Exp}$ be the exceptional locus. If $\pi : T^* G(k,\GVect) \rightarrow \mathrm{G}(k,\GVect)$ is projection to the zero section, then we claim that $\pi |_{\mathrm{Exp}}$ is a Zariski locally trivial fibre bundle, with each fibre $\pi^{-1}(V) \cap \mathrm{Exp}$ an irreducible hypersurface in $\pi^{-1}(V)$. Fixing a basis of $V$ and extending this to a basis of $\GVect$, we can identify the space $\pi^{-1}(V) \cap \mathrm{Exp}$ with the space of all $k \times k$ matrices $B$ of rank $< k$ (note that $\dim V = k$ and $\dim \GVect = 2k$). This is the zero set of the determinant, and hence a hypersurface. Being a special case of a determinantal variety, it is well-known to be irreducible. The claim follows and we deduce that the exceptional locus of $f$ is irreducible. We note that this argument shows that the exceptional divisor is singular. Finally, if $x$ belongs to the open subset $\mc{L}_1$ of $Z$, then \eqref{eq:fetalebetawall} becomes $\C^{2 n -2}\times T^* \mathbb{P}^1 \rightarrow \C^{2n-2} \times \C^2/\Z_2$ which is a divisorial contraction.
\end{proof}

\subsection{Internal walls}
In order to state the main result for real internal walls, we recall the definition of a Mukai flop of type $A$ due to Namikawa~\cite{NamikawaNilpotentIII}.

Let $\GVect$ be a finite dimensional vector space and choose an integer $1 \le k < \dim \GVect$. Let $\mc{O} \subset \mf{gl}(\GVect)$ be the nilpotent orbit of rank $k$ matrices whose square is zero, and let $G(k,\GVect)$ be the Grassmanian of $k$-planes in $\GVect$. Provided $2k \neq \dim \GVect$, the symplectic singularity $\overline{\mc{O}}$ admits two different symplectic resolutions such that the diagram 
\begin{equation}\label{eq:MukaiflopA}
\begin{tikzcd}
T^* G(k,\GVect) \ar[rr,"\psi",dashed] \ar[dr,"g_+"'] & &  T^* G(k,\GVect^*) \ar[dl,"g_-"] \\
& \overline{\mc{O}}. & 
\end{tikzcd}
\end{equation}
 is a flop \cite[Lemma 5.4]{NamikawaNilpotentIII}; that is, $g_\pm$ are small projective resolutions, and if $L_+$ is a $g_+$-ample line bundle on $T^* G(k,\GVect)$ then the proper transform $L_-$ is such that  $L_-^{-1}$ is $g_-$-ample. In particular, $\psi$ is not regular. 
 
 One can obtain the above diagram by variation of GIT quotient for a certain quiver variety: begin with the graph with one vertex $0$ and no arrows; take vectors $\bv = (k)$ and $\bw = (\dim \GVect)$; and calculate that 
$$
T^* G(k,\GVect)\cong \mf{M}_1(\bv,\bw), \quad \overline{\mc{O}} = \mf{M}_0(\bv,\bw), \quad  T^* G(k,\GVect^*)\cong \mf{M}_{-1}(\bv,\bw), 
$$
in such a way that the morphisms $g_+$ and $g_-$ are identified with $f_1$ and $f_{-1}$. Moreover, if $\mc{O}_{T^* G(k,\GVect)}(1)$ is the pull-back to $T^* G(k,\GVect)$ of $\mc{O}_{G(k,\GVect)}(1)$ then under the identification $T^* G(k,\GVect)\cong \mf{M}_1(\bv,\bw)$, the line bundle $\mc{O}_{T^* G(k,\GVect)}(1)$ is identified with $\det \mc{R}_0$. Similarly, $\mc{O}_{T^* G(k,\GVect^*)}(1)$ is identified with $\det \mc{R}_0^\prime$, where $\mc{R}_0^\prime$ is the summand of the tautological bundle on $\mf{M}_{-1}(\bv,\bw)$ indexed by vertex 0.

\bigskip
 
  The (real) interior walls are precisely those that are contained in $\gamma^{\perp}$ for $\gamma = m \delta - \alpha$, where $\alpha \in \Phi^+$ and $0 < m < n$. There are uniquely defined chambers $C, C^\prime$ in $F$ such that the wall is $\overline{C} \cap \overline{C^\prime}$. Choose $\theta\in C$, $\theta^\prime\in C^\prime$ and let $\theta_0$ be a generic point of this wall; fix conventions so that $\theta(\gamma) > 0$ and $\theta^\prime(\gamma) < 0$. Let $N$ be the largest positive integer such that $N(N+m) \le n$.

\begin{thm}
\label{thm:MukaiflopAquiver}
 The morphisms $f_{\theta} \colon\mf{M}_{\theta} \rightarrow \mf{M}_{\theta_0}$ and $f_{\theta'} \colon\mf{M}_{\theta'} \rightarrow \mf{M}_{\theta_0}$ induced by variation of GIT quotient are small contractions. 
\end{thm}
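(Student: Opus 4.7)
My plan is to combine the stratification of $\mf{M}_{\theta_0}$ from Lemma~\ref{lem:leavesinternalwall} with the étale-local normal form of Theorem~\ref{thm:commdiagram22}, reducing the claim to the smallness of the resolutions appearing in the Mukai flop diagram \eqref{eq:MukaiflopA}. By Lemma~\ref{lem:leavesinternalwall} the strata of $\mf{M}_{\theta_0}$ are $\mc{L}_0, \ldots, \mc{L}_N$ with $\mc{L}_0$ open; since $\gamma$ is a real root, $\mc{L}_0$ is precisely the $\theta_0$-stable locus, so $f_\theta$ and $f_{\theta'}$ restrict to isomorphisms there and are in particular birational. It then suffices to bound the dimensions of the fibres over the singular strata $\mc{L}_k$ with $k \geq 1$.

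Next I would compute the Ext-graph at a point $x \in \mc{L}_k$. Using the pairings $(\rho_\infty, \delta) = -1$, $(\rho_\infty, \alpha) = 0$, $(\delta, \delta) = (\delta, \alpha) = 0$, and $(\alpha, \alpha) = 2$, one finds $p(\gamma) = 0$ and $-(v - k\gamma, \gamma) = m + 2k$, so the Ext-graph has a single vertex with no loops, while the associated dimension and framing vectors are $\bm = (k)$ and $\bn = (m + 2k)$. By \eqref{eqn:varrho} the induced stability condition is $\varrho_0 = \theta(\gamma)$, which is strictly positive on $C$ and strictly negative on $C'$. Under the standard identification of one-vertex Nakajima quiver varieties with cotangent bundles of Grassmannians, one obtains
\[
\mf{M}_\varrho(\bm, \bn) \;\cong\; T^*G(k, m+2k) \quad\text{or}\quad T^*G(m+k, m+2k)
\]
according to the sign of $\varrho_0$, and in each case the map to $\mf{M}_0(\bm, \bn)$ is one of the two natural resolutions of $\overline{\mc{O}}$ in \eqref{eq:MukaiflopA}, where $\mc{O} \subset \mf{gl}(m+2k)$ is the nilpotent orbit of rank-$k$ square-zero matrices. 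Since $m \geq 1$ we have $m+2k \neq 2k$, so both resolutions are small.

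Applying Theorem~\ref{thm:commdiagram22}, étale-locally over $x$ the morphism $f_\theta$ is the product of this small Grassmannian resolution with the identity on $\C^{2\ell}$ where $\ell = p(v - k\gamma)$. Hence each fibre of $f_\theta$ over $\mc{L}_k$ has dimension $k(m+k)$, while $\codim_{\mf{M}_{\theta_0}}\mc{L}_k = 2k(m+k)$; this gives exceptional codimension $k(m+k) \geq 2$ over every $\mc{L}_k$ with $k \geq 1$, proving that $f_\theta$ is a small contraction. The argument for $f_{\theta'}$ is identical with the opposite sign of $\varrho_0$. The main step requiring care is the bilinear-form computation identifying the étale-local model as $T^*G(k, m+2k) \to \overline{\mc{O}}$; once this is in hand, smallness is inherited directly from the fact that the resolutions in \eqref{eq:MukaiflopA} are small.
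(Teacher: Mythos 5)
Your proposal is correct and follows essentially the same route as the paper: stratify $\mf{M}_{\theta_0}$ via Lemma~\ref{lem:leavesinternalwall}, compute the Ext-graph at a point of $\mc{L}_k$ to get a single vertex with no loops, $\bm=(k)$, $\bn=(m+2k)$, identify the \'etale-local model as a Grassmannian resolution of $\overline{\mc{O}}$ via Theorem~\ref{thm:commdiagram22}, and invoke smallness of the maps $g_\pm$ in \eqref{eq:MukaiflopA}. You make explicit the fibre-dimension count $\dim G(k,m+2k)=k(m+k)$ against $\operatorname{codim}\mc{L}_k=2k(m+k)$, which the paper leaves implicit, but this is the same argument.
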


\begin{proof}
First we note by Lemma \ref{lem:leavesinternalwall} that $\mf{M}_{\theta_0} = \bigsqcup_{k = 0}^N \mc{L}_k$ admits a finite stratification by smooth locally closed subvarieties, where the codimension of $\mc{L}_k$ is $2k(k+m)$. Choose $x \in  \mc{L}_k \subset \mf{M}_{\theta_0}$. The representation type of $v$ labelled by $x$ is $(1,\beta^{(\infty)};k, \beta^{(0)})$, where $\beta^{(\infty)} = v - k\gamma$ and $\beta^{(0)} = \gamma$, so the $\Ext$-graph has only one vertex in this case. Since $p(\gamma) = 0$, there are no loops at this vertex. The dimension vector $\bm$ equals $(k)$ and $\bn = (m + 2k)$ because $-\left(\beta^{(\infty)},\beta^{(0)}\right) = -(v - k\gamma, \gamma) = m+2k$. We have $\ell = p\left(\beta^{(\infty)}\right)  = p(v - k\gamma) = n - k(m+k)$. Finally, the stability condition $\varrho$ is given by $\varrho_0 = \theta(\gamma)$ and $\varrho_{\infty} = \theta (v - k\gamma)$. Similarly, $\varrho_0' = \theta'(\gamma)$ and $\varrho_{\infty}' = \theta' (v - k\gamma)$.  In particular, $\varrho_{\infty} > 0$ and $\varrho_{\infty}' < 0$. Thus, if we choose a vector space $\GVect$ of dimension $m + 2k$, it follows that \'etale locally, the morphism $f_{\theta} \colon \mf{M}_{\theta} \rightarrow \mf{M}_{\theta_0}$ is isomorphic, in a neighbourhood of $x$, to the morphism $\C^{2 \ell} \times T^* G(k,\GVect) \rightarrow \C^{2 \ell} \times \overline{\mc{O}}$ in a neighbourhood of $0$. Similarly,  the morphism $f_{\theta'} \colon \mf{M}_{\theta'} \rightarrow \mf{M}_{\theta_0}$ is isomorphic, in a neighbourhood of $x$, to the morphism $\C^{2 \ell} \times T^* G(k,\GVect^*) \rightarrow \C^{2 \ell} \times \overline{\mc{O}}$ in a neighbourhood of $0$. Since the maps $g_\pm$ in diagram \eqref{eq:MukaiflopA} are small contractions, the result follows. 
\end{proof}

\begin{rem}
 We prove in Corollary~\ref{cor:internalwallflop} below that the rational map $\varphi$ from Theorem~\ref{thm:MukaiflopAquiver} is a flop. In fact, using quite different techniques it is possible to show that these maps are `stratified Mukai flops of type $A$' in the sense of Fu~\cite{Fu06}. We will come back to this in future work.
\end{rem}

The following proposition  will be important later. 

\begin{prop}\label{prop:tautrestrict} 
Let $Z \subset \mf{M}_{\theta}$ denote the unstable locus for the wall containing $\theta_0$, i.e.\ $Z$ is the set of points consisting of $\theta$-stable $\Pi$-modules that are $\theta'$-unstable. Similarly for $Z' \subset \mf{M}_{\theta'}$. Then:
\begin{enumerate}
\item[\one] there is an isomorphism $\mf{M}_{\theta} \smallsetminus Z \cong \mf{M}_{\theta'} \smallsetminus Z';$
\item[\two] under the identification from part \one, the restriction of the tautological bundle $\mc{R}_i$ to $\mf{M}_{\theta} \smallsetminus Z$ is identified with the restriction of the tautological bundle $\mc{R}_i'$ to $\mf{M}_{\theta'} \smallsetminus Z';$ and  
\item[\three] the closed subschemes $Z$ and $Z'$ have codimension at least $m + 1$ in $\mf{M}_{\theta}$ and $\mf{M}_{\theta'}$ respectively. 
\end{enumerate}
\end{prop}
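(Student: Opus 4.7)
My plan is to identify both $\mf{M}_\theta \smallsetminus Z$ and $\mf{M}_{\theta'} \smallsetminus Z'$ with the open stratum $\mc{L}_0 \subset \mf{M}_{\theta_0}$ of $\theta_0$-stable $\Pi$-modules from Lemma~\ref{lem:leavesinternalwall}, and then to bound the codimension of $Z$ (and symmetrically $Z'$) using the \'etale-local description from the proof of Theorem~\ref{thm:MukaiflopAquiver}.

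For parts \one\ and \two, the key claim is that a $\theta$-stable $\Pi$-module $M$ of dimension vector $v$ is $\theta'$-semistable if and only if it is $\theta_0$-stable. After first replacing $\theta, \theta'$ by nearby parameters in their respective chambers so that $\theta_0$ lies on the segment between them---harmless because $\theta$- and $\theta'$-stability are constant on their chambers---the implication $(\Rightarrow)$ is convexity: if $N$ is a proper nonzero submodule of $M$ then $\theta(N) > 0$ and $\theta'(N) \geq 0$ combine to give $\theta_0(N) > 0$. For $(\Leftarrow)$, genericity of $\theta_0$ on the wall $\gamma^\perp$ means that the only dimension vectors $\beta \leq v$ with $\theta_0(\beta) = 0$ are multiples of $\gamma$, and $\theta_0$-stability rules out submodules of these dimensions; for all remaining submodule dimension vectors $\beta$ one has $\theta_0(\beta) > 0$, so since only finitely many such $\beta$ occur a small perturbation into the chambers yields $\theta(N) > 0$ and $\theta'(N) \geq 0$. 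Consequently $\mf{M}_\theta \smallsetminus Z = f_\theta^{-1}(\mc{L}_0)$, and the restriction of $f_\theta$ there is an isomorphism onto $\mc{L}_0$ because both sides are fine moduli spaces of $\theta_0$-stable $\Pi$-modules of dimension $v$. Applying the same reasoning to $\theta'$ establishes \one, and part \two\ follows because the tautological bundles $\mc{R}_i$ and $\mc{R}_i'$ are both pullbacks of the common tautological bundle on $\mc{L}_0$.

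For part \three, Lemma~\ref{lem:leavesinternalwall} gives $\mf{M}_{\theta_0} \smallsetminus \mc{L}_0 = \bigsqcup_{k=1}^{N} \mc{L}_k$, so it suffices to show that $\codim_{\mf{M}_\theta} f_\theta^{-1}(\mc{L}_k) \geq m+1$ for each $k \geq 1$. The \'etale-local description from the proof of Theorem~\ref{thm:MukaiflopAquiver} identifies $f_\theta$ in a neighbourhood of any $x \in \mc{L}_k$ with the morphism
\[
\C^{2\ell} \times T^*G(k, m+2k) \longrightarrow \C^{2\ell} \times \overline{\mc{O}},
\]
where $\ell = n - k(m+k)$; under this identification $\mc{L}_k$ corresponds to $\C^{2\ell} \times \{0\}$, and its preimage is the zero section $\C^{2\ell} \times G(k, m+2k)$, of dimension $2\ell + k(m+k) = 2n - k(m+k)$. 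Hence $\codim_{\mf{M}_\theta} f_\theta^{-1}(\mc{L}_k) = k(m+k) \geq m+1$, with equality at $k = 1$. The argument for $Z' \subset \mf{M}_{\theta'}$ is identical.

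The main obstacle is the wall-crossing identification in \one\ and \two: verifying the equivalence "$\theta$-stable and $\theta'$-semistable $\iff$ $\theta_0$-stable" scheme-theoretically, so that one can transport the universal family and hence the tautological bundles. Once this is in hand, \three\ is a direct dimension count from the local model already used to prove smallness in Theorem~\ref{thm:MukaiflopAquiver}.
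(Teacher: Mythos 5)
Your proof is correct and reaches the same intermediate target as the paper (identifying $\mf{M}_\theta\smallsetminus Z$ with the locus $\mf{M}_{\theta_0}^s=\mc{L}_0$), but the route differs in a couple of places worth noting.

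For parts \one\ and \two, you establish the moduli-theoretic equivalence ``$\theta$-stable and $\theta'$-semistable iff $\theta_0$-stable'' by convexity in one direction and a perturbation-plus-finiteness argument in the other. This works (the set of dimension vectors $\beta\leq v$ is finite, so the minimum of $\theta_0(\beta)>0$ over relevant $\beta$ is bounded away from zero and survives a small move into either chamber). The paper instead proves the set-theoretic identity $Z=f_\theta^{-1}(\mf{M}_{\theta_0}\smallsetminus\mf{M}_{\theta_0}^s)$ directly: one inclusion is immediate, and for the other they use Lemma~\ref{lem:leavesinternalwall} to describe the polystable representation at a strictly semistable point and observe that any $\theta$-stable lift must contain a submodule of dimension $\gamma$, which is $\theta'$-destabilising. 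The two arguments are close in content; the paper's is shorter because it avoids discussing generic submodule dimension vectors and simply reads off the destabilising subobject from the stratum structure. The conclusion about tautological bundles is handled the same way in both: once the two open subsets coincide as moduli problems, the universal families (with the normalisation $\mc{R}_\infty$ trivial) must agree.

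Part \three\ is where you genuinely diverge. The paper invokes semi-smallness of $f_\theta$ (Theorem~\ref{thm:birationalimage} from the appendix) together with the codimension formula from Lemma~\ref{lem:leavesinternalwall} to obtain $\codim f_\theta^{-1}(\mc{L}_k)\geq k(k+m)$. You instead compute the codimension exactly from the \'etale-local model of Theorem~\ref{thm:MukaiflopAquiver}: the preimage of the origin in $\overline{\mc{O}}$ is the zero section $G(k,m+2k)\subset T^*G(k,m+2k)$, so $f_\theta^{-1}(\mc{L}_k)$ has dimension $2\ell+k(k+m)=2n-k(k+m)$ and codimension precisely $k(k+m)$, minimised at $k=1$. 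This is more self-contained than the paper's citation of the appendix, and gives equality rather than an inequality, at the cost of re-deriving a fact the paper already proved in the course of Theorem~\ref{thm:MukaiflopAquiver}. Both are valid.

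One small caution: in your $(\Leftarrow)$ argument, the observation that the only roots in $\gamma^\perp$ below $v$ are multiples of $\gamma$ is not really what does the work — what you need and do implicitly use is just that there are finitely many possible submodule dimension vectors, all with $\theta_0>0$, so a small perturbation of $\theta_0$ within the chambers preserves the sign. Stated that way the argument is cleaner.
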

\begin{proof}

First, we claim that $Z = f_{\theta}^{-1}( \mf{M}_{\theta_0} \smallsetminus \mf{M}_{\theta_0}^s)$. It is clear that $f_{\theta}^{-1}(\mf{M}_{\theta_0}^s) \subseteq \mf{M}_{\theta} \smallsetminus Z$. For the opposite inclusion, let $x\in \mf{M}_{\theta_0}\smallsetminus \mf{M}_{\theta_0}^s$ and apply Lemma \ref{lem:leavesinternalwall} when $k=0$ to see that $x$ corresponds to the polystable representation $M_{\infty} \oplus M_0^{\oplus k}$, where $\dim M_0 = \gamma$ and $\dim M_{\infty} = v - k \gamma$, for some $k > 0$. Since $\theta(\gamma) > 0$, and $\theta'(\gamma) < 0$, any point $y \in f_{\theta}^{-1}(x)$ must correspond to a $\theta$-stable representation $N$ fitting into a sequence 
$$
0 \rightarrow N_0 \rightarrow N \rightarrow M_{\infty} \rightarrow 0. 
$$
 Any such point is clearly $\theta'$-unstable, giving $f_{\theta}^{-1}( \mf{M}_{\theta_0} \smallsetminus \mf{M}_{\theta_0}^s)\subseteq Z$ which proves the claim. The isomorphism $\mf{M}_{\theta} \smallsetminus Z \cong \mf{M}_{\theta'} \smallsetminus Z'$ from part \one\ is given by the restriction of $f_{\theta'}^{-1} \circ f_{\theta}$ to $\mf{M}_{\theta} \smallsetminus Z$. Part \two \ follows from part \one\ because both $\mf{M}_{\theta} \smallsetminus Z$ and $\mf{M}_{\theta'} \smallsetminus Z'$ parametrise precisely those $\Pi$-modules of dimension vector $v$ that are simultaneously $\theta$-stable and $\theta'$-stable. For part \three, the codimension of $Z$ is the maximum codimension of the locally closed subsets $f_{\theta}^{-1}(\mc{L}_k)$ for $k > 0$. Since $f_{\theta}$ is semi-small by Theorem~\ref{thm:birationalimage}, this number is at least $m+1$ by Lemma~\ref{lem:leavesinternalwall}. 
\end{proof}

\section{Linearisation map to the movable cone}
We now construct an isomorphism $L_C \colon \Theta_v \rightarrow N^1(X/Y)$ of rational vector spaces for each chamber $C$ in $\Theta_v$. For any two chambers $C,C'$ in the simplicial cone $F$, it is shown that the isomorphisms $L_C$ and $L_{C'}$ are equal. This allows us to describe explicitly the chamber structure of the movable cone of $X$ over $Y$.  
 
\subsection{Birational geometry}
 \label{sec:birationalgeometry}
 Recall the crepant resolution of singularities from Theorem~\ref{thm:Kuznetsov}:
 \[
 f\colon X:= \Hilb^{[n]}(S)\longrightarrow Y:= \Sym^n(\C^2/\Gamma)\cong \C^{2n}/\Gamma_n.
 \]
  
 Let $N^1(X/Y)$ denote the rational vector space of $\Q$-Cartier divisor classes on $X$ up to numerical equivalence, where divisors $D, D^\prime$ are numerically equivalent, denoted $D\equiv D^\prime$, if $D\cdot \ell = D^\prime\cdot \ell$ for every proper curve $\ell\subset X$. Since $Y$ is affine, proper curves in $X$ are precisely curves in $X$ that are contracted by $f$. Equivalently,  $N^1(X/Y):=(\Pic(X/Y)\otimes\Q/\!\!\equiv)$, where line bundles $L, L^\prime$ on $X$ are numerically equivalent if $\deg L\vert_\ell = \deg L^\prime\vert_\ell$ for every proper curve $\ell\subset X$. Given a line bundle $L$ on $X$, we use the same notation $L\in N^1(X/Y)$ for the corresponding numerical class. 
 
 We now introduce several cones in $N^1(X/Y)$. The (relative) movable cone $\Mov(X/Y)\subset N^1(X/Y)$ is the closure of the convex cone generated by the divisor classes $D$ for which the linear system $\vert mD\vert$ has no fixed component for $m\gg 0$. The (relative) nef cone $\Nef(X/Y)\subset N^1(X/Y)$, is the closed cone of divisor classes $D$ satisfying $D\cdot \ell\geq 0$ for every curve $\ell$ contracted by $f$, and the (relative) ample cone $\Amp(X/Y)$ is the interior of $\Nef(X/Y)$. Note that
  \[
 \Amp(X/Y)\subset \Nef(X/Y)\subseteq \Mov(X/Y)\subset N^1(X/Y).
 \]
 
 Suppose now that $f^\prime\colon X^\prime \to Y$ is another projective, crepant resolution. Since $X$ and $X^\prime$ are birational minimal models over $Y$ \cite[Corollary 3.54]{KollarMori}, there is a commutative diagram
   \begin{equation}
\label{eqn:fi}
\xymatrix{
 X \ar[rd]_{f}\ar@{-->}[rr]^{\psi} & & X^\prime \ar[ld]^{f^\prime} \\
  & Y & 
 }
\end{equation}
 where the birational map $\psi\colon X\dashrightarrow X^\prime$ is an isomorphism in codimension-one. Taking the proper transform along $\psi$ enables us to identify canonically the vector space $N^1(X^\prime/Y)$ with $N^1(X/Y)$, and the movable cone  $\Mov(X^\prime/Y)$ with $\Mov(X/Y)$. The ample and nef cones of $X^\prime$ over $Y$, however, depend on curves in $X^\prime$, but by taking the proper transform along $\psi$ we may nevertheless identify them with the cones $\psi^*\Amp(X^\prime/Y)$ and $\psi^*\Nef(X^\prime/Y)$ respectively in $\Mov(X/Y)$. 
 
 We now turn our attention to the quiver varieties for the framed McKay quiver. For any chamber $C\subset \Theta_v$ and any $\theta\in C$, diagram \eqref{eqn:fi} specialises to the commutative diagram 
    \begin{equation}
\label{eqn:ftheta}
\xymatrix{
 X \ar[rd]_{f}\ar@{-->}[rr]^{\psi_\theta} & & \mathfrak{M}_{\theta} \ar[ld]^{f_\theta} \\
  & Y & 
 }
\end{equation}
 where $f_\theta\colon \mathfrak{M}_\theta\rightarrow Y$ is the symplectic resolution obtained from Proposition~\ref{prop:crepres} and Lemma~\ref{lem:singularity}, and where the birational map $\psi_\theta\colon X\dashrightarrow \mathfrak{M}_\theta$ is an isomorphism in codimension-one. We may therefore identify $N^1(\mathfrak{M}_\theta/Y)$ with $N^1(X/Y)$ and $\Mov(\mathfrak{M}_\theta/Y)$ with $\Mov(X/Y)$ by taking the proper transform along $\psi_\theta$. We also identify the ample and nef cones of $\mathfrak{M}_\theta$ over $Y$ with the cones $\psi_\theta^*(\Amp(\mathfrak{M}_\theta/Y))$ and $\psi_\theta^*(\Nef(\mathfrak{M}_\theta/Y))$ respectively in $\Mov(X/Y)$. All of these cones are of top dimension in $N^1(X/Y)$ because $\mathfrak{M}_\theta$ is projective over $Y$.

 \subsection{The linearisation map}
 Let $C \subset \Theta_v$ be any chamber. Recall that for $\theta\in C$, the quiver variety $\mathfrak{M}_\theta$ carries a tautological locally free sheaf $\mc{R}:=\bigoplus_{i\in I} \mc{R}_i$ that depends on the choice of chamber $C$, where $\mc{R}_{\infty}\cong \mc{O}_{\mathfrak{M}_\theta}$ and where $\mc{R}_i$ has rank $n\dim(\rho_i)$ for $i\in I\setminus \{\infty\}$. Define a $\Q$-linear map 
 \[
 L_C \colon \Theta_v \longrightarrow N^1(X/Y)
 \]
 as follows: for integer-valued maps $\eta\colon \Z^{I}\to\Z$, set 
 \[
 L_C(\eta) = \bigotimes_{i \in I} \det(\mc{R}_i)^{\otimes \eta_i},
 \]
  and define $L_C$ in general by extending linearly over $\Q$. The arguments that follow depend only on the choice of $\eta$ up to a positive multiple, so we  may assume without loss of generality that $\eta$ takes only integer values. The line bundle $L_C(\theta)$ descends from the trivial bundle $\mc{O}_{\mu^{-1}(0)}$ linearised by the character $\chi_\theta$, so $L_C(\theta)$ is the ample line bundle $\mathcal{O}_{\mathfrak{M}_\theta}(1)$ obtained from the GIT construction of $\mathfrak{M}_\theta$.  More generally, for any stability parameter $\eta\in \overline{C}$, we have that
 \begin{equation}
 \label{eqn:pullbackO(1)}
 L_C(\eta) = g^*\big(\mathcal{O}_{\mathfrak{M}_{\eta}}(1)\big),
 \end{equation}
 where $g\colon \mathfrak{M}_\theta\to \mathfrak{M}_{\eta}$ is the morphism obtained by variation of GIT quotient. 
 
 \begin{prop}
 \label{prop:LCC}
 Let $n>1$ and let $C\subset \Theta_{v}$ be any chamber. The map $L_C$ is an isomorphism of rational vector spaces that satisfies $L_C(C)=\Amp(\mathfrak{M}_\theta/Y)$ and $L_C(\overline{C}) = \Nef(\mathfrak{M}_\theta/Y)$ for any $\theta\in C$. 
 \end{prop}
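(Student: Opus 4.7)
The plan is to prove the proposition in four steps.

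\textbf{Step 1 (Containments).} I would first show $L_C(C) \subseteq \Amp(\mf{M}_\theta/Y)$ and $L_C(\overline{C}) \subseteq \Nef(\mf{M}_\theta/Y)$ directly from equation~\eqref{eqn:pullbackO(1)}. For any $\eta \in \overline{C}$, variation of GIT quotient provides a projective morphism $g \colon \mf{M}_\theta \to \mf{M}_\eta$ over $Y$ satisfying $L_C(\eta) = g^{*}\mc{O}_{\mf{M}_\eta}(1)$; since $\mc{O}_{\mf{M}_\eta}(1)$ is $Y$-ample by GIT, its pullback is $Y$-semiample and hence lies in $\Nef(\mf{M}_\theta/Y)$. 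When $\eta \in C$, the morphism $g$ is an isomorphism, so $L_C(\eta)$ is itself $Y$-ample.

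\textbf{Step 2 (Dimension count).} I would observe $\dim_\Q \Theta_v = |I|-1 = r+1$, since $v$ is primitive in $\Z^I$ with $|I|=r+2$, while $\dim_\Q N^1(X/Y) = r+1$ by Fogarty's classical computation of the Picard group of $\Hilb^{[n]}(S)$ as $\Pic(S)\oplus \Z$ together with $\rk \Pic(S) = r$.

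\textbf{Step 3 (Injectivity).} I would prove that $L_C$ is injective via the standard equivariant descent for GIT quotients. Since $v$ is primitive and $\mf{M}_\theta$ is smooth by Theorem~\ref{thm:smoothmoduli}, the stable locus $\mu^{-1}(0)^{\theta}$ is a principal $PG(v)$-bundle over $\mf{M}_\theta$, and $L_C$ coincides with the descent map $\Theta_v \otimes \Q \cong \mathbb{X}(PG(v))\otimes\Q \to \Pic(\mf{M}_\theta)\otimes\Q$ sending a character to its associated line bundle, under which $\chi_{\eta}$ goes to $\bigotimes_i \det(\mc{R}_i)^{\otimes \eta_i}$. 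Smoothness of $\mf{M}_\theta$ forces the complement of the stable locus in $\mu^{-1}(0)$ to have codimension at least two (otherwise the GIT quotient would inherit singularities), so the rational Picard groups of $\mu^{-1}(0)^{\theta}$ and $\mu^{-1}(0)$ coincide; and the latter vanishes because $\mu^{-1}(0)$ is a complete intersection inside the affine space $\mathbf{M}(\bv,\bw)$, by Lemma~\ref{lem:defspaceflatCMquiver} and its proof. The resulting exact sequence then identifies $L_C$ with the isomorphism, which combined with Step~2 shows $L_C$ is an isomorphism.

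\textbf{Step 4 (Matching cones).} Finally, I would upgrade the containments of Step~1 to equalities using a convex-geometric argument now that $L_C$ is known to be a linear isomorphism. Suppose, for contradiction, some ample class $p \in \Amp(\mf{M}_\theta/Y)$ is not in $L_C(C)$; then $p = L_C(\eta)$ for a unique $\eta \notin C$. Fix $\theta_0 \in C$ and consider the line segment from $\theta_0$ to $\eta$ in $\Theta_v \otimes \R$; it must cross $\partial C$ at some point $\eta_0$, which lies in $\overline{C} \setminus C$ and hence is generic in some wall of $C$. By Step~1 and the fact that the VGIT morphism $g \colon \mf{M}_\theta \to \mf{M}_{\eta_0}$ is not an isomorphism (different chambers yield different stable loci), $L_C(\eta_0)$ is nef but not ample. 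However $L_C(\eta_0)$ lies on the line segment joining $L_C(\theta_0)$ and $p$ in $N^1(X/Y)\otimes\R$, both of which are ample, so by convexity of $\Amp$ it must itself be ample --- a contradiction. Hence $L_C(C) = \Amp(\mf{M}_\theta/Y)$, and passing to closures gives $L_C(\overline{C}) = \Nef(\mf{M}_\theta/Y)$.

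The main obstacle I anticipate is Step~3, the injectivity of $L_C$. Although the descent framework is standard, executing it rigorously requires both the codimension-two complement property for the unstable locus (which does follow from smoothness of $\mf{M}_\theta$, but warrants justification) and the vanishing of the rational Picard group of the singular complete intersection $\mu^{-1}(0)$. An alternative, cleaner route would appeal to the Kirwan surjectivity of McGerty--Nevins mentioned in the introduction, which together with Step~2 immediately forces $L_C \otimes \C$ to be an isomorphism.
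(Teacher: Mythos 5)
Your Step~2 has a genuine gap. The dimension count for $N^1(X/Y)$ cannot be deduced from Fogarty's Picard formula $\Pic(\Hilb^{[n]}(S)) \cong \Pic(S)\oplus\Z$, because that theorem requires $S$ to be a smooth \emph{projective} surface, whereas here $S$ is the (non-projective) minimal resolution of $\C^2/\Gamma$. The paper explicitly flags this obstruction in the remark immediately following the proposition. Its actual route to $\dim N^1(X/Y) = r+1$ goes via the McKay-correspondence count: $\Gamma_n$ has $r+1$ junior conjugacy classes (Etingof--Ginzburg, Kaledin), so by Ito--Reid $\dim H^2(X,\Q)=r+1$, and since $Y$ has rational singularities, $c_1\colon\Pic(X)\otimes\Q\to H^2(X,\Q)$ is an isomorphism. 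You need an argument of this kind before the dimension count can be trusted.

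Your Step~3 is a genuinely different route, but as written it leans on two unsubstantiated claims. The assertion that smoothness of $\mf{M}_\theta$ forces the unstable locus in $\mu^{-1}(0)$ to have codimension at least two is false in general: removing a divisor of unstable points can still yield a smooth GIT quotient (e.g.\ $\C^2/\!\!/_\chi\Cs$ with the standard hyperbolic action). And the claim that $\Pic(\mu^{-1}(0))\otimes\Q=0$ because $\mu^{-1}(0)$ is an affine complete intersection is not automatic; complete intersections can have nontrivial class groups, and $\mu^{-1}(0)$ is generally singular. If you want to run a descent argument you would need to establish both facts carefully (the contracting $\Cs$-action and normality of $\mu^{-1}(0)$ are the relevant ingredients, not the complete-intersection property alone). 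The paper circumvents the issue entirely: it never proves injectivity by descent, but instead argues that if $L_C$ had non-full rank then $L_C(\overline{C}) = L_C(\partial\overline{C})$ (because $\overline{C}$ contains no positive-dimensional linear subspace, by the explicit hyperplane arrangement $\mc{A}_v$), and then shows $L_C$ sends $\partial\overline{C}$ into the boundary of the nef cone, a contradiction. Your suggested Kirwan-surjectivity shortcut is also valid, but it invokes an external result that the paper deliberately avoids in order to stay self-contained.

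Your Step~4 has a small but real error: the parenthetical justification that $L_C(\eta_0)$ is not ample ``because different chambers yield different stable loci'' does not apply, since $\eta_0$ sits on a wall, not in a different chamber, and the stability notions for $\theta$ and $\eta_0$ genuinely differ only by strict vs.\ non-strict inequalities. The correct argument, which the paper uses and which you could adopt verbatim, is: if $L_C(\eta_0)$ were ample then (after passing to a positive multiple) it would be very ample, forcing $g\colon\mf{M}_\theta\to\mf{M}_{\eta_0}$ to be a closed immersion; equality of dimensions then makes $g$ an isomorphism, which is absurd because $\mf{M}_\theta$ is smooth while $\mf{M}_{\eta_0}$ is singular by Theorem~\ref{thm:smoothmoduli}.
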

 \begin{proof}
 It is well known (see, e.g.\ Etingof--Ginzburg~\cite[equation~(11.1)]{EG}) that $\Gamma_n$ contains $r+1$ symplectic reflections, and hence $\Gamma_n$ has $r+1$ junior conjugacy classes by Kaledin~\cite[Lemma~1.1]{KaledinMcKay}. The main result of Ito--Reid~\cite[Corollary~1.5]{ItoReid96} implies that $H^2(X,\Q)$ has dimension $r+1$. Since $Y$ has rational singularities, we have $H^i(X,\mc{O}_X)=0$ for $i>0$, so the first Chern class $c_1\colon \Pic(X)\to H^2(X,\Z)$ is an isomorphism and hence $N^1(X)\cong H^2(X,\Q)$. Since $Y$ is affine, the dimension of $N^1(X/Y)$ is also equal to $r+1$. In particular, $\Theta_v$ and $N^1(X/Y)$ have the same dimension. 
 
 Suppose for a contradiction that $L_C$ does not have full rank. We claim that the image of $\overline{C}$ under $L_C$ equals that of the boundary $\partial\overline{C}$. Indeed, for the non-obvious inclusion, suppose there exists $\ell\in L_C(\overline{C})\setminus L_C(\partial \overline{C})$. The intersection of the affine subspace $L_C^{-1}(\ell)$ with $\overline{C}$ is a polyhedron $P$ that contains some point of $C$, so $P$ has positive dimension. The boundary of $P$ is $L_C^{-1}(\ell)\cap \partial \overline{C}$, but this is empty by assumption, so $P$ is an affine subspace of positive dimension. In particular, $\overline{C}$ contains an affine subspace of positive dimension, and hence it contains a vector subspace of positive dimension. But this is a contradiction, because the explicit hyperplane arrangement $\mc{A}_v$ from \eqref{eqn:Av} allows no room for the closure $\overline{C}$ of any chamber to contain a nonzero subspace. Therefore,  if $L_C$ does not have full rank, then  $L_C(\overline{C}) = L_C(\partial\overline{C})$ as claimed. If we can deduce from this that $L_C(\partial\overline{C})$ is contained in the boundary of the nef cone, then we obtain a contradiction because $L_C(C)$ is contained in the interior of the nef cone. Thus, to prove that $L_C$ is an isomorphism, we need only prove that $L_C(\zeta)$ is nef but not ample for $\zeta\in \partial\overline{C}$. Suppose otherwise. After replacing $\zeta$ by a positive multiple if necessary, $L_C(\zeta)$ is very ample and \eqref{eqn:pullbackO(1)} implies that $g\colon \mf{M}_{\theta} \to \mf{M}_{\zeta}$ is a closed immersion. Since $\mf{M}_\theta$ and $\mf{M}_\zeta$ have the same dimension, $g$ is an isomorphism. This is absurd because $\mathfrak{M}_\theta$ is smooth whereas $\mathfrak{M}_\zeta$ is singular by Theorem~\ref{thm:smoothmoduli}, so $L_C(\zeta)$ lies in the boundary of $\Nef(\mathfrak{M}_\theta/Y)$ and hence $L_C$ is an isomorphism.

 Since $L_C$ is a linear isomorphism, it identifies $C$ with the interior of a polyhedral cone of full dimension in $\Amp(\mathfrak{M}_\theta/Y)$. Moreover, we proved above that $L_C$ sends the boundary of $\overline{C}$ into the boundary of the nef cone. In particular, the supporting hyperplanes of the closure of the cone $L_C(C)$ must lie in the boundary of the nef cone. This implies $L_C(C)=\Amp(\mathfrak{M}_\theta/Y)$ and hence $L_C(\overline{C}) = \Nef(\mathfrak{M}_\theta/Y)$.
 \end{proof}

 \begin{rem}
The proof of Proposition~\ref{prop:LCC} shows that the rank of the Picard group of $X = \Hilb^{[n]}(S)$ is equal to $1+\rk(\Pic(S))$. If $S$ were projective, this would follow from the main result of Fogarty~\cite{FogartyPic}.
 \end{rem}
 
\begin{cor}
\label{cor:internalwallflop}
 Let $C, C^\prime$ be adjacent chambers in $F$, and let $\theta_0$ be generic in the separating wall $\overline{C}\cap \overline{C'}$. Then for $\theta\in C$ and $\theta'\in C'$, the diagram 
\begin{equation}
\begin{tikzcd}
\label{thm:MukaiflopAModuli}
\mf{M}_{\theta} \ar[rr,"\varphi",dashed] \ar[dr,"f_{\theta}"'] & & \mf{M}_{\theta'} \ar[dl,"f_{\theta'}"] \\
& \mf{M}_{\theta_0} & 
\end{tikzcd}
\end{equation}
involving the maps from Theorem~\ref{thm:MukaiflopAquiver} is a flop. 
\end{cor}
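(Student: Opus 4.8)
The plan is to deduce that $\varphi$ is a flop by combining the small contraction statement of Theorem~\ref{thm:MukaiflopAquiver} with the compatibility of tautological bundles from Proposition~\ref{prop:tautrestrict} and the isomorphism statement of Proposition~\ref{prop:LCC}. By definition, to show $\varphi$ is a flop we must verify that $f_\theta$ and $f_{\theta'}$ are small projective contractions with the same image, and that if $L$ is an $f_\theta$-ample line bundle on $\mf{M}_\theta$, then its proper transform $\varphi_*L$ on $\mf{M}_{\theta'}$ satisfies $(\varphi_*L)^{-1}$ is $f_{\theta'}$-ample.

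First I would note that $f_\theta$ and $f_{\theta'}$ are small projective birational contractions onto $\mf{M}_{\theta_0}$ by Theorem~\ref{thm:MukaiflopAquiver}, and that $\varphi = f_{\theta'}^{-1}\circ f_\theta$ is an isomorphism in codimension one by Proposition~\ref{prop:tautrestrict}\one\ (the loci $Z$ and $Z'$ being contracted by $f_\theta$, $f_{\theta'}$ respectively). Next I would choose a convenient $f_\theta$-ample line bundle. Since $\theta(\gamma) > 0$, a natural candidate is $L := L_C(\theta)=\mc{O}_{\mf{M}_\theta}(1)$, which by Proposition~\ref{prop:LCC} lies in $\Amp(\mf{M}_\theta/Y)$ and hence is $f_\theta$-ample. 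More useful will be to work with the class $L_C(\gamma) = \bigotimes_i \det(\mc{R}_i)^{\otimes \gamma_i}$ where $\gamma=m\delta-\alpha$; by the convention $\theta(\gamma)>0$ and the fact that $\gamma^\perp$ is precisely the supporting wall, the class $L_C(\gamma)$ lies in $\Amp(\mf{M}_\theta/Y)$ (a positive combination of $L_C$ applied to interior points of $C$), so it is $f_\theta$-ample; and $L_C(\gamma)$ is trivial along the wall, i.e.\ its pullback to $\mf{M}_{\theta_0}$ under $f_\theta$ is trivial in the relevant direction.

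The key computation is then to identify the proper transform $\varphi_* L_C(\gamma)$ on $\mf{M}_{\theta'}$. By Proposition~\ref{prop:tautrestrict}\two, the tautological bundles $\mc{R}_i$ and $\mc{R}_i'$ agree on the common open locus $\mf{M}_\theta\setminus Z\cong \mf{M}_{\theta'}\setminus Z'$, whose complement has codimension $\ge 2$ on each side by Proposition~\ref{prop:tautrestrict}\three. Hence the proper transform of $\det(\mc{R}_i)$ along $\varphi$ is exactly $\det(\mc{R}_i')$, and therefore $\varphi_* L_C(\gamma) = L_{C'}(\gamma)$. Now since $\theta'(\gamma) < 0$, the class $L_{C'}(-\gamma) = L_{C'}(\gamma)^{-1}$ lies in $\Amp(\mf{M}_{\theta'}/Y)$ by Proposition~\ref{prop:LCC} applied to the chamber $C'$ (here $-\gamma$ evaluates positively on $C'$, and $-\gamma$ defines the same supporting wall, so $L_{C'}(-\gamma)$ is $f_{\theta'}$-ample). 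Thus $(\varphi_* L_C(\gamma))^{-1} = L_{C'}(-\gamma)$ is $f_{\theta'}$-ample, which is precisely the flop condition. Since the ample cones are full-dimensional and $\gamma^\perp$ is the wall, any $f_\theta$-ample class is a positive combination of $L_C(\gamma)$ with classes pulled back from $\mf{M}_{\theta_0}$, so the flop condition for $L_C(\gamma)$ suffices to conclude it for all $f_\theta$-ample bundles.

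The main obstacle I expect is making rigorous the passage from the \'etale-local Mukai flop picture (diagram~\eqref{eq:MukaiflopA}) to the global statement about proper transforms: one must be sure that the global birational map $\varphi$ restricted to the \'etale neighbourhoods really is the local Mukai flop $\psi$ of \eqref{eq:MukaiflopA}, with the tautological bundle $\det\mc{R}_0$ matching $\mc{O}_{T^*G(k,\GVect)}(1)$ as recorded in section on internal walls. Proposition~\ref{prop:tautrestrict} is designed precisely to sidestep this by giving the global identity of tautological bundles over a codimension-$\ge 2$ open set directly; so in fact the cleanest route avoids the \'etale-local bundle comparison entirely and relies only on Proposition~\ref{prop:tautrestrict} together with Proposition~\ref{prop:LCC}. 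The remaining care needed is simply checking that $L_C(\gamma)$ (resp.\ $L_{C'}(-\gamma)$) is genuinely relatively ample and not merely nef --- this follows because $\theta(\gamma)>0$ places $\gamma$ in the interior of the dual cone to $C$, so $L_C(\gamma)\in L_C(C) = \Amp(\mf{M}_\theta/Y)$ by Proposition~\ref{prop:LCC}.
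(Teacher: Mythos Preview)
Your overall architecture is exactly that of the paper: invoke Theorem~\ref{thm:MukaiflopAquiver} for smallness, use Proposition~\ref{prop:tautrestrict} to identify the tautological line bundles across $\varphi$ (hence $L_C = L_{C'}$ as linear maps), and then appeal to Proposition~\ref{prop:LCC} to convert this into an ampleness/antiampleness statement. That part is fine and matches the paper.

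The gap is a type confusion in your choice of test bundle. The root $\gamma = m\delta-\alpha$ lives in $R(\Gamma)\subset\Z^I$, whereas $L_C$ is defined on $\Theta_v=\{\theta\in\Hom(\Z^I,\Q)\mid\theta(v)=0\}$; writing $L_C(\gamma)=\bigotimes_i\det(\mc{R}_i)^{\gamma_i}$ silently identifies a root with a stability parameter via the standard bases, and Proposition~\ref{prop:LCC} gives no control over such a class. Concretely, $L_C(\gamma)\in\Amp(\mf{M}_\theta/Y)=L_C(C)$ would require the ``parameter'' with components $(\gamma_i)$ to lie in $C$, and this fails already for $n=2$, $\Phi=A_1$: there $\gamma=\delta-\rho_1=\rho_0$, which as a parameter lies on the boundary hyperplane $\rho_1^\perp$ of $F$, not in any chamber. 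Likewise your final claim that every $f_\theta$-ample class is a positive combination of $L_C(\gamma)$ and pullbacks does not parse: the rank-one relative N\'eron--Severi group $N^1(\mf{M}_\theta/\mf{M}_{\theta_0})$ is spanned by a direction in $\Theta_v$ transverse to the wall (e.g.\ $\theta-\theta_0$), not by the root $\gamma$ defining the wall.

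The paper avoids this by working directly with $L_C(\theta)$ for $\theta\in C$. After establishing $L_C=L_{C'}$ via Proposition~\ref{prop:tautrestrict}, the proper transform of $L_C(\theta)$ is $L_{C'}(\theta)$; choosing $\theta,\theta'$ with $\theta_0=\tfrac12(\theta+\theta')$ and using $L_{C'}(\theta_0)=f_{\theta'}^*\mc{O}_{\mf{M}_{\theta_0}}(1)$ gives
\[
L_{C'}(\theta)^{-1}=L_{C'}(\theta')\otimes f_{\theta'}^*\mc{O}_{\mf{M}_{\theta_0}}(1)^{-2},
\]
which has positive degree on every curve contracted by $f_{\theta'}$. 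This is the computation you should substitute for your $L_C(\gamma)$ argument; everything else in your outline then goes through.
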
 

\begin{proof}
In light of Theorem~\ref{thm:MukaiflopAquiver}, it remains to prove that the proper transform of the $f_\theta$-ample bundle $L_C(\theta)$ is $f_{\theta^\prime}$-antiample. Proposition~\ref{prop:tautrestrict} shows that $\det(\mathcal{R}_i)$ is the proper transform along $\varphi$ of $\det(\mathcal{R}_i^\prime)$ for all $i\in I$. In particular, the linearisation maps for the chambers $C$ and $C^\prime$ agree, i.e.\
\begin{equation}
    \label{eqn:LCLC'agree}
L_C(\eta) = L_{C^\prime}(\eta)\quad  \text{for all }\eta\in \Theta_v.
\end{equation}
 It follows that the proper transform of $L_C(\theta)$ is $L_{C^\prime}(\theta)$.
 
The ample bundle $L_0:=\mathcal{O}_{\mathfrak{M}_{\theta_0}}(1)$ on $\mf{M}_{\theta_0}$ satisfies $L_{C^\prime}(\theta_0) = f_{\theta^\prime}^*(L_0)$ by \eqref{eqn:pullbackO(1)}. It is possible to choose $\theta\in C$ and $\theta^\prime\in C^\prime$ such that $\theta_0 = \frac{1}{2}(\theta+\theta')$, giving 
\[
L_{C^\prime}(\theta)\otimes L_{C^\prime}(\theta') = L_{C^\prime}(\theta + \theta') = L_{C^\prime}(2\theta_0) = f_{\theta}^*(L_0)^{2}.
\]
Proposition~\ref{prop:LCC} implies that the set of curve classes contracted by $f_{\theta^\prime}$ is non-empty. Since $L_{C^\prime}(\theta^\prime)$ and $f_{\theta^\prime}^*(L_0)$ have positive and zero degree respectively on all such curves, it follows that 
\[
L_{C^\prime}(\theta)^{-1} = L_{C^\prime}(\theta^\prime)\otimes  f_{\theta^\prime}^*(L_0)^{-2}
\]
has positive degree on all such curves, so its inverse $L_{C^\prime}(\theta)$ is $f_{\theta^\prime}$-antiample. 
\end{proof}
 
 \begin{proof}[Proof of Theorem~\ref{thm:simplifiedmainintro}]
 Choose any chamber $C$ in $F$ and define $L_F(\theta):= L_C(\theta)$ for $\theta\in \Theta_{v}$. To see that $L_F$ is well-defined, independent of the choice of $C$, let $C^\prime$ be a chamber in $F$ that lies adjacent to $C$. A key fact, established in equation \eqref{eqn:LCLC'agree}, is that the linearisation maps $L_C$ and $L_{C^\prime}$ agree. Repeating this successively for all internal walls in $F$ shows that $L_F$ is well-defined. Each $L_C$ is an isomorphism of vector spaces by Proposition~\ref{prop:LCC}, hence so is $L_F$.

  For each chamber $C$ in $F$, Proposition~\ref{prop:LCC} shows that the restriction $L_F\vert_C = L_C$ identifies $C$ with the ample cone $\Amp(\mf{M}_\theta/Y)$ for $\theta\in C$. In particular, the restriction of $L_F$ to the interior of $F$ respects the wall-and-chamber structure on $F$ and the Mori chamber decomposition on $\Mov(X/Y)$. It remains to show that $L_F$ sends every boundary wall of $F$ to a boundary wall of $\Mov(X/Y)$. For this, let $\theta_0\in \Theta_v$ be generic in a wall of a chamber $C$ that lies in one of the boundary walls of $F$. According to our classification, each boundary wall of $F$ is either real or imaginary, in which case the morphism $f\colon \mf{M}_\theta\to \mf{M}_{\theta_0}$ induced by the line bundle $L_F(\theta_0)$ is a divisorial contraction by Corollary~\ref{cor:imaginarywallconstraction} or Proposition~\ref{prop:realboundary} respectively. Therefore $L_F$ also identifies the boundary of $F$ with that of $\Mov(X/Y)$. This completes the proof.
    \end{proof}

This result immediately implies Corollary~\ref{cor:introCIconjecture}. More specifically, we have established the following: 

\begin{cor}
\label{cor:CIconjecture}
 Every projective crepant resolution $X^\prime$ of $Y$ is of the form $\mathfrak{M}_\theta$ for any $\theta$ in the unique chamber $C$  in $F$ satisfying $L_F(C) = \Amp(X^\prime/Y)$.
\end{cor}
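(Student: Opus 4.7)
The plan is to deduce the corollary directly from Theorem~\ref{thm:simplifiedmainintro} by matching the ample cone of the given resolution $X^\prime$ with a chamber of $F$ under the isomorphism $L_F$. Since $Y=\C^{2n}/\Gamma_n$ has symplectic singularities and both $X$ and $X^\prime$ are projective crepant resolutions, they are minimal models over $Y$, so the induced birational map $\psi\colon X\dashrightarrow X^\prime$ over $Y$ is an isomorphism in codimension one (by standard MMP results, cf.~\cite{KollarMori}, or by viewing the two resolutions as related by a sequence of flops of a symplectic variety). Taking proper transform along $\psi$ identifies $N^1(X^\prime/Y)$ with $N^1(X/Y)$, and the cone $\psi_*\Amp(X^\prime/Y)$ sits inside $\Mov(X/Y)$ as a top-dimensional subcone.

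Next, I claim that $\psi_*\Amp(X^\prime/Y)$ is precisely a Mori chamber of $\Mov(X/Y)$. This is essentially the definition of the Mori chamber decomposition: each chamber is the ample cone (under proper transform) of a small $\Q$-factorial modification of $X$ over $Y$, and $X^\prime$ is such a modification. Apply Theorem~\ref{thm:simplifiedmainintro} to conclude that there is a unique chamber $C\subset F$ with $L_F(C) = \psi_*\Amp(X^\prime/Y)$; uniqueness follows from the fact that $L_F$ is an isomorphism matching GIT chambers bijectively with Mori chambers.

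Finally, for any $\theta\in C$, Theorem~\ref{thm:simplifiedmainintro} states that $\mf{M}_\theta$ is the birational model of $X$ over $Y$ determined by the line bundle $L_F(\theta)$. Because $L_F(\theta)\in L_F(C)=\psi_*\Amp(X^\prime/Y)$, the class $L_F(\theta)$ is (the proper transform of) an ample class on $X^\prime$, so the associated birational model $\Proj\bigoplus_{m\geq 0} H^0(X,mL_F(\theta))$ coincides with $X^\prime$. This yields an isomorphism $\mf{M}_\theta\cong X^\prime$ over $Y$, completing the proof.

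The only non-trivial input beyond Theorem~\ref{thm:simplifiedmainintro} is the standard MMP fact that the birational map between two minimal models over an affine base is an isomorphism in codimension one. Once that is granted, the argument is purely formal: $L_F$ being an isomorphism of chamber structures forces every Mori chamber to arise as $L_F(C)$ for a GIT chamber $C\subset F$, and Theorem~\ref{thm:simplifiedmainintro} then identifies the corresponding quiver variety $\mf{M}_\theta$ with the given resolution $X^\prime$.
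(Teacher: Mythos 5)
Your proposal is correct and follows essentially the same route as the paper, which simply states that the corollary ``immediately'' follows from Theorem~\ref{thm:simplifiedmainintro}; you are filling in precisely the argument the paper leaves implicit. The one step worth making fully explicit is why $\psi_*\Amp(X'/Y)$ must coincide with some $L_F(C)$: since $\Amp(X'/Y)\subset\Mov(X/Y)=L_F(F)$ is open it meets the interior of some $L_F(C)$, and any common ample class $D$ gives $X'\cong\Proj\bigoplus_m H^0(X,mD)\cong\mf{M}_\theta$, whence the two ample cones agree — which is just a slightly more concrete version of what you wrote.
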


Theorem~\ref{thm:simplifiedmainintro} also provides a new proof for several results from the literature:
\begin{enumerate}
    \item together with Theorem~\ref{thm:counting}, it follows that the number of non-isomorphic projective crepant resolutions of  $\C^{2n}/\Gamma_n$ is given by \eqref{eqn:count}. This agrees with the count of Bellamy~\cite[Equation~(1.B)]{BellamyCounting}.
    \item that $\Mov(X/Y)$ is a simplicial cone. This recovers a special case of \cite[Theorem~4.1]{AW14}. 
\end{enumerate}
 In addition, Theorem~\ref{thm:simplifiedmainintro} leads to a new, purely quiver-theoretic proof of the following result that is due originally to Andreatta--Wi\'{s}niewski~\cite[Theorem~3.2]{AW14} for $n=2$, and to Namikawa~\cite[Lemma~1, Lemma~6]{Namikawa3} for $n>2$. Recall that a divisor class $D$ is \emph{semi-ample} if $kD$ is basepoint-free for some $k\geq 1$.

 \begin{cor}[Andreatta--Wi\'{s}niewski, Namikawa]
 \label{cor:MDS}
 The variety $X=\Hilb^{[n]}(S)$ is a relative Mori Dream Space over $Y$. That is, 
 \begin{enumerate}
 \item[\one] the cone $\Nef(X/Y)$ is generated by finitely many semi-ample line bundles; and
 \item[\two] there are only finitely many small birational models $X=X_0, X_1, \dots, X_k$ of $X$ over $Y$, and 
 \begin{equation}
 \label{eqn:movdecomp}
\Mov(X/Y) = \bigcup_{0\leq i\leq k} \Nef(X_i/Y),
\end{equation}
 where each cone in this description is generated by finitely many semi-ample line bundles.
 \end{enumerate}
 \end{cor}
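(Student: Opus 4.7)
The plan is to deduce this corollary directly from Theorem~\ref{thm:simplifiedmainintro}, Theorem~\ref{thm:counting}, and Corollary~\ref{cor:CIconjecture}. The point is that Theorem~\ref{thm:simplifiedmainintro} has already done the substantive work: the linearisation map $L_F$ identifies the GIT wall-and-chamber structure of $F$ with the Mori chamber decomposition of $\Mov(X/Y)$, so the corollary reduces to finiteness of chambers and semi-ampleness of the relevant classes.

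For part~\one, let $C_- \subset F$ be the chamber satisfying $L_F(\overline{C_-}) = \Nef(X/Y)$. Since the hyperplane arrangement $\mc{A}_v$ is finite, the closure $\overline{C_-}$ is a rational polyhedral cone with finitely many extremal rays. For each primitive integer vector $\eta$ on such an extremal ray, the morphism $g \colon X \cong \mf{M}_\theta \to \mf{M}_\eta$ induced by variation of GIT quotient (for any $\theta \in C_-$) satisfies $L_F(\eta) = g^* \mathcal{O}_{\mf{M}_\eta}(1)$ by equation~\eqref{eqn:pullbackO(1)}. Since $\mathcal{O}_{\mf{M}_\eta}(1)$ is ample on $\mf{M}_\eta$, some power is very ample and hence basepoint-free, so the pullback $L_F(\eta)$ is semi-ample on $X$. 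This exhibits $\Nef(X/Y) = L_F(\overline{C_-})$ as generated by finitely many semi-ample line bundles.

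For part~\two, Theorem~\ref{thm:counting} guarantees that $F$ contains only finitely many chambers $C_0 = C_-, C_1, \ldots, C_k$, and Corollary~\ref{cor:CIconjecture} shows that choosing $\theta_i \in C_i$ yields projective crepant resolutions $X_i := \mf{M}_{\theta_i}$ exhausting all such resolutions of $Y$. Each $X_i$ is a small birational model of $X$ over $Y$, since a chain of flops (Corollary~\ref{cor:internalwallflop}) relating $X$ to $X_i$ is by definition an isomorphism in codimension one. Conversely, every small birational model of $X$ over $Y$ has trivial canonical class and, being projective over $Y$, is a projective crepant resolution; hence it appears among the $X_i$. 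The decomposition~\eqref{eqn:movdecomp} then follows from $F = \bigcup_{i} \overline{C_i}$ together with $L_F(\overline{C_i}) = \Nef(X_i/Y)$, as supplied by Theorem~\ref{thm:simplifiedmainintro}. Semi-ample generation of each $\Nef(X_i/Y)$ is obtained by repeating the argument of part~\one\ with $X_i$ in place of $X$.

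The main subtlety is the identification between small birational models of $X$ over $Y$ and projective crepant resolutions of $Y$; this rests on the fact that any minimal model in this birational class over the symplectic singularity $Y$ is again a smooth symplectic resolution. Once this is in hand, the remaining steps are essentially bookkeeping built on the machinery already in place.
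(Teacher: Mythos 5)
Your proof is correct and follows essentially the same route as the paper: semi\text{-}ampleness of boundary classes via equation~\eqref{eqn:pullbackO(1)}, finiteness from the finite hyperplane arrangement, and the decomposition~\eqref{eqn:movdecomp} obtained by applying $L_F$ to $F = \bigcup_{C\subset F}\overline{C}$ together with Corollary~\ref{cor:CIconjecture}. One small comparison: in part~\two\ you spell out the converse direction (that every small birational model of $X$ over $Y$ must appear among the $X_i$) by appealing to the Namikawa-type fact that any $\Q$-factorial small modification of a symplectic resolution over $Y$ is again smooth; the paper leaves this implicit, and in fact it follows directly from Theorem~\ref{thm:movableintro}, since the ample cone of any small $\Q$-factorial model is a full-dimensional subcone of $\Mov(X/Y)$ and therefore lies in some $\Amp(X_i/Y)$, which forces the model to be $X_i$. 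Invoking that observation would make your argument self-contained without any external input from birational geometry.
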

 \begin{proof}
 The closure $\overline{C}$ of each chamber $C$ in $\Theta_{v}$ is finitely generated because the hyperplane arrangement $\mc{A}_{v}$ is finite, hence so is the cone $\Nef(\mathfrak{M}_\theta/Y)$ for any $\theta\in C$ by Proposition~\ref{prop:LCC}.  Each class in the interior of $\Nef(\mathfrak{M}_\theta/Y)$ is ample and hence semi-ample. For a class $D$  in the boundary of $\Nef(\mathfrak{M}_\theta/Y)$, Proposition~\ref{prop:LCC} implies that $\eta:=L_C^{-1}(D)\in \overline{C}\setminus C$. After multiplying by $\ell>0$ if necessary, equation \eqref{eqn:pullbackO(1)} shows that $L_C(\ell\eta) = g^*(\mc{O}_{\mathfrak{M}_{\ell\eta}}(1))$ for the morphism $g\colon \mathfrak{M}_\theta\to \mathfrak{M}_{\ell\eta}$ obtained by variation of GIT quotient. In particular,  we have that $\ell D=g^*(\mc{O}_{\mathfrak{M}_{\ell\eta}}(1))$ is basepoint-free, so $D$ is semi-ample. This proves part \one\ and, given Corollary~\ref{cor:CIconjecture}, also establishes the final statement of part \two. 
 To obtain the description \eqref{eqn:movdecomp} of $\Mov(X/Y)$, apply the isomorphism $L_F$ to the description
 \[
 F =  \bigcup_{C\subset F}  \overline{C},
 \]
 bearing in mind that $L_F(F) = \Mov(X/Y)$ by Theorem~\ref{thm:simplifiedmainintro} and $L_F(\overline{C}) = L_C(\overline{C}) = \Nef(\mf{M}_\theta/Y)$ for any $\theta\in C$ by Proposition~\ref{prop:LCC}. It remains to note that each birational model $X_i$ is of the form $\mf{M}_\theta$ for $\theta\in L_C^{-1}(\Amp(X_i/Y))$ by Corollary~\ref{cor:CIconjecture}.
 \end{proof}

 \subsection{The movable cone}
Combining Theorem~\ref{thm:simplifiedmainintro} with Lemma~\ref{lem:Fwalls} leads immediately to the  description of the Mori chamber decomposition of the movable cone $\Mov(X/Y)$ given in Theorem \ref{thm:movableintro}. 
 
 \begin{cor}
 The intersection of $\Mov(X/Y)$ with the affine hyperplane $\{L_F(\theta) \mid \theta(\delta)=1\}$ is isomorphic to the decomposition of the fundamental chamber of the $(n-1)$-extended Catalan hyperplane arrangement of $\Phi$ studied in \cite{Athanasiadis04, PS00}. Moreover, the wall-and-chamber decomposition of $\Mov(X/Y)$ is determined completely by this hyperplane arrangement.
 \end{cor}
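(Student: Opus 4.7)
The plan is to transfer the question to the stability space $\Theta_v$ via the linear isomorphism $L_F$ and then invoke the slicing argument already present in the proof of Theorem~\ref{thm:counting}. Since $L_F\colon \Theta_v \to N^1(X/Y)$ is $\Q$-linear, the preimage of the affine hyperplane $\{L_F(\theta) \mid \theta(\delta)=1\}$ under $L_F$ is precisely the affine hyperplane $\Lambda = \{\theta \in \Theta_v \mid \theta(\delta)=1\}$ introduced in the proof of Theorem~\ref{thm:counting}. Moreover, Theorem~\ref{thm:simplifiedmainintro} states that $L_F(F) = \Mov(X/Y)$ and that $L_F$ identifies the GIT wall-and-chamber decomposition of $F$ with the Mori chamber decomposition of $\Mov(X/Y)$. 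Combining these two observations, the intersection of $\Mov(X/Y)$ with the given affine hyperplane, together with its induced Mori chamber decomposition, is isomorphic (as a decomposed polyhedral set) to $F \cap \Lambda$ with the cell structure induced by the GIT walls of $F$.

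The second step is to identify $F \cap \Lambda$ combinatorially. By Lemma~\ref{lem:Fwalls}, the hyperplanes of $\mathcal{A}_v$ meeting the interior of $F$ are exactly $(m\delta - \alpha)^\perp$ for $\alpha \in \Phi^+$ and $0 < m < n$, while the facets of $F$ itself are cut out by $\delta^\perp$ and by $\rho_i^\perp$ for $1 \le i \le r$. The facet $\delta^\perp$ is parallel to $\Lambda$, so it does not contribute; the remaining hyperplanes intersect $\Lambda$ in the affine hyperplanes
\[
\{\theta \in \Lambda \mid \theta(\beta) = m\} \qquad (0 \le m < n,\ \beta \in \Phi^+),
\]
which is precisely the $(n-1)$-extended Catalan arrangement of $\Phi$ from \cite{Athanasiadis04, PS00}. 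Since $\Lambda \cap F$ is cut out inside $\Lambda$ by the inequalities $\theta(\rho_i) \ge 0$ for $1 \le i \le r$ (i.e.\ the $m=0$ walls for the simple roots), it is the fundamental chamber of this arrangement, and its decomposition into regions matches the induced chamber decomposition.

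For the final assertion, the fact that $F$ is a cone with apex at the origin and $\Lambda$ is an affine hyperplane parallel to only one facet of $F$ (namely $\delta^\perp$) means that $\Lambda$ meets the interior of every GIT chamber contained in $F$ in an open subset of dimension $r$. Hence the slice $F \cap \Lambda$ faithfully records the wall-and-chamber combinatorics of $F$ (the unbounded regions of the slice correspond precisely to those chambers whose closure meets $\delta^\perp$, as noted in the remark after Theorem~\ref{thm:counting}). Applying $L_F$ then transfers this combinatorial faithfulness to the Mori chamber decomposition of $\Mov(X/Y)$, establishing the \emph{moreover} clause. The main (minor) obstacle is merely keeping track of the distinction between the bounded and unbounded regions of the slice versus the closed boundary wall $L_F(\delta^\perp) \cap \Mov(X/Y)$ of the movable cone, but this is handled by the cone-versus-slice dictionary described above.
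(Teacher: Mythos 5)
Your proof is correct and follows essentially the same route as the paper: transfer via the linear isomorphism $L_F$ to the slice $F\cap\Lambda$ in stability space (using Theorem~\ref{thm:simplifiedmainintro}), identify $F\cap\Lambda$ with the fundamental chamber of the $(n-1)$-extended Catalan arrangement exactly as in the proof of Theorem~\ref{thm:counting}, and for the \emph{moreover} clause observe that every chamber in $F$ meets $\Lambda$ in a region of full dimension $r$. Your exposition merely spells out a few of the steps (e.g.\ re-citing Lemma~\ref{lem:Fwalls} and noting that $\delta^\perp$ is parallel to $\Lambda$) that the paper compresses by pointing at Theorem~\ref{thm:counting}'s proof.
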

 \begin{proof}
  For the first statement, it is enough by Theorem~\ref{thm:simplifiedmainintro} to prove that the slice $\Lambda:=\{\theta\in \Theta_v \mid \theta(\delta)=1\}$ in the wall-and-chamber structure of $F$ is isomorphic to the decomposition of the fundamental chamber of the $(n-1)$-extended Catalan hyperplane arrangement of $\Phi$. This is precisely the content of the proof of Theorem~\ref{thm:counting}. The second statement follows from the fact that every chamber in $F$ intersects the slice $\Lambda$.
 \end{proof}

  The result of Theorem~\ref{thm:movableintro} in the special case where $n=2$ and $\Phi$ is of type $A_r$ is due originally to Andreatta--Wi\'{s}niewski~\cite[Theorem~1.1]{AW14}. We now show how to recover their description from ours.
  
  \begin{example}
  \label{exa:n=2Ar}
  Let $n=2$ and suppose that $\Phi$ is of type $A_r$. Let $e_0,\dots e_r$ denote the standard basis of $\Q^{r+1}$. Consider the surjective linear map $T\colon (\Z\oplus R(\Gamma))\otimes_\Z \Q\to \Q^{r+1}$ given by 
\begin{equation}
 \label{eqn:Trhoi}
 T(\rho_i) = \left\{\begin{array}{cr}  -2e_0 & \text{for }i=\infty, \\ e_0-e_1-\dots - e_r & \text{for }i=0, \\
 e_i & \text{ for }1\leq i\leq r.
 \end{array}\right.
 \end{equation}
The kernel of $T$ is spanned by the vector $v=\rho_\infty+2(\rho_0+\cdots + \rho_r)$, so $T$ induces an isomorphism between $\Theta_{v}$ and $\Hom(\Q^{r+1},\Q)$. For $\theta\in \Theta_{v}$, we have $\theta(\delta)\geq 0$ iff $\theta(\rho_\infty)\leq 0$, so $T$ identifies $F=\langle -\rho_\infty,\rho_0,\rho_1,\dots,\rho_r\rangle^\vee$ with the cone $\langle e_0,e_1,\dots, e_r\rangle^\vee$. Moreover, if we write $\alpha_{i,j}:= \rho_i+\rho_{i+1}+\dots + \rho_j$ for  $1\leq i<j\leq r$, then the normal vector $\delta-\alpha_{i,j}$ to each hyperplane in $\mathcal{A}_v$ passing through $F$ is identified with  
 \[
 T(\delta - \alpha_{i,j}) = T\left(-\frac{\rho_\infty}{2} - (\rho_i+\dots +\rho_j)\right) = e_0 - (e_i+\dots + e_j).
 \]
 Therefore the Mori chamber decomposition of $\Mov(X/Y)$ from Theorem~\ref{thm:movableintro} coincides with the decomposition of the cone $\langle e_0,e_1,\dots, e_r\rangle^\vee$ obtained by cutting with the hyperplanes $\big(e_0 - (e_i+\dots + e_j)\big)^\perp$ for $1\leq i<j\leq r$. Thus we recover the result of Andreatta--Wi\'{s}niewski~\cite[Theorem~1.1]{AW14}. 
 \end{example}
 
 \begin{example}
 For $n=4$ and $\Phi$ of type $A_2$, consider again Example~\ref{exa:A2n=4}. The affine slice $\{L_F(\theta) \mid \theta(\delta)=1\}$ of $\Mov(X/Y)$ is obtained by applying the isomorphism $L_F$ to the slice of $F$ shown in Figure~\ref{fig:A2n=4a}; and similarly for the transverse slice of $\Mov(X/Y)$ shown in Figure~\ref{fig:A2n=4b}. 
 \end{example}

 \section{Reflection functors and the Namikawa Weyl group}
 Our results thus far, and specifically Theorem~\ref{thm:simplifiedmainintro}, give an understanding of the quiver varieties $\mf{M}_\theta$ for generic parameters $\theta$ that lie in the simplicial cone $F$. We now use the fact that $F$ is a fundamental domain for the action of the Namikawa Weyl group $W$ to study the moduli spaces $\mf{M}_\theta$ for any generic $\theta\in \Theta_v$.

 \subsection{Reflection functors}
 \label{sec:reflectionfunctors}
 Reflection functors were introduced by Nakajima~\cite{NakajimaWeyl}, but were also studied independently by Lusztig~\cite{LusztigWeylgroup}, Crawley-Boevey--Holland~\cite{CrawleyBoeveyHolland}, and Maffei~\cite{Maffei}.   

Let $C \subset \Theta_{v}$ be a chamber and let $\theta\in C$. Recall that for $1\leq i\leq r$, we write $s_i\colon \Theta_v\to\Theta_v$ for reflection in the hyperplane $\rho_i^\perp$. By Proposition~\ref{prop:W}\three, the action of the Namikawa Weyl group $W$ permutes the chambers in $\Theta_v$, so $s_i(C)$ is a chamber containing the parameter $s_i(\theta)$. The following result is a special case of a general result of Losev~\cite[Lemma~6.4.2]{Losev}.

\begin{lem}[Losev]
For $1\leq i\leq r$, the reflection functor $s_i$ induces an isomorphism $S_i \colon \mf{M}_{s_i(\theta)} \rightarrow \mf{M}_{\theta}$ of schemes over $Y=\mf{M}_0$. 
\end{lem}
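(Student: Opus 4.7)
The plan is to construct $S_i$ directly from the classical reflection functor at the vertex $\rho_i$, following the frameworks of Lusztig~\cite{LusztigWeylgroup}, Nakajima~\cite{NakajimaWeyl}, Crawley-Boevey--Holland~\cite{CrawleyBoeveyHolland}, and in particular the algebraic treatment of Maffei~\cite{Maffei}. The argument breaks into three steps.

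First, I would check that $s_i$ fixes the dimension vector $v = \rho_\infty + n\delta$. Since the framing vertex $\rho_\infty$ is adjacent only to $\rho_0$ in the framed McKay quiver, and $\delta$ lies in the radical of the finite-type Cartan pairing on $\bigoplus_{i\geq 1}\Z\rho_i$, one computes $(v,\rho_i) = (\rho_\infty,\rho_i) + n(\delta,\rho_i) = 0$ for $1 \leq i \leq r$, so $s_i(v) = v$ and the reflection affects only the stability parameter. Next I would recall Maffei's explicit construction of the reflection functor $R_i$ on $\mu^{-1}(0)$: for any $(B,i_\infty,j_\infty)\in\mu^{-1}(0)$, the moment-map relation at vertex $i$ asserts the vanishing of a certain composite $V_i \to \bigoplus_{\head(a)=i} V_{\tail(a)} \to V_i$ built from the arrows at $i$; because the McKay graph has no loop at $i\geq 1$, Maffei uses this complex to produce a $G(v)$-equivariant algebraic automorphism of $\mu^{-1}(0)$. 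Maffei further proves that $R_i$ exchanges the $\theta$-(semi)stable locus with the $s_i(\theta)$-(semi)stable locus, and is involutive up to canonical isomorphism. Descending to GIT quotients yields the desired isomorphism $S_i \colon \mf{M}_{s_i(\theta)} \iso \mf{M}_\theta$ of algebraic varieties.

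Finally, to verify that $S_i$ is a morphism over $Y = \mf{M}_0$, it suffices to show that the automorphism $\sigma$ of $\mf{M}_0$ induced by $R_i$ at the stability parameter $0$ (which satisfies $s_i(0)=0$) is the identity. In our setting this can be argued directly: $\sigma$ is $\Cs$-equivariant for the contracting action on $\mf{M}_0 \cong \C^{2n}/\Gamma_n$, fixes the cone point, and on the open stratum of simple $\Pi$-modules of dimension $v$ (which corresponds under Lemma~\ref{lem:singularity} to the configuration locus in $\Sym^n\big((\C^2\setminus\{0\})/\Gamma\big)$) it must act as the identity, since that identification is canonical and $W$-invariant; normality of $\mf{M}_0$ then forces $\sigma = \id$.

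The main obstacle is precisely this last commutativity with the affine quotient: while the interchange of (semi)stable loci is classical and essentially formal once Maffei's construction is in place, establishing that the induced automorphism of $\mf{M}_0$ is trivial is subtle. In full generality this is exactly what is proved in \cite[Lemma~6.4.2]{Losev} using the quantization-theoretic machinery of symplectic reduction; the sketch above indicates how the argument specializes in our concrete case.
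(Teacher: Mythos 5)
The paper offers no proof of this lemma; it simply cites \cite[Lemma~6.4.2]{Losev}, and the last sentence of your proposal does the same. Your intermediate sketch is therefore more detailed than anything the paper provides, but it contains an inaccuracy and a gap. The inaccuracy: Maffei's reflection functor is not a $G(v)$-equivariant automorphism of $\mu^{-1}(0)$. It is a correspondence built from the auxiliary variety $Z_i^\theta$ that the paper itself uses in the proof of Lemma~\ref{lem:bundlesreflect}, with two \emph{different} maps $p'\colon Z \to \mf{M}_{s_i(\theta)}$ and $p\colon Z \to \mf{M}_\theta$. The reflection replaces $V_i$ by the kernel (or cokernel) in the three-term complex $V_i \to \bigoplus_{\head(a)=i} V_{\tail(a)} \to V_i$ coming from the moment-map relation at $i$; this is a new representation whose orbit is typically disjoint from the original, which is why the correspondence formulation is forced rather than an automorphism descending from $\mu^{-1}(0)$.

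The gap is in the final step, which is the genuine content of the lemma. You assert that $S_i$ acts as the identity on the configuration locus of $\Sym^n\big((\C^2\setminus\{0\})/\Gamma\big)$ because ``that identification is canonical and $W$-invariant'', but $W$-invariance of the identification is precisely what has to be established, so as written the argument is circular. A non-circular route would note that the reflection functor commutes with the action of the center $\C[V]^\Gamma$ of $\Pi_{\mathrm{aff}}$ on modules and hence preserves central characters; that a simple $\Pi$-module of dimension $\delta$ over a smooth point of $\C^2/\Gamma$ is determined up to isomorphism by its central character; and then that normality of $\mf{M}_0$ promotes the resulting equality on a dense open set to $S_i = \operatorname{id}$ on all of $\mf{M}_0$. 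You correctly flag this as the subtle step and ultimately defer it to Losev, so your proposal, while more explanatory, does not in the end supply an argument independent of the citation the paper gives.
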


Recall that for $j\in I$, we let $\mc{R}_j$ denote the corresponding tautological bundle on the fine moduli space $\mf{M}_\theta$. For simplicity, we write $\mc{R}_j^\prime$ for the corresponding tautological bundle on $\mf{M}_{s_i(\theta)}$.

\begin{lem}\label{lem:bundlesreflect}
	Let $1\leq i\leq r$ and $\theta\in C$. The reflection isomorphism $S_i \colon \mf{M}_{s_i(\theta)} \stackrel{\sim}{\longrightarrow} \mf{M}_{\theta}$ satisfies
	\begin{equation}
	    \label{eqn:siLCeta}
	S_i^* \left(\bigotimes_{j \in I} \det (\mc{R}_j)^{\otimes \eta_j} \right) \cong \bigotimes_{j \in I} \det (\mc{R}'_j)^{\otimes s_i(\eta)_j}
		\end{equation}
	for all $\eta\in \Theta_v$. In particular, the linearisation maps for $C$ and $s_i(C)$ fit into a commutative diagram
	 \begin{equation*}
\begin{tikzcd}
  C \ar[r,"s_i"]\ar[d,"{L_C}"] & s_i(C) \ar[d,"{L_{s_i(C)}}"] \\
 \Amp(\mf{M}_\theta/Y)\ar[r,"S_i^*"] & \Amp(\mf{M}_{s_i(\theta)}/Y).
 \end{tikzcd}
\end{equation*}
\end{lem}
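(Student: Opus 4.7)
The plan is to reduce \eqref{eqn:siLCeta} to two explicit pullback formulas for the determinants of the tautological bundles, to establish these formulas via a short exact sequence coming from the Nakajima reflection functor at vertex $i$, and finally to deduce the commutative diagram of linearisation maps.

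First, both sides of \eqref{eqn:siLCeta} depend multiplicatively on $\eta$ with integer exponents once denominators are cleared, and the rule $s_i(\eta)_j = \eta_j - c_{ij}\eta_i$ is linear in $\eta$. It therefore suffices to verify the two pullback identities
$$
S_i^*\det(\mc{R}_j) \cong \det(\mc{R}_j') \quad \text{for } j\in I,\ j\neq i,
$$
and
$$
S_i^*\det(\mc{R}_i) \cong \det(\mc{R}_i')^{-1} \otimes \bigotimes_{j\neq i} \det(\mc{R}_j')^{\otimes a_{ij}},
$$
where $a_{ij} = -c_{ij}$ counts the arrows in $Q_1$ from $i$ to $j$. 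Substituting into the expansion of $S_i^*L_C(\eta) = \bigotimes_{j \in I} S_i^*\det(\mc{R}_j)^{\eta_j}$ and using $c_{ii} = 2$ shows that the exponent of each $\det(\mc{R}_j')$ on the right-hand side equals $\eta_j - c_{ij}\eta_i = s_i(\eta)_j$, which is precisely \eqref{eqn:siLCeta}.

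The first pullback formula is built into the reflection functor: on the level of $\Pi$-modules it replaces only the component $M_i$ at vertex $i$, leaving $M_j$ unchanged for $j \neq i$, so the tautological bundle $\mc{R}_j'$ on $\mf{M}_{s_i(\theta)}$ agrees with $S_i^*\mc{R}_j$. For the second formula, the functor replaces $M_i$ by the kernel of the natural surjection
$$
\bigoplus_{a \in Q_1,\, t(a) = i} M_{h(a)} \longrightarrow M_i
$$
induced by the moment-map data at $i$; the preprojective relation \eqref{eqn:preprojective} at vertex $i$ guarantees that the associated dual map $M_i \to \bigoplus_a M_{h(a)}$ composes to zero with the surjection, so the kernel has the right dimension (namely $v_i$, by the Cartan identity $\sum_j a_{ij}\dim\rho_j = 2\dim\rho_i$). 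Since $c_{i\infty}=0$ for $i\in\{1,\dots,r\}$, the framing vertex does not appear in the direct sum. Globalising this construction over the fine moduli space produces a short exact sequence of tautological bundles on $\mf{M}_{s_i(\theta)}$,
$$
0 \longrightarrow \mc{R}_i' \longrightarrow \bigoplus_{j\neq i} (\mc{R}_j')^{\oplus a_{ij}} \longrightarrow S_i^*\mc{R}_i \longrightarrow 0,
$$
and taking determinants yields the required formula.

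Once \eqref{eqn:siLCeta} is established, the commutative diagram is immediate: for $\eta \in C$ the class $L_C(\eta)$ is ample on $\mf{M}_\theta/Y$ by Proposition~\ref{prop:LCC}, and pullback along the isomorphism $S_i$ preserves ampleness, so $S_i^*L_C(\eta) = L_{s_i(C)}(s_i(\eta))$ lies in $\Amp(\mf{M}_{s_i(\theta)}/Y)$. The main obstacle is pinning down the conventions of the reflection functor finely enough to obtain the exact sequence above with the correct direction and multiplicities; several superficially different but equivalent formulations appear in the literature (Nakajima, Crawley-Boevey--Holland, Lusztig, Maffei, Losev), and since the isomorphism $S_i$ has been invoked from Losev's work, the cleanest route is to follow his module-theoretic construction and read off the short exact sequence directly from his formulae.
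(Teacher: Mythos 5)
Your proposal is correct and follows essentially the same route as the paper: both proofs reduce \eqref{eqn:siLCeta} to a short exact sequence of tautological bundles arising from the reflection functor at vertex~$i$, take determinants, and read off the transformation rule $s_i(\eta)_j=\eta_j-c_{ij}\eta_i$ using $c_{ii}=2$ and $c_{ij}=-a_{ij}$ for $j\neq i$. The paper pins down the conventions by working through Maffei's explicit correspondence $\mf{M}_{s_i(\theta)}\xleftarrow{p'}Z\xrightarrow{p}\mf{M}_\theta$ (after normalising $\theta_i>0$), which gives $p^*\mc{R}_j\cong(p')^*\mc{R}_j'$ for $j\neq i$ and the exact sequence $0\to(p')^*\mc{R}_i'\to\bigoplus_{\head(a)=i}(p')^*\mc{R}_{\tail(a)}'\to p^*\mc{R}_i\to 0$ on $Z$, then sets $S_i=p\circ(p')^{-1}$; you instead describe the functor informally at the module level and defer the precise conventions to Losev's construction, explicitly flagging the sign/direction ambiguity as the remaining technical point. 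Your dimension count via the McKay relation $\sum_j a_{ij}\delta_j=2\delta_i$ and the observation that the framing vertex is absent for $i\in\{1,\dots,r\}$ are both correct, as is the deduction of the commutative diagram from Proposition~\ref{prop:LCC}. The one thing your sketch leaves implicit is that surjectivity of $\bigoplus_a M_{\head(a)}\to M_i$ relies on $\theta$-stability together with the chosen sign of $\theta_i$, which is exactly what the appeal to Maffei's (or Losev's) explicit formulae would supply.
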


\begin{proof}
	For simplicity, we follow the set-up described in \cite[\S 3]{Maffei}. We let $Z := Z_i^{\theta} \git \, G_{i,v}$, where $Z_i^{\theta}$ and $G_{i,v}$ are defined in \cite[\S 3]{Maffei}; note that $\theta$ is denoted $m$ there. We may assume without loss of generality that $\theta_i > 0$. Then there are explicit isomorphisms
	$$
    \begin{tikzcd}
	\mf{M}_{s_i(\theta)} &  Z \ar[l,"\sim"',"p^\prime"] \ar[r,"\sim","p"'] & \mf{M}_{\theta}.
	\end{tikzcd}
    $$
	Moreover, statements in and preceding \cite[Definition~27]{Maffei} imply that for indices $j \neq i$ there is an isomorphism $p^* (\mc{R}_{j}) \cong (p')^* (\mc{R}_{j}')$, while for the index $i$ there is a short exact sequence 
	$$
	0 \longrightarrow (p')^* (\mc{R}_{i}') \longrightarrow \bigoplus_{\head(a) = i} (p')^* (\mc{R}_{\tail(a)}') \longrightarrow p^* (\mc{R}_{i}) \longrightarrow 0
	$$
	of bundles on $Z$, where the sum is over arrows $a \in H$; here we use the isomorphisms $p^* (\mc{R}_{\tail(a)}) \cong (p')^* (\mc{R}_{\tail(a)}')$ for arrows with $\head(a)=i$ and the fact that the McKay quiver $Q$ has no loops. Taking determinants, we have $p^* (\det \mc{R}_{j}) \cong (p')^* (\det \mc{R}_{j}')$ for indices $j \neq i$, while for the index $i$ we have  
	$$
	p^* (\det \mc{R}_{i}) \cong (p')^* \left( (\det \mc{R}_{i}')^{\otimes -1}\otimes \bigotimes_{\head(a) = i} \det \mc{R}_{\tail(a)}' \right).
	$$
	Since $S_i := p \circ (p')^{-1}$, the left hand side of equation \eqref{eqn:siLCeta} is therefore
	\begin{equation}
	    \label{eqn:LHS7.1}
	\left( (\det \mc{R}_{i}')^{\otimes -\eta_i}\otimes \bigotimes_{\head(a) = i} \det (\mc{R}_{\tail(a)}')^{\otimes \eta_i} \right)\otimes \bigotimes_{j\neq i} \det(\mc{R}_j^\prime)^{\otimes \eta_j}. 
		\end{equation}
	Recall from equation \eqref{eqn:sieta} that $s_i(\eta)_j = \eta_j - c_{i,j}\eta_i$ for any $\eta\in \Theta_v$, so the right hand side of \eqref{eqn:siLCeta} is 
 \[
  \bigotimes_{j \in I} \det (\mc{R}'_j)^{\otimes (\eta_j - c_{i,j}\eta_i)}  \cong  \det (\mc{R}'_i)^{\otimes - \eta_i}\otimes \bigotimes_{\head(a) = i} \det(\mc{R}_{\tail(a)}')^{\otimes (\eta_j + \eta_i)}\otimes \bigotimes_{j\in J}\det (\mc{R}'_j)^{\otimes \eta_j}, 
 \]
 where $J = \{j\in I \mid j\neq i, j\neq \tail(a) \text{ for some }a\in Q_1 \text{ with }\head(a)=i\}$. This equals \eqref{eqn:LHS7.1} as required. It now follows directly from the definition that $ S_i^*(L_C(\eta)) \cong L_{s_i(C)}(s_i(\eta))$ for all $\eta\in \Theta_v$. The final statement of the lemma follows by considering this isomorphism for $\eta\in C$ and using the equalities $L_C(C) = \Amp(\mf{M}_\theta/Y)$ from Proposition~\ref{prop:LCC} for the chambers $C$ and $s_i(C)$.
\end{proof}

 The orbit of the cone $F$ under the action of the subgroup of $W$ generated by the reflections $s_1,\dots, s_r$ covers the half-space $\{\theta\in \Theta_v \mid \theta(\delta)\geq 0\}$. By combining the results from this section with Theorem~\ref{thm:simplifiedmainintro}, it follows that we now understand the moduli spaces $\mf{M}_\theta$ and their tautological bundles for all generic parameters $\theta$ that lie in this halfspace.

\subsection{Crossing the hyperplane $\delta^\perp$}
\label{sec:graphinvolution}
Ideally, we would like to reflect at the vertex $0$ as well, but this is not possible since $v$ is not fixed by $s_0$. In order to study the moduli $\mf{M}_\theta$ for $\theta$ in the half-space $\{\theta\in \Theta_v \mid \theta(\delta)<0\}$, we instead use the fact that the preprojective algebra is isomorphic to its dual. 

 Let $\iota$ denote the involution on $\Irr(\Gamma)$  given by $\rho \mapsto \rho^*$. Since $V$ is self-dual, this is a graph automorphism. It is described as follows: if $\Gamma$ is of type $A_n$ ($n>1$), $D_n$ ($n$ odd) or $E_6$, then $\iota$ is the involution of $\Irr(\Gamma)$ induced by the order 2 symmetry of the McKay graph described in \cite[item (XI) in Planche~I-VII]{BourbakiLie}; otherwise, $\Gamma$ is of type $A_1$, $D_n$ ($n$ even), $E_7, E_8$ and $\iota$ is the identity. Let $H_{\mathrm{aff}} \subset H$ denote the set of oriented edges of the unframed affine Dynkin graph. Then $\iota$ is uniquely defined on $H_{\mathrm{aff}}$ by the rule 
$$
\tail(\iota(a)) = \iota(\head(a)), \quad \head(\iota(a)) = \iota(\tail(a)). 
$$
This defines an anti-involution of the path algebra associated to $H_{\mathrm{aff}}$. Since $\iota(a)^* = \iota(a^*)$, the anti-involution $\iota$ descends to an anti-involution $\iota \colon \Pi_{\mathrm{aff}} \stackrel{\sim}{\longrightarrow} \Pi^{\mathrm{op}}_{\mathrm{aff}}$ of the preprojective algebra corresponding to $H_{\mathrm{aff}}$. We extend $\iota$ to an anti-involution $\iota \colon \Pi \stackrel{\sim}{\longrightarrow} \Pi^{\mathrm{op}}$ of the preprojective algebra for the framed McKay graph as follows: set $\iota(\rho_{\infty}) = \rho_{\infty}$, and for the arrows $u \colon \rho_{\infty} \rightarrow \rho_0$ and $u^* \colon \rho_0 \rightarrow \rho_{\infty}$ set $\iota(u) = u^*$ and $\iota(u^*) = u$. On dimension vectors, we define $\iota(\alpha)$ by $\iota(\alpha)_i = \alpha_{\iota(i)}$, and on stability conditions, define $\iota(\theta)_i := \theta_{\iota(i)}$. 

Let $A:= \C[V] \rtimes \Gamma$ be the skew-group algebra. We define an analogous anti-involution $\nu \colon A \stackrel{\sim}{\longrightarrow} A^{\mathrm{op}}$ by setting $\nu(x) = x$ and $\nu(g) = g^{-1}$ for $x \in V^* \subset \C[V]$ and $g \in G$. For each $0\leq i\leq r$, choose an idempotent $f_i \in \C \Gamma$ such that $(\C \Gamma) f_i \cong \rho_i$.  

\begin{lem}
The idempotents $f_0,\dots,f_r$ can be chosen so that $\nu(f_i) = f_{\iota(i)}$. In particular, $f := f_0 + \cdots + f_r$ is invariant under $\nu$. 
\end{lem}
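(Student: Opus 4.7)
The plan is to use the Wedderburn decomposition $\C\Gamma = \bigoplus_{j=0}^r B_j$ with $B_j \cong \End(V_j)$ and build the $f_i$ orbit by orbit under $\iota$. On $\C\Gamma$, the anti-involution $\nu$ restricts to ordinary inversion $g \mapsto g^{-1}$. The first step is to verify that $\nu$ permutes the Wedderburn blocks according to $\iota$: for any primitive idempotent $e \in \C\Gamma$ with $(\C\Gamma)e \cong \rho_i$, the idempotent $\nu(e)$ is again primitive and satisfies $(\C\Gamma)\nu(e) \cong \rho_i^* \cong \rho_{\iota(i)}$. This is a short character computation — writing $e = \sum_g a_g\, g$, the standard trace formula gives $\chi_{(\C\Gamma)e}(g) = \sum_h a_{h^{-1} g^{-1} h}$, and hence $\chi_{(\C\Gamma)\nu(e)}(g) = \chi_{(\C\Gamma)e}(g^{-1})$, which is the character of the dual representation.

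With this established, I would partition $\{0,\dots,r\}$ into $\iota$-orbits and construct the $f_i$ orbit by orbit. For each two-element orbit $\{i,\iota(i)\}$, pick any primitive idempotent $f_i$ with $(\C\Gamma)f_i \cong \rho_i$ and set $f_{\iota(i)} := \nu(f_i)$; both relations $\nu(f_i) = f_{\iota(i)}$ and $\nu(f_{\iota(i)}) = f_i$ then hold automatically because $\nu$ is an involution. For each singleton orbit (self-dual $\rho_i$), I need $\nu(f_i) = f_i$ inside the block $B_i$. Schur's lemma supplies a $\Gamma$-invariant nondegenerate bilinear form $\phi_i$ on $V_i$, unique up to scalar, and $\nu|_{B_i}$ acts as $X \mapsto \phi_i^{-1} X^T \phi_i$. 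When $\phi_i$ is symmetric, any line on which $\phi_i$ restricts nondegenerately yields a $\nu$-fixed rank-one projector, which I take as $f_i$. Once all the $f_i$ are in hand, the ``in particular'' conclusion follows immediately from $\nu(f) = \sum_i \nu(f_i) = \sum_i f_{\iota(i)} = f$.

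The main obstacle is the quaternionic case, where $\phi_i$ is skew-symmetric — this arises unavoidably for the tautological two-dimensional representation of any non-abelian $\Gamma \subset \SL(2,\C)$, which preserves the standard symplectic form on $\C^2$. A direct check in $M_2(\C)$ with the symplectic anti-involution $X \mapsto J X^T J^{-1}$ shows that the $\nu$-fixed subspace of such a block reduces to $\C \cdot \mathrm{Id}$ and contains no primitive idempotent, so the pointwise identity $\nu(f_i) = f_i$ cannot be achieved by any primitive element. Handling this is the delicate step, and will likely require either interpreting the condition $(\C\Gamma)f_i \cong \rho_i$ slightly less rigidly (taking $f_i$ to be the central primitive idempotent of the offending block, which is automatically $\nu$-fixed) or pairing $f_i$ with $\nu(f_i)$ as complementary primitive idempotents whose sum is that central idempotent, and then deducing the global identity $\nu(f) = f$ from these block-by-block pairings rather than from pointwise fixity.
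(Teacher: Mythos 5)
Your diagnosis of the quaternionic obstruction is correct, and it actually exposes a gap in the paper's own proof rather than merely a difficulty in your attempt. The paper handles the singleton $\iota$-orbits by asserting that one can always choose a $\nu$-fixed primitive idempotent in the block $B_i\cong M_{\delta_i}(\C)$, with the parenthetical justification that every anti-automorphism of a matrix algebra is conjugate to the transpose by Skolem--Noether. That justification is not correct for anti-involutions of symplectic type: Skolem--Noether gives $\sigma(X)=g^{-1}X^Tg$ for some invertible $g$, and for $\sigma$ to be an involution $g$ must be symmetric or skew; only in the symmetric case is $\sigma$ conjugate to the transpose. In the skew case there is no fixed rank-one idempotent at all, since such an idempotent would be a symplectically self-adjoint projection onto a line, and every line in a symplectic space is isotropic. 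As you note, $\nu|_{B_i}$ has symplectic type exactly when $\rho_i$ has Frobenius--Schur indicator $-1$, and the tautological representation $V$ has indicator $-1$ for every non-abelian $\Gamma\subset\SL(2,\C)$ --- i.e.\ precisely for types $D$ and $E$, which are the only cases the paper actually treats in the subsequent lemma (type $A$ being ``checked by hand''). So for those types the paper's proof of this step does not go through as written.

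Your proposed patches do not close the gap either. Taking $f_i$ to be the central idempotent of the block gives $(\C\Gamma)f_i\cong\rho_i^{\oplus\delta_i}$ rather than $\rho_i$, and primitivity is essential for the Crawley-Boevey--Holland isomorphism $fAf\cong\Pi_{\mathrm{aff}}$ that the whole section feeds into. Pairing $f_i$ with $\nu(f_i)$ as complementary primitive idempotents is only even coherent when $\delta_i=2$; the binary icosahedral group, for instance, has quaternionic self-dual irreducibles in dimensions $4$ and $6$, where $f_i+\nu(f_i)$ is far from the identity of the block. And even when $\delta_i=2$, that pairing does not yield $\nu(f)=f$ unless both summands are included in $f$, which again destroys multiplicity one. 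You have therefore located a genuine difficulty that the published proof does not address, but neither your attempt nor the paper's argument resolves it; whether the lemma can be repaired --- perhaps by twisting the anti-involution, or by establishing the downstream compatibility statement by other means --- is left open.
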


\begin{proof}
If $\iota(i) \neq i$, simply choose $f_{\iota(i)} = \nu(f_i)$. If $\iota(i) = i$, then $\nu$ restricts to an anti-involution of the block of $\C \Gamma$ corresponding to $\rho_i$. Since this block is just matrices of size $\delta_i$ over $\C$, it is straight-forward to check that one can choose a primitive idempotent fixed by $\nu$ (every anti-automorphism of a matrix algebra is conjugate to the transpose by the Skolem--Noether Theorem).
\end{proof}

Work of Crawley-Boevey--Holland~\cite[Theorem~3.4]{CrawleyBoeveyHolland} establishes an isomorphism $f A f\cong \Pi_{\mathrm{aff}}$, so $f$ defines a Morita equivalence $A \sim \Pi_{\mathrm{aff}}$. 

\begin{lem}\label{lem:fpsiphicompat}
The isomorphism $f A f \cong \Pi_{\mathrm{aff}}$ can be chosen to identify the anti-involution $\nu |_{f A f}$ with $\iota$. 
\end{lem}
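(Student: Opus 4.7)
The plan is to trace through the Crawley--Boevey--Holland construction of the isomorphism $fAf \cong \Pi_{\mathrm{aff}}$ and exhibit enough flexibility in the choices to make it $\nu$-equivariant. Both algebras admit a common presentation as tensor algebras over the semisimple subalgebra $k := f\C\Gamma f \cong \C^{r+1}$ modulo one quadratic relation at each vertex: for $\Pi_{\mathrm{aff}}$ the generating $k$-bimodule is the arrow space $B := \bigoplus_a \C\cdot a$; for $fAf$ it is $fV^*f \subseteq fAf$, where $V^* \subset \C[V] \subset A$ is the degree-one part. Since $\nu|_{V^*} = \mathrm{id}$ and $\nu(f)=f$, the map $\nu$ preserves $fV^*f$ and carries each summand $f_jV^*f_i$ isomorphically onto $f_{\iota(i)}V^*f_{\iota(j)}$, which mirrors exactly how $\iota$ permutes the summands of $B$ via the rule $\tail(\iota(a)) = \iota(\head(a))$, $\head(\iota(a)) = \iota(\tail(a))$.

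My first step would be to observe that, by Schur's lemma and the definition of the McKay quiver, $f_jV^*f_i \cong \Hom_{\Gamma}(\rho_i,\rho_j \o V^*)$ has dimension equal to the number of arrows from $i$ to $j$ in the double McKay quiver (using $V \cong V^*$). A choice of basis for each summand $f_jV^*f_i$ therefore determines a $k$-bimodule isomorphism $B \iso fV^*f$, and Crawley--Boevey--Holland assert that this extends uniquely to an algebra isomorphism $\Pi_{\mathrm{aff}} \iso fAf$ for any such choice. To make the identification $\nu$-equivariant, I would pair up the summands $f_jV^*f_i$ and $f_{\iota(i)}V^*f_{\iota(j)}$ under $\nu$: when these two summands are distinct, choose a basis of the first arbitrarily and define the basis of the second as its image under $\nu$; when instead $\iota$ fixes both $i$ and $j$, the map $\nu$ restricts to an isomorphism $f_jV^*f_i \iso f_iV^*f_j$, and I would choose a basis on one side and apply $\nu$ to obtain the basis on the other. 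By construction the resulting bijection between basis elements of $fV^*f$ and arrows of $B$ intertwines $\iota$ and $\nu$ on generators.

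The final step is to verify that this bimodule identification descends to the quotient defining $\Pi_{\mathrm{aff}}$, i.e.\ that $\nu$ preserves the preprojective relation \eqref{eqn:preprojective}. Under the CBH identification, the relation at vertex $i$ is the image in $f_i(V^* \o V^*)f_i$ of the unique (up to scalar) $\Gamma$-invariant element dual to the symplectic form $\omega \in \Lambda^2 V$. Since $\nu$ is an anti-homomorphism acting as the identity on $V^*$, it swaps the two tensor factors; combined with the antisymmetry of $\omega$ this sign cancels against the sign change $\epsilon(a^*) = -\epsilon(a)$ in the preprojective relation, which is precisely the sign that the rule $\iota(a^*) = \iota(a)^*$ introduces when $\iota$ is applied to the same relation at vertex $\iota(i)$. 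The main obstacle is to keep track of these signs coherently across all vertices and across $\iota$-orbits of edges; once this is done, the bimodule identification of the second step descends to an algebra isomorphism $\Pi_{\mathrm{aff}} \iso fAf$ intertwining $\iota$ and $\nu|_{fAf}$.
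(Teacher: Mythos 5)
There is a genuine gap at the core of your proposal: the claim that ``a choice of basis for each summand $f_jV^*f_i$ determines a $k$-bimodule isomorphism $B \iso fV^*f$, and Crawley--Boevey--Holland assert that this extends uniquely to an algebra isomorphism $\Pi_{\mathrm{aff}} \iso fAf$ for any such choice'' is false. An arbitrary basis identifies $fAf$ with $T_k(B)/I'$ for \emph{some} two-sided ideal $I'$, but $I'$ will not generally be the preprojective ideal. The whole point of the CBH Key Lemma (\cite[Lemma 3.2]{CrawleyBoeveyHolland}, rephrased as their Lemma 3.3) is to produce a specific normalised basis $\theta_a$ satisfying $\sum_{\tail(a)=i}\epsilon(a)\theta_{a^*}\theta_a = \delta_i f_i(xy-yx)$; one cannot choose half the basis at will and produce the other half by applying $\nu$, because the resulting set will typically violate this constraint. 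You acknowledge a ``main obstacle'' about signs, but the real obstacle is a scaling of the basis elements, not just signs.

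The paper proceeds in the opposite order, and this is what the argument actually requires. It \emph{starts} from the Key Lemma basis $\theta_a$ (which makes the relation hold), then observes that $\nu(\theta_a) = m_a\theta_{\iota(a)}$ for some non-trivial scalars $m_a$. The work of the proof is to derive the compatibility constraints $m_{\iota(a)} = m_a^{-1}$ (from $\nu^2 = \mathrm{id}$) and, crucially, $m_{a^*} = m_a^{-1}$ (from a sign computation comparing $\nu(\delta_i f_i(xy-yx)) = -\delta_{\iota(i)} f_{\iota(i)}(xy-yx)$ with the decomposition of the preprojective relation into pieces $d_{i,j}$). These two relations are exactly what allow a square-root rescaling $\widetilde\theta_a = t_a^{-1}\theta_a$ that achieves $\nu(\widetilde\theta_a) = \widetilde\theta_{\iota(a)}$ \emph{while preserving} the products $\theta_a\theta_{a^*}$ and hence the Key Lemma relation. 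Your proposal has no mechanism to establish these scalar constraints or to perform the rescaling; without them, a $\nu$-equivariant bimodule identification and one inducing the preprojective relations are two separate requirements with no evident common solution. You also omit the paper's two structural preparations: handling type $A$ separately by an explicit check, and (in types $D,E$) choosing the orientation so that $\epsilon(\iota(a)) = -\epsilon(a)$ and noting that no arrow joins $i$ to $\iota(i)$ --- the latter is needed for a uniform treatment of the case $a^* = \iota(a)$ when defining the square roots $t_a$.
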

\begin{proof}
The proof of this lemma is rather lengthy since we must show that the isomorphism in the ``Key Lemma'' \cite[Lemma 3.2]{CrawleyBoeveyHolland} can be made compatible with the anti-involutions. First, we note that the case where $\Gamma$ is of type $A$ can be checked explicitly by hand, so we assume that $\Gamma$ is of type $D$ or $E$. In this case, the definition of $\iota$ implies that there is no arrow $a$ in $H_{\mathrm{aff}}$ such that $a \colon i \rightarrow \iota(i)$ for some vertex $i$, and that $\iota(a) \neq a$ for all $a$. In particular, we can choose an orientation $\Omega_{\mathrm{aff}} \subset H_{\mathrm{aff}}$ of the McKay graph so that $\epsilon(\iota(a)) = - \epsilon(a)$ for all arrows $a$.

Next, as in \cite{CrawleyBoeveyHolland}, we let $C_1 = V \otimes_{\C} \C \Gamma$ be the $\Gamma$-bimodule with diagonal left action and right action on the right factor only. Let $C_2 = C_1 \o_{\Gamma} C_1$. Then, 
$$
f C_1 f = \bigoplus_{i,j \in Q_0} f_j C_1 f_i,
$$
with $\dim f_j C_1 f_i = \Hom_\Gamma(\rho_j,V\otimes \rho_i) = 1$ if there is an arrow in $H_{\mathrm{aff}}$ from $i$ to $j$, and zero otherwise. The ``Key Lemma'' of \cite{CrawleyBoeveyHolland}, rephrased as \cite[Lemma~3.3]{CrawleyBoeveyHolland}, exhibits a basis $\theta_a$ of $f_{\head(a)} C_1 f_{\tail(a)}$ such that 
$$
\sum_{\stackrel{a \in Q_1}{\tail(a) = i}} \epsilon(a) \theta_{a^*} \theta_a = \delta_i f_i (xy - yx)
$$
in $C_2$, where $x,y$ is the standard basis of $V$ and where $\delta_i = \dim_{\C} \rho_i$. Now, Lemma~\ref{lem:fpsiphicompat} and the fact that $\nu$ is an anti-involution implies that $\nu(\theta_a) = m_a \theta_{\iota(a)}$ for some $m_a \in \C$ with $m_{\iota(a)} = m_a^{-1}$. Next, 
$$
f_i C_2 f_i = \bigoplus_{j \neq i} f_i C_1 f_j C_1 f_i
$$
and we can (uniquely) decompose $\delta_i f_i (xy - yx) = \sum_{j \neq i} d_{i,j}$, with $d_{i,j} \in f_i C_1 f_j C_1 f_i$. Since 
$$
\nu(\delta_i f_i (xy - yx)) = - \delta_{\iota(i)} f_{\iota(i)} (xy - yx)
$$
we have $\nu(d_{i,j}) = - d_{\iota(i),\iota(j)}$. Since there is at most one arrow from $i$ to $j$ in $H_{\mathrm{aff}}$, we deduce that either $d_{i,j} = \epsilon(a) \theta_{a^*} \theta_{a}$ or $d_{i,j} = 0$. This implies that 
$$
- \epsilon(\iota(a)) \theta_{\iota(a)} \theta_{\iota(a)^*} = - d_{\iota(i),\iota(j)} = \epsilon(a) \nu(\theta_{a}) \nu (\theta_{a^*}) =  \epsilon(a) m_a m_{a^*}\theta_{\iota(a)} \theta_{\iota(a)^*},
$$
 and hence $m_{a^*} = m_a^{-1}$. 

The fact that $\iota(a)\neq a$ for all $a$ implies that either 
\begin{enumerate}
\item[(i)] $a^* = \iota(a)$ (when $\iota(\tail(a)) = \tail(a)$ and $\iota(\head(a)) = \head(a)$), in which case we let $t_a$ be a square root of $m_a$ and set $t_{\iota(a)} = t^{-1}_a$; or
\item[(ii)] $a,a^*,\iota(a),\iota(a)^*$ are all distinct, in which case let $t_a$ again be a square root of $m_a$ and set 
$t_a = t_{\iota(a)^*} = t_{a^*}^{-1} = t_{\iota(a)}^{-1}$. 
\end{enumerate}
Finally, define $\widetilde{\theta}_a := \frac{1}{t_a}\theta_a$. Then 
$$
\nu(\widetilde{\theta}_a ) = \frac{1}{t_a}\nu(\theta_a) = \frac{1}{t_a}m_a \theta_{\iota(a)} = \widetilde{\theta}_{\iota(a)},
$$
and $\theta_a \theta_{a^*} = \widetilde{\theta}_a \widetilde{\theta}_{a^*}$. Combining this with the proof of \cite[Theorem~3.4]{CrawleyBoeveyHolland}, we obtain a well-defined isomorphism $\Pi_{\mathrm{aff}} \rightarrow f A f$ sending $\rho_i$ to $f_i$ and $a$ to $\widetilde{\theta}_a$ that is compatible with the anti-involutions $\iota$ and $\nu |_{f A f}$. 
\end{proof}

Let $\Lmod{\Pi_{\mathrm{aff}}}$ and $\Lmod{A}$ denote the categories of finite dimensional left $\Pi_{\mathrm{aff}}$-modules and $A$-modules respectively. There is a contravariant equivalence \[
D \colon \Lmod{\Pi_{\mathrm{aff}}} \longrightarrow \Lmod{\Pi_{\mathrm{aff}}}
\]
 satisfying $D(M) = M^*=\Hom_{\C}(M,\C)$, where the $\Pi_{\mathrm{aff}}$-module structure satisfies $(a \cdot \lambda)(m) = \lambda(\iota(a) \cdot m)$ for $a \in \Pi_{\mathrm{aff}}, m \in M$ and $\lambda \in M^*$. Similarly, there is a second contravariant equivalence $\mathbb{D}\colon \Lmod{A} \to \Lmod{A}$, and Lemma~\ref{lem:fpsiphicompat} implies that $f \circ \mathbb{D} \cong D \circ f$ as functors $\Lmod{A} \rightarrow \Lmod{\Pi_{\mathrm{aff}}}$.

\begin{lem}\label{lem:Ddualityonfunctions}
 Let $\theta \in \Theta_{v}$. The functor $D$ induces an isomorphism $D^* \colon \C[\mu^{-1}(0)]^{\theta} \stackrel{\sim}{\longrightarrow} \C[\mu^{-1}(0)]^{- \iota(\theta)}$, where semi-invariants are taken with respect to the actions of the groups $G(v)$ and $G(\iota(v))$ on the domain and codomain of $D^*$.
\end{lem}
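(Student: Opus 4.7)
The plan is to realise $D^*$ as the pullback along an explicit automorphism $\widetilde{D}\colon\mu^{-1}(0)\to\mu^{-1}(0)$ of schemes induced by the duality $D$, and then to compute the character twist directly.

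At the level of closed points, the anti-involution $\iota$ from Lemma~\ref{lem:fpsiphicompat} endows the vector-space dual of a $\Pi$-module of dimension $v$ with the structure of a $\Pi$-module of dimension $\iota(v)$: explicitly, if $(B_a)_{a\in Q_1}$ is the structure datum of a representation of $\Pi$ of dimension $v$, then its dualisation $\widetilde{D}(B)$ has the transpose $B_a^t$ as the structure map at the arrow $\iota(a)$, with vector-space identifications $V_i\cong V_i^*$ fixed once and for all by choosing bases. Because $\iota$ is an anti-involution \emph{of $\Pi$} rather than just of the path algebra, the preprojective relation \eqref{eqn:preprojective} is preserved, so $\widetilde{D}(B)\in\mu^{-1}(0)$. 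Since $\iota$ fixes $\rho_\infty$ and preserves $\delta$, we have $\iota(v)=v$, and $\widetilde{D}$ is an automorphism of $\mu^{-1}(0)$, involutive up to the natural double-duality.

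For the character computation, let $\phi\colon G(v)\to G(\iota(v))$ be the group isomorphism defined by $\phi(g)_i := (g_{\iota(i)})^{-t}$ (transpose-inverse, reindexed by $\iota$). A direct calculation with $(g\cdot B)_a = g_{\head(a)} B_a g_{\tail(a)}^{-1}$ produces the equivariance $\widetilde{D}(g\cdot M) = \phi(g)\cdot\widetilde{D}(M)$. Defining $D^*f := f\circ\widetilde{D}$, for any $\chi_\theta$-semi-invariant $f$ one obtains
\begin{equation*}
(D^*f)(g\cdot M) = f\big(\phi(g)\cdot\widetilde{D}(M)\big) = \chi_\theta(\phi(g))\,(D^*f)(M),
\end{equation*}
and the character evaluates as
\begin{equation*}
\chi_\theta(\phi(g)) = \prod_i \det(g_{\iota(i)})^{-\theta_i} = \prod_j \det(g_j)^{-\iota(\theta)_j} = \chi_{-\iota(\theta)}(g).
\end{equation*}
Hence $D^*$ maps $\C[\mu^{-1}(0)]^\theta$ into $\C[\mu^{-1}(0)]^{-\iota(\theta)}$; it is an isomorphism because the analogous construction for $\widetilde{D}^{-1}$ provides a two-sided inverse.

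The main obstacle will be the first step, namely verifying that $\widetilde{D}$ actually lands in $\mu^{-1}(0)$. This hinges on the compatibility between the anti-involutions $\iota$ on $\Pi$ and $\nu$ on $A=\C[V]\rtimes\Gamma$ established in Lemma~\ref{lem:fpsiphicompat}; the careful rescaling to basis elements $\widetilde{\theta}_a$ is precisely what ensures $\iota$ sends preprojective relations to preprojective relations (with any signs introduced by $\epsilon$ cancelling in each product $aa^*$). Once this has been secured, the equivariance and the determinantal bookkeeping are straightforward.
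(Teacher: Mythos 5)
Your proposal follows the same route as the paper: define $D$ pointwise on $\mu^{-1}(0)$ by dualising and reindexing by $\iota$, exhibit the group isomorphism $\phi\colon G(v)\to G(\iota(v))$ that intertwines the two actions, and read off the character twist. Your $\phi(g)_i = (g_{\iota(i)})^{-t}$ is exactly the paper's $\left(\iota(g)^{-1}\right)^T$ (where the paper sets $\iota(g)_i = g_{\iota(i)}$), and the determinantal bookkeeping giving $\chi_\theta\circ\phi = \chi_{-\iota(\theta)}$ is the same computation.

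The one slip is in your final paragraph, on where the well-definedness of $\widetilde{D}$ on $\mu^{-1}(0)$ comes from. This is equivalent to $\iota$ descending to an anti-involution of $\Pi$, and that is \emph{not} what the rescaling to $\widetilde{\theta}_a$ in Lemma~\ref{lem:fpsiphicompat} delivers. That lemma normalises the Crawley-Boevey--Holland isomorphism $fAf\cong\Pi_{\mathrm{aff}}$ so that it intertwines the two anti-involutions $\nu$ and $\iota$; it is invoked only in the subsequent proposition to show that $D$ induces the identity on $\mf{M}_0$. The descent of $\iota$ to $\Pi$ is the more elementary observation recorded just before Lemma~\ref{lem:fpsiphicompat}: since $\iota(a)^*=\iota(a^*)$ and $\epsilon(a^*)=-\epsilon(a)$, the anti-homomorphism $\iota$ carries the preprojective relation $\sum_{a\in Q_1}\epsilon(a)aa^*$ to $\pm$ itself, and so descends. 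This is a misattribution rather than a gap, but worth correcting: Lemma~\ref{lem:fpsiphicompat} is not a prerequisite for the present Lemma.
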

\begin{proof}
Let $M = (V_i,\psi_a \mid i \in I, a \in H)$ be a point in $\mu^{-1}(0)$. Then $D(M)= (V_i^*,\psi_{\iota(a)}^T)$. We now define $\iota \colon G(v) \stackrel{\sim}{\longrightarrow} G(\iota(v))$ by $\iota(g)_i = g_{\iota(i)}$. Then $D(g \cdot M) = (\iota(g)^{-1})^T D(M)$. Hence, if $f \in \C[\mu^{-1}(0)]^{\theta}$ then 
\begin{align*}
(g \cdot D^*(f))(M) & = D^*(f)(g^{-1} \cdot M) \\
 & = f(D(g^{-1} \cdot m)) = f(\iota(g)^T \cdot D(M)) \\
 & = ((\iota(g)^{-1})^T \cdot f)(D(M)) = (- \iota(\theta))(g) f(D(M)), 
\end{align*}
as required. 
\end{proof}

\begin{prop}
There is an isomorphism $D \colon \mf{M}_{- \iota(\theta)} \rightarrow \mf{M}_{\theta}$ of schemes over $Y=\mf{M}_0$. 
\end{prop}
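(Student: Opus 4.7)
The plan is to lift the $\C$-linear isomorphism of Lemma \ref{lem:Ddualityonfunctions} to an isomorphism of graded $\C$-algebras and then apply $\Proj$. The first step is a compatibility check on dimension vectors: since $\iota$ fixes $\rho_\infty$ by definition and since $\dim \rho_i = \dim \rho_{\iota(i)}$ (a representation and its dual have equal dimension), the regular representation $\delta = \sum_i (\dim \rho_i) \rho_i$ satisfies $\iota(\delta) = \delta$, so $\iota(v) = v$ and both sides of Lemma \ref{lem:Ddualityonfunctions} involve semi-invariants for the common group $G(v) = G(\iota(v))$.

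Next, I would note that the duality functor $D$ is represented at the level of representation varieties by a regular involution $\Delta \colon \mu^{-1}(0) \to \mu^{-1}(0)$ defined on closed points by $(V_i, \psi_a) \mapsto (V_i^*, \psi_{\iota(a)}^T)$, as used implicitly in the proof of Lemma \ref{lem:Ddualityonfunctions}. Its pullback $\Delta^*$ is a $\C$-algebra automorphism of $\C[\mu^{-1}(0)]$, and applying Lemma \ref{lem:Ddualityonfunctions} with $k\theta$ in place of $\theta$ for each $k \geq 0$ and assembling the graded components produces a graded $\C$-algebra isomorphism
$$
\Delta^* \colon \bigoplus_{k \geq 0} \C[\mu^{-1}(0)]^{k\theta} \iso \bigoplus_{k \geq 0} \C[\mu^{-1}(0)]^{-k\iota(\theta)}.
$$
Taking $\Proj$ then delivers the desired isomorphism $D \colon \mf{M}_{-\iota(\theta)} \iso \mf{M}_\theta$.

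The main obstacle is to verify that $D$ is a morphism over $\mf{M}_0$, i.e.\ that the degree-zero part of $\Delta^*$, an automorphism of $\C[\mf{M}_0]$, is trivial. Since $\mf{M}_0$ is irreducible by Lemma \ref{lem:singularity}, it is enough to check that the induced automorphism of $\mf{M}_0$ fixes a dense open subset pointwise. I would take the locus of configurations of $n$ distinct points in $(\C^2/\Gamma) \setminus \{0\}$: a point in this locus corresponds to a $0$-polystable $\Pi$-module of the form $M_\infty \oplus M_1 \oplus \cdots \oplus M_n$, where $M_\infty$ is the one-dimensional module at the framing vertex (and hence self-dual under $\Delta$) and each $M_k$ is a simple $\Pi$-module of dimension vector $\delta$ corresponding to a generic point $p_k \in \C^2/\Gamma$. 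Under the Morita equivalence $\Pi_{\mathrm{aff}} \sim \C[V] \rtimes \Gamma$ of Lemma \ref{lem:fpsiphicompat}, each $M_k$ becomes a simple $\C[V] \rtimes \Gamma$-module whose $\C[V]^\Gamma$-annihilator is the maximal ideal $\mathfrak{m}_{p_k}$; since $\nu$ acts as the identity on $\C[V]$, the $\nu$-dual $M_k^*$ has the same $\C[V]^\Gamma$-annihilator, so it is supported at $p_k$, and triviality of the $\Gamma$-stabiliser at $p_k$ forces $M_k^* \cong M_k$. Thus $\Delta$ fixes the isomorphism class of $M_\infty \oplus M_1 \oplus \cdots \oplus M_n$, so the induced automorphism of $\mf{M}_0$ fixes this dense open subset pointwise, and we conclude by density.
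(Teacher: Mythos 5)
Your proof is correct and follows essentially the same strategy as the paper: it obtains the scheme isomorphism from Lemma~\ref{lem:Ddualityonfunctions} and then verifies triviality over $\mf{M}_0$ via the Morita equivalence of Lemma~\ref{lem:fpsiphicompat} and the annihilator argument in $Z=\C[V]^\Gamma$. The one minor difference is that the paper checks $D(M)\cong M$ for \emph{all} semisimple $\Pi$-modules $M$ of dimension $v$ (thereby needing the extra case where a summand of dimension $\delta$ is of the form $\bigoplus_i L_i^{\oplus\delta_i}$), whereas you invoke irreducibility of $\mf{M}_0$ to reduce to the dense open stratum of $n$ distinct points in the smooth locus, which lets you assume all summands of dimension $\delta$ are simple and skip that case.
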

\begin{proof}
Since $\iota(v) = v$, Lemma~\ref{lem:Ddualityonfunctions} gives an isomorphism $D \colon \mf{M}_{-\iota(\theta)} \rightarrow \mf{M}_{\theta}$ of $\C$-schemes. It remains to check that the isomorphism $D\colon \mf{M}_0 \rightarrow \mf{M}_0$ is the identity. We can do this by looking at closed points. Thus, it is enough to show that if $M$ is a semi-simple $\Pi$-module of dimension $v$ then $D(M) \cong M$. By Theorem~\ref{thm:canondecomp}\four\ and Proposition~\ref{prop:canonicaldecomp}, we have $M = M_{\infty} \oplus M_1 \oplus \cdots \oplus M_n$, where $\dim M_{\infty} = \rho_{\infty}$ and $\dim M_i = \delta$ for all $i \ge 1$. If $L_i$ is the simple $\Pi$-module of dimension $\rho_i$, then $D(L_i) \cong L_{\iota(i)}$ and hence $D(M_{\infty}) \cong M_{\infty}$. Therefore, we need only show that if $M$ is semi-simple of dimension $\delta$, then $D(M) \cong M$. Note that the action of $\Pi$ on $M$ now factors through $\Pi_{\mathrm{aff}}$, so we may replace the former by the latter. Since simple $\Pi_{\mathrm{aff}}$-modules have dimension vector corresponding to positive affine roots, there are two cases to consider: either $M \cong \bigoplus_{0\leq i \leq r} L_i^{\oplus \delta_i}$; or $M$ is simple. In the former case, $D(L_i) \cong L_{\iota(i)}$ and $\delta_i = \delta_{\iota(i)}$ implies that $D(M) \cong M$ as required. Otherwise, we may assume that $M$ is simple. Under the Morita equivalence $A \sim \Pi_{\mathrm{aff}}$, we have $M = f N$, where $N$ is a simple $A$-module such that $N |_{\Gamma} \cong \C \Gamma$. Since $\Gamma$ acts freely on $V \smallsetminus \{ 0 \}$, $N$ is uniquely defined up to isomorphism by the maximal ideal $\mathrm{ann}_Z N$, where $Z := Z(A) = \C[V]^{\Gamma}$. But, if $z \in Z$, $\lambda \in N^*$ and $n \in N$, then 
$$
(z \cdot \lambda)(n) = \lambda(\nu(z) n) = \lambda(zn) = 0
$$
because $\nu$ is the identity on $\C[V]^{\Gamma}$. Therefore, $\mathrm{ann}_Z N = \mathrm{ann}_Z \mathbb{D}(N)$ and hence $N \cong \mathbb{D}(N)$. 
\end{proof}

We now compute what happens to the tautological bundles under $D$. For $i\in I$, write $\mc{R}_i$ and $\mc{R}_i^\prime$ for the corresponding tautological bundles on $\mf{M}_{\theta}$ and $\mf{M}_{-\iota(\theta)}$ respectively.

\begin{lem}
\label{lem:Dbundles}
For all $i\in I$, we have $D^*(\det \mc{R}_i) \cong (\det \mc{R}_{\iota(i)}')^{-1}$ on $\mf{M}_{- \iota(\theta)}$.
\end{lem}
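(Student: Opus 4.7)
The plan is to read off the statement directly from the description of $D$ on points by working with the universal families and invoking the fine moduli space property from Proposition~\ref{prop:tautrestrict}'s setup (more precisely, from King~\cite{KingStable} as recalled in section~\ref{sec:tautbundles}): both $\mf{M}_\theta$ and $\mf{M}_{-\iota(\theta)}$ are fine moduli spaces of $\theta$-stable (resp.\ $-\iota(\theta)$-stable) $\Pi$-modules of dimension $v$, carrying universal families $\mc{R}=\bigoplus \mc{R}_j$ and $\mc{R}'=\bigoplus \mc{R}_j'$ normalised by $\mc{R}_\infty = \mc{O}=\mc{R}_\infty'$.

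The key observation is fibrewise: inspection of the definition $D(M) = (V_i^*, \psi^T_{\iota(a)})$ of the duality functor (together with the formula $(a\cdot\lambda)(m) = \lambda(\iota(a)\cdot m)$) shows that, viewed as a graded vector space with its correct $\Pi$-action, we have $D(M)_i = M_{\iota(i)}^*$; indeed, for an arrow $a\colon i \to j$, the map $\iota(a)$ goes from $\iota(j)$ to $\iota(i)$, so its transpose $\psi_{\iota(a)}^T$ is the required map $M_{\iota(i)}^* \to M_{\iota(j)}^*$ on $D(M)$. Consequently, applying the same duality construction relatively to the universal family $\mc{R}'$ produces a family of $\theta$-stable $\Pi$-modules of dimension $v$ on $\mf{M}_{-\iota(\theta)}$ whose component at vertex $i$ is the locally-free sheaf $(\mc{R}'_{\iota(i)})^\vee$.

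By the universal property of $\mf{M}_\theta$, this $D$-dualised family is classified by a morphism $\mf{M}_{-\iota(\theta)} \to \mf{M}_\theta$; the proof of the preceding proposition shows that this morphism agrees with $D$ on closed points, so it coincides with $D$. Hence $D^*\mc{R}_i$ and $(\mc{R}'_{\iota(i)})^\vee$ agree up to twist by a line bundle pulled back from $\mf{M}_{-\iota(\theta)}$. Since $\iota(\infty)=\infty$, the normalisations $\mc{R}_\infty = \mc{O} = \mc{R}'_\infty$ force this twisting line bundle to be trivial, giving an isomorphism $D^*\mc{R}_i \cong (\mc{R}'_{\iota(i)})^\vee$ of locally free sheaves on $\mf{M}_{-\iota(\theta)}$. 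Passing to determinants now yields the claim
\[
D^*(\det\mc{R}_i) \;\cong\; \det\bigl((\mc{R}'_{\iota(i)})^\vee\bigr) \;\cong\; (\det \mc{R}'_{\iota(i)})^{-1}.
\]

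The only step requiring any genuine care is identifying $D^*\mc{R}$ with the $D$-dual of $\mc{R}'$ on the nose rather than up to a twist; this is where normalising the framing summand is essential, and where the compatibility provided by Lemma~\ref{lem:fpsiphicompat} (and the proof of the preceding proposition) guarantees that $D$ really is the morphism classifying the dualised universal family. No serious obstacle arises beyond this bookkeeping.
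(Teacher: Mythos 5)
Your argument is correct, but it takes a genuinely different route from the paper's. The paper proves the lemma by direct computation with semi-invariant functions: on the open set $U$ where a chosen $f\in\C[\mu^{-1}(0)]^{-m\theta}$ is nonvanishing, it writes $\Gamma(U,\det\mc{R}_i)$ explicitly as a ring of fractions $h/f^l$ with $h\in\C[\mu^{-1}(0)]^{-\chi_i-lm\theta}$, pushes this through the ring isomorphism $D^*$ of Lemma~\ref{lem:Ddualityonfunctions}, and identifies the resulting module of sections with that of $(\det\mc{R}'_{\iota(i)})^{-1}$. You instead apply the duality functor relatively to the universal family $\mc{R}'$, check fibrewise that the result is a flat family of $\theta$-stable $\Pi$-modules of dimension $v$ with $i$-th component $(\mc{R}'_{\iota(i)})^\vee$, invoke the universal property of the fine moduli space $\mf{M}_\theta$, identify the classifying morphism with $D$ by comparing closed points (which are dense, with separated target and reduced source), and kill the ambient twisting line bundle via the normalisation $\mc{R}_\infty=\mc{O}=\mc{R}'_\infty$ together with $\iota(\infty)=\infty$. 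What the paper's method buys is economy: it needs only Lemma~\ref{lem:Ddualityonfunctions} and the local description of sections of a descended line bundle, with no appeal to the universal property or to stability of duals. What yours buys is conceptual clarity and a strictly stronger conclusion: you actually obtain the isomorphism of locally free sheaves $D^*\mc{R}_i\cong(\mc{R}'_{\iota(i)})^\vee$, of which the statement on determinants is only the shadow. Two small caveats worth writing out if you expand this: you should explicitly verify that $D$ turns $-\iota(\theta)$-stability into $\theta$-stability (take orthogonal complements $N^\perp\subset M$ of submodules $N\subset D(M)$, use $\iota(v)=v$ and $\theta(v)=0$), and you should note that the fact that the morphism $D$ acts on closed points by $[N]\mapsto[D(N)]$ is precisely the construction via Lemma~\ref{lem:Ddualityonfunctions} (it is used, though not isolated as a statement, in the proof of the preceding proposition).
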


\begin{proof}
Choose a non-zero semi-invariant $f \in \C[\mu^{-1}(0)]^{-m \theta}$ for some $m > 0$, and let $U \subset \mf{M}_{\theta}$ be the affine open subset that is complementary to the zero-locus of $f$. Then 
$$
\Gamma(U,\det \mc{R}_i) = \C \left[ \frac{h}{f^l} \  \Big| \ h \in \C[\mu^{-1}(0)]^{-\chi_i - l m \theta}\right],
$$
and hence, by Lemma \ref{lem:Ddualityonfunctions}, we have
$$
\Gamma(D^{-1}(U),D^*(\det \mc{R}_i)) = \C \left[ \frac{g}{D^*(f)^l} \  \Big| \ g \in \C[\mu^{-1}(0)]^{\chi_{\iota(i)} + l m \iota(\theta)}\right].
$$
This equals $\Gamma(D^{-1}(U),(\det \mc{R}_{\iota(i)}')^{-1})$ since $\chi_{\iota(i)} + l m \iota(\theta) = -(-\chi_{\iota(i)}) - l m (-\iota(\theta))$.  
\end{proof}

\begin{remark}
\label{rem:Gammatrivial}
 In fact all of the results of this section are valid even when $\Gamma$ is trivial if we set $\iota$ to be the identity. This gives rise to an isomorphism $D\colon \mf{M}_{-\theta}\rightarrow \mf{M}_{\theta}$ over the base $\Sym^n(\C^2)$. In this case, there are only two GIT chambers and it is well known that $\mf{M}_{\theta}$ is the Hilbert scheme of $n$-points on $\C^2$.
 \end{remark}

\subsection{The main results}
\label{sec:lastsection}
 We can now prove the main result which gives an understanding of the moduli spaces $\mf{M}_\theta$ for any generic $\theta\in \Theta_v$.  It is convenient to first establish a compatibility result for the linearisation maps associated to the chambers $C$ and $w(C)$ for any $w\in W$; here, $w(C)$ is a chamber by Proposition~\ref{prop:W}\three.
 
 \begin{lem}
 \label{lem:penultimate}
 For any chamber $C\subset \Theta_v$, we have that $L_{w(C)}(w(\theta)) = L_C(\theta)$ for all $w\in W$ and $\theta\in \overline{C}$.
 \end{lem}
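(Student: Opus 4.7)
The plan is to reduce the claim, via the decomposition $W \cong \langle s_\delta\rangle\times W_\Gamma$ from Proposition~\ref{prop:W}\one, to verifying the identity separately for each generator $s_i$ (with $1\leq i\leq r$) and for $s_\delta$. Once established for these generators and all chambers, the result for arbitrary $w\in W$ follows by induction on the length of $w$. Moreover, since both $L_C$ and $L_{w(C)}\circ w$ are linear maps $\Theta_v\to N^1(X/Y)$, it suffices to verify the identity for $\theta$ in the open chamber $C$; the extension to $\overline{C}$ is automatic.

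For a simple reflection $s_i$ with $1\leq i\leq r$, Lemma~\ref{lem:bundlesreflect} provides the bundle-level isomorphism $S_i^*(L_C(\theta))\cong L_{s_i(C)}(s_i(\theta))$ on $\mf{M}_{s_i(\theta)}$. The key observation is that the reflection isomorphism $S_i\colon\mf{M}_{s_i(\theta)}\to\mf{M}_\theta$ acts as the identity over the smooth locus of $Y$, where all the moduli spaces are canonically identified. Hence the birational map $S_i\circ \psi_{s_i(\theta)}\colon X\dashrightarrow \mf{M}_\theta$ over $Y$ agrees with $\psi_\theta$ on a dense open subset, and therefore in codimension one by the uniqueness of such identifications between projective crepant resolutions of $Y$. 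This means that the identifications of $N^1(\mf{M}_\theta/Y)$ and $N^1(\mf{M}_{s_i(\theta)}/Y)$ with $N^1(X/Y)$ via $\psi_\theta$ and $\psi_{s_i(\theta)}$ are intertwined by $S_i^*$, so the bundle-level identity descends to the required equality $L_C(\theta)=L_{s_i(C)}(s_i(\theta))$ in $N^1(X/Y)$.

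The case $w=s_\delta$ requires a separate argument because no analogous reflection functor exists at the vertex $0$. Instead, we use the isomorphism $D\colon\mf{M}_{-\iota(\theta)}\to\mf{M}_\theta$ from section~\ref{sec:graphinvolution}. A direct calculation using Lemma~\ref{lem:Dbundles} gives $D^*(L_C(\theta))\cong L_{-\iota(C)}(-\iota(\theta))$, and the same codimension-one uniqueness argument yields $L_{-\iota(C)}(-\iota(\theta))=L_C(\theta)$ in $N^1(X/Y)$. To complete the passage from $-\iota$ to $s_\delta$, one checks by explicit computation on the simple roots $\rho_0,\rho_1,\ldots,\rho_r$ and on $\delta$ that $-\iota = s_\delta w_0$ in $W$, where $w_0\in W_\Gamma$ is the longest element of the finite Weyl group; since $s_\delta$ commutes with every element of $W_\Gamma$, we may equally write $-\iota = w_0 s_\delta$. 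The already-proved statement for $w_0\in W_\Gamma$, applied to the chamber $s_\delta(C)$ and parameter $s_\delta(\theta)$, yields $L_{w_0 s_\delta(C)}(w_0 s_\delta(\theta)) = L_{s_\delta(C)}(s_\delta(\theta))$, whose left-hand side equals $L_{-\iota(C)}(-\iota(\theta)) = L_C(\theta)$. This gives the lemma for $s_\delta$.

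The main obstacle is the bookkeeping required to compare the various identifications of the N\'eron--Severi spaces of the moduli spaces with $N^1(X/Y)$. These identifications are defined through the birational maps $\psi_{\theta'}$ from $X$ to the various crepant resolutions $\mf{M}_{\theta'}$, and showing that they intertwine the pullback isomorphisms $S_i^*$ and $D^*$ is the key technical step; this rests on the uniqueness (in codimension one) of the birational map over $Y$ between any two projective crepant resolutions of $Y$, which is where the geometry enters an otherwise essentially formal manipulation of characters and Weyl group elements.
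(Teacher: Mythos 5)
Your proof is correct and uses exactly the same two geometric ingredients as the paper (the reflection isomorphism $S_i$ together with Lemma~\ref{lem:bundlesreflect}, and the duality isomorphism $D$ together with Lemma~\ref{lem:Dbundles}), but you organize the reduction to generators slightly differently. The paper observes that $W$ is generated by $\{s_1,\dots,s_r\}$ together with the single element $s_\delta w_0$, which acts on $\Theta_v$ as $-\iota$, and then verifies the identity directly for each of those generators. You instead use the generating set $\{s_1,\dots,s_r,s_\delta\}$, establish the identity for $s_i$ and for $-\iota$, and then recover the $s_\delta$ case from the identity $s_\delta = w_0\cdot(-\iota)$ using the already-proved $W_\Gamma$ cases; the bookkeeping step $L_{w_0 s_\delta(C)}(w_0 s_\delta(\theta))=L_{s_\delta(C)}(s_\delta(\theta))$ is correct and completes the argument cleanly. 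The paper's choice of generating set is marginally more economical, since it avoids the extra pass through $W_\Gamma$, but your route is logically equivalent and has the small expository advantage that the generating set is the one used to define $W$ in the first place. Your ``key observation'' that $S_i$ is the identity over the smooth locus of $Y$ is just an unwinding of the fact that $S_i\circ\psi_{s_i(\theta)}$ and $\psi_\theta$ are both birational maps over $Y$ between projective crepant resolutions, hence coincide by uniqueness of such maps; the paper treats this as implicit when it writes $\psi_\theta = S_i\circ\psi_{s_i(\theta)}$.
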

 \begin{proof}
  The birational map $\psi_\theta\colon X\dashrightarrow \mf{M}_\theta$ of schemes over $\mf{M}_0$ from section~\ref{sec:birationalgeometry} is unique; it's determined by the linear system $\vert L_{C_-}(\ell\theta)\vert$ for sufficiently large $\ell$, where $C_-$ is the unique chamber in $F$ satisfying $X\cong\mf{M}_\theta$ for $\theta\in C_-$ (see Theorem~\ref{thm:Kuznetsov}). For clarity, in the course of this proof we choose not to suppress making reference to these maps when identifying nef cones and line bundles, so in fact our goal is to prove that
  \begin{equation}
      \label{eqn:LwC}
  \psi_{w(\theta)}^*L_{w(C)}(w(\theta)) = \psi_\theta^*L_C(\theta)
  \end{equation}
  for all $w\in W$ and $\theta\in \overline{C}$. 
  
  If $w_0$ is the longest element in $W_{\Gamma}$, with respect to the set of simple reflections $\{ s_1, \ds, s_r \}$, then we deduce from Bourbaki~\cite[item (XI) in Planche~I-VII]{BourbakiLie} that $w_0$ acts on the hyperplane in $R(\Gamma)$ spanned by $\rho_1, \dots,\rho_r$ as multiplication by $-\iota$; here $\iota$ is the involution of the Dynkin diagram introduced at the beginning of section~\ref{sec:graphinvolution}. The element $s_{\delta}$ fixes all vectors in this hyperplane. Therefore to show that $s_{\delta} w_0 = w_0 s_{\delta}$ acts as $- \iota$ on $R(\Gamma)$ it suffices to check that $ w_0 s_{\delta}(\rho_0) = - \rho_0$. Let $\beta \in \Phi^+$ be the highest root. Then 
  $$
   w_0 s_{\delta}(\rho_0) =  w_0(\rho_0 - 2\delta) = w_0(-\delta - \beta) = - \delta + \beta = -\rho_0
   $$
   since $w_0(\beta) = - \beta$. By definition, $s_{\delta} w_0$ also acts on $\Theta_{v}$ as multiplication by $-\iota$. Since $W$ is generated by $\{ s_1, \ds, s_r \}$ and $s_{\delta} w_0$, it suffices to check that equation \eqref{eqn:LwC} holds for $w=s_i$ for $1\leq i\leq r$, and for $s_{\delta} w_0$. In the first case, for $1\leq i\leq r$ we have $\psi_\theta = S_i\circ \psi_{s_i(\theta)}$, so
  \[
  \psi_\theta^*L_C(\theta) = \psi_{s_i(\theta)}^*\big(S_i^*(L_C(\theta)\big) = \psi_{s_i(\theta)}^*L_{s_i(C)}(s_i(\theta))
  \]
   by Lemma~\ref{lem:bundlesreflect}. In the second case, we have $\psi_\theta = D\circ \psi_{-\iota(\theta)}$, so
  \[
  \psi_\theta^*L_C(\theta) = \psi_{-\iota(\theta)}^*\big(D^*(L_C(\theta)\big) = \psi_{-\iota(\theta)}^*L_{-\iota(C)}(-\iota(\theta))
  \]
   by Lemma~\ref{lem:Dbundles}. Therefore equation \eqref{eqn:LwC} holds as required.
 \end{proof}

We are finally in a position to prove the strong form of our main result.

\begin{proof}[Proof of Theorem~\ref{thm:mainintro}]
 For \one, define $L$ by setting $L(\theta):= L_C(\theta)$ for $\theta\in \Theta_v$, where $C$ is any chamber satisfying $\theta\in \overline{C}$. To see that $L$ is well-defined, we need only show that for adjacent chambers $C, C^\prime\subset \Theta_{v}$, the maps $L_C, L_{C^\prime}$ agree on the separating wall, i.e.\ that $L_C(\theta_0) = L_{C^\prime}(\theta_0)$ for all $\theta_0\in \overline{C}\cap \overline{C^\prime}$ (we do not assume $\theta_0$ is generic in the wall). Since $F$ is a fundamental domain for $W$, we may assume without loss of generality by Lemma~\ref{lem:penultimate} that $C \subset F$. There are three cases to consider. First, if $C^\prime$ also lies in $F$, then the proof of Theorem~\ref{thm:simplifiedmainintro} shows that $L_C(\eta) = L_{C^\prime}(\eta)$ for all $\eta\in \Theta_v$.  Second, if the wall separating $C$ from $C^\prime$ is a real boundary wall of $F$, then it is contained in a hyperplane $\rho_i^\perp$ for some $1 \le i \le r$. In this case, $C^\prime=s_i(C)$ and $\theta_0 = s_i(\theta_0)$ for all $\theta_0 \in \overline{C}\cap \overline{C'}$, giving $L_{C'}(\theta_0) = L_{s_i(C)}(s_i \theta) = L_C(\theta)$, as required. Finally, if the wall separating $C$ from $C^\prime$ is an imaginary boundary wall, then $C' = s_{\delta}(C)$ and, as above, we deduce that $L_{C'}(\theta_0) = L_C(\theta_0)$. Therefore $L$ is a well-defined, piecewise-linear map. 

For \two, let $C$ be any chamber such that $\theta\in \overline{C}$. Then $w(\theta)\in \overline{w(C)}$ and hence Lemma~\ref{lem:penultimate} gives
 \[
 L(w(\theta)) = L_{w(C)}(w(\theta)) = L_C(\theta) = L(\theta)
 \]
 which establishes $W$-invariance. We have $L\vert_F = L_F$ by construction, so $L(F) = \Mov(X/Y)$ by Theorem~\ref{thm:simplifiedmainintro}, and since $F$ is a fundamental domain for $W$, the $W$-invariance of $L$ now implies that $L(\Theta_v) = \Mov(X/Y)$. For \three, observe first that $L\vert_F$ is compatible with the chamber decompositions of $\Theta_v$ and $\Mov(X/Y)$ by Theorem~\ref{thm:simplifiedmainintro}. Part \two\ now implies that $L\vert_{w(F)}$ is compatible with the chamber decompositions for each $w\in W$, and the statement of part \three\ follows from Proposition~\ref{prop:W}\two. Part \four\ follows from Proposition~\ref{prop:LCC}, because the restriction of $L$ to $C$ equals $L_C$.
 \end{proof}

 \begin{cor}
 Let $C, C^\prime\subset \Theta_{v}$ be chambers and let $\theta \in C$,  $\theta^\prime \in C^\prime$. Then $\mathfrak{M}_\theta\cong \mathfrak{M}_{\theta^\prime}$ as schemes over $Y$ if and only if there exists $w \in W$ such that $w(C) = C^\prime$. 
 \end{cor}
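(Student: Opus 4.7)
The plan is to prove both implications directly using the tools already developed in the preceding sections.

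For the $(\Leftarrow)$ direction, the first step is to reduce to showing that $\mathfrak{M}_{w(\theta)} \cong \mathfrak{M}_\theta$ as schemes over $Y$ for every $w \in W$ and every $\theta \in \Theta_v$: indeed, $w(C)=C'$ forces $w(\theta) \in C'$, and two parameters in a common GIT chamber produce isomorphic quotients, so we would conclude $\mathfrak{M}_{w(\theta)} \cong \mathfrak{M}_{\theta'}$. To establish the claim for every $w$, I would let $H \subseteq W$ denote the subgroup of elements for which such an isomorphism exists for all $\theta$. By the Losev lemma at the start of section~\ref{sec:reflectionfunctors}, each simple reflection $s_i$ with $1 \le i \le r$ lies in $H$ via the reflection isomorphism $S_i$; by the proposition in section~\ref{sec:graphinvolution}, the element $-\iota$ lies in $H$ via the duality isomorphism $D$. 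Since $-\iota = s_\delta w_0$ with $w_0 \in \langle s_1,\dots,s_r\rangle$ (as observed inside the proof of Lemma~\ref{lem:penultimate}), $W$ is generated by $s_1,\dots,s_r$ together with $-\iota$, so $H = W$.

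For the $(\Rightarrow)$ direction, assume $\mathfrak{M}_\theta \cong \mathfrak{M}_{\theta'}$ over $Y$. Using Proposition~\ref{prop:W}\two, pick the unique $w, w' \in W$ with $w(C), w'(C') \subset F$. The forward direction, already in hand, yields $\mathfrak{M}_{w(\theta)} \cong \mathfrak{M}_\theta \cong \mathfrak{M}_{\theta'} \cong \mathfrak{M}_{w'(\theta')}$ as schemes over $Y$. The closing step is to invoke Corollary~\ref{cor:CIconjecture}, which asserts that the rule sending a chamber $C'' \subset F$ to the isomorphism class of $\mathfrak{M}_{\theta''}$ (for any $\theta'' \in C''$) sets up a bijection between chambers in $F$ and projective crepant resolutions of $Y$. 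This bijection forces $w(C) = w'(C')$ inside $F$, whence $C' = (w')^{-1}w(C)$.

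I do not anticipate any serious technical obstacle: the argument rests entirely on splicing together the reflection and duality isomorphisms $S_i$ and $D$ with the classification in Corollary~\ref{cor:CIconjecture}, both of which are already available. The one point deserving a careful check is the generation statement $W = \langle s_1,\dots,s_r,-\iota\rangle$, which reduces to the standard observation (verified inside the proof of Lemma~\ref{lem:penultimate}) that $s_\delta w_0$ acts on $R(\Gamma)$ as $-\iota$.
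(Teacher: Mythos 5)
Your argument is correct, and for the backward implication it is a genuinely different and slightly more elementary route than the paper's. The paper deduces $(\Leftarrow)$ from Theorem~\ref{thm:mainintro}\two\ and \four: from $W$-invariance of the glued map $L$ one gets $L(\theta)=L(w(\theta))\in L(C')=\Amp(\mf{M}_{\theta'}/Y)$, and then \four\ yields the isomorphism. Establishing $W$-invariance, however, requires the full strength of Lemma~\ref{lem:penultimate}, which in turn rests on the bundle-level compatibilities of Lemmas~\ref{lem:bundlesreflect} and~\ref{lem:Dbundles}. You instead work directly with the isomorphisms $S_i$ (Losev's lemma) and $D$ (Proposition in section~\ref{sec:graphinvolution}) as isomorphisms over $Y=\mf{M}_0$, form the subgroup $H\leq W$ of elements realised by such isomorphisms, and use the generation $W=\langle s_1,\ds,s_r,-\iota\rangle$ (which, as you correctly flag, comes down to the computation $s_\delta w_0=-\iota$ already done inside the proof of Lemma~\ref{lem:penultimate}). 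This bypasses the tautological-bundle tracking entirely for this direction; the price is that it proves only the scheme-level isomorphism, not the stronger invariance of $L$, but that is exactly what the corollary needs. One small point to tighten: the reflection isomorphism $S_i$ is only asserted for $\theta$ in a chamber, so $H$ should be defined by quantifying over generic $\theta$; since $W$ permutes chambers this causes no trouble, but it's worth saying. For $(\Rightarrow)$ your argument is essentially the paper's: the paper manipulates $L$ directly via Theorem~\ref{thm:mainintro}, while you invoke Corollary~\ref{cor:CIconjecture} as a packaged version of the statement that $L_F$ gives a bijection between chambers of $F$ and projective crepant resolutions of $Y$ (injectivity is what you use, and it follows from $L_F$ being a linear isomorphism together with the uniqueness of the small birational map to $X$ over $Y$, which identifies the ample cones of isomorphic models). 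So the two routes are logically equivalent here; the difference is largely a matter of which intermediate result is cited.
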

 \begin{proof}
  If $\mathfrak{M}_\theta\cong \mathfrak{M}_{\theta^\prime}$ as schemes over $Y$ then $L(C) = \Amp(\mathfrak{M}_{\theta^\prime}/Y) = L(C^\prime)$ by Theorem~\ref{thm:mainintro}\four. Since $F$ is a fundamental domain for the action of $W$ on $\Theta_{v}$, there exists $w_1, w_2\in W$ such that $w_1(C), w_2(C^\prime)\subset F$. Theorem~\ref{thm:mainintro}\one\ implies that
  \[
  L(w_1(C)) = L(C)=L(C^\prime) = L(w_2(C^\prime)).
  \]
   The map $L\vert_F=L_F$ identifies the chamber decomposition of $F$ with that of $\Mov(X/Y)$ by Theorem~\ref{thm:simplifiedmainintro}, so $w_1(C)=w_2(C^\prime)$ and hence $w:=w_2^{-1}w_1$ satisfies $w(C)=C^\prime$. For the converse, if there exists $w\in W$ such that $w(\theta)\in C^\prime$, then Theorem~\ref{thm:mainintro}\one\ implies that $L(\theta) = L(w(\theta))\in L(C^\prime) = \Amp(\mathfrak{M}_{\theta^\prime}/Y)$ and hence $\mathfrak{M}_\theta\cong \mathfrak{M}_{\theta^\prime}$ as schemes over $Y$ by Theorem~\ref{thm:mainintro}\four. 
 \end{proof}

 For completeness, we now present the analogues of Proposition~\ref{prop:LCC} and Theorem~\ref{thm:mainintro} in the degenerate case when $n=1$. Note that $X\cong S$ and $Y\cong \C^2/\Gamma$. 
 
\begin{prop}
\label{prop:n=1}
 Let $n=1$. 
 \begin{enumerate}
     \item[\one] For each chamber $C$ in $\Theta_v$, the linearisation map $L_C\colon \Theta_v\to N^1(S/(\C^2/\Gamma))$ is surjective, the kernel is spanned by $(-1,1,0,\dots,0)$, and it satisfies $L_C(C) = \Amp(\mf{M}_\theta/Y)$ for $\theta\in C$. 
     \item[\two] These maps glue to give a piecewise-linear, continuous map $L\colon \Theta_{v}\longrightarrow N^1\big(S/(\C^2/\Gamma))$ that is invariant with respect to the action of $W$ on $\Theta_v$, and whose image is $\Nef(S/(\C^2/\Gamma))$. 
 \end{enumerate} 
\end{prop}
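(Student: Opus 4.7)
The plan is to follow the template of Theorem~\ref{thm:simplifiedmainintro} and Theorem~\ref{thm:mainintro}, modified to account for the degenerate features of the $n=1$ case: the moduli space is the same (up to isomorphism over $Y$) for every choice of generic parameter, the dimension drop $\dim\Theta_v-\dim N^1(S/Y)=1$ forces $L_C$ to have a one-dimensional kernel, and the movable cone coincides with the nef cone since there are no distinct small birational models.

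For part \one, I would first fix a chamber $C\subset\Theta_v$ and $\theta\in C$ and show $\mf{M}_\theta\cong S$ over $Y$. By Lemma~\ref{lem:nequal1chamberdecomp}, $\mf{M}_\theta$ is smooth; by Proposition~\ref{prop:crepres} the morphism $f_\theta$ is a crepant resolution; and since $\dim\mf{M}_\theta=2p(v)=2$, it must agree with the minimal resolution $S\to\C^2/\Gamma$. In particular, $\dim N^1(S/Y)=r$, so $L_C$ has nontrivial kernel. Next I would verify that $\eta:=(-1,1,0,\dots,0)$ lies in $\ker L_C$: membership $\eta\in\Theta_v$ is immediate from $\eta(v)=-1+\delta_0=0$, and the computation
\[
L_C(\eta)=\det(\mc{R}_\infty)^{-1}\otimes\det(\mc{R}_0)\cong \mc{R}_0
\]
uses that $\mc{R}_\infty$ is the trivial bundle and that $v_0=\delta_0=1$. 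To finish the kernel step, I would use the McKay-correspondence identification of the intersection pairing between $\det(\mc{R}_j)$ and the exceptional curves of $\mf{M}_\theta\cong S$: since $\eta(\alpha)=0$ for every $\alpha\in\Phi$ (the components of $\eta$ at $\rho_1,\dots,\rho_r$ all vanish), the line bundle $\mc{R}_0$ has zero degree on every exceptional curve, hence is numerically trivial. Surjectivity of $L_C$ and the identification $L_C(C)=\Amp(\mf{M}_\theta/Y)$ then follow by the argument of Proposition~\ref{prop:LCC}, suitably adapted: the pull-back identity \eqref{eqn:pullbackO(1)} together with Lemma~\ref{lem:nequal1chamberdecomp} shows that $L_C$ sends $\partial\overline{C}\smallsetminus\{\eta\cdot\Q\}$ into the boundary of $\Nef(\mf{M}_\theta/Y)$, which forces the induced map $\Theta_v/\langle\eta\rangle\to N^1(S/Y)$ to be an isomorphism and the image of $C$ to be the interior of the nef cone.

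For part \two, I would note that the reflection isomorphisms $S_i\colon\mf{M}_{s_i\theta}\to\mf{M}_\theta$ of section~\ref{sec:reflectionfunctors} and the duality $D\colon\mf{M}_{-\iota\theta}\to\mf{M}_\theta$ of section~\ref{sec:graphinvolution} go through unchanged for $n=1$, as do Lemmas~\ref{lem:bundlesreflect} and \ref{lem:Dbundles} describing the effect on tautological bundles. The gluing proof of Theorem~\ref{thm:mainintro}\one-\two\ then applies verbatim, combined with part \one\ above (in place of Proposition~\ref{prop:LCC}), to glue the various $L_C$ into a continuous, piecewise-linear, $W$-invariant map $L\colon\Theta_v\to N^1(S/Y)$. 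The image is the full nef cone $\Nef(S/Y)$ rather than a movable cone, because every chamber yields the same variety $S$ over $Y$, so the union $\bigcup_w L(w F)$ of the images of the Weyl translates of $F$ exhausts $\Nef(S/Y)=\Mov(S/Y)$, which is already a single Mori chamber.

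The main obstacle is the numerical triviality of $\mc{R}_0$, because $\mc{R}_0$ need not be literally the trivial line bundle in every chamber (whereas $\mc{R}_\infty$ is trivial by normalisation). I expect the cleanest treatment is to invoke the equivalence of the framed moduli for $v=\rho_\infty+\delta$ with Kronheimer's unframed McKay quiver variety for dimension vector $\delta$, under which $\mc{R}_0$ is identified with the tautological bundle at the trivial representation; the latter is classically trivial, which immediately gives the desired conclusion. Alternatively, one can perform the intersection-number computation directly using the relation $c_1(\det \mc{R}_j)\cdot E_k=\rho_j(\alpha_k)$ (with $\alpha_1,\dots,\alpha_r$ the simple roots of $\Phi$ dual to the exceptional curves $E_1,\dots,E_r$), which again makes $\mc{R}_0$ numerically trivial because $\rho_0(\alpha_k)=0$ for each $k\geq 1$.
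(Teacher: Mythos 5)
Your overall strategy matches the paper's: show $\mf{M}_\theta \cong S$ for generic $\theta$, exhibit $\kappa = (-1,1,0,\dots,0)$ in the kernel of $L_C$, verify surjectivity by a dimension count, and then quote the $n>1$ gluing argument for part \two. The main place you diverge, and where your proposal is weakest, is the key step that $\kappa \in \ker L_C$, i.e.\ that $\mc{R}_0$ is (numerically) trivial on $\mf{M}_\theta$.

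The paper's argument here is more direct and self-contained than either of your two routes: for a chamber $C$ with $\theta_\infty < 0$, the preprojective relation $a^*a = 0$ at the vertex $\rho_\infty$, combined with $\theta$-stability, forces any stable representation to have $\psi_a \neq 0$ and $\psi_{a^*}=0$, where $a$ is the unique arrow $\infty \to 0$. Since $v_\infty = v_0 = 1$, this yields a nowhere-vanishing morphism of \emph{line bundles} $\mc{R}_\infty \to \mc{R}_0$, hence an isomorphism; the case $\theta_\infty>0$ is symmetric. No external input is needed for this step. By contrast, your ``primary'' route via identification with Kronheimer's unframed quiver variety has a circularity: the unframed construction's tautological bundle $\mc{R}_0$ is trivial only because of a \emph{normalisation choice} ($\mc{R}_0$ trivial) that differs a priori from the framed normalisation ($\mc{R}_\infty$ trivial); matching the two normalisations is equivalent to proving $\mc{R}_\infty\cong\mc{R}_0$, which is precisely what you are trying to establish. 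Your alternative route via the intersection formula $c_1(\det\mc{R}_j)\cdot E_k=(\alpha_k)_j$ is essentially correct, but this is Gonzalez-Sprinberg--Verdier rephrased, and the proposal does not explain why that formula continues to hold for the tautological bundles coming from an arbitrary chamber $C$ rather than the one realising $\Gamma$-Hilb. Note also that the paper leans on Gonzalez-Sprinberg--Verdier explicitly for the surjectivity of $L_C$ (getting a basis of $N^1(S/Y)$ from $\det\mc{R}_1,\dots,\det\mc{R}_r$), and then deduces that $\kappa$ spans the kernel simply by the dimension count $\dim\Theta_v = r+1$; your adaptation of Proposition~\ref{prop:LCC} to the quotient $\Theta_v/\langle\eta\rangle$ would also work but is more circuitous, and needs a word about why the hyperplane arrangement and chamber structure descend to that quotient given that $\kappa$ lies in $\alpha^\perp$ for all $\alpha\in\Phi$ but not in $\delta^\perp$. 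Your treatment of part \two\ is the same as the paper's.
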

\begin{proof}
 For \one, suppose first that $C$ lies in the half-space $\{\theta \in \Theta_v\mid \theta(\delta)>0\}$, so each $\theta\in C$ satisfies $\theta_\infty<0$. If $a\in Q_1$ is the unique arrow with tail at $\rho_\infty$, then the relation $a^*a = 0$ in $\Pi$ ensures that any $\theta$-stable point $(V_i,\psi_a \mid i \in I, a \in Q_1)$ in $\mu^{-1}(0)$ satisfies $\psi_a\neq 0$ and $\psi_{a^*}=0$. Therefore, we have a nowhere-zero morphism $\mc{R}_{\rho_\infty}\to \mc{R}_{\rho_0}$ which is necessarily an isomorphism. It follows that $\kappa:= (-1,1,0,\dots,0)$ lies in the kernel of $L_C$. Gonzalez-Sprinberg--Verdier~\cite{GSV83} implies that the line bundles $\det(\mc{R}_i)$ for $1\leq i\leq r$ provide an integral basis of $N^1(S/(\C^2/\Gamma))$, so $L_C$ is surjective and hence $\kappa$ spans the kernel of $L_C$. It remains to note that the image of $C$ in $\Theta_v/\langle \kappa\rangle\cong \delta^\perp$ is a Weyl chamber in the decomposition associated to $\Phi$, so the final statement from \one\ follows from Kronheimer~\cite{Kronheimer} and Lemma~\ref{lem:nequal1chamberdecomp}. The case where $C$ lies in the half-space $\{\theta \in \Theta_v\mid \theta(\delta)>0\}$ is similar. For part \two, the proof of Theorem~\ref{thm:mainintro} carries over verbatim, bearing in mind that $F$ is the closure of a unique chamber since $n=1$.
 \end{proof}

\appendix
\section{Variation of GIT for quiver varieties}

In the appendix we describe the properties of quiver varieties under variation of GIT that are required in the main body of the article. We adopt once again the notation and assumptions of section~\ref{sec:Nakasec}, so that $(\bv,\bw)$ are a pair of dimension vectors for a fixed graph with vertex set $\{0,1,\dots,r\}$ and $v$ is the dimension vector of the corresponding framed doubled quiver $Q=(I,Q_1)$ for $I=\{\infty,0,1,\dots,r\}$.

\subsection{} Recall that a representation type $\tau$ of $v$ is a tuple $(n_0, \beta^{(0)}; \ds ; n_k, \beta^{(k)})$, where $v = \sum_{i = 0}^k n_i \beta^{(i)}$, $\beta^{(i)} \in \Sigma_{\theta}$ and if $\beta^{(i)} = \beta^{(j)}$ for some $i \neq j$, then $p(\beta^{(i)}) > 0$, i.e. $\beta^{(i)}$ is imaginary. Then $\mf{M}_{\theta}(\bv,\bw)$ admits a finite stratification 
$$
\mf{M}_{\theta}(\bv,\bw) = \bigsqcup_{\tau} \mf{M}_{\theta}(\bv,\bw)_{\tau}
$$
by smooth locally closed subvarieties (each one being symplectic), where the union is over all representation types of $v$. When $\theta = 0$, there is always the ``minimal representation type'' $\boldsymbol{0} = (v_{\infty},e_{\infty};v_0,e_0;\ds ;v_r,e_r)$ of $v$, where $\{e_\infty,e_0,e_1,\ds ,e_r\}$ is the integer basis of $\Z^I$. This corresponds to the unique closed stratum $\mf{M}_{0}(\bv,\bw)_{\boldsymbol{0}}$ in $\mf{M}_{0}(\bv,\bw)$. In all that follows, we write $\mf{M}_{\theta} := \mf{M}_{\theta}(\bv,\bw)$. 

\begin{thm}
\label{thm:birationalimage}
Let $\theta \ge \theta_0 \in \Theta_{v}$ such that $\mf{M}_{\theta} \neq \emptyset$. There exists a unique representation type $\tau$ such that:
\begin{enumerate}
\item[\one] the morphism $f\colon \mf{M}_{\theta} \rightarrow \mf{M}_{\theta_0}$ obtained by variation of GIT quotient satisfies $\mathrm{Im} \, f = \overline{\mf{M}_{\theta_0,\tau}}$; and
\item[\two] the resulting morphism $f \colon \mf{M}_{\theta} \rightarrow \overline{\mf{M}_{\theta_0, \tau}}$ is birational and semi-small. 
\end{enumerate}
\end{thm}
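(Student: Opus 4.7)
The plan is to define $\tau$ via the $\theta_0$-Jordan--H\"older decomposition of a generic $\theta$-stable module, show that $f$ maps the open dense locus $\mf{M}_\theta^s$ into a single stratum $\mf{M}_{\theta_0,\tau}$, and then use the \'etale local normal form of Theorem~\ref{thm:commdiagram22} to analyse the fibres of $f$ across every stratum of the image closure.

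First I would fix a generic $\theta$-stable $\Pi$-module $M$ representing a closed point of the dense open subset $\mf{M}_\theta^s \subseteq \mf{M}_\theta$. Since $\theta \ge \theta_0$, every $\theta$-semistable module is $\theta_0$-semistable, so $M$ admits a Jordan--H\"older filtration with $\theta_0$-stable factors; grouping the isomorphism classes of these factors produces a $\theta_0$-polystable module $\bigoplus_{i=0}^k M_i^{\oplus n_i}$ with $\beta^{(i)} := \dim M_i \in \Sigma_{\theta_0}$, from which I read off $\tau = (n_0,\beta^{(0)};\dots;n_k,\beta^{(k)})$. To see that $\tau$ is well-defined I would observe that the assignment $M \mapsto \tau(M)$ takes only finitely many values (one for each representation type of $v$), partitioning $\mf{M}_\theta^s$ into constructible loci; irreducibility of $\mf{M}_\theta^s$, which follows from its smoothness together with the connected \'etale local models of Theorem~\ref{thm:commdiagram22}, singles out a unique $\tau$ occurring on a dense open. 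By construction $f(\mf{M}_\theta^s) \subseteq \mf{M}_{\theta_0,\tau}$; since $f$ is projective, taking closures gives $\mathrm{Im}(f) \subseteq \overline{\mf{M}_{\theta_0,\tau}}$. For the reverse inclusion, I would apply Theorem~\ref{thm:commdiagram22} at a point $x \in \mf{M}_{\theta_0,\tau}$: the associated $\Ext$-graph data is precisely that recorded by $\tau$, so the local model $f_\varrho \times \mathrm{id}_{\C^{2\ell}}$ has non-empty central fibre, forcing $x \in \mathrm{Im}(f)$; a limit argument over the strata in the boundary of $\overline{\mf{M}_{\theta_0,\tau}}$ then promotes this to surjectivity onto the whole closure.

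Birationality of $f \colon \mf{M}_\theta \to \overline{\mf{M}_{\theta_0,\tau}}$ is then a dimension count: Theorem~\ref{thm:canondecomp} together with the definition of $\tau$ gives $\dim \mf{M}_\theta = 2\sum_{i} p(\beta^{(i)}) = \dim \overline{\mf{M}_{\theta_0,\tau}}$, while a generic fibre of $f$ over $\mf{M}_{\theta_0,\tau}$ parametrises $\theta$-semistable modules with a prescribed $\theta_0$-polystable associated graded, which by genericity of the stable factors $M_i$ consists of a single point.

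The main obstacle is semi-smallness, namely the estimate $2 \dim f^{-1}(y) \le \dim \mf{M}_\theta - \dim \mf{M}_{\theta_0,\sigma}$ for every stratum $\mf{M}_{\theta_0,\sigma} \subseteq \overline{\mf{M}_{\theta_0,\tau}}$ and every $y$ in it. The idea is to apply Theorem~\ref{thm:commdiagram22} at such a $y$ to reduce the inequality, \'etale locally, to the statement $2 \dim f_\varrho^{-1}(0) \le \dim \mf{M}_\varrho(\bm,\bn)$ for the smaller quiver variety data attached to the $\Ext$-graph at $y$. Whenever $\mf{M}_\varrho(\bm,\bn)$ is smooth, this follows from Nakajima's Lagrangian description of the central fibre; in general I would argue by induction on $\dim \mf{M}_{\theta_0}(\bv,\bw)$, exploiting the symplectic singularities structure so that the central fibre is isotropic on a smooth open, and using the stratification by representation types to ensure each inductive step reduces to strictly smaller quiver data. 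The crux is to verify the dimension bound uniformly across all representation types, particularly when both real and imaginary roots contribute to the canonical decomposition, so that the Lagrangian estimate survives the passage from the smooth case to the general one.
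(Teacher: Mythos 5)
Your outline for part (i) differs from the paper's and has a hidden issue: you define $\tau$ by picking a generic point of $\mf{M}_\theta^s$ and taking the $\theta_0$-Jordan--H\"older factors, but $\mf{M}_\theta^s$ can be empty even when $\mf{M}_\theta$ is not (e.g.\ when the canonical decomposition of $v$ with respect to $\theta$ has several summands, the generic point of $\mf{M}_\theta$ is strictly polystable). The paper instead observes that $f$ is a projective Poisson morphism between irreducible varieties with symplectic singularities, so $\mathrm{Im}\,f$ is an irreducible closed Poisson subvariety of $\mf{M}_{\theta_0}$ and is therefore automatically the closure of a single symplectic leaf, i.e.\ of a single stratum $\mf{M}_{\theta_0,\tau}$. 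That argument avoids any genericity hypothesis on $\theta$-stable points and is cleaner; if you want to keep the Jordan--H\"older route you would need to take a generic $\theta$-polystable module, compute its $\theta_0$-Jordan--H\"older factors, and then argue irreducibility to show the resulting type is constant on a dense open, which is more delicate.

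The real gap is in your semi-smallness argument. Reducing via Theorem~\ref{thm:commdiagram22} to the statement $2\dim f_\varrho^{-1}(0)\le\dim\mf{M}_\varrho(\bm,\bn)-\dim\mf{M}_0(\bm,\bn)_{\boldsymbol 0}$ is correct, but the proposed induction on $\dim\mf{M}_{\theta_0}(\bv,\bw)$ does not close: when you apply the \'etale slice theorem at a point of the \emph{closed} stratum $\mf{M}_{0,\boldsymbol 0}$, the resulting Ext-graph data has the same dimension as the data you started with (the vertices are the $\beta^{(i)}=e_i$ and the multiplicities are the $v_i$), so the ``strictly smaller quiver data'' needed for the induction does not materialise at exactly the stratum where the bound is hardest. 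Nor does ``the symplectic singularities structure forces the central fibre to be isotropic on a smooth open'' suffice, since it is precisely the singular locus that escapes Nakajima's Lagrangian argument. The paper's proof replaces this with a genuinely new input, Proposition~\ref{prop:keyssresult}: one embeds $f^{-1}(0)\cap\mf{M}_\theta^s$ into the GIT quotient of Bozec's variety $\Lambda$ of \emph{seminilpotent} representations, and Bozec's theorem that $\Lambda$ is isotropic in $\Rep(Q,v)$ supplies the dimension bound directly, without any smoothness hypothesis. The paper then handles arbitrary strata by the factorisation $\mf{M}_\theta\cong\prod_i\Sym^{m_i}\mf{M}_\theta(\gamma^{(i)})$ coming from the canonical decomposition (Crawley-Boevey/Bellamy--Schedler), a step your sketch also omits. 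Without the seminilpotent-variety ingredient there is no base case, so the argument as proposed would not go through.

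Finally, for birationality you claim the generic fibre is a single point ``by genericity of the stable factors $M_i$'', but this needs justification; the paper instead deduces generic finiteness directly from semi-smallness applied to the open stratum, and then proves connectedness of the generic fibre by passing to the Ext-graph and observing that $f$ becomes a trivial bundle over the loop-trace coordinates. Your instinct is right, but the paper's route is the one that actually closes the argument.
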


\begin{remark}
The above theorem was shown by Nakajima \cite{Nak1994,NakajimaBranching} in the case where $\mf{M}_{\theta}$ is smooth. We give a different proof that does not rely on the topological arguments of \textit{loc.\ cit.}
\end{remark}

\subsection{Proof} The difficult part of Theorem \ref{thm:birationalimage} is to show that $f$ is semi-small. We reduce this statement to the following, whose proof relies on a result of Bozec. Recall that a root $\alpha$ is \textit{anisotropic} if $p(\alpha) > 1$. 

\begin{prop}\label{prop:keyssresult}
Assume that $v$ is an anisotropic root and $\theta \in \Theta_{v}$ sufficiently general and satisfies $v \in \Sigma_{\theta}$. If $f : \mf{M}_{\theta} \rightarrow \mf{M}_{0}$ is the corresponding projective morphism then 
$$
2 \dim f^{-1}(0) \le \dim \mf{M}_{\theta} - \dim \mf{M}_{0,\boldsymbol{0}}.
$$
\end{prop}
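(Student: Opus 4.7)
The plan is to identify the central fibre $f^{-1}(0)$ as a GIT quotient of the null-cone in $\mu^{-1}(0)$ and then to appeal to Bozec's dimension bound for nilpotent varieties of quivers with loops.

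First, by Hilbert--Mumford, the preimage of $0 \in \mf{M}_0$ under the categorical quotient $\mu^{-1}(0) \to \mf{M}_0$ is the null-cone
\[
\Lambda(v) \;:=\; \big\{ M \in \mu^{-1}(0) \,:\, \overline{G(v) \cdot M} \ni 0 \big\},
\]
which coincides with the set of nilpotent $\Pi$-modules---those admitting a filtration whose composition factors are all the trivial one-dimensional simples $S_i$ at the vertices (every arrow, including loops, acting as zero on each graded piece). Under our hypotheses, every $\theta$-semistable point of $\Lambda(v)$ is $\theta$-stable and $G(v)$ acts with only the scalar $\C^{\times}$-stabiliser on the stable locus, so
\[
\dim f^{-1}(0) \;=\; \dim\!\big(\Lambda(v)\cap \mu^{-1}(0)^{\theta\text{-}s}\big) - \dim G(v) \;\le\; \dim \Lambda(v) - \dim G(v).
\]

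The crucial input is Bozec's refinement of Lusztig's classical bound. In the loop-free case Lusztig proved the equality $\dim \Lambda(v) = d_v$, where $d_v$ is the dimension of the representation space for a single orientation of $Q$; in the presence of loops this can fail because simultaneous triangularisability of a loop and its dual is a non-trivial closed condition. Bozec's extension to quivers with loops yields the sharper inequality
\[
\dim \Lambda(v) \;\le\; d_v \;-\; \sum_{i} p(e_i),
\]
where the correction $\sum_i p(e_i)$ equals the total number of loops of $Q$ and accounts exactly for the simultaneous-triangularisability constraint on each loop/dual-loop pair in the doubled quiver.

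Assembling the ingredients is then routine: the formula $\dim \mf{M}_\theta = 2p(v)$, together with the fact that $\mu^{-1}(0)$ is a complete intersection of expected dimension, gives the reformulation $p(v) = d_v - \dim G(v)$; and applying Remark~\ref{rem:stratapropertydecomp} to the minimal representation type yields $\dim \mf{M}_{0,\boldsymbol{0}} = 2\sum_i p(e_i)$. Combining these with the bounds above produces
\[
\dim f^{-1}(0) \;\le\; p(v) - \sum_i p(e_i) \;=\; \tfrac{1}{2}\big(\dim \mf{M}_\theta - \dim \mf{M}_{0,\boldsymbol{0}}\big),
\]
which is the required inequality. The main obstacle is extracting Bozec's bound in exactly this form for the framed setting: his original statement concerns the unframed nilpotent variety, so a mild extension---or an application of Crawley-Boevey's trick of absorbing the framing into an additional vertex---is needed to ensure that the correction term matches $\tfrac{1}{2}\dim\mf{M}_{0,\boldsymbol{0}}$. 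The anisotropy of $v$ and the genericity of $\theta$ enter implicitly to guarantee $\dim \mf{M}_\theta = 2p(v)$ and to ensure that $G(v)$ acts freely on the $\theta$-stable locus of $\Lambda(v)$.
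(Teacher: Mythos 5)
Your overall strategy---identify $f^{-1}(0)$ with a GIT quotient of the nilpotent locus and bound its dimension via Bozec's isotropy result---matches the paper's, but there is a genuine structural gap in the reduction to the stable locus. The assertion that ``under our hypotheses, every $\theta$-semistable point of $\Lambda(v)$ is $\theta$-stable'' is false in general. Writing $v = d\alpha$ with $\alpha$ indivisible anisotropic, the genericity of $\theta$ only forces the positive roots $\beta\le v$ with $\theta(\beta)=0$ to be multiples of $\alpha$; when $d>1$ there are still strictly polystable representations of dimension $v$ (with summands of dimensions $\nu_i\alpha$), so $f^{-1}(0)$ meets strata $\mf{M}_{\theta,\nu}$ indexed by non-trivial weighted partitions $\nu=(n_0,\nu_0\alpha;\dots;n_k,\nu_k\alpha)$ of $d$. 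Your first displayed equation therefore only computes $\dim\bigl(f^{-1}(0)\cap\mf{M}_\theta^s\bigr)$, not $\dim f^{-1}(0)$. The paper establishes precisely this bound for the stable locus and then runs a substantial second argument over the finite stratification: it applies the stable-locus estimate to each factor $\mf{M}_\theta(\nu_i\alpha)^s$ of a stratum, uses the finite morphism from $\prod_i\mf{M}_\theta(\nu_i\alpha)^s$ onto $\mf{M}_{\theta,\nu}$ to sum the fibre dimensions, and exploits $\sum_i m_i\dim\mf{M}_0(\nu_i\alpha)_{\boldsymbol{0}}\ge\dim\mf{M}_{0,\boldsymbol{0}}$ to close the estimate. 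Omitting this step leaves the proof incomplete for every anisotropic $v$ that is not indivisible.

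A secondary, repairable issue is the attribution of the refined dimension bound to Bozec. What Bozec proves (and what the paper cites) is that the \emph{seminilpotent} variety $\Lambda$ is isotropic, i.e.\ $\dim\Lambda\le\tfrac12\dim\Rep(Q,v)$, with no correction term. The paper extracts the extra $-N$ by an explicit trace-splitting isomorphism $\Lambda\cong\Lambda_0\times\C^N$, where $\Lambda_0$ is the locus with vanishing trace at the $\Omega^*$-loops and $N$ is the number of loops in the chosen orientation $\Omega$; the nilpotent locus sits inside $\Lambda_0$, whence $\dim\Lambda_0\le\tfrac12\dim\Rep(Q,v)-N$. Your remark about ``the simultaneous-triangularisability constraint'' gestures at this but supplies no argument, and the claim that $\sum_i p(e_i)$ equals the total number of loops of $Q$ is off by a factor of two (one has $\sum_i p(e_i)=N$, not $2N$), though your subsequent arithmetic happens to be consistent with the correct value.
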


\begin{proof}
We begin by establishing  
\begin{equation}\label{eq:showsssemistable}
2 \dim \big(f^{-1}(0) \cap \mf{M}_{\theta}^s\big) \le \dim \mf{M}_{\theta} - \dim \mf{M}_{0,\boldsymbol{0}}.
\end{equation}
Abusing notation, let us write $\Omega$ for a quiver whose double $\overline{\Omega}$ equals the framed doubled quiver $Q$. Let $N$ be the number of loops in $\Omega$, so that $Q$ has $2N$ loops and $\dim \mf{M}_{0,\boldsymbol{0}} = 2N$ (one has the freedom to assign to each loop in $Q$ a scalar). Let $L \subset \Omega_1$ be the set of loops in $\Omega$, and let $\Lambda \subset \Rep(Q,v)$ denote the closed subset of \textit{seminilpotent} representations, as defined by Bozec~\cite{BozecCrystal}. Then $\Lambda$ is isotropic by \cite[Lemma 1.2]{BozecCrystal}, so $\dim \Lambda \le \frac{1}{2} \dim \Rep(Q,v)$. We define 
$$
\Lambda_0 = \{ x \in  \Lambda \ | \ \Tr(a^*) = 0, \ \forall \ a \in L \}.
$$
Then taking trace defines an isomorphism $ \Lambda \cong \Lambda_0 \times \C^{N}$. We note that $f^{-1}(0) \cap \mf{M}_{\theta}^s \subset  \Lambda_0^s \, \git \, G(v)$. Also note that $PG(v)$ acts freely on $\Lambda_0^s$. Thus,
\begin{align*}
2 \dim \big(f^{-1}(0) \cap \mf{M}_{\theta}^s\big) & \le 2 \dim \Lambda_0^s \, \git \, G \ \le 2 \ \dim \Lambda_0^s - 2 \dim PG(v) \\
& = 2 \dim \Lambda - 2 \dim PG(v) - \dim \mf{M}_{0,\boldsymbol{0}} \\
& \le \dim \Rep(Q,v) - 2 \dim PG(v) - \dim \mf{M}_{0,\boldsymbol{0}}  \\
& =  \dim \mf{M}_{\theta} - \dim \mf{M}_{0,\boldsymbol{0}},
\end{align*}
demonstrating inequality \eqref{eq:showsssemistable}. 

Next, write $v = d \alpha$, where $d \in \Z_{> 0}$ and $\alpha$ an indivisible dimension vector. We recall from \cite[Theorem 2.2]{BelSchQuiver} that $\alpha$ is an anisotropic root, and so too is every multiple $m \alpha$ of it. Our assumption that $\theta$ is sufficiently general is introduced to ensure that $\theta(\beta) \neq 0$ for all positive roots $\beta \le v$ that are \textit{not} a multiple of $\alpha$. It then follows from \cite[Theorem 2.2]{BelSchQuiver} that either (a) $\alpha \in \Sigma_{\theta}$, or (b) every proper multiple of $\alpha$ belongs to $\Sigma_{\theta}$ but $\alpha$ itself does not. Then the only representation types of $v$ are 
$$
\nu = (n_0,\nu_0 \alpha; \ds, ; n_k, \nu_k \alpha),
$$
the ``weighted partitions'' of $d$, as described in \cite[\S 6.1]{BelSchQuiver}, where we only allow weighted partitions with $\nu_i > 1$ for all $i$ if we are in case (b). Since this stratification is finite, it suffices to show that
$$
2 \dim \big(f^{-1}(0) \cap \mf{M}_{\theta,\nu}\big) \le \dim \mf{M}_{\theta} - \dim \mf{M}_{0,\boldsymbol{0}}
$$
 for every such $\nu$. For any root $\gamma \le v$ satisfying $\theta(\gamma) = 0$, we write $\mf{M}_{\theta}(\gamma)$ for the quiver variety associated to $\gamma$. 
 Each closed point of $\mf{M}_{\theta,\nu}$ has the form $\bigoplus_{0\leq i\leq k} M_i^{\oplus n_i}$, where $M_i \in \mf{M}_{\theta}(\nu_i \alpha)^s$ and $M_i \not\cong M_j$ for $i \neq j$. If $\mc{V}_i$ denotes the flat family of $\theta$-stable $\Pi$-modules of dimension vector $\nu_i\alpha$ on $ \mf{M}_{\theta}(\nu_i \alpha)^s$ for $0\leq i\leq k$, then the fibre of the vector bundle $\bigoplus_{0\leq i\leq k} \mc{V}_i^{\oplus n_i}$ over the closed point $\bigoplus_{0\leq i\leq k} M_i^{\oplus n_i}$ in the variety $\prod_{0\leq i\leq k} \mf{M}_{\theta}(\nu_i \alpha)^s$ is $\theta$-semistable, because each $M_i$ is $\theta$-stable. The universal property of the coarse moduli space $\mf{M}_\theta$ then gives a morphism
 \[
 \xi \colon \prod_{0\leq i\leq k} \mf{M}_{\theta}(\nu_i \alpha)^s \longrightarrow \mf{M}_{\theta}
 \]
 whose image is contained in the closure of $\mf{M}_{\theta,\nu}$. In particular, there is an open subset $U\subseteq \prod_{0\leq i\leq k} \mf{M}_{\theta}(\nu_i \alpha)^s$ such that $\xi(U)=\mf{M}_{\theta,\nu}$. For each $0\leq i\leq k$, we write $f_{\nu_i} \colon \mf{M}_{\theta}(\nu_i\alpha) \rightarrow \mf{M}_{0}(\nu_i\alpha)$ for the projective morphism obtained by variation of GIT quotient, and let $M_i\in \mf{M}_\theta(\nu_i\alpha)^s$ satisfy $(M_0,\dots,M_k)\in U$. After identifying the closed point $0\in \mf{M}_0$ with $(0,\dots,0)\in \prod_i\mf{M}_0(\nu_i\alpha)$, we have that
 \[
 0=f\big(\xi(M_0,\dots,M_k)\big) = f\big(M_0^{\oplus n_0}\oplus \cdots \oplus M_k^{\oplus n_k}\big) = f_{\nu_0}(M_0)^{\oplus n_0}\oplus \cdots \oplus f_{\nu_k}(M_k)^{\oplus n_k}
 \]
 if and only if $f_{\nu_i}(M_i)=0\in \mf{M}_0(\nu_i\alpha)$ for all $0\leq i\leq k$. Therefore the locus $\xi^{-1}\big(f^{-1}(0) \cap \mf{M}_{\theta,\nu})$ coincides with $U \cap \left( \prod_i f_{\nu_i}^{-1}(0)\cap \mf{M}_{\theta}(\nu_i \alpha)^s \right)$. Now, $U$ is dense in $\prod_{0\leq i\leq k} \mf{M}_{\theta}(\nu_i \alpha)^s$, and $\xi$ has finite fibres by a dimension count, so we deduce that 
\begin{equation}
    \label{eqn:fibredims}
\dim \big(f^{-1}(0) \cap \mf{M}_{\theta,\nu}) = \sum_i \dim \left( f_{\nu_i}^{-1}(0)\cap \mf{M}_{\theta}(\nu_i \alpha)^s\right).
\end{equation}
Inequality \eqref{eq:showsssemistable}, applied to each $v = \nu_i \alpha$, gives  
$$
2 \dim \big(f^{-1}_{\nu_i}(0) \cap \mf{M}_{\theta}(\nu_i \alpha)^s\big) \le \dim \mf{M}_{\theta}(\nu_i \alpha) - \dim \mf{M}_{0}(\nu_i \alpha)_{\boldsymbol{0}}
$$
for all $0\leq i\leq k$. Combining this with \eqref{eqn:fibredims} gives
$$
2\dim \big(f^{-1}(0) \cap \mf{M}_{\theta,\nu}\big) \le \sum_i \big(\dim \mf{M}_{\theta}(\nu_i \alpha) - \dim \mf{M}_{0}(\nu_i \alpha)_{\boldsymbol{0}}\big).
$$
Since $\dim \mf{M}_{0}(m\alpha)_{\boldsymbol{0}} = 2N$ is independent of $m$, and since $\sum_i \dim \mf{M}_{\theta}(\nu_i \alpha) = \dim \mf{M}_{\theta,\nu} \le \dim \mf{M}_{\theta}$, we have 
$$
2\dim \big(f^{-1}(0) \cap \mf{M}_{\theta,\nu}\big) \le \dim \mf{M}_{\theta} - \dim \mf{M}_{\theta,\boldsymbol{0}},
$$
which implies the statement of the proposition.
\end{proof}

\begin{proof}[Proof of Theorem \ref{thm:birationalimage}]
For \one, it was shown in \cite{BelSchQuiver} that $\mf{M}_{\theta}$ and $\mf{M}_{\theta_0}$ are irreducible symplectic varieties, whose leaves are precisely the strata $\mf{M}_{\theta,\tau}$ and $\mf{M}_{\theta_0,\tau}$ respectively. As noted in \cite[Lemma 2.4]{BelSchQuiver}, the morphism $f$ is a projective Poisson morphism. Therefore $\mathrm{Im} \, f $ is a closed, irreducible Poisson subvariety of $\mf{M}_{\theta_0}$, so there must exist a representation type $\tau$ such that $\mathrm{Im} \, f = \overline{\mf{M}_{\theta_0,\tau}}$. 

For \two, we show first that $f$ is semi-small. It suffices to show that 
$$
2 \dim f^{-1}(x) \le \dim \mf{M}_{\theta} -  \dim \mf{M}_{\theta_0,\eta}
$$
for each $x \in \mf{M}_{\theta_0,\eta}$ with $\mf{M}_{\theta_0,\eta} \subset \overline{\mf{M}_{\theta_0,\tau}}$, where $\tau$ is specified as above. Applying Theorem \ref{thm:commdiagram22} to the Ext-graph associated to $x$, we may assume that $\theta_0 = 0$, $\eta = \boldsymbol{0}$ and $x = 0$ in $\mf{M}_{0,\boldsymbol{0}}$. Thus, we are reduced to showing that $2 \dim f^{-1}(0) \le \dim \mf{M}_{\theta} -  \dim \mf{M}_{0,\boldsymbol{0}}$. We begin by assuming that $v$ has trivial canonical decomposition, i.e. $v \in \Sigma_{\theta}$. There is a trichotomy here: either
\begin{itemize}
\item[(a)] $v$ is a real root, in which case $\mf{M}_{\theta}$ is a point (and the statement is vacuous),
\item[(b)] $v$ is an isotropic imaginary root, in which case it follows from \cite[Lemma 4.1]{BelSchQuiver} that $\mf{M}_{\theta}$ is a partial resolution of the Kleinian singularity $\mf{M}_{0}$ (and hence $f$ is semi-small); or 
\item[(c)] $v$ is anisotropic. 
\end{itemize}
We need only deal with case (c). In this case, choose some $\theta' \in \Theta_{v}$ such that such that $\theta' \ge \theta$ and $\theta'(\beta) \neq 0$ for all positive roots $\beta < v$ that are not a multiple of $v$. Then we have a commutative diagram 
$$
\begin{tikzcd}
\mf{M}_{\theta'} \ar[r,"{f'}"] \ar[rr,"{h}"',dashed,bend right=20] & \mf{M}_{\theta} \ar[r,"f"] & \mf{M}_{0}.
\end{tikzcd}
$$
Since $v \in \Sigma_{\theta}$, we have $v \in \Sigma_{\theta'}$ and part \one\ applied to $h$ implies that $f'$ is surjective because every $\theta$-stable representation is $\theta'$-stable (and $\theta$-stable representations exist by Theorem~\ref{thm:canondecomp}). The dimension of $\mf{M}_{\theta}$ and $\mf{M}_{\theta'}$ both equal $2 p(v)$. Thus, it suffices to show that 
$$
2 \dim h^{-1}(0) \le \dim \mf{M}_{\theta'} - \dim \mf{M}_{0,\boldsymbol{0}}.
$$
But this is precisely the statement of Proposition \ref{prop:keyssresult}. 

Next we consider the general situation. As in the proof of Proposition \ref{prop:keyssresult}, if $\gamma \le v$ is a root with $\theta(\gamma) = 0$, then we write $\mf{M}_{\theta}(\gamma)$ for the quiver variety associated to $\gamma$. If (after grouping together like terms) $v = m_0 \gamma^{(0)} + \cdots + m_{\ell} \gamma^{(\ell)}$ is the canonical decomposition of $v$ in $\Sigma_{\theta}$, then \cite[Theorem 1.4]{BelSchQuiver} says that there is an isomorphism 
$$
\prod_{0\leq i\leq \ell} \Sym^{m_i} \mf{M}_{\theta}\big(\gamma^{(i)}\big) \stackrel{\sim}{\longrightarrow} \mf{M}_{\theta}.
$$
This gives two projective morphisms, $f \colon \mf{M}_{\theta} \rightarrow \mf{M}_0$ and
$$
\prod_i\Sym^{m_i}(f_i) \colon \mf{M}_{\theta} \rightarrow \prod_{i} \Sym^{m_i} \mf{M}_{0}\big(\gamma^{(i)}\big)
$$
 which both factor through the affinisation map $a \colon \mf{M}_{\theta} \rightarrow \mf{M}_{\theta}^{\mathrm{aff}} := \Spec \Gamma(\mf{M}_{\theta},\mc{O})$.  The induced morphisms $\mf{M}_{\theta}^{\mathrm{aff}} \rightarrow \mf{M}_0$ and $\mf{M}_{\theta}^{\mathrm{aff}} \rightarrow \prod_{i} \Sym^{m_i} \mf{M}_{0}\left(\gamma^{(i)}\right)$ are closed immersions of affine cones, so  
\begin{equation}\label{eq:ainfinv}
f^{-1}(0) = a^{-1}(0) = \left(\prod_i\Sym^{m_i}(f_i)\right)^{-1}(0,\dots,0). 
\end{equation}
As in the first part of the proof of Proposition \ref{prop:keyssresult}, we assume that $v$ is sincere, i.e. $\Supp\, v = Q$ and that $Q$ has $2N$ loops, so that $\dim \mf{M}_{0,\boldsymbol{0}} = 2N$. Let $2N_i$ denote the number of loops appearing in the subquiver $\Supp \, \gamma^{(i)}$ of $Q$ so that $\dim \mf{M}_{0,\boldsymbol{0}}(\gamma^{(i)}) = 2N_i$. Then, equality \eqref{eq:ainfinv} implies that  
\begin{align*}
2 \dim f^{-1}(0) & = 2 \sum_{i} \dim \Sym^{m_i}(f_i)^{-1}(0) =  2 \sum_{i} m_i \dim f_i^{-1}(0)\\
& \le \sum_{i} m_i \left(\dim \mf{M}_{\theta}(\gamma^{(i)}) - \mf{M}_{0}(\gamma^{(i)})_{\boldsymbol{0}} \right) \\
& = \dim \mf{M}_{\theta} - \sum_{i} m_i \dim \mf{M}_{0,\boldsymbol{0}}(\gamma^{(i)}) = \dim \mf{M}_{\theta} - \sum_{i} 2m_i N_i.
\end{align*}
The fact that $v = m_0 \gamma^{(0)} + \cdots + m_{\ell} \gamma^{(\ell)}$ implies that each loop in $Q$ must appear in the support of at least one of the $\gamma^{(i)}$. Hence $\sum_{i = 0}^{\ell} N_i \ge N$. This means that $\sum_{i = 0}^{\ell} 2m_i N_i \ge 2 N$ and hence 
$$
2 \dim f^{-1}(0) \le \dim \mf{M}_{\theta} - \dim \mf{M}_{0,\boldsymbol{0}}
$$
as required. 

Finally, we must show that $f$ is birational onto its image. Since $f$ is semi-small, it is generically finite. Therefore it suffices to show that a generic fibre is connected. Let $x \in \mf{M}_{\theta_0,\tau}$ be generic. Passing to the Ext-graph at $x$, it suffices to show that $f^{-1}(0)$ is connected when $f : \mf{M}_{\theta} \rightarrow \mf{M}_{0}$ has the property that $\mathrm{Im} \, f = \mf{M}_{0,\boldsymbol{0}}$. As in the proof of Proposition \ref{prop:keyssresult}, let $L \subset \Omega_1$ denote the loops in $\Omega$. Then taking trace of each $a,a^*$ for $a \in L$ defines an isomorphism $\mf{M}_{\theta} \cong \mf{M}_{\theta}^0 \times \C^{2N}$, where $\mf{M}_{\theta}^0$ is the subvariety of all representation with $\Tr(a) = \Tr(a^*) = 0$ for $a \in L$. Since $\mf{M}_{0,\boldsymbol{0}} = \{ 0 \} \times \C^{2N}$, with $\{ 0 \} = \mf{M}_{0,\boldsymbol{0}}^0$, we see in this case that $\mf{M}_{\theta}^0 = f^{-1}(0)$ and $f$ is a trivial fiber bundle over $\C^{2N}$. The quiver variety $\mf{M}_{\theta}$ is irreducible. Therefore, $f^{-1}(0) = \mf{M}_{\theta}^0$ is also irreducible as required. Note that we have actually shown that $\mf{M}_0^0$ is a single point in this case and the morphism $f$ is a closed embedding, i.e. $\mf{M}_{\theta} \cong \mf{M}_{0,\boldsymbol{0}}$.    
\end{proof}

\bibliographystyle{plain}

\def\cprime{$'$} \def\cprime{$'$} \def\cprime{$'$} \def\cprime{$'$}
  \def\cprime{$'$} \def\cprime{$'$} \def\cprime{$'$} \def\cprime{$'$}
  \def\cprime{$'$} \def\cprime{$'$} \def\cprime{$'$} \def\cprime{$'$}
  \def\cprime{$'$} \def\cprime{$'$}


\end{document}